\numberwithin{equation}{section}
\newtheorem{theorem}{Theorem}[section]
\newtheorem{lemma}[theorem]{Lemma}
\newtheorem{proposition}[theorem]{Proposition}
\newtheorem{condition}[theorem]{Condition}
\theoremstyle{definition}
\newtheorem{definition}[theorem]{Definition}
\newtheorem{question}[theorem]{Question}
\newtheorem{remark}[theorem]{Remark}
\begin{document}
\sloppy

\title[The panted cobordism group of cusped hyperbolic $3$-manifolds]{The panted cobordism group of cusped hyperbolic $3$-manifolds}

\author{Hongbin Sun}
\address{Department of Mathematics, Rutgers University - New Brunswick, Hill Center, Busch Campus, Piscataway, NJ 08854, USA}
\email{hongbin.sun@rutgers.edu}


\subjclass[2010]{57M05, 57M50, 20H10, 57N10}
\thanks{The author is partially supported by NSF Grant No. DMS-1840696 and Simons Collaboration Grants 615229.}
\keywords{cusped hyperbolic $3$-manifolds, good pants construction, panted cobordism group, connection principle}

\date{\today}
\begin{abstract}
For any oriented cusped hyperbolic $3$-manifold $M$, we study its $(R,\epsilon)$-panted cobordism group, which is the abelian group generated by $(R,\epsilon)$-good curves in $M$ modulo the oriented boundaries of $(R,\epsilon)$-good pants. In particular,   we prove that for sufficiently small $\epsilon>0$ and sufficiently large $R>0$, some modified version of the $(R,\epsilon)$-panted cobordism group of $M$ is isomorphic to $H_1(\text{SO}(M);\mathbb{Z})$.
\end{abstract}

\maketitle
\vspace{-.5cm}
\section{Introduction}
	
The celebrated good pants construction was initiated by Kahn and Markovic. In \cite{KM1}, Kahn and Markovic proved the surface subgroup theorem: any closed hyperbolic $3$-manifold $M$ admits a $\pi_1$-injective immersed closed hyperbolic subsurface $S\looparrowright M$. This is the first step of Agol's proof of the virtual Haken and virtual fibering conjectures (\cite{Agol}), which are the most important results on $3$-manifold topology in the past decade.

Although the above statement is purely topological, Kahn and Markovic actually used geometric method to construct $\pi_1$-injective subsurfaces, and the nice geometric properties make these subsurfaces useful for studying other aspects of hyperbolic $3$-manifolds. More precisely, Kahn and Markovic used tools in geometry and dynamics to paste a lot of $(R,\epsilon)$-good pants in $M$ along $(R,\epsilon)$-good curves in a nearly geodesic manner, to get the desired $\pi_1$-injective subsurface (see Section \ref{pantsclosed} for definitions). This is the starting point of a collection of works under the theme of {\it good pants construction}, which use various versions of good pants to construct $\pi_1$-injective subsurfaces in geometrically interesting spaces (\cite{KM1,KM2,LM,Ham,KW1}).

In \cite{KM2}, Kahn and Markovic used the good pants construction to resolve the Ehrenpreis conjecture, that is, for any $\epsilon>0$ and any pair of closed hyperbolic surfaces $S_1$ and $S_2$, each of them has a finite cover $\tilde{S}_1$ and $\tilde{S}_2$ respectively, such that they are $(1+\epsilon)$-quasi-isometric to each other. 

To prove the Ehrenpreis conjecture, Kahn and Markovic proved that any closed hyperbolic surface has a finite cover that admits a pants decomposition, such that each cuff is an $(R,\epsilon)$-good curve and the two pairs of $(R,\epsilon)$-pants adjacent to it are pasted by a nearly $1$-shift. In the proof, Kahn and Markovic developed the theory of {\it good pants homology} on closed hyperbolic surfaces, which is essentially a homology theory with $\mathbb{R}$-coefficient (good curves modulo oriented boundaries of good pants,).

In \cite{LM}, among other works, Liu and Markovic studied the $\mathbb{Z}$-coefficient good pants homology of closed oriented hyperbolic $3$-manifolds, and they called it the {\it panted cobordism group} $\Omega_{R,\epsilon}(M)$. Let $\text{SO}(M)$ be the frame bundle over $M$, then they proved that $\Omega_{R,\epsilon}(M)$ is isomorphic to $H_1(\text{SO}(M);\mathbb{Z})$. (See Section \ref{pantscobordismgroup} for more details for Liu and Markovic's work in \cite{LM}.)

All the above results on good pants construction only work for closed hyperbolic $3$-manifolds, since good pants are not equidistributed along good curves in cusped hyperbolic $3$-manifolds. In \cite{KW1}, Kahn and Wright overcame this difficulty, and generalized Kahn and Markovic's surface subgroup theorem to cusped hyperbolic $3$-manifolds. Besides $(R,\epsilon)$-good pants, they introduced a new object called $(R,\epsilon)$-good hamster wheel, as another building block of the $\pi_1$-injective subsurface. Our paper will not use the main result in \cite{KW1}. However, we will use some geometric ideas in \cite{KM1} and adopt some basic notions in that paper, including a height function in cusped hyperbolic $3$-manifolds. 

This paper is devoted to generalize Liu and Markovic's result on panted cobordism group (in \cite{LM}) to cusped hyperbolic $3$-manifolds. Here we can not prove exactly the same result as in the closed manifold case. For any $h'>h>0$, we will work on the panted cobordism group with height control, which is denoted by $\Omega_{R,\epsilon}^{h,h'}(M)$. Here $\Omega_{R,\epsilon}^{h,h'}(M)$ is generated by all $(R,\epsilon)$-good curves of height at most $h$, modulo the oriented boundary of linear combinations of $(R,\epsilon)$-good pants of height at most $h'$ such that the oriented boundary of this linear combination has height at most $h$. We will also study $\Omega_{R,\epsilon}^h(M)=\Omega_{R,\epsilon}^{h,\infty}(M)$. (The height function in a cusped hyperbolic $3$-manifold will be defined in Section \ref{pantscusped}, and the precise definition of $\Omega_{R,\epsilon}^{h,h'}(M)$ will be introduced in Section \ref{pantscobordismsection}.)

\begin{theorem}\label{main1}
For any oriented cusped hyperbolic $3$-manifold $M$, any numbers $\beta>\alpha\geq 4$ with $\beta-\alpha\geq 3$ and any $\epsilon\in (0,10^{-2})$, there exists $R_0=R_0(M,\epsilon)>0$, such that for any $R>R_0$, we have $$\Omega_{R,\epsilon}^{\alpha\ln {R},\beta\ln{R}}(M)\cong H_1(\text{SO}(M);\mathbb{Z}).$$ Moreover, we also have $$\Omega_{R,\epsilon}^{\alpha\ln {R}}(M)\cong H_1(\text{SO}(M);\mathbb{Z}).$$
\end{theorem}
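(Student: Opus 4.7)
The plan is to construct a natural homomorphism $\Phi:\Omega_{R,\epsilon}^{\alpha\ln R,\beta\ln R}(M)\to H_1(\text{SO}(M);\mathbb{Z})$ and prove it is an isomorphism. First I would define $\Phi$ by sending each $(R,\epsilon)$-good curve $\gamma$ to the class of its canonical framed lift $\tilde\gamma\subset \text{SO}(M)$, where the frame is the unit tangent vector together with a parallel-transported normal frame. Each $(R,\epsilon)$-good pants $\Pi$ lifts (noncanonically but up to homologous choices) to a $2$-chain in $\text{SO}(M)$ whose oriented boundary is the signed sum of the framed lifts of the three cuffs, so $\Phi$ descends to the quotient defining $\Omega_{R,\epsilon}^{\alpha\ln R,\beta\ln R}(M)$ and is well-defined regardless of the height ceiling on pants.

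For surjectivity I would adapt Liu--Markovic's strategy. The group $H_1(\text{SO}(M);\mathbb{Z})$ is finitely generated, and via the thick/thin decomposition it is generated by framed lifts of a finite collection of short closed geodesics supported in the thick part of $M$. Each such geodesic can be approximated by an $(R,\epsilon)$-good curve whose height is bounded by a constant depending only on $M$; for $R$ sufficiently large this constant is $\leq\alpha\ln R$, so every class in $H_1(\text{SO}(M);\mathbb{Z})$ is realized by a combination of good curves of height $\leq\alpha\ln R$.

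The main obstacle is injectivity. Given a null-homologous combination $\sum n_i[\gamma_i]$ of good curves of height $\leq\alpha\ln R$, I must produce an integer combination of $(R,\epsilon)$-good pants of height $\leq\beta\ln R$ whose oriented boundary is exactly $\sum n_i[\gamma_i]$. The natural route is to establish a height-controlled \emph{connection principle}: given any two frames $f_1,f_2\in\text{SO}(M)$ with footprints of height $\leq\alpha\ln R$, there is an abundant supply of nearly geodesic $(R/2,\epsilon)$-good segments from $f_1$ to $f_2$ staying at height $\leq\beta\ln R$, equidistributed up to a small error. With this tool in hand, one can replay the sequence of reductions of \cite{LM} in the closed case, from arbitrary null-homologous combinations to standard zero-cycles and then to explicit bounding pants, while checking at each stage that every auxiliary good curve has height $\leq\alpha\ln R$ and every auxiliary good pants has height $\leq\beta\ln R$. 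The gap $\beta-\alpha\geq 3$ supplies just enough slack: a nearly geodesic segment of length $\asymp R$ can ascend at most $\asymp\ln R$ above the height of its starting frame, so segments based at frames of height $\leq\alpha\ln R$ stay at height $\leq(\alpha+1)\ln R+O(1)$, which fits comfortably inside the pants budget $\beta\ln R$ but only barely inside the curve budget $\alpha\ln R$ when one closes back up. The hardest step is the height-controlled connection principle itself; in the cusped setting, Kahn--Markovic style mixing estimates only apply in the thick part, so one must combine equidistribution of the frame flow on the thick part of $M$ with explicit hyperbolic geometry estimates controlling how high segments of bounded length can rise into a cusp, using the height function from Section \ref{pantscusped}.

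Finally, the second isomorphism $\Omega_{R,\epsilon}^{\alpha\ln R}(M)\cong H_1(\text{SO}(M);\mathbb{Z})$ follows formally from the first. The identity on generators gives a surjection $\Omega_{R,\epsilon}^{\alpha\ln R,\beta\ln R}(M)\twoheadrightarrow\Omega_{R,\epsilon}^{\alpha\ln R}(M)$ since the target has strictly more relators, and the composition with the analogous map $\Phi$ from the target is again induced by framed lifts. The first part of the theorem says this composition is an isomorphism, which forces both factors to be isomorphisms.
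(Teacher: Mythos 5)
Your overall architecture (define $\Phi$ via canonical lifts, prove a height-controlled connection principle as the key new tool, use the $\beta-\alpha$ slack to absorb the height increase of each construction, and deduce the ``moreover'' part formally from the first isomorphism) matches the paper, and your final formal argument for $\Omega_{R,\epsilon}^{\alpha\ln R}(M)$ is exactly right. But the two substantive steps are where the gaps are. For surjectivity you propose that $H_1(\text{SO}(M);\mathbb{Z})$ is generated by framed lifts of short closed geodesics in the thick part, each ``approximated'' by a good curve of bounded height. In a cusped manifold parabolic generators have no geodesic representatives, and more importantly a single $(R,\epsilon)$-good curve has length about $2R$ and cannot be homologous in $\text{SO}(M)$ to the framed lift of a short loop; one must realize each generator $\hat g\in\pi_1(\text{SO}(M))$ by an engineered integer combination of good curves in which the auxiliary connecting pieces cancel. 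This is precisely what the paper's map $\Psi_1(\hat g)=\sum_i[\mathfrak{s}_{\hat g}\mathfrak{a}_{i,i+1}]-[\mathfrak{s}_{\hat g}\mathfrak{b}]-[\mathfrak{s}_{\hat g}\bar{\mathfrak{b}}]$ accomplishes, and the paper then verifies $\Phi\circ\Psi^{\mathrm{ab}}=\mathrm{id}$ and proves surjectivity of $\Psi$ (Propositions \ref{identity} and \ref{surjective}) rather than arguing injectivity and surjectivity of $\Phi$ directly; your sketch contains no mechanism for this cancellation.

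For injectivity, ``replay the reductions of \cite{LM} with height bookkeeping'' hides the two points where the cusped case genuinely breaks the closed-case argument and where the paper has to work hardest. First, your heuristic that a nearly geodesic segment of length $\asymp R$ can rise only $\asymp\ln R$ above its starting height is false for general geodesic segments, which climb a cusp linearly in their length; the logarithmic bound $\tfrac12\ln t+C\ln\tfrac1\delta+C$ is a property only of the specially counted connecting segments produced by Theorem \ref{connection_principle_with_height}, and even there the initial and terminal excursions are controlled by a separate term depending on the direction of the endpoint vectors. Second, the input curves themselves may have height all the way up to $\alpha\ln R$, with many high cusp excursions, and cannot be fed into the Liu--Markovic machinery (whose generating elements must be conjugated by $k$ into $\mathscr{C}_{DL}$ and must have height a definite amount below $\alpha\ln R$). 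The paper needs Lemma \ref{reduceheight} (an iterated surgery halving the number of high excursions and then shaving the height of the single remaining one) and Lemma \ref{reduce} (decomposing long elements of $\mathcal{A}$ into products of elements of length $<30L$ through triangular relations while controlling heights and endpoint directions) before the closed-case scheme can be run; these are the genuinely new ingredients, and nothing in your proposal substitutes for them. Your reading of the role of $\beta-\alpha\geq 3$ is also off: it is needed because each swapping/rotation construction raises the height of the bounding panted surface by up to $3\ln R$ over the input curves, not because connecting segments rise by $\ln R$.
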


The ``moreover'' part of Theorem \ref{main1} is a nice and neat result by itself. However, in order to study virtual domination of $3$-manifolds in a forthcoming paper \cite{Sun}, we need to adopt the construction in \cite{KW1}, which requires the result in Theorem \ref{main1} on $\Omega_{R,\epsilon}^{\alpha\ln {R},\beta\ln{R}}(M)$.




Given Theorem \ref{main1}, it is natural to ask the following question on the panted cobordism group with no height control. Here $\Omega_{R,\epsilon}(M)$ denotes the panted cobordism group $\Omega^{h,h'}_{R,\epsilon}(M)$ with $h=h'=\infty$.
\begin{question}\label{noheightcontrol}
For a cupsed hyperbolic $3$-manifold $M$, small enough $\epsilon>0$ and large enough $R>0$, is $\Omega_{R,\epsilon}(M)$ isomorphic to $H_1(SO(M);\mathbb{Z})$?
\end{question}

The main difficulty for Question \ref{noheightcontrol} is that we do not have a {\it connection principle} that works for any two points in a cusped hyperbolic $3$-manifold without much condition on their heights. In Section \ref{connection_principle}, we will prove a connection principle in cusped hyperbolic $3$-manifolds (Theorem \ref{connection_principle_with_height}), whose input contains two points in the manifold that are not too high (comparing to the length of the object to be constructed), and the output is an oriented $\partial$-framed segment connecting these two points with controlled height. 

The key ingredient in the proof of Theorem \ref{connection_principle_with_height} is a counting result in \cite{KW2} on Lie groups. The proof of Theorem \ref{connection_principle_with_height} is essentially a combination of proofs of Theorem 3.11 of \cite{KW2} and Theorem 3.2 of \cite{KW1}. 

Given the new connection principle in Theorem \ref{connection_principle_with_height}, the main strategy for proving Theorem \ref{main1} is similar to the strategy in \cite{LM}. However, in almost all lemmas, we need to have some height control since we are working in cusped hyperbolic $3$-manifolds. For completeness, we will give full proofs of all necessary lemmas in \cite{LM} that need height control, while many geometric constructions are borrowed from \cite{LM}. Moreover, some proofs do need extra efforts and new geometric ideas to take care of the cusped manifold case.

The organization of this paper is as the following. In Section \ref{pregoodpants}, we review some basic notions and results on the  good pants construction in \cite{KM1}, \cite{LM} and \cite{KW1}. In Section \ref{prehyperbolic}, we give some elementary results on hyperbolic geometry, and most of them are well-known for experts. In Section \ref{connection_principle}, we prove our connection principle in cusped hyperbolic $3$-manifolds (Theorem \ref{connection_principle_with_height}). In Section \ref{construction}, we give a few geometric constructions (with height control) in cusped hyperbolic $3$-manifolds. In Section \ref{cobordism}, we give the proof of Theorem \ref{main1}, which is the heart of this paper. We break Section \ref{cobordism} into a few subsections. In Section \ref{pantscobordismsection}, we define the panted cobordism group with height control $\Omega_{R,\epsilon}^{h,h'}(M)$ and construct a homomorphism $\Phi:\Omega_{R,\epsilon}^{\alpha \ln{R},\beta \ln{R}}(M)\to H_1(SO(M);\mathbb{Z})$. In Section \ref{inversehomomorphism}, we construct a homomorphism $\Psi^{\text{ab}}: H_1(SO(M);\mathbb{Z})\to \Omega_{R,\epsilon}^{\alpha \ln{R},\beta \ln{R}}(M)$ in the inverse direction. The homomorphism $\Psi^{\text{ab}}$ is actually induced by a homomorphism $\Psi:\pi_1(SO(M))\to \Omega_{R,\epsilon}^{\alpha \ln{R},\beta \ln{R}}(M)$, and $\Psi$ is induced by a set-theoretic map $\Psi_1$ from a finite generating set of $\pi_1(SO(M))$ to $\Omega_{R,\epsilon}^{\alpha \ln{R},\beta \ln{R}}(M)$. In Section \ref{verifications}, we check that $\Phi\circ \Psi^{\text{ab}}=id_{H_1(SO(M);\mathbb{Z})}$ and $\Psi^{\text{ab}}$ is surjective, so $\Phi$ is an isomorphism.

\bigskip

{\bf Acknowledgement.} The author thanks Alex Wright for answering several questions on \cite{KW2}, and thanks Yi Liu for helpful conversations on this paper.




\bigskip
\bigskip

\section{Preliminary on the good pants construction}\label{pregoodpants}

In this section, we review the good pants construction of closed hyperbolic $3$-manifolds in \cite{KM1}, the panted cobordism group studied in \cite{LM}, and the good pants construction of cusped hyperbolic $3$-manifolds in \cite{KW1}.

\subsection{The good pants construction in closed hyperbolic $3$-manifolds}\label{pantsclosed}

In \cite{KM1}, Kahn and Markovic proved the following celebrated surface subgroup theorem, which was used as the first step of Agol's proof of Thurston's virtual Haken and virtual fibering conjectures (\cite{Agol}).
	
	\begin{theorem}[Surface subgroup theorem \cite{KM1}]\label{surface}
		For any closed hyperbolic $3$-manifold $M$,
		there exists an immersed closed hyperbolic subsurface $f\colon S\looparrowright M$,
		such that $f_*\colon \pi_1(S)\rightarrow 	\pi_1(M)$ is an injective map.
	\end{theorem}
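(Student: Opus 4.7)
The plan is to build the immersed subsurface $S \looparrowright M$ by assembling a large collection of $(R,\epsilon)$-good pants in $M$, glued along their boundary curves so that the resulting surface is locally nearly totally geodesic. The heart of the argument combines an equidistribution input for the frame flow on $\mathrm{SO}(M)$ with a bipartite matching argument that ensures adjacent pants glue together along each cuff in a geometrically controlled manner.

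First, I would introduce the space of $(R,\epsilon)$-good pants: immersed pairs of pants in $M$ whose three cuffs are closed geodesics of complex length within $\epsilon$ of $2R$, and whose intrinsic geometry is $\epsilon$-close to that of the standard totally geodesic pair of pants of cuff length $2R$ in $\mathbb{H}^3$. Each such pants $\Pi$ determines, at each cuff $\gamma$, a \emph{foot}: a unit normal vector to $\gamma$ at a canonical basepoint pointing toward the opposite side of the pants. The crucial analytic input is exponential mixing of the frame flow on $\mathrm{SO}(M)$, which implies that pairs of points in $\mathrm{SO}(M)$ joined by geodesic segments of length near $R$ equidistribute in $\mathrm{SO}(M)\times \mathrm{SO}(M)$ with an effective error. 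Translating this into the language of pants, one obtains for every small $\epsilon$ and large enough $R$ an abundance of $(R,\epsilon)$-good pants, and shows that at each good curve $\gamma$ the feet of the incident good pants are $\epsilon$-dense and nearly uniformly distributed on the normal circle bundle of $\gamma$.

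Next, I would use a matching argument to pair up pants along each cuff. Two good pants sharing a cuff $\gamma$, one on each side, can be glued to form a locally near-geodesic surface precisely when their feet are related by a \emph{nearly $1$-shift} on $\gamma$: the normal vector from one pants, translated once around $\gamma$, is close to the antipode of the normal vector from the other. The near-uniform distribution of feet obtained in the previous step allows a Hall-type bipartite marriage argument to produce, simultaneously for every good curve appearing in the configuration, a perfect matching of incident pants by nearly $1$-shifts. Gluing the pants by this matching produces a closed oriented surface $S$ and an immersion $f \colon S \looparrowright M$ that is piecewise totally geodesic with extremely small bending along its cuffs.

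Finally, I would prove $\pi_1$-injectivity of $f$ by a hyperbolic comparison argument in $\mathbb{H}^3$. Lifting $f$ to the universal cover gives a map of a plane $\widetilde{S}$ into $\mathbb{H}^3$ sending intrinsic geodesics of $\widetilde{S}$ to piecewise geodesics in $\mathbb{H}^3$ whose bending angles are exponentially small in $R$; a standard CAT$(-1)$ thin-triangle estimate then ensures that such a broken geodesic stays within a bounded Hausdorff neighborhood of a genuine hyperbolic geodesic. Hence $f$ is a quasi-isometric embedding at the level of universal covers, so $f_*$ is injective. The main obstacle I expect is the matching step: turning an asymptotic statement about equidistribution of feet into an honest perfect matching requires a quantitative mixing rate that beats the number of good pants incident to a single good curve, and this is where the exponential decay of correlations for the frame flow on $\mathrm{SO}(M)$ is indispensable.
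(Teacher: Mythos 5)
Your outline is essentially the Kahn--Markovic good pants construction that this paper does not prove but simply cites from \cite{KM1} and summarizes in Section \ref{pantsclosed}: abundance and equidistribution of feet of $(R,\epsilon)$-good pants via exponential mixing of the frame flow, a Hall-type matching along each good curve, gluing by nearly $1$-shifts, and $\pi_1$-injectivity via a quasi-geodesic argument in $\mathbb{H}^3$. The only caution is that your last step is more delicate than ``exponentially small bending plus a CAT$(-1)$ thin-triangle estimate'': the seams of good pants are exponentially short, so consecutive bends along a lifted surface geodesic can be very closely spaced and the path can cross cuffs at tiny angles, and it is precisely the nearly $1$-shift condition (not merely near-antipodal feet) that Kahn and Markovic exploit to obtain the quasi-isometric embedding of the universal cover.
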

	
The immersed subsurface of Kahn and Markovic is built by pasting a large collection of {\it $(R,\epsilon)$-good pants} along {\it $(R,\epsilon)$-good curves} in a nearly geodesic way, which is called the {\it good pants construction} in general. More details of terminologies are given in the following.

We fix a closed oriented hyperbolic $3$-manifold $M$, a small number $\epsilon>0$ and a large number $R>0$. 

\begin{definition}\label{goodcurves}
An {\it $(R,\epsilon)$-good curve} is an oriented closed geodesic in $M$ with complex length satisfying $|l(\gamma)-2R|<2\epsilon$. The set consists of all such $(R,\epsilon)$-good curves is denoted by ${\bold \Gamma}_{R,\epsilon}$.
\end{definition}

Here the complex length of $\gamma$ is defined by $l(\gamma)=l+i\theta \in \mathbb{C}/2\pi i\mathbb{Z}$, where $l\in \mathbb{R}_{>0}$ is the length of $\gamma$, and $\theta\in \mathbb{R}/2\pi \mathbb{Z}$ is the rotation angle of the loxodromic isometry of $\mathbb{H}^3$ corresponding to $\gamma$. Note that ${\bf \Gamma}_{R,\epsilon}$ is a finite set.

\begin{definition}\label{goodpants}
A pair of {\it $(R,\epsilon)$-good pants} is a homotopy class of immersed oriented pair of pants $\Pi \looparrowright M$, such that the three cuffs of $\Pi$ are mapped to $(R,\epsilon)$-good curves $\gamma_i\in {\bold \Gamma}_{R,\epsilon}$ for $i=1,2,3$, and the complex half length $\bold{hl}_{\Pi}(\gamma_i)$ of each $\gamma_i$ with respect to $\Pi$ satisfies
		$$\left|\bold{hl}_{\Pi}(\gamma_i)-R\right|<\epsilon.$$
The set consists of all such $(R,\epsilon)$-good pants is denoted by ${\bold \Pi}_{R,\epsilon}$, which is also a finite set. 
\end{definition}

Here the complex half length is defined as the following. Let $s_{i-1}$ and $s_{i+1}$ be the common perpendicular segments (also called seams) from $\gamma_i$ to $\gamma_{i-1}$ and $\gamma_{i+1}$ respectively, let $\vec{v}_{i-1}$ and $\vec{v}_{i+1}$ be the tangent vectors of $s_{i-1}$ and $s_{i+1}$ at their intersections with $\gamma_i$ respectively (and give the orientations of $s_{i-1}$ and $s_{i+1}$). Then the complex half length $\bold{hl}_{\Pi}(\gamma_i)$ is defined to be the complex distance between $\vec{v}_{i-1}$ and $\vec{v}_{i+1}$ along $\gamma_i$. More precisely, we have $\bold{hl}_{\Pi}(\gamma_i)=l+i\theta \in \mathbb{C}/2\pi i\mathbb{Z}$ where $l$ is the length of the geodesic subsegment of $\gamma_i$ from $s_{i-1}$ to $s_{i+1}$ and $\theta$ is the angle from the parallel transport of $\vec{v}_{i-1}$ along this geodesic segment to $\vec{v}_{i+1}$. Note that the definition of $\bold{hl}_{\Pi}(\gamma_i)$ does not change if $s_{i-1}$ and $s_{i+1}$ are swapped.

Note that in this paper, we adopt the convention in \cite{KW1} that good curves have length close to $2R$, instead of following the convention in \cite{KM1} that good curves have length close to $R$.
	
In \cite{KM1}, an important innovation is that good pants are pasted along good curves with nearly $1$-shifts, rather than exactly matching seams along the common cuff. The nearly $1$-shift is a crucial condition to guarantee that $f_*: \pi_1(S)\rightarrow \pi_1(M)$ is injective. Moreover, Kahn and Markovic showed that, for any $(R,\epsilon)$-good curve $\gamma$, the feet of $(R,\epsilon)$-good pants on $\gamma$ are nearly evenly distributed along $\gamma$. So $M$ contains a large collection of $(R,\epsilon)$-good pants, and they can be pasted together by nearly $1$-shift. Therefore, the asserted $\pi_1$-injective immersed subsurface can be constructed. 

\subsection{The panted cobordism group of closed hyperbolic $3$-manifolds}\label{pantscobordismgroup}

In \cite{LM}, Liu and Markovic studied the panted cobordism group of closed oriented hyperbolic $3$-manifolds, and we review some material in \cite{LM} in this section. The main goal of this paper is to generalize part of the work in \cite{LM} to oriented cusped hyperbolic $3$-manifolds.

We fix a closed oriented hyperbolic $3$-manifold $M$, a small number $\epsilon>0$ and a large number $R>0$. Let $\mathbb{Z}{\bf \Gamma}_{R,\epsilon}$ be the free abelian group generated by ${\bf \Gamma}_{R,\epsilon}$, modulo the relation $\gamma+\bar{\gamma}=0$ for all $\gamma\in {\bf \Gamma}_{R,\epsilon}$. Here $\bar{\gamma}$ denotes the orientation reveral of $\gamma$. Let $\mathbb{Z}{\bf \Pi}_{R,\epsilon}$ be the free abelian group generated by ${\bf \Pi}_{R,\epsilon}$, modulo the relation $\Pi+\bar{\Pi}=0$ for all $\Pi\in {\bf \Pi}_{R,\epsilon}$. By taking the oriented boundary of $(R,\epsilon)$-good pants, we get a homomorphism $\partial:\mathbb{Z}{\bf \Pi}_{R,\epsilon}\to \mathbb{Z}{\bf \Gamma}_{R,\epsilon}$. The panted cobordism group $\Omega_{R,\epsilon}(M)$ is defined as the following in \cite{LM}.

\begin{definition}\label{closedcobordismgroup} 
The {\it panted cobordism group} $\Omega_{R,\epsilon}(M)$ is defined to be the cokernel of the homomorphism $\partial$, i.e. $\Omega_{R,\epsilon}(M)$ fits into the following exact sequence 
$$\mathbb{Z}{\bf \Pi}_{R,\epsilon}\xrightarrow{\partial} \mathbb{Z}{\bf \Gamma}_{R,\epsilon}\to \Omega_{R,\epsilon}(M)\to 0.$$
\end{definition}

Actually, there is an alternative definition of $\Omega_{R,\epsilon}(M)$ in \cite{LM}, which is more convenient for this paper. This definition is based on the following concept of $(R,\epsilon)$-panted subsurface in hyperbolic $3$-manifolds (Definition 2.7 of \cite{LM}).

\begin{definition}\label{pantedsurface}
An {\it $(R,\epsilon)$-panted subsurface} in a hyperbolic $3$-manifold $M$ is a (possibly disconnected) compact oriented surface $F$ with a pants decomposition and an immersion $j:F\looparrowright M$ such that the restriction of $j$ to each pair of pants in the pants decomposition of $F$ gives a pair of $(R,\epsilon)$-good pants.
\end{definition}

An $(R,\epsilon)$-multicurve in $M$ is defined to be a finite collection of $(R,\epsilon)$-good curves in $M$ with multiplicity.
On the set of $(R,\epsilon)$-multicurves in $M$, for two $(R,\epsilon)$-multicurves $L$ and $L'$, we say that they are equivalent if there is an $(R,\epsilon)$-panted subsurface $F$ in $M$, such that the oriented boundary of $F$ is $L\cup \overline{L'}$. Then the quotient of the set of all $(R,\epsilon)$-multicurves under this equivalence relation also gives rise to $\Omega_{R,\epsilon}(M)$.

To describe the result in \cite{LM} that we want to generalize, we also need the definition of the frame bundle of an oriented hyperbolic $3$-manifold.

\begin{definition}\label{framebundle}
For an oriented hyperbolic $3$-manifold $M$, and a point $p\in M$, a \emph{special orthonormal frame} (or simply a frame) of $M$ at $p$ is a triple of unit tangent vectors $(\vec{t}_p,\vec{n}_p,\vec{t}_p\times \vec{n}_p)$ such that $\vec{t}_p,\vec{n}_p\in T_p^1M$ with $\vec{t}_p\perp \vec{n}_p$, and $\vec{t}_p\times \vec{n}_p\in T_p^1M$ is the cross product with respect to the orientation of $M$. We use $\text{SO}(M)$ to denote the frame bundle over $M$ that consists of all special orthonormal frames of $M$.
\end{definition}

For simplicity, we will denote each frame in $\text{SO}(M)$ by its base point and the first two vectors of the frame, as $(p,\vec{t}_p,\vec{n}_p)$, since the third vector is determined by the first two. We call $\vec{t}_p$ and $\vec{n}_p$ the tangent vector and the normal vector of this frame, respectively.

One of the main result in \cite{LM} is that the panted cobordism group $\Omega_{R,\epsilon}(M)$ is isomorphic to the $\mathbb{Z}$-coefficient first homology group of $\text{SO}(M)$.
	
\begin{theorem}\label{homology}(Theorem 5.2 of \cite{LM})
Given a closed oriented hyperbolic $3$-manifold $M$, for small enough $\epsilon>0$ depending on $M$ and large enough $R>0$ depending on $\epsilon$ and $M$, there is a natural isomorphism 
		$$\Phi:{\bold \Omega}_{R,\epsilon}(M)\rightarrow H_1(\text{SO}(M);\mathbb{Z})$$
from the panted cobordism group to the first homology group of $\text{SO}(M)$.
	\end{theorem}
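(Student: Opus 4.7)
The plan is to construct $\Phi$ geometrically via canonical frame-lifts of good curves, and then to build an inverse $\Psi^{\text{ab}}$ by realizing a finite generating set of $\pi_1(\text{SO}(M))$ as classes of $(R,\epsilon)$-good curves through the good pants connection principle.

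First I would define $\Phi$ on $\mathbb{Z}{\bf \Gamma}_{R,\epsilon}$. For an $(R,\epsilon)$-good curve $\gamma$, the unit tangent vector along $\gamma$ together with any parallel-transported unit normal defines a path in $\text{SO}(M)$ that fails to close only by a rotation in the $SO(2)$-fiber of angle within $2\epsilon$ of $0$, since the rotational part of $l(\gamma)$ lies in $(-2\epsilon,2\epsilon)$ modulo $2\pi$. Closing by the canonical short fiber arc produces a loop $\hat\gamma$ whose class $[\hat\gamma] \in H_1(\text{SO}(M);\mathbb{Z})$ is independent of the choice of initial normal, and I set $\Phi([\gamma])=[\hat\gamma]$. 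To verify $\Phi\circ\partial=0$ on $\mathbb{Z}{\bf \Pi}_{R,\epsilon}$, I would lift each pair of $(R,\epsilon)$-good pants $\Pi$ to a framed $2$-chain in $\text{SO}(M)$, using the tangent plane of the immersed geodesic pants together with its distinguished unit normal; the boundary of this chain is homologous (after seam corrections that cancel in pairs) to $\sum_i[\hat\gamma_i]$, so $\Phi$ descends to a homomorphism ${\bf \Omega}_{R,\epsilon}(M)\to H_1(\text{SO}(M);\mathbb{Z})$.

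Next I would construct $\Psi^{\text{ab}}$ in the reverse direction. Fix a base frame $F_0\in\text{SO}(M)$ and a finite generating set $\{g_i\}$ of $\pi_1(\text{SO}(M),F_0)$, each $g_i$ represented by a short loop. Using the connection principle, close $F_0$ back to itself in the homotopy class $g_i$ by a nearly-geodesic arc of length roughly $2R$, producing an $(R,\epsilon)$-good curve $\gamma_i$, and set $\Psi_1(g_i)=[\gamma_i]\in{\bf \Omega}_{R,\epsilon}(M)$. Extending freely gives a homomorphism $\Psi$ from the free group on $\{g_i\}$, and the main content of the construction is verifying that each relator of a chosen presentation of $\pi_1(\text{SO}(M))$ is mapped to $0$: concretely, for each relator word one exhibits an explicit $(R,\epsilon)$-panted subsurface whose oriented boundary is the associated multicurve of $\gamma_i$'s. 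This is carried out via the good pants construction of Kahn--Markovic, using equidistribution of feet of good pants along any good curve to realize arbitrary prescribed nearly $1$-shift patterns. Since ${\bf \Omega}_{R,\epsilon}(M)$ is abelian, $\Psi$ factors through $H_1(\text{SO}(M);\mathbb{Z})$, yielding $\Psi^{\text{ab}}$.

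Finally, $\Phi\circ\Psi^{\text{ab}}=\mathrm{id}$ holds by construction, since $\gamma_i$ lifts tautologically back to a loop in $\text{SO}(M)$ homotopic to $g_i$. Surjectivity of $\Psi^{\text{ab}}$ (equivalently, injectivity of $\Phi$) amounts to showing that any $(R,\epsilon)$-good curve $\gamma$ equals, modulo $\partial\mathbb{Z}{\bf \Pi}_{R,\epsilon}$, a $\mathbb{Z}$-linear combination of the $[\gamma_i]$'s; this once again uses the good pants construction to reroute and recombine $\gamma$ into a standard form matching a concatenation of the $\gamma_i$. The main obstacle is exactly this descent-through-relators step together with the surjectivity verification: both require the good pants construction to be flexible enough to realize any prescribed panted cobordism dictated by $\ker\Phi$, and both hinge on the counting and equidistribution statements of Kahn--Markovic together with the algebraic reduction lemmas developed in \cite{LM}.
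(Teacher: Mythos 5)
Your proposal has the right skeleton, and it is indeed the skeleton of Liu--Markovic's proof (which this paper only cites, but reproduces in the cusped setting): define $\Phi$ by canonical lifts, build an inverse on a generating set of $\pi_1(\text{SO}(M))$ via the connection principle, kill relators by exhibiting panted subsurfaces, then check $\Phi\circ\Psi^{\text{ab}}=\mathrm{id}$ and surjectivity. However, the two places where the actual proof lives are missing or wrong. First, your canonical lift omits the extra $2\pi$ rotation about the normal vector that is part of the definition (Definition 5.11 of \cite{LM}, recalled in Section \ref{pantscobordismsection} here), and with the naive tangential lift your claim that ``seam corrections cancel in pairs'' fails. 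Concretely: a section of $\text{SO}(M)$ over a pair of pants $\Pi$ gives a $2$-chain whose boundary is null-homologous, and comparing that boundary with the tangential lifts of the three cuffs produces a total fiber winding congruent to $\chi(\Pi)=-1$ mod $2$; so the sum of your $[\hat{\gamma}_i]$ differs from zero by the fiber class of $\text{SO}(M)\to M$, which is the nontrivial element of the $\mathbb{Z}/2$ summand in $H_1(\text{SO}(M);\mathbb{Z})\cong H_1(M;\mathbb{Z})\oplus\mathbb{Z}/2$ (recall $M$ is parallelizable). Hence your $\Phi$ does not vanish on $\partial\mathbb{Z}{\bf \Pi}_{R,\epsilon}$ and does not descend to ${\bf \Omega}_{R,\epsilon}(M)$. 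The $2\pi$ rotation adds three copies of the order-two fiber class, i.e.\ one copy, and exactly cancels this spin discrepancy; that cancellation is the content of Lemmas 5.12 and 5.14 of \cite{LM}.

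Second, the relator-killing step for $\Psi$ is the technical heart of the argument and cannot be dispatched by appealing to equidistribution of feet of good pants: equidistribution is what Kahn--Markovic use to assemble closed surfaces, not to show that a prescribed multicurve bounds an $(R,\epsilon)$-panted subsurface, and the connection principle does not by itself prescribe homotopy classes of the connecting segments. In \cite{LM} (and in the cusped version proved in this paper) one takes a \emph{triangular} generating set, restricts to elements in sharp position ($\mathscr{C}_{DL}$), associates to each such $\hat{g}$ a $\partial$-framed segment $\mathfrak{s}_{\hat{g}}$, and defines $\Psi_1(\hat{g})$ not as a single good curve but as the five-term combination $[\mathfrak{s}_{\hat{g}}\mathfrak{a}_{01}]+[\mathfrak{s}_{\hat{g}}\mathfrak{a}_{12}]+[\mathfrak{s}_{\hat{g}}\mathfrak{a}_{20}]-[\mathfrak{s}_{\hat{g}}\mathfrak{b}]-[\mathfrak{s}_{\hat{g}}\bar{\mathfrak{b}}]$ built from an auxiliary tripod and segment; the length-two and length-three relations are then verified by the splitting, swapping, rotation and antirotation constructions, each an application of the connection principle. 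Your single-curve definition gives no mechanism for these verifications, and it also makes $\Phi\circ\Psi^{\text{ab}}=\mathrm{id}$ non-tautological: because of the $2\pi$ (and $\pi$) fiber rotations in the conventions, the canonical lift of the constructed good curve differs from the prescribed loop by fiber contributions that must be tracked, which is why Lemma 5.23 of \cite{LM} is a genuine computation rather than a tautology. The surjectivity sketch is closer to the actual argument (Lemma 5.24 of \cite{LM}), but as written the proposal assumes exactly the flexibility statements that need to be proved.
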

	
The goal of this paper is to generalize Theorem \ref{homology} to cusped hyperbolic $3$-manifolds. To do this generalization,
we need several definitions of geometric objects in \cite{LM}. These definitions will be given at several spots in this paper, and the most important one is the following.
 
\begin{definition}\label{segment} 
An {\it oriented $\partial$-framed segment} in $M$ is a triple 
$$\mathfrak{s}=(s,\vec{n}_{\text{ini}},\vec{n}_{\text{ter}}),$$ 
such that $s$ is an immersed oriented compact geodesic segment (or simply called a geodesic segment), $\vec{n}_{\text{ini}}$ and $\vec{n}_{\text{ter}}$ are unit normal vectors of $s$ at the initial and terminal points of $s$ respectively. 
\end{definition}

We have the following objects associated to an oriented $\partial$-framed segment $\mathfrak{s}$:
\begin{itemize}
\item The {\it carrier segment} of $\mathfrak{s}$ is the (oriented) geodesic segment $s$.
\item The {\it initial endpoint} $p_{\text{ini}}(\mathfrak{s})$ and the {\it terminal point} $p_{\text{ter}}(\mathfrak{s})$ are the initial and terminal points of $s$ respectively.
\item The {\it initial framing} $\vec{n}_{\text{ini}}(\mathfrak{s})$ and the {\it terminal framing} $\vec{n}_{\text{ter}}(\mathfrak{s})$ are the unit normal vectors $\vec{n}_{\text{ini}}$ and $\vec{n}_{\text{ter}}$ respectively.
\item The {\it initial direction} $\vec{t}_{\text{ini}}(\mathfrak{s})$ and the {\it terminal direction} $\vec{t}_{\text{ter}}(\mathfrak{s})$ are the unit tangent vectors in the direction of $s$ at $p_{\text{ini}}(\mathfrak{s})$ and $p_{\text{ter}}(\mathfrak{s})$ respectively.
\item The {\it length} $l(\mathfrak{s})\in(0,\infty)$ of $\mathfrak{s}$ is the length of the geodesic segment $s$, the {\it phase} $\varphi(\mathfrak{s})\in \mathbb{R}/2\pi \mathbb{Z}$ of $\mathfrak{s}$ is the angle from the parallel transport of $\vec{n}_{\text{ini}}$ along $s$ to $\vec{n}_{\text{ter}}$.
\end{itemize}

Moreover, the {\it orientation reversal} of $\mathfrak{s}=(s,\vec{n}_{\text{ini}},\vec{n}_{\text{ter}})$ is defined to be 
$$\bar{\mathfrak{s}}=(\bar{s},\vec{n}_{\text{ter}},\vec{n}_{\text{ini}}).$$ 
The {\it frame flipping} of $\mathfrak{s}=(s,\vec{n}_{\text{ini}},\vec{n}_{\text{ter}})$ is defined to be 
$$\mathfrak{s}^*=(s,-\vec{n}_{\text{ini}},-\vec{n}_{\text{ter}}).$$ 
For any angle $\phi \in \mathbb{R}/2\pi\mathbb{Z}$, the {\it frame rotation} of $\mathfrak{s}$ by $\phi$ is defined to be
$$\mathfrak{s}(\phi)=(s,\vec{n}_{\text{ini}}\cos{\phi}+(\vec{t}_{\text{ini}}\times \vec{n}_{\text{ini}})\sin{\phi},\vec{n}_{\text{ter}}\cos{\phi}+(\vec{t}_{\text{ter}}\times \vec{n}_{\text{ter}})\sin{\phi}).$$

\subsection{The good pants  construction in cusped hyperbolic $3$-manifolds}\label{pantscusped}

In \cite{KW1}, Kahn and Wright generalized Kahn and Markovic's surface subgroup theorem (Theorem \ref{surface}) to cusped hyperbolic $3$-manifolds.

\begin{theorem}\label{surfaceincusp}(Theorem 1.1 of \cite{KW1})
		Let $\Gamma<PSL_2(\mathbb{C})$ be a Kleinian group and assume that $\mathbb{H}^3/\Gamma$ has finite volume and is not compact. Then for all $K>1$, there exists $K$-quasi-Fuchsian surface subgroups in $\Gamma$.
	\end{theorem}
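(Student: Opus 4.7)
The plan is to generalize the Kahn--Markovic construction underlying Theorem \ref{surface} from the closed case, the principal new ingredient being a second building block, the $(R,\epsilon)$-good \emph{hamster wheel}, that compensates for the failure of equidistribution of good pants near the cusps of $M=\mathbb{H}^3/\Gamma$. First I would fix a small $\epsilon$ and a large $R$, a cusp-height function $h$ on $M$ adapted to the Margulis horoball decomposition, and introduce the three basic objects: $(R,\epsilon)$-good curves, $(R,\epsilon)$-good pants (as in Definitions \ref{goodcurves}--\ref{goodpants}), and $(R,\epsilon)$-good hamster wheels (roughly, an annulus with many evenly spaced perpendicular rungs of length $\approx 2R$, all immersed so as to be nearly geodesic). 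The hamster wheel plays the role of a \emph{cusp replacement} piece: its two boundary cuffs are good curves, one of which can sit deep in a cusp so that the wheel effectively wraps around the cusp many times and absorbs the geometric defect that forbids the pure pants strategy in the cusped setting.

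The core analytic step is an effective connection principle together with a matching counting theorem for frames, good pants, and good hamster wheels, proved on the thick part $\{h\leq H_0\}$ of $M$. This rests on exponential mixing of the frame flow on $\mathrm{SO}(M)$ combined with an effective equidistribution argument of Margulis type, refined by Kahn--Markovic and developed in \cite{KW2}. The output to be used is that on the thick part the feet of good pants at any given good curve are nearly equidistributed in the appropriate four-parameter sense. One then combines this with a direct hyperbolic computation showing that a good hamster wheel wrapping a cusp a controlled number of times ends in good curves whose feet land inside the equidistributed set produced in the thick part.

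The surface is assembled in two layers: (i) a Kahn--Markovic style good-pants assembly on the thick part, pasting pants to pants by nearly $1$-shifts along common cuffs; (ii) for each cusp, any pants that would need to reach deeper than a fixed height are replaced by a hamster wheel whose inner cuff dives into the cusp and whose outer cuff is capped by thick-part good pants from step (i), again by nearly $1$-shifts. The main obstacle, and the technical heart of the argument, is the verification that the resulting closed surface $S\looparrowright M$ is $K$-quasi-Fuchsian with $K\to 1$ as $R\to\infty$ and $\epsilon\to 0$. I would argue via pleated surfaces in the universal cover: nearly $1$-shifts at every gluing force the bending lamination to carry only very small weight, but a separate and substantial argument is required to bound the cumulative drift of framings along long hamster wheels, using the height controls to ensure no wheel penetrates so far into a cusp that its rungs cease to be nearly geodesic. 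This last quasi-Fuchsian check is where the argument departs most sharply from the closed case and is, in my estimation, the step that will consume most of the work.
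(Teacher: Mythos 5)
The paper does not prove this statement at all: it is quoted as Theorem 1.1 of \cite{KW1}, and the author explicitly states that the proof will not be reviewed or used in this paper, so there is no internal argument to compare yours against. At the level of strategy your outline is a fair description of what Kahn and Wright actually do: good curves and good pants as in the closed case, a new building block (the hamster wheel) to compensate for the failure of equidistribution of pants along curves with large cusp excursions, a counting/connection input of the type developed in \cite{KW2}, and an assembly by nearly unit shifts.

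As a proof proposal, however, it has genuine gaps. First, the heart of the matter --- the precise equidistribution statement that survives in the cusped setting and the mechanism by which hamster wheels repair its failure --- is asserted rather than supplied: in \cite{KW1} one must build \emph{umbrellas} out of many hamster wheels so that a cuff with a high cusp excursion is cobounded with a collection of cuffs of uniformly bounded height, and then run a delicate matching argument to balance the feet at every cuff; the recipe ``replace any pants that would reach too deep by a hamster wheel whose inner cuff dives into the cusp, capped by thick-part pants'' does not by itself yield a closed surface with matched, correctly shifted boundaries, and it inverts the actual role of the wheel (its purpose is to let the boundary return to bounded height, with explicit control on how far the umbrella penetrates the cusp). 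Second, your quasi-Fuchsian verification via pleated surfaces and small bending weight is not how either \cite{KM1} or \cite{KW1} argues, and it is far from clear it can be made to work as stated: the actual argument uses the local-to-global principle that a surface which is everywhere nearly geodesic (with the geometric error controlled by $R$, $\epsilon$, and the height bounds on the umbrellas) is $K$-quasi-Fuchsian with $K\to 1$. So your proposal names the right ingredients but leaves the two hardest steps --- the umbrella construction with its height control, and the nearly-geodesic local-to-global estimate --- as unproven assertions.
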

	
To prove Theorem \ref{surfaceincusp}, Kahn and Wright introduced a new geometric object called {\it $(R,\epsilon)$-hamster wheel} in \cite{KW1}. We will not review the proof of Theorem \ref{surfaceincusp} in \cite{KW1}, since the proof will not be used in this paper. Alternatively, we will review some basic notions on cusped hyperbolic $3$-manifolds introduced in \cite{KW1}.

At first, by the Margulis lemma, there exists $\epsilon_0>0$, such that the subset of $M$ consisting of points of injectivity radii at most $\epsilon_0$ is a disjoint union of solid tori and cusp neighborhoods of ends of $M$. By \cite{Mey}, we can take $\epsilon_0$ to be $0.1$. We will call the complement of the cusp neighborhoods the {\it thick part} of $M$ (note that the usual convention of thick part also excludes the aforementioned solid tori). We will call these cusp neighborhoods of ends of $M$ by {\it cusps}.

Now we define a height function on $M$. For any point in the thick part of $M$, we define its height to be $0$. For any point $p$ in a cusp $C\subset M$, we define the height of $p$ to be the distance between $p$ and the boundary of $C$. Note that this height function is slightly different from the height function defined in \cite{KW1}.

For a geodesic segment or a closed geodesic in $M$, we define its {\it height} to be the maximal height of points on it. For an oriented $\partial$-framed segment, its height is the height of its carrier segment. For a pair of $(R,\epsilon)$-good pants in $M$, its height is the maximal height of its three cuffs. For an $(R,\epsilon)$-panted subsurface, its height is the maximal height of all $(R,\epsilon)$-good pants contained in it. 

For a geodesic segment or a closed geodesic $s$ in $M$, a {\it cusp excursion} of $s$ is a maximal subsegment of $s$ such that each point in it has positive height. If $s$ is a geodesic segment, a cusp excursion of $s$ that contains the initial (terminal) point of $s$ is called the {\it initial (terminal) cusp excursion} of $s$. Any other cusp excursion is called an {\it intermediate cusp excursion} of $s$. The initial, terminal and intermediate cusp excursions of an oriented $\partial$-framed segment are defined to be the initial, terminal and intermediate cusp excursions of its carrier segment, respectively

For a cusped hyperbolic $3$-manifold $M$ and some $h>0$, we use ${\bf \Gamma}_{R,\epsilon}^h$ to denote the subset of ${\bf \Gamma}_{R,\epsilon}$ that consists of $(R,\epsilon)$-good curves of height at most $h$, and use ${\bf \Pi}_{R,\epsilon}^{h}$ to denote the subset of ${\bf \Pi}_{R,\epsilon}$ that consists of $(R,\epsilon)$-good pants of height at most $h$. Moreover, we use $\mathbb{Z}{\bf \Gamma}_{R,\epsilon}^h$ to denote the subgroup of $\mathbb{Z}{\bf \Gamma}_{R,\epsilon}$ generated by ${\bf \Gamma}_{R,\epsilon}^h$, and use $\mathbb{Z}{\bf \Pi}_{R,\epsilon}^h$ to denote the subgroup of $\mathbb{Z}{\bf \Pi}_{R,\epsilon}$ generated by ${\bf \Pi}_{R,\epsilon}^h$.
 
Then there is a boundary map $\partial^h: \mathbb{Z}{\bf \Pi}_{R,\epsilon}^h\to \mathbb{Z}{\bf \Gamma}_{R,\epsilon}^h$ given by the restriction of the boundary map $\partial: \mathbb{Z}{\bf \Pi}_{R,\epsilon}\to \mathbb{Z}{\bf \Gamma}_{R,\epsilon}$. For any $h'>h>0$, $\mathbb{Z}{\bf \Gamma}_{R,\epsilon}^h$ is naturally a subset of $\mathbb{Z}{\bf \Gamma}_{R,\epsilon}^{h'}$. For the boundary map $\partial^{h'}: \mathbb{Z}{\bf \Pi}_{R,\epsilon}^{h'}\to \mathbb{Z}{\bf \Gamma}_{R,\epsilon}^{h'}$, we use $\mathbb{Z}{\bf \Pi}_{R,\epsilon}^{h,h'}$ to denote $(\partial^{h'})^{-1}(\mathbb{Z}{\bf \Gamma}_{R,\epsilon}^h)$. 

Then the good panted cobordism group $\Omega_{R,\epsilon}^{h,h'}(M)$ in Theorem \ref{main1} is defined to be the cokernel of $\partial^{h'}|_{\mathbb{Z}{\bf \Pi}_{R,\epsilon}^{h,h'}}: \mathbb{Z}{\bf \Pi}_{R,\epsilon}^{h,h'}\to \mathbb{Z}{\bf \Gamma}_{R,\epsilon}^h$ and it fits into the following exact sequence 
$$\mathbb{Z}{\bf \Pi}_{R,\epsilon}^{h,h'}\xrightarrow{\partial^{h'}|_{\mathbb{Z}{\bf \Pi}_{R,\epsilon}^{h,h'}}} \mathbb{Z}{\bf \Gamma}_{R,\epsilon}^h\to \Omega_{R,\epsilon}^{h,h'}(M)\to 0.$$ 
However, this definition is not very convenient for the proof in this paper, so we will give an alternative definition of $\Omega_{R,\epsilon}^{h,h'}(M)$ in Section \ref{pantscobordismsection}, which is our official definition.

\section{Preliminary on hyperbolic geometry}\label{prehyperbolic}

In this section, we give some elementary results on hyperbolic geometry, and they will be repeatly used in the remainder of this paper. Most of these results are either well-known or straight forward for experts.

\begin{lemma}\label{cuspheightvslength}
Let $\gamma$ be a geodesic segment in the upper-half space model of $\mathbb{H}^3$ that is contained in $\{(x,y,z)\ |\ z\geq 1\}\subset \mathbb{H}^3$, such that the $z$-coordinates of the initial and terminal points of $\gamma$ are both $1$. Let $|\gamma|$ be the length of $\gamma$ and let $h_{\gamma}$ be the height of $\gamma$ with respect to the plane $z=1$, then $|\gamma|\leq 2(h_{\gamma}+\ln{2})$.
\end{lemma}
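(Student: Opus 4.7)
The plan is to reduce the claim to an explicit computation in terms of the Euclidean radius of the semicircular geodesic. Since a geodesic in the upper half-space model is either a vertical ray or a Euclidean semicircle orthogonal to the boundary plane, and the carrier of $\gamma$ has two distinct endpoints both at height $z=1$, it must be an arc of such a semicircle. After applying an orientation-preserving Euclidean isometry fixing the plane $z=1$, I may assume the semicircle is centered at the origin of the plane $z=0$ and lies in the half-plane $y=0$. Let $r$ denote its Euclidean radius. Since the arc is contained in $\{z\geq 1\}$, we have $r\geq 1$, and the arc runs between the two points where the semicircle meets $z=1$.

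I would parametrize the arc by the polar angle $\theta$ measured from the horizontal, so that a point on it has coordinates $(r\cos\theta,0,r\sin\theta)$ and the endpoints correspond to $\theta=\theta_0$ and $\theta=\pi-\theta_0$, where $\sin\theta_0=1/r$. Using the hyperbolic line element $ds_{\text{hyp}}=ds_{\text{Euc}}/z$, the Euclidean speed $r$ cancels against the denominator $r\sin\theta$, leaving
\[
|\gamma|=\int_{\theta_0}^{\pi-\theta_0}\frac{d\theta}{\sin\theta}=2\ln\cot(\theta_0/2).
\]
The half-angle identity $\cot(\theta_0/2)=(1+\cos\theta_0)/\sin\theta_0$ together with $\sin\theta_0=1/r$ and $\cos\theta_0=\sqrt{r^2-1}/r$ yields the closed form $|\gamma|=2\ln\!\bigl(r+\sqrt{r^2-1}\bigr)$.

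For the height, the highest point on the arc is the apex $(0,0,r)$, and the height at that point (hyperbolic distance to the horoplane $z=1$) is $\int_1^r dz/z=\ln r$, so $h_\gamma=\ln r$. The inequality then reduces to
\[
2\ln\!\bigl(r+\sqrt{r^2-1}\bigr)\ \leq\ 2\ln(2r)\ =\ 2(h_\gamma+\ln 2),
\]
which is immediate from $\sqrt{r^2-1}\leq r$. There is no real obstacle; the only subtlety is matching the lemma's height convention (hyperbolic, not Euclidean, distance to the plane $z=1$) with the definition of height set up in Section~\ref{pantscusped}, and confirming that the boundary of the cusp neighborhood in question is indeed the horoplane $z=1$ so that the formula $h_\gamma=\ln r$ is the correct quantity to compare against.
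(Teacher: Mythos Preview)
Your proof is correct and is essentially the same argument as the paper's: both parametrize the semicircular arc by angle, integrate $d\theta/\sin\theta$ between the two points at $z=1$, and then bound the resulting closed form by $2(h_\gamma+\ln 2)$. The only cosmetic difference is that the paper works directly with the parameter $e^{h_\gamma}$ in place of your Euclidean radius $r$ (these coincide via $h_\gamma=\ln r$), arriving at the equivalent expression $|\gamma|=2\bigl(h_\gamma+\ln(1+\sqrt{1-e^{-2h_\gamma}})\bigr)$.
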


\begin{proof}
We can assume that $\gamma$ is the subsegment of the bi-infinite geodesic from $(-e^{h_{\gamma}},0,0)$ to $(e^{h_{\gamma}},0,0)$, such that the $z$-coordinates of its initial and terminal points are both $1$.

Then $|\gamma|$ is the distance from $(-\sqrt{e^{2h_{\gamma}}-1},0,1)$ to $(\sqrt{e^{2h_{\gamma}}-1},0,1)$, which can be computed by 
$$\int_{\theta}^{\pi-\theta}\frac{e^{h_{\gamma}}}{e^{h_{\gamma}}\sin{t}}\ dt=-2\ln{(\tan{\frac{\theta}{2}})}=2(h_{\gamma}+\ln{(1+\sqrt{1-e^{-2h_{\gamma}}})})<2(h_{\gamma}+\ln{2}),$$
with $\theta=\arcsin{(e^{-h_{\gamma}})}$.
\end{proof}



For two points $A,B$ in the hyperbolic plane or the hyperbolic $3$-space, we use $AB$ to denote the geodesic segment from $A$ to $B$, and use $|AB|$ to denote the length of this segment.

\begin{lemma}\label{hyperbolicconstant}
Let $ABC$ be a hyperbolic triangle, then $AC$ lies in the $\ln{(\sqrt{2}+1)}\approx 0.882$ neighborhood of $AB\cup BC$.
\end{lemma}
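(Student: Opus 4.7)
The plan is to locate the extremal point $P^* \in AC$ where $d(\cdot, AB \cup BC)$ attains its maximum, express the corresponding distance via a right-triangle identity at $B$, and bound that quantity by degeneration to the doubly asymptotic case. To locate $P^*$, first observe that $d(P, AB \cup BC) = \min\bigl(d(P, AB),\, d(P, BC)\bigr)$ along $AC$, and each term is a strictly convex function along the geodesic $AC$. Since $d(A, AB) = 0$ and $d(C, BC) = 0$, convexity forces $d(\cdot, AB)$ to be monotonically increasing from $A$ toward $C$ and $d(\cdot, BC)$ to be monotonically decreasing, so their pointwise minimum attains its unique maximum $d^*$ at the point $P^* \in AC$ where $d(P^*, AB) = d(P^*, BC)$; geometrically $P^*$ is the intersection of $AC$ with the internal bisector of $\angle ABC$.

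Next, dropping the two perpendiculars from $P^*$ to the lines $AB$ and $BC$ produces two congruent right triangles sharing the hypotenuse $BP^*$, with legs of length $d^*$ and acute angle $\beta/2$ at $B$, where $\beta := \angle ABC$. The hyperbolic right-triangle identity gives
\[
\sinh(d^*) \;=\; \sinh(|BP^*|)\,\sin(\beta/2),
\]
so the target bound $d^* \le \ln(\sqrt{2}+1) = \operatorname{arcsinh}(1)$ reduces to $\sinh(|BP^*|)\sin(\beta/2) \le 1$.

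Then I would bound this quantity by degenerating to the doubly asymptotic triangle: push $A$ and $C$ along the rays $\overrightarrow{BA}$ and $\overrightarrow{BC}$ to their ideal endpoints. The rays and the angle $\beta$ are unchanged, so the bisector is fixed and $P^*$ slides outward along it. In the upper half-plane with $B = (0,1)$ and bisector along the vertical ray above $B$, the two rays become semicircles with ideal endpoints $(\mp\cot(\beta/4), 0)$, the limiting chord is the semicircle of radius $\cot(\beta/4)$ centered at the origin, and $P^*_\infty = (0, \cot(\beta/4))$ with $|BP^*_\infty| = \ln\cot(\beta/4)$. A direct simplification using $\sinh(\ln x) = (x - x^{-1})/2$ and the double-angle formula gives
\[
\sinh(|BP^*_\infty|)\sin(\beta/2) \;=\; \cos^2(\beta/4) - \sin^2(\beta/4) \;=\; \cos(\beta/2) \;\le\; 1,
\]
as required.

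The main obstacle will be justifying that this degeneration does not decrease $\sinh(|BP^*|)\sin(\beta/2)$. Since $\beta$ is preserved, the claim reduces to the monotonicity statement that $|BP^*|$ is non-decreasing as $A$ (or $C$) slides outward along its ray from $B$; equivalently, in the above coordinates, the intersection of the geodesic $AC$ with the $y$-axis rises monotonically. I would verify this by an elementary computation of the intersection height as a function of the positions of $A$ and $C$ on their respective semicircular rays, completing the reduction to the ideal case and hence the bound $d^* \le \ln(\sqrt{2}+1)$.
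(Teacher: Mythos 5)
Your route is genuinely different from the paper's, which disposes of this lemma in one line by quoting the known thinness constant $\ln(1+\sqrt{2})$ of $\mathbb{H}^2$ and $\mathbb{H}^3$; you instead attempt a self-contained proof via an extremal point on $AC$ and degeneration to the doubly asymptotic triangle. Much of the outline is fine: convexity of $d(\cdot,AB)$ and $d(\cdot,BC)$ along $AC$ does give the monotonicity you want, the ideal-case computation $\sinh(|BP^*_\infty|)\sin(\beta/2)=\cos(\beta/2)\le 1$ is correct, and the monotonicity of $|BP^*|$ that you defer can indeed be justified (the triangle $A'BC$ contains $ABC$, so the bisector ray from $B$ meets $A'C$ beyond the point where it meets $AC$).

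The genuine gap is the step where you identify $P^*$ with the intersection of $AC$ and the internal bisector at $B$ and read $d^*$ off two congruent right triangles. That presupposes that the nearest point of each \emph{segment} $AB$, $BC$ to $P^*$ is a perpendicular foot lying inside the segment, so that distance-to-segment equals distance-to-line. This fails whenever the interior angle at $A$ (or at $C$) is at least $\pi/2$: by the first variation formula, for every interior point $P$ of $AC$ the function $X\mapsto d(P,X)$ is nondecreasing along $AB$ starting at $A$, so the nearest point of the segment $AB$ is the vertex $A$ itself and $d(P,AB)=|PA|>d(P,\mathrm{line}\ AB)$. Then the equidistant point $P^*$ is not the bisector point, and at $P^*$ one only has $\sinh d^*=\sinh(|BP^*|)\sin(\angle P^*BC)$ with $\angle P^*BC\ge \beta/2$, i.e.\ your key identity degrades into an inequality in the unfavorable direction; the comparison with the doubly asymptotic triangle then gives no bound on $d^*$ (and $|BP^*|$ need not be dominated by $|BP^*_\infty|$ either, since $P^*$ can sit near the far vertex). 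So the argument as written only covers triangles whose angles at $A$ and $C$ are acute, and the remaining case needs a separate idea. The standard case-free argument, which is essentially what the paper's one-line citation refers to, is: for $P\in AC$ one of $\angle APB$, $\angle BPC$ is at least $\pi/2$, say $\angle APB$; then the angles of $APB$ at $A$ and $B$ are acute, so the foot $F$ of the perpendicular from $P$ to $AB$ lies inside the segment, and since one of $\angle APF$, $\angle FPB$ is at least $\pi/4$, the right-triangle identity $\cos(\angle PAF)=\cosh(|PF|)\sin(\angle APF)$ gives $\cosh d(P,AB)\le 1/\sin(\pi/4)=\sqrt{2}$, hence $d(P,AB)\le\operatorname{arccosh}(\sqrt{2})=\ln(1+\sqrt{2})$.
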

\begin{proof}
This is simply the fact that  $\ln{(\sqrt{2}+1)}$ is the hyperbolic constant of the hyperbolic plane $\mathbb{H}^2$ and the hyperbolic $3$-space $\mathbb{H}^3$.
\end{proof}

\begin{lemma}\label{exponentialdecay}
Let $AB$ be a hyperbolic geodesic segment of length at least $2D$ and $\gamma$ is a bi-infinite geodesic such that $d(A,\gamma)<1$ and $d(B,\gamma)<1$ hold. Then for any point $W$ on $AB$ such that $d(W,A),d(W,B)>D$, the distance between $W$ and $\gamma$ is at most $3e^{-D}$.
\end{lemma}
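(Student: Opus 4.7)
The strategy is a comparison argument based on the convexity of the distance function to a geodesic in $\mathbb{H}^3$. Parameterize the segment $AB$ by arc length as $\alpha\colon [0,T]\to \mathbb{H}^3$ with $\alpha(0)=A$, $\alpha(T)=B$, and $T=|AB|\geq 2D$; write $W=\alpha(w)$ for some $w\in (D,T-D)$. Define $f(s):=\sinh(d(\alpha(s),\gamma))$, so the hypotheses give $f(0),f(T)<\sinh(1)$. Since $\sinh(x)\geq x$ for $x\geq 0$, the lemma reduces to proving $f(w)<2\sinh(1)\,e^{-D}$, which is strictly less than $3\,e^{-D}$.

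The main input is the standard convexity inequality $f''(s)\geq f(s)$ along any geodesic in $\mathbb{H}^3$ disjoint from $\gamma$. This comes from a Hessian calculation in cylindrical coordinates $(r,\theta,\tau)$ around $\gamma$, in which the metric reads $dr^2+\sinh^2(r)\,d\theta^2+\cosh^2(r)\,d\tau^2$. Writing the unit tangent to $\alpha$ in the associated orthonormal frame $\{\partial_r,\sinh(r)^{-1}\partial_\theta,\cosh(r)^{-1}\partial_\tau\}$ as $a\,\partial_r+b\,\sinh(r)^{-1}\partial_\theta+c\,\cosh(r)^{-1}\partial_\tau$ with $a^2+b^2+c^2=1$, a short computation yields $(f\circ\alpha)''(s)=f(s)+b(s)^2/f(s)\geq f(s)$, with equality precisely in the coplanar ($\mathbb{H}^2$) case.

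I then compare $f$ with the explicit solution $g(s):=\bigl(f(T)\sinh(s)+f(0)\sinh(T-s)\bigr)/\sinh(T)$ of $g''=g$ with the same boundary values. The difference $h:=f-g$ satisfies $h''\geq h$ with $h(0)=h(T)=0$, so the usual maximum principle gives $h\leq 0$: a positive interior maximum at $s_0$ would force $h''(s_0)\leq 0$, contradicting $h''(s_0)\geq h(s_0)>0$. Hence $f\leq g$ on $[0,T]$.

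To conclude, for $w\in[D,T-D]$ the elementary identity $\sinh(T-D)/\sinh(T)=\cosh(D)-\coth(T)\sinh(D)\leq \cosh(D)-\sinh(D)=e^{-D}$ (valid since $\coth(T)\geq 1$) bounds both $\sinh(w)/\sinh(T)$ and $\sinh(T-w)/\sinh(T)$ by $e^{-D}$, giving
$$f(w)\;\leq\; g(w)\;\leq\; \bigl(f(0)+f(T)\bigr)\,e^{-D}\;<\;2\sinh(1)\,e^{-D}\;<\;3\,e^{-D}.$$
The main obstacle is verifying the Hessian inequality, together with the minor technical point that the cylindrical coordinates degenerate on $\gamma$; the latter is handled by splitting $[0,T]$ at any intersection of $\alpha$ with $\gamma$ (where $f$ vanishes) and running the same comparison on each smooth piece.
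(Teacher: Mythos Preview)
Your proof is correct and takes a genuinely different route from the paper. The paper argues by cases on the relative position of the line $\delta$ through $A,B$ and $\gamma$: if they share an endpoint at infinity it computes directly in the upper half-space model; otherwise it drops the common perpendicular, uses the explicit distance formula
\[
\sinh^2 d(X,\gamma)=\cosh^2 x\,\sinh^2 d+\sinh^2 x\,\sin^2\theta
\]
along $\delta$ (with $x$ the signed distance to the foot), and reads off the decay from the boundary data. Your argument instead packages everything into the single differential inequality $f''\ge f$ for $f=\sinh d(\cdot,\gamma)$ and a maximum-principle comparison with the explicit solution of $g''=g$. This avoids the case split entirely, is dimension-independent, and yields the same constant $2\sinh 1<3$ in one stroke; the price is that one must know (or verify, as you do) the Hessian lower bound in Fermi coordinates, whereas the paper's computation stays at the level of elementary hyperbolic trigonometry. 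Your handling of the degenerate case where $\alpha$ meets $\gamma$ is adequate: two distinct geodesics in $\mathbb{H}^3$ meet at most once, and splitting at such a point only improves the bound since one boundary value of $f$ becomes zero.
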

\begin{proof}

Let $\delta$ be the bi-infinite geodesic containing $AB$. 

We first suppose that $\gamma$ and $\delta$ share a common limit point at infinity. We can assume that this common limit point is $\infty$ in the upper-half space model of $\mathbb{H}^3$. By translation and scaling, we can assume that $\gamma$ goes between $0$ and $\infty$ and $\delta$ goes between $1$ and $\infty$. For a point on $\delta$ with $z$-coordinate $c$, its distance from $\gamma$ is $\ln(\frac{\sqrt{c^2+1}+1}{c})$. So the $z$-coordinates of $A$ and $B$ are at least $\frac{2e}{e^2-1}$. Then the $z$-coordinate of $W$, which is denoted by $w$, is at least $\frac{2e}{e^2-1}e^D$, and its distance from $\gamma$ is at most 
$$\ln{(\sqrt{1+\frac{1}{w^2}}+\frac{1}{w})}\leq \frac{1}{w}+\frac{1}{2w^2}\leq \frac{e^2-1}{2e}e^{-D}+\frac{(e^2-1)^2}{8e^2}e^{-2D}<2e^{-D}.$$

If $\gamma$ and $\delta$ do not share limit points at infinity, then let $M$ be the point on $\delta$ that is closest to $\gamma$. Let $\eta$ be the common perpendicular geodesic segment of $\gamma$ and $\delta$ (may degenerate to a point), then $\eta$ intersects with $\delta$ at $M$. Let $d$ be the length of $\eta$ and let $\theta$ be the angle between $\gamma$ and the parallel transport of $\delta$ along $\eta$. For any point $X$ on $\delta$ with distance $x$ from $M$, by a direct computation in hyperbolic geometry, the distance between $X$ and $\gamma$ satisfies 
$$(\sinh{d(X,\gamma)})^2=(\cosh{x})^2(\sinh{d})^2+(\sinh{x})^2(\sin{\theta})^2.$$

Without loss of generality, we can assume that $A$ and $W$ lies on the same component of $\delta\setminus \{M\}$, and $W$ is closer to $M$ than $A$. Let their distances from $M$ be denoted by $a$ and $w$ respectively, then $0\leq w\leq a-D$ holds. Since $d(A,\gamma)<1$, we have 
$$(\sinh{1})^2>(\cosh{a})^2(\sinh{d})^2+(\sinh{a})^2(\sin{\theta})^2.$$
By using $w\leq a-D$, we have 
\begin{align*}
&(\sinh{d(W,\gamma)})^2=(\cosh{w})^2(\sinh{d})^2+(\sinh{w})^2(\sin{\theta})^2\\
\leq\  & 4e^{-2D}((\cosh{a})^2(\sinh{d})^2+(\sinh{a})^2(\sin{\theta})^2)<(2e^{-D}\sinh{1})^2.
\end{align*}
So $d(W,\gamma)<\sinh{d(W,\gamma)}<2e^{-D}\sinh{1}<3e^{-D}$ holds.
\end{proof}

\begin{lemma}\label{anglechange}
Let $X,Y,Z,W$ be four points on $\mathbb{H}^3$, such that the lengths of geodesic segments $XY,YZ,ZW$ are all at least $D>3$. Also suppose that the angle between $XY,YZ$and the angle between $YZ,ZW$  (denoted by $\angle XYZ$ and $\angle YZW$ respectively) are at least $\delta$ for some $\delta>\frac{\pi}{4}$. Then the following statements hold.
\begin{enumerate}
\item The angle between $XY$ and $XZ$ is at most $20e^{-|XY|}$.
\item If the length of $YZ$ is at least $2D$, then the angle between $XY$ and $XW$ is at most $40e^{-D}$.
\end{enumerate}
\end{lemma}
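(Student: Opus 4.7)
The plan is to prove part (1) by a planar hyperbolic right-triangle computation, then bootstrap to part (2) by applying (1) twice and combining the two angle estimates at $X$ using the triangle inequality for angles between rays.

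For part (1), I would work in the totally geodesic hyperbolic plane containing $X,Y,Z$. Let $\gamma$ be the bi-infinite geodesic extending the segment $XY$, and let $P$ be the foot of the perpendicular from $Z$ onto $\gamma$. The right triangle $YPZ$ yields
\begin{equation*}
\tanh|YP|=|\cos(\angle XYZ)|\,\tanh|YZ|.
\end{equation*}
Since $\angle XYZ\ge\delta>\pi/4$, either $\angle XYZ\in[\delta,\pi/2]$ and $|YP|<\tanh^{-1}(\tfrac{\sqrt{2}}{2})<1$, or $\angle XYZ>\pi/2$, in which case $P$ lies on the far side of $Y$ from $X$ and $|XP|\ge|XY|$. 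In either case $|XP|>|XY|-1$. Applying hyperbolic Pythagoras $\cosh|XZ|=\cosh|XP|\cosh|PZ|$ in the right triangle $XPZ$, together with $\sin(\angle YXZ)=\sinh|PZ|/\sinh|XZ|$, gives
\begin{equation*}
\sin(\angle YXZ)\le\frac{1}{\cosh|XP|}\le 2e^{1-|XY|},
\end{equation*}
whence $\angle YXZ\le(\pi/2)\sin(\angle YXZ)<20e^{-|XY|}$.

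For part (2), applying part (1) to $\triangle XYZ$ gives $\angle YXZ\le 20e^{-D}$. I would then apply part (1) to $\triangle XZW$, viewed as a triangle with middle vertex $Z$. Its hypotheses are verified as follows: \emph{(i)} $|XZ|\ge D$, because the hyperbolic law of cosines in $\triangle XYZ$, together with $|XY|\ge D$, $|YZ|\ge 2D$, and $\cos(\angle XYZ)\le\tfrac{\sqrt{2}}{2}$, yields $\cosh|XZ|\ge(1-\tfrac{\sqrt{2}}{2})\cosh(D)\cosh(2D)$, giving $|XZ|\ge 3D-O(1)$; \emph{(ii)} $|ZW|\ge D$ is given; \emph{(iii)} $\angle XZW$ is close to $\angle YZW$, because part (1) applied at the vertex $Z$ of $\triangle XYZ$ gives $\angle XZY\le 20e^{-|YZ|}\le 20e^{-2D}$, so $\angle XZW\ge\angle YZW-\angle XZY\ge\delta-20e^{-2D}$. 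Part (1) then yields $\angle ZXW\le 20e^{-|XZ|}\le 20e^{-D}$, and the triangle inequality for angles between rays at $X$ gives
\begin{equation*}
\angle YXW\le\angle YXZ+\angle ZXW\le 40e^{-D}.
\end{equation*}

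The main obstacle is hypothesis \emph{(iii)} when $\delta$ is only marginally greater than $\pi/4$: the correction $20e^{-|YZ|}$ could in principle push $\angle XZW$ slightly below the threshold $\pi/4$ used in part (1). However, the projection argument underlying part (1) actually works whenever the middle-vertex angle is bounded away from $0$ and $\pi$ by any fixed positive amount, with only a mild worsening of the leading constant. Since $|YZ|\ge 2D\ge 6$ bounds the correction by roughly $0.05$ and since $|XZ|\ge 3D-O(1)$ provides ample extra exponential decay beyond $e^{-D}$, any worsening of the constant is easily absorbed into the stated bound $40e^{-D}$.
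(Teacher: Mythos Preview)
Your proof is correct and reaches the same conclusions, but it differs from the paper's argument in both parts in ways worth noting.

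For part~(1), the paper combines the hyperbolic law of cosines and law of sines to bound $(\sin\angle YXZ)^2$ directly. Your perpendicular-projection argument is cleaner: dropping the foot $P$ from $Z$ onto the line $XY$, bounding $|YP|$ via $\tanh|YP|=|\cos(\angle XYZ)|\tanh|YZ|$, and then reading off $\sin(\angle YXZ)\le 1/\cosh|XP|$ from the right triangle $XPZ$, gives the estimate with less algebra and makes transparent exactly how the constant depends on the threshold for the middle angle.

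For part~(2), the strategies genuinely diverge. The paper introduces the midpoint $M$ of $YZ$ and applies part~(1) to the triangles $XYM$ and $XMW$; the point of this detour is that the angle $\angle XMW$ is then at least $\pi-40e^{-D}$, safely above $\pi/4$, so part~(1) applies at $M$ without any worry about the threshold. You instead apply part~(1) directly to $XYZ$ and to $XZW$. This is more direct and exploits the stronger decay $|XZ|\ge 3D-O(1)$, but it does create the edge case you identified, where $\angle XZW$ may dip to roughly $\delta-20e^{-2D}$, possibly just under $\pi/4$. Your resolution---that the projection argument of part~(1) only needs the middle angle bounded away from $0$ and $\pi$, and that the resulting constant is absorbed by the excess exponent in $e^{-|XZ|}$---is correct; since $D>3$ forces $20e^{-2D}<0.05$, the angle stays above, say, $\pi/5$, and one still obtains $\angle ZXW\le 20e^{-D}$ with room to spare. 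So both routes work: the paper's midpoint trick sidesteps the threshold issue entirely, while yours trades that for a shorter argument plus a brief robustness check.
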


\begin{proof}
For (1), by the cosine and sine laws in hyperbolic geometry, we have 
$$\cosh{|XZ|}=\cosh{|XY|}\cosh{|YZ|}-\sinh{|XY|}\sinh{|YZ|}\cos{\angle XYZ}$$
and $$\frac{\sinh{|XZ|}}{\sin{\angle XYZ}}=\frac{\sinh{|YZ|}}{\sin{\angle YXZ}}.$$
So we have
\begin{align*}
& (\sin{\angle YXZ})^2=\frac{(\sinh{|YZ|}\sin{\angle XYZ})^2}{(\cosh{|XY|}\cosh{|YZ|}-\sinh{|XY|}\sinh{|YZ|}\cos{\angle XYZ})^2-1}\\
\leq \ & \frac{(\sin{\angle XYZ})^2}{(\sinh{|XY|})^2(1-\cos{\angle XYZ})^2-1}\leq 2(\frac{\sin{\angle XYZ}}{\sinh{|XY|}(1-\cos{\angle XYZ})})^2.
\end{align*}
So $$\angle YXZ\leq\frac{\pi}{2}\sin{\angle YXZ} \leq\frac{\sqrt{2}\pi}{2}\frac{\sin{\angle XYZ}}{\sinh{|XY|}(1-\cos{\angle XYZ})} \leq \frac{\sqrt{2}\pi}{2-\sqrt{2}}\frac{1}{\sinh{|XY|}}<20e^{-|XY|}.$$

To prove (2), let $M$ be the middle point of $YZ$, then both $YM$ and $MZ$ have length at least $D$. By (1), the angles $\angle YXM, \angle XMY, \angle ZMW$ are all at most $20e^{-D}$. So the angle $\angle XMW$ is at least $\pi-20e^{-D}\times 2>\frac{\pi}{4}$. The length of $XM$ satisfies 
\begin{align*}
&\cosh{|XM|}=\cosh{|XY|}\cosh{|YM|}-\sinh{|XY|}\sinh{|YM|}\cos{\angle XYM}\\
\geq \ &\sinh{|XY|}\sinh{|YM|}(1-\cos{\angle XYM})\geq (1-\frac{\sqrt{2}}{2})(\sinh{D})^2\geq \cosh{D}.
\end{align*} So $|XM|$ is at least $D$, and so does $|MW|$. 

We use (1) again to deduce than $\angle MXW<20e^{-D}$. So $$\angle YXW\leq \angle YXM+\angle MXW<40e^{-D}.$$
\end{proof}

Before stating the next lemma, we need a few geometric definitions from Section 4 of \cite{LM}. 

\begin{definition}\label{chainsandcycles}
Let $0<\delta<\frac{\pi}{3}$, $L>0$ and $0<\theta<\pi$ be three constants.
\begin{enumerate}
\item Two oriented $\partial$-framed segments $\mathfrak{s}$ and $\mathfrak{s}'$ are {\it $\delta$-consecutive} if the terminal point of $\mathfrak{s}$ is the initial point of $\mathfrak{s}'$, and the terminal framing of $\mathfrak{s}$ is $\delta$-close to the initial framing of $\mathfrak{s}'$. The {\it bending angle} between $\mathfrak{s}$ and $\mathfrak{s}'$ is the angle between the terminal direction of $\mathfrak{s}$ and the initial direction of $\mathfrak{s}'$.
\item A {\it $\delta$-consecutive chain} of oriented $\partial$-framed segments is a finite sequence $\mathfrak{s}_1,\cdots,\mathfrak{s}_m$ such that $\mathfrak{s}_i$ is $\delta$-consecutive to $\mathfrak{s}_{i+1}$ for $i=1,\cdots,m-1$. It is a {\it $\delta$-consecutive cycle} if furthermore $\mathfrak{s}_m$ is $\delta$-consecutive to $\mathfrak{s}_1$. A $\delta$-consecutive chain or cycle is {\it $(L,\theta)$-tame} if each $\mathfrak{s}_i$ has length at least $2L$ and each bending angle is at most $\theta$.
\item For an $(L,\theta)$-tame $\delta$-consecutive chain $\mathfrak{s}_1,\cdots,\mathfrak{s}_m$, the {\it reduced concatenation}, denoted by $\mathfrak{s}_1\cdots \mathfrak{s}_m$, is the oriented $\partial$-framed segment defined as the following. The carrier segment of $\mathfrak{s}_1\cdots \mathfrak{s}_m$ is homotopic to the concatenation of carrier segments of $\mathfrak{s}_1,\cdots,\mathfrak{s}_m$, with respect to the endpoints. The initial and terminal framings of $\mathfrak{s}_1\cdots\mathfrak{s}_m$ are the closest unit normal vectors to the initial framing of $\mathfrak{s}_1$ and the terminal framing of $\mathfrak{s}_m$ respectively.
\item For an $(L,\theta)$-tame $\delta$-consecutive cycle $\mathfrak{s}_1,\cdots,\mathfrak{s}_m$, the {\it reduced cyclic concatenation}, denoted by $[\mathfrak{s}_1\cdots \mathfrak{s}_m]$, is the oriented closed geodesic freely homotopic to the cyclic concatenation of carrier segments of $\mathfrak{s}_1,\cdots,\mathfrak{s}_m$, assuming it is not null-homotopic.
\end{enumerate}
\end{definition}

We need the following lemma from \cite{LM}, which estimates the length and phase of a concatenation of oriented $\partial$-framed segments. Here the function $I(\cdot)$ is defined by $I(\theta)=2\ln{(\sec{\frac{\theta}{2}})}$.

\begin{lemma}\label{lengthphase} (Lemma 4.8 of \cite{LM})
Given any positive constants $\delta,\theta,L$ with $0<\theta<\pi$ and $L\geq I(\theta)+10\ln{2}$, the following statements hold in any oriented hyperbolic $3$-manifold.
\begin{enumerate}
\item If $\mathfrak{s}_1,\cdots,\mathfrak{s}_m$ is an $(L,\theta)$-tame $\delta$-consecutive chain of oriented $\partial$-framed segments, denoting the bending angle between $\mathfrak{s}_i$ and $\mathfrak{s}_{i+1}$ as $\theta_i\in[0,\theta)$, then 
$$\Big|l(\mathfrak{s}_1\cdots \mathfrak{s}_m)-\sum_{i=1}^ml(\mathfrak{s}_i)+\sum_{i=1}^{m-1}I(\theta_i)\Big|<\frac{(m-1)e^{(-L+10\ln{2})/2}\sin{(\theta/2)}}{L-\ln{2}}$$
and 
$$\Big|\varphi(\mathfrak{s}_1\cdots\mathfrak{s}_m)-\sum_{i=1}^m\varphi(\mathfrak{s}_i)\Big|<(m-1)(\delta+e^{(-L+10\ln{2})/2}\sin{(\theta/2)}),$$
where $|\cdot|$ on $\mathbb{R}/2\pi\mathbb{Z}$ is understood as the distance from zero valued in $[0,\pi]$.
\item If $\mathfrak{s}_1,\cdots,\mathfrak{s}_m$ is an $(L,\theta)$-tame $\delta$-consecutive cycle of oriented $\partial$-framed segments, denoting the bending angle between $\mathfrak{s}_i$ and $\mathfrak{s}_{i+1}$ as $\theta_i\in[0,\theta)$ with $\mathfrak{s}_{m+1}$ equal to $\mathfrak{s}_1$ by convention, then 
$$\Big|l([\mathfrak{s}_1\cdots \mathfrak{s}_m])-\sum_{i=1}^ml(\mathfrak{s}_i)+\sum_{i=1}^{m}I(\theta_i)\Big|<\frac{me^{(-L+10\ln{2})/2}\sin{(\theta/2)}}{L-\ln{2}}$$
and 
$$\Big|\varphi([\mathfrak{s}_1\cdots\mathfrak{s}_m])-\sum_{i=1}^m\varphi(\mathfrak{s}_i)\Big|<m(\delta+e^{(-L+10\ln{2})/2}\sin{(\theta/2)}),$$
where $|\cdot|$ on $\mathbb{R}/2\pi\mathbb{Z}$ is understood as the distance from zero valued in $[0,\pi]$.
\end{enumerate}
\end{lemma}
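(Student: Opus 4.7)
The plan is to prove the two parts separately: first the chain case by induction on $m$, bootstrapping from an $m=2$ base case obtained by direct hyperbolic trigonometry, and then the cycle case by cutting the cycle at one segment and reducing it to a chain, with an extra bending counted in.

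For the base case $m=2$ of the chain, let $\mathfrak{s}_1$, $\mathfrak{s}_2$ have carrier endpoints $X$, $Y$, $Z$ (with $Y$ the common endpoint) and let $\theta_1$ be the bending angle. The reduced concatenation has carrier $XZ$, so the first step is to control $|XZ|$ via the hyperbolic law of cosines applied to the triangle $XYZ$, whose interior angle at $Y$ equals $\pi-\theta_1$. A direct expansion yields
\[
\cosh |XZ| \;=\; \cos^2(\theta_1/2)\cosh(|XY|+|YZ|) + \sin^2(\theta_1/2)\cosh(|XY|-|YZ|),
\]
and since $|XY|,|YZ|\ge 2L$, the second term is exponentially smaller than the first; solving for $|XZ|$ gives $|XZ| = |XY|+|YZ|-I(\theta_1) + \mathrm{err}$, where $\mathrm{err}$ is controlled by a term of the shape $\sin(\theta/2)\cdot e^{-L}/(L-\ln 2)$ once one invokes a mean-value argument on $\cosh$ and uses $L\ge I(\theta)+10\ln 2$ to keep $\cos(\theta_1/2)$ away from zero. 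For the phase, I would parallel-transport $\vec{n}_{\mathrm{ini}}(\mathfrak{s}_1)$ along the piecewise-geodesic $X\to Y\to Z$ and then along the geodesic $XZ$, and use Lemma \ref{exponentialdecay} to show $XZ$ stays uniformly close to $XY\cup YZ$ on their middle portions; the hyperbolic holonomy around the thin bigon bounded by these two paths is exponentially small in $L$, and the $\delta$-consecutiveness at $Y$ adds an additional $\delta$ jump. This produces the stated estimate with $m-1=1$.

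The inductive step for chains proceeds by concatenating $\mathfrak{s}_1\cdots\mathfrak{s}_{m-1}$ with $\mathfrak{s}_m$ using the base case. The main subtlety here is that the terminal framing and direction of the accumulated segment $\mathfrak{s}_1\cdots\mathfrak{s}_{m-1}$ differ from those of $\mathfrak{s}_{m-1}$ only by errors controlled by the inductive hypothesis, so the new bending angle at the junction is $\theta_{m-1} + O(e^{-L/2}\sin(\theta/2))$ and the framings remain $\delta'$-close with $\delta'$ only slightly larger than $\delta$. Since $\cos(\theta_{m-1}/2)$ is bounded below and $L\ge I(\theta)+10\ln 2$, the base-case estimate applies; the error terms simply add, producing the linear-in-$(m-1)$ bound. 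One has to check that throughout the induction the accumulated segment still has length $\ge 2L$, but this is automatic because the $-I(\theta_i)$ corrections are uniformly bounded and the segments are long.

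For the cycle case, I would cut the cycle between $\mathfrak{s}_m$ and $\mathfrak{s}_1$, apply the chain result to $\mathfrak{s}_1,\ldots,\mathfrak{s}_m$, and then handle the final closure. The reduced concatenation $\mathfrak{s}_1\cdots\mathfrak{s}_m$ is an oriented $\partial$-framed segment whose initial and terminal endpoints coincide with those of $\mathfrak{s}_1$ and $\mathfrak{s}_m$; closing it up to the closed geodesic $[\mathfrak{s}_1\cdots\mathfrak{s}_m]$ introduces one additional bending with angle $\theta_m$ and one additional $\delta$-discontinuity, so applying the base-case estimate to the closing step adds exactly one more $I(\theta_m)$ term and one more $\delta$ plus exponential error. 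This upgrades the $m-1$ in the chain bound to $m$ in the cycle bound. The only nontrivial point is that $[\mathfrak{s}_1\cdots\mathfrak{s}_m]$ is a closed geodesic rather than a segment, but its length and holonomy are obtained from the closed piecewise-geodesic loop by the same straightening argument used in the base case, now applied in a neighborhood of the closing point.

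The main obstacle I expect is bookkeeping: keeping the error terms cleanly linear in $m-1$ (resp.\ $m$) requires verifying that at every induction step the hypotheses of the base case remain valid with uniform constants, in particular that neither the bending angle drifts past $\theta$ nor the framing discrepancy past $\delta$ in any way that would degrade the estimate. The hyperbolic geometry is otherwise routine, but the balance between the $\sin(\theta/2)$ trigonometric factor, the $e^{-L/2}$ exponential decay, and the $1/(L-\ln 2)$ denominator is exactly tuned to survive this iteration, and matching the stated constants would be the tedious part.
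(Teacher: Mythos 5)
Note first that this lemma is not proved in the paper at all: it is quoted verbatim as Lemma 4.8 of \cite{LM}, so there is no in-paper argument to compare your proposal against, and the relevant benchmark is the standard straightening argument that Liu--Markovic carry out. Your sketch follows that standard route and its core is correct: the $m=2$ identity $\cosh|XZ|=\cos^2(\theta_1/2)\cosh(|XY|+|YZ|)+\sin^2(\theta_1/2)\cosh(|XY|-|YZ|)$ is a correct rearrangement of the hyperbolic law of cosines with interior angle $\pi-\theta_1$, and it does produce the $I(\theta_1)$ defect; the induction with exponentially small perturbations of the junction angle and framing, and the holonomy-of-thin-bigons argument for the phase, are the right mechanisms, with the linear-in-$m$ error bookkeeping plausible since $L\geq I(\theta)+10\ln 2$ forces $\cos(\theta/2)\geq e^{-L/2}$, which is exactly why the stated error decays only like $e^{(-L+10\ln 2)/2}$ rather than $e^{-L}$. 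The one genuinely thin spot is the cycle case: closing up is not literally one more application of the triangle base case, because what is needed there is a comparison between the translation length of the corresponding loxodromic isometry and the length of a geodesic loop with a single corner (an estimate of the type used in Lemma \ref{distance} of this paper, via $\cosh L_2\geq(\cosh L_1-1)\cosh^2 D+1$, or equivalently a periodic straightening of the bi-infinite broken geodesic); likewise the ``phase'' of the cycle is the rotation part of the complex length and must be extracted from that holonomy computation, not from an endpoint framing. These are standard but would have to be written out, together with the constant-matching you already acknowledge as the tedious part.
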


The following lemma bounds the distance between a $\delta$-consecutive cycle of geodesic segments and the corresponding closed geodesic, which will be used for bounding heights of closed geodesics arised from geometric constructions. In the following, an $(L,\theta)$-tame cycle of geodesic segments is a sequence of geodesic segments that satisfies the definition of an $(L,\theta)$-tame cycle of oriented $\partial$-framed segments except the $\delta$-closeness condition on framings.

\begin{lemma}\label{distance}
Given any positive constants $\theta,L$ where $0<\theta<\pi$ and $L\geq 4(I(\theta)+10\ln{2})$, the following statement holds in any oriented hyperbolic $3$-manifold. If $s_1,\cdots,s_m$ is an $(L,\theta)$-tame cycle of geodesic segments with $m\leq L$, denoting the bending angle between $s_i$ and $s_{i+1}$ by $\theta_i\in[0,\theta)$ with $s_{m+1}$ equal to $s_1$ by convention, then the closed geodesic $[s_1\cdots s_m]$ lies in the $1$-neighborhood of the union $\cup_{i=1}^m s_i$. Moreover, for
$$K=\max_{i=1,\cdots,m}{\Big\{\ln{\frac{1+\tan{\frac{\theta_i}{4}}}{1-\tan{\frac{\theta_i}{4}}}}\Big\}},$$ $[s_1\cdots s_m]$ and $\cup_{i=1}^m s_i$ lie in the 
$(K+\frac{1}{10})$-neighborhood of each other.
\end{lemma}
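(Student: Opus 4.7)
The plan is to lift everything to the universal cover. Lift the cyclic broken geodesic $s_1,\ldots,s_m$ to a bi-infinite periodic piecewise geodesic $\widetilde S=\bigcup_{i\in\mathbb Z}\widetilde s_i$ in $\mathbb H^3$, invariant under the deck translation $T$ whose axis is the bi-infinite geodesic $\widetilde\gamma$ covering $[s_1\cdots s_m]$. Writing $p_i$ for the corner between $\widetilde s_i$ and $\widetilde s_{i+1}$ with bending angle $\theta_i$, both assertions reduce to bounding the Hausdorff distance between $\widetilde S$ and $\widetilde\gamma$.

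The central quantitative step is the \emph{corner bound}: for each $i$, $d(p_i,\widetilde\gamma)\le K_i+\tfrac1{20}$, where $K_i=\ln\tfrac{1+\tan(\theta_i/4)}{1-\tan(\theta_i/4)}$. By iterating Lemma \ref{lengthphase}(1) over long subchains, straight-line distances along $\widetilde S$ differ from path lengths by a fraction $I(\theta)/2L\le 1/8$, so $\widetilde S$ is a $(8/7,0)$-quasi-geodesic and Morse stability in $\mathbb H^3$ gives a preliminary finite bound $D:=\sup_i d(p_i,\widetilde\gamma)<\infty$. I would then tighten $D$ by a local analysis at an extremal corner $p_{i_0}$: since its two neighbors $p_{i_0\pm1}$ already lie within $D$ of $\widetilde\gamma$, a direct generalization of Lemma \ref{exponentialdecay} (replacing the ``$<1$'' hypothesis by ``$<D$'' and keeping the same proof) shows that the portions of the adjacent segments $\widetilde s_{i_0},\widetilde s_{i_0+1}$ beyond distance $I(\theta)+10\ln 2$ from $p_{i_0}$ lie within $O(De^{-L/2})$ of $\widetilde\gamma$. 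In the limit where both adjacent segments become infinite, we are in the ideal configuration: two hyperbolic rays meeting at interior angle $\pi-\theta_{i_0}$ whose far limit points are forced to lie on $\widetilde\gamma$. A direct Poincar\'e-disk computation (placing the apex at the origin, the two ideal endpoints symmetric, and taking the orthogonal geodesic between them) shows that in this ideal model the apex sits at hyperbolic distance exactly $\tanh^{-1}\!\bigl(\sin(\theta_{i_0}/2)\bigr)=K_{i_0}$ from the base. The hypothesis $L\ge 4(I(\theta)+10\ln 2)$ is calibrated so that the finite-length deviation from this ideal value is at most $\tfrac1{20}$; hence $D\le K+\tfrac1{20}$.

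Given the corner bound, assertion (2) is almost immediate. The inclusion $\bigcup\widetilde s_i\subset N_{K+1/10}(\widetilde\gamma)$ follows from convexity of $x\mapsto d(x,\widetilde\gamma)$ along any geodesic segment in the $\mathrm{CAT}(0)$ space $\mathbb H^3$: the maximum of this function on $\widetilde s_i$ is attained at an endpoint $p_{i-1}$ or $p_i$, both of which sit within $K+\tfrac1{20}<K+\tfrac1{10}$ of $\widetilde\gamma$. For the opposite inclusion, and for (1), consider any $q\in\widetilde\gamma$ and locate the corner $p_i$ whose orthogonal projection onto $\widetilde\gamma$ is closest to $q$. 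Applying Lemma \ref{hyperbolicconstant} to the triangle $p_{i-1}p_ip_{i+1}$ shows that the shortcut side $p_{i-1}p_{i+1}$ lies within $\ln(\sqrt 2+1)<0.89$ of $\widetilde s_i\cup\widetilde s_{i+1}$. A complementary exponential-decay estimate, using that the endpoints of the shortcut are within $K+\tfrac1{20}$ of $\widetilde\gamma$ and that the relevant subarc of $\widetilde\gamma$ is long (length $\ge 2L-I(\theta)$, so $m\le L$ suffices to localize correctly), places $q$ within $\tfrac1{10}$ of the shortcut. Together these give $d(q,\widetilde S)<0.89+\tfrac1{10}<1$, which is both (1) and the remaining direction of (2).

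The main obstacle is the corner bound: upgrading the qualitative Morse-stability statement to the explicit constant $K+\tfrac1{20}$ by identifying the asymptotic ideal two-ray configuration at an extremal corner and performing the Poincar\'e-disk computation, while absorbing all finite-length errors. The hypotheses $L\ge 4(I(\theta)+10\ln 2)$ and $m\le L$ are precisely calibrated so that every exponential-decay and closing error in this analysis fits inside $\tfrac1{20}$ of available slack, yielding the advertised $K+\tfrac1{10}$ in (2) and the $1$-bound in (1).
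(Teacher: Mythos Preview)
Your corner-bound step has a genuine gap. The appeal to a generalized Lemma~\ref{exponentialdecay} is misapplied: that lemma controls points far from \emph{both} endpoints of a segment, so ``portions of $\widetilde s_{i_0}$ beyond distance $I(\theta)+10\ln2$ from $p_{i_0}$'' includes points near $p_{i_0-1}$, which are only known to lie within $D$ of $\widetilde\gamma$, not within $O(De^{-L/2})$. Even with a corrected version (restricting to the middle of each adjacent segment), comparing to the ideal two-ray configuration yields at best $D\le K_{i_0}+C\,D\,e^{-cL}$: the error depends on the very quantity you are bounding. Resolving this requires making the bootstrap explicit and feeding in a quantitative Morse constant for your $(8/7,0)$-quasi-geodesic; you assert that $L\ge4(I(\theta)+10\ln2)$ is ``calibrated'' to absorb everything into $\tfrac1{20}$, but this is never verified. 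The reverse-inclusion step for~(1) is also loose: exponential decay shows points on the shortcut are close to $\widetilde\gamma$, not that a given $q\in\widetilde\gamma$ lies within $\tfrac1{10}$ of the shortcut.

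The paper avoids all of this with a different device: work at the \emph{midpoints} $n_i$ of the $s_i$ rather than at the corners. Setting $t_i$ to be the geodesic from $n_i$ to $n_{i+1}$, one has $[t_1\cdots t_m]=[s_1\cdots s_m]$, but the new broken path has essentially no bending at its vertices. Two applications of Lemma~\ref{lengthphase} give $\bigl|\sum_i l(t_i)-l([t_1\cdots t_m])\bigr|<\tfrac1{400}$, so in the cover the displacement of any lifted midpoint $z$ under the deck translation exceeds the translation length $L_1$ by at most $\tfrac1{400}$. The elementary inequality $\cosh L_2\ge(\cosh L_1-1)\cosh^2\!D+1$ then forces $d(z,\widetilde\gamma)<\tfrac1{10}$ directly---no Morse lemma, no extremal-corner analysis, no bootstrap. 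Both neighborhood statements then follow by combining this midpoint bound with the thin-triangle constant $\ln(\sqrt2+1)$ (Lemma~\ref{hyperbolicconstant}) and the hyperbolic-triangle fact that produces $K$.
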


\begin{proof}
For $i=1,\cdots,m$, let $n_i$ be the middle point of $s_i$, and let $t_i$ be the geodesic segment from $n_i$ to $n_{i+1}$ (with natural identification $n_{m+1}=n_1$).

Then the closed geodesic $[s_1\cdots s_m]$ is same with the closed geodesic $[t_1\cdots t_m]$. By Lemma \ref{hyperbolicconstant}, $t_i$ lies in the $\ln{(\sqrt{2}+1)}$-neighborhood of $s_i\cup s_{i+1}$.

By Lemma \ref{lengthphase} (1), we have 
$$\Big|l(t_i)-\frac{1}{2}(l(s_i)+l(s_{i+1}))+I(\theta_i)\Big|<\frac{e^{(-L/2+10\ln{2})/2}\sin{(\theta/2)}}{L/2-\ln{2}}.$$ 
By Lemma \ref{lengthphase} (2), we have
$$\Big|l([t_1\cdots t_m])-\sum_{i=1}^ml(s_i)+\sum_{i=1}^m I(\theta_i)\Big|<\frac{me^{(-L/2+10\ln{2})/2}\sin{(\theta/2)}}{L/2-\ln{2}},$$ and $l([t_1\cdots t_m])\geq mL-mI(\theta)-m\geq 10\ln{2}$.
So  $$\Big|l([t_1\cdots t_m])-\sum_{i=1}^ml(t_i)\Big|<\frac{2me^{(-L/2+10\ln{2})/2}\sin{(\theta/2)}}{L/2-\ln{2}}\leq \frac{1}{400}.$$

We lift the closed geodesic $\gamma=[t_1\cdots t_m]$ to $\mathbb{H}^3$ to get a bi-infinite geodesic $l$, and lift the cyclic concatenation $t_1,\cdots,t_m$ to get a bi-infinite piecewise geodesic in $\mathbb{H}^3$ that is asymptotic to $l$ on both ends. For each $i$, we take one lift $\tilde{t}_i$ of $t_i$ in the above bi-inifinite piecewise geodesic, and denote its initial point by $z_i$. Then we want to bound $D_i=d(z_i,l)$.

For simplicity, we use $z$ and $D$ to denote $z_i$ and $D_i$ respectively. At first, we have $L_2=d(z,\gamma z)\leq \sum_{i=1}^ml(t_i)$ and the translation length of $\gamma$ on $l$ is $L_1=l([t_1\cdots t_m])$, with $0\leq L_2-L_1\leq \frac{1}{400}$ holds. By a direct computation in hyperbolic geometry, we have $$\cosh{L_2}\geq \cosh{L_1}(\cosh{D})^2-(\sinh{D})^2=(\cosh{L_1}-1)(\cosh{D})^2+1.$$

So 
$$(1+\frac{D^2}{2})^2\leq (\cosh{D})^2\leq \frac{\cosh{L_2}-1}{\cosh{L_1}-1}\leq e^{2(L_2-L_1)},$$ 
and 
$$D\leq \sqrt{2(e^{L_2-L_1}-1)}\leq \sqrt{2\times 2(L_2-L_1)}<\frac{1}{10}.$$
The fact $d(z,l)<\frac{1}{10}$ implies that $[t_1\cdots t_m]$ and $\cup_{i=1}^m t_i$ lie in the $\frac{1}{10}$-neighborhood of each other. Since $\ln{(\sqrt{2}+1)}<\frac{9}{10}$, $[t_1\cdots t_m]=[s_1\cdots s_m]$ lies in the $1$-neighborhood of $\cup_{i=1}^ms_i$.

For the moreover part, it follows from the fact that, in a hyperbolic triangle $ABC$, if $\angle BAC=\pi-\theta$, then the distance from $B$ to $AC$ is at most $\ln{\frac{1+\tan{\frac{\theta}{4}}}{1-\tan{\frac{\theta}{4}}}}$. So $BC$ and $AB\cup AC$ lie in the $(\ln{\frac{1+\tan{\frac{\theta}{4}}}{1-\tan{\frac{\theta}{4}}}})$-neighborhood of each other. 

The above observation implies that $\cup_{i=1}^m s_i$ and $\cup_{i=1}^mt_i$ lie in the $K=\max_{i=1,\cdots,m}{\{\ln{\frac{1+\tan{\frac{\theta_i}{4}}}{1-\tan{\frac{\theta_i}{4}}}}\}}$-neighborhood of each other. Then $[s_1\cdots s_m]=[t_1\cdots t_m]$ and $\cup_{i=1}^ms_i$ lie in the $(K+\frac{1}{10})$-neighborhood of each other.

\end{proof}

Now we prove a result on triangular generating sets of hyperbolic $3$-manifold groups. Although this result is not absolutely necessary for proving Theorem \ref{main1}, it makes the logic of the proof slightly simpler.

Let $G$ be a finitely presented group, and $S\subset G$ be a symmetric finite generating set of $G$. We say that $S$ is {\it a triangular generating set of $G$} if there is a presentation of $G$ whose generating set is $S$ and all relations have length at most $3$. It is equivalent to that, for any word of $S$ that represents the trivial element $e\in G$, it can be transformed to the trivial word of $S$ by relations of length at most $3$ (i.e. $s_1s_2=e$ or $s_1s_2s_3=e$ for $s_1,s_2,s_3\in S$). Note that every finite generating set of a finitely presentable group is contained in a finite triangular generating set. 

Let $M$ be a finite volume hyperbolic $3$-manifold, and let $*\in M$ be a base point. Then for each $g\in \pi_1(M,*)$, we define $|g|$ to be the length of the geodesic segment with endpoints at $*$ that represents $g$. Then we want to prove the following statement. 

\begin{proposition}\label{triangulargeneratingset}
Let $M$ be a finite volume hyperbolic $3$-manifold (closed or cusped), and let $*\in M$ be a base point. Then there exists $K>0$ only depends on $M$ and $*$, such that for any $k\geq K$, $$\mathscr{B}_k=\{g\in \pi_1(M,*)\ | \ |g|<k,\ g\ne e\}$$ is a triangular generating set of $\pi_1(M,*)$.
\end{proposition}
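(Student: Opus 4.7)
The plan is to build a triangular presentation of $\pi_1(M,*)$ on generators $\mathscr{B}_k$ via Tietze transformations, starting from a fixed finite triangular presentation. Since $\pi_1(M,*)$ is finitely presented, it admits a finite triangular generating set $T_0$ with triangular presentation $\langle T_0 \mid R_0 \rangle$: take any finite presentation and subdivide each relation by introducing auxiliary generators defined as length-$2$ products of existing ones. Put $K_0 = \max_{t \in T_0} |t|$, and let $M_0$ be the maximum $T_0$-word length of any element of the (finite) set $\mathscr{B}_{K_0+1}$.

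The key geometric input is a splitting lemma: there is $K_1 \ge K_0$ such that every $g \in \pi_1(M,*)$ with $|g| > K_1$ admits a factorization $g = g_1 g_2$ with $g_1, g_2 \ne e$ and $\max(|g_1|, |g_2|) < |g|$. To prove this, lift to the hyperbolic geodesic $\gamma$ from $\tilde{*}$ to $g\tilde{*}$ in $\tilde M = \mathbb{H}^3$. If some interior point $p$ of $\gamma$ projects into the thick part of $M$ at distance greater than $D_{\text{thick}} := \operatorname{diam}(\text{thick part of }M)$ from both endpoints of $\gamma$, then cocompactness of the thick part yields some $h \in \pi_1(M,*)$ with $d(h\tilde{*}, p) \le D_{\text{thick}}$, and $g = h \cdot (h^{-1}g)$ satisfies $|h|, |h^{-1}g| \le |g|/2 + D_{\text{thick}} < |g|$ once $|g| > 2 D_{\text{thick}}$. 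Otherwise $\gamma$ essentially consists of a single deep excursion into a horoball $H$ whose peripheral stabilizer $P \cong \mathbb{Z}^2$ lies in $\pi_1(M,*)$; working in the upper half-space model with $H$ based at $\infty$, pick $p \in P$ whose horospherical translation approximates a suitable fraction of the horizontal displacement from $\tilde{*}$ to $g\tilde{*}$ (within the lattice error, which is bounded by the diameter of a fundamental domain for $P$), and a direct hyperbolic trigonometric computation in the half-space gives both $|p|$ and $|p^{-1}g|$ strictly less than $|g|$ once $|g|$ exceeds a threshold depending only on the geometry of $M$ and the minimum height of points in the thick part.

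Set $K = \max(K_1, M_0 K_0) + 1$. For any $k \ge K$, I construct a triangular presentation $\langle \mathscr{B}_k \mid R \rangle$ starting from $\langle T_0 \mid R_0 \rangle$ and enumerating $\mathscr{B}_k \setminus T_0$ in order of increasing $|\cdot|$. For each $g \in \mathscr{B}_{K_1} \setminus T_0$ (a finite collection), pick a $T_0$-word $g = t_1 \cdots t_n$ with $n \le M_0$ and introduce the prefixes $u_i = t_1 \cdots t_i$ (all in $\mathscr{B}_{M_0 K_0} \subseteq \mathscr{B}_k$) with length-$3$ defining relations $u_i = u_{i-1} t_i$. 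For each remaining $g$ with $|g| > K_1$, apply the splitting lemma to get $g = g_1 g_2$ with $\max(|g_1|, |g_2|) < |g| < k$, so $g_1, g_2 \in \mathscr{B}_k$ were added earlier, and introduce $g$ with the length-$3$ defining relation $g = g_1 g_2$. This produces a triangular presentation of $\pi_1(M,*)$ on $\mathscr{B}_k$. The principal obstacle is the cusp case of the splitting lemma, specifically ensuring that the $\mathbb{Z}^2$ lattice approximation strictly decreases $|g|$; after reducing to the upper-half-space computation, one must choose the fraction of the horizontal displacement carefully depending on the relative heights of $\tilde{*}$ and $g\tilde{*}$, both of which are uniformly bounded above and below by constants of $M$.
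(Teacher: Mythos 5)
Your proposal follows the same two-step skeleton as the paper — a geometric ``split long elements'' lemma based on the dichotomy \emph{the geodesic arc passes through the compact part away from its endpoints} versus \emph{its middle is a single deep cusp excursion}, followed by bootstrapping from a fixed finite triangular generating set — but both steps are implemented genuinely differently. In the cusp case the paper (Lemma \ref{reducing}, Case I) shortcuts from the apex of the highest excursion down to the excursion's entry point on the horotorus, using the bounded Euclidean diameter of the cusp torus to get the quantitative estimate $|h_i|<|g|-\tfrac12$; you instead split off a peripheral $\mathbb{Z}^2$ translation approximating a fraction $t$ of the horizontal displacement, which works precisely because (as you note) the two endpoints lie within a bounded distance of the horoball and outside it, so their Euclidean heights are pinched between constants of $(M,*)$ and one can take $t$ small compared to the square root of the height ratio. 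The paper's uniform decrease by $\tfrac12$ is then what makes its combinatorial step work: it rewrites an arbitrary relation in $\mathscr{B}_k$, lowering the maximal length by $\tfrac12$ per round until everything lands in $\mathscr{B}_D\subset S_{\text{tri}}$. Your Tietze/enumeration construction, by contrast, only needs strict decrease, since finiteness of $\mathscr{B}_k$ and processing generators in order of increasing $|\cdot|$ guarantee termination, and it has the added benefit of producing an explicit triangular presentation on $\mathscr{B}_k$; the price is more bookkeeping in the base layer (prefix subdivision of $T_0$-words).

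A few repairs are needed but none is a real gap: (i) $M_0$ must be the maximal $T_0$-word length over $\mathscr{B}_{K_1+1}$, not $\mathscr{B}_{K_0+1}$ (no circularity, since $K_1$ is purely geometric); (ii) elements with $|g|=K_1$ fall between your two regimes, and prefixes $u_i$ equal to $e$ or to earlier generators need the obvious adjustment; (iii) in the thick case the bound $|g|/2+D_{\text{thick}}$ presumes $p$ is the midpoint — for an arbitrary thick interior point far from both endpoints the correct estimates are $|h|\le d(\tilde*,p)+D_{\text{thick}}$ and $|h^{-1}g|\le d(p,g\tilde*)+D_{\text{thick}}$, which are still $<|g|$ by the distance hypothesis on $p$; (iv) if ``thick part'' is taken in the Margulis sense you must absorb the (compact) Margulis tubes into the compact region, as the paper does by defining the thick part as the complement of the cusp neighborhoods only, so that the second case really is a single horoball excursion; and (v) as in the paper, normalize so that $*$ lies in (or at bounded distance from) the thick part, so the height bounds depend only on $M$ and $*$.
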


To prove Proposition \ref{triangulargeneratingset}, we first prove the following lemma.

\begin{lemma}\label{reducing}
Let $M$ be a finite volume hyperbolic $3$-manifold (closed or cusped), and let $*\in M$ be a base point. Then there exists $D>0$ only depend on $M$ and $*$, such that for any $g\in \pi_1(M,*)$ with $|g|\geq D$, there exist $h_1,h_2\in \pi_1(M,*)$, such that $g=h_1h_2$ and $|h_1|,|h_2|<|g|-\frac{1}{2}$.
\end{lemma}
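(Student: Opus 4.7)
Plan:

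Let $M_{\rm thick}$ denote the thick part of $M$, which is compact with some diameter $D_0 < \infty$ since $M$ has finite volume. Parametrize the geodesic loop at $*$ representing $g$ as $\gamma\colon [0, L] \to M$ with $L = |g|$. The plan is to find an interior point of $\gamma$ that is close, in the universal cover, to a lift of $*$, and use it to split $g = h_1 h_2$ with both factors shorter than $|g|$ by at least $1/2$.

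The easy case is when some $t \in (D_0 + 1, L - D_0 - 1)$ has $\gamma(t) \in M_{\rm thick}$: I would take a path $\tau$ of length at most $D_0$ in $M_{\rm thick}$ from $\gamma(t)$ to $*$ and set $h_1 = [\gamma|_{[0, t]} \cdot \tau]$, $h_2 = [\tau^{-1} \cdot \gamma|_{[t, L]}]$. Since the geodesic representative length is bounded by the chosen loop length, $|h_1|, |h_2| \leq L - 1 < L - 1/2$.

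Otherwise the entire middle of $\gamma$ lies in a single cusp excursion $[c, d]$ into one cusp $C$, with $c \leq D_0 + 1$ and $d \geq L - D_0 - 1$. Lifting $\gamma$ to $\mathbb{H}^3$, the excursion lies in a horoball $H$; by Lemma~\ref{cuspheightvslength}, the maximum height of the excursion grows linearly in $L$, and the horospherical distance between entry and exit grows like $e^{L/2}$. Using the translation lattice $P \cong \mathbb{Z}^2$ stabilizing $H$ (with some fundamental domain diameter $D_\partial$ depending on $M$), I would choose a nontrivial $p \in P$ so that $p\tilde *$ has horospherical projection close to an appropriately chosen intermediate point of the projection of the lifted excursion, take $h_1 \in \pi_1(M, *)$ to be the element sending $\tilde *$ to $p \tilde *$, and set $h_2 = h_1^{-1} g$. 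A direct calculation in the upper half-space distance formula shows that for a suitable choice of this intermediate position, both $|h_1| = d(\tilde *, p\tilde *)$ and $|h_2| = d(p\tilde *, g\tilde *)$ are bounded by $L - 1/2 - \delta$ for some $\delta > 0$, once $L$ exceeds a constant $D = D(M, *)$.

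The hard part is Case 2: balancing $|h_1|$ and $|h_2|$ simultaneously requires choosing the intermediate horospherical position carefully relative to the height of $\tilde *$ above $\partial H$, which can be as large as $c \leq D_0 + 1$. The favorable quantitative input is that halving the horospherical translation of a cusp element saves approximately $2\ln 2 > 1$ in the hyperbolic length contribution of the corresponding excursion, leaving enough slack to absorb the error terms depending on $D_0$ and $D_\partial$ and still gain more than $1/2$ on both factors.
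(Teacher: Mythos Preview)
Your Case~1 is fine and matches the paper's low-height case (after arranging $*\in M_{\mathrm{thick}}$, which the paper does by shrinking cusps). The gap is in Case~2. With $H=\{z\ge1\}$, $\tilde*=(0,0,a)$, $g\tilde*=(u,v,b)$ and your choice $h_1=p\in P$, the point $p\tilde*$ sits at height~$a$, and for large $L$ one computes
\[
|h_1|-L\;\approx\;2\ln\frac{|(m,n)|}{|(u,v)|}+\ln\frac{b}{a},\qquad
|h_2|-L\;\approx\;2\ln\frac{|(u,v)-(m,n)|}{|(u,v)|}.
\]
Optimizing over the split point gives at best $|h_i|-L\approx -2\ln\bigl(1+\sqrt{a/b}\,\bigr)$, which is below $-\tfrac12$ only when $b/a\lesssim12$. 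But the hypotheses $c\le D_0+1$ and $L-d\le D_0+1$ force only $a,b\in[e^{-(D_0+1)},1]$, so $b/a$ can be as large as $e^{D_0+1}$; whenever the thick part has diameter larger than about~$3$ there are $g$ for which no parabolic $p$ yields the required gain on both sides. Your $2\ln2$ heuristic is exactly the case $a=b$, and the defect $\ln(b/a)$ does not decay as $L\to\infty$, so enlarging $D$ does not help. (The idea is salvageable: when $b>a$ take instead $h_1=p'g$ with $p'\in P$, so that $h_1\tilde*$ lies at the larger height $b$; the same computation then gives gain $2\ln(1+\sqrt{b/a})>2\ln2$. But this is not what you proposed.)

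The paper sidesteps this by splitting at the \emph{highest} point $x$ of the excursion rather than by a lattice translate. Writing $g=g_1g_2g_3g_4$ with $g_2g_3$ the excursion through $x$ and $|g_1|\le|g_4|$, it takes $\beta$ to be the geodesic from $x$ back to the entry point of the excursion and sets $h_1=[g_1g_2\beta\,g_1^{-1}]$, $h_2=[g_1\beta^{-1}g_3g_4]$. The estimate $|g_2|-|\beta|>\tfrac12$ is purely local at the top of the cusp (it needs only that the height exceed a constant depending on the horotorus diameter), and the conjugation by $g_1$ --- so that $h_1$ is merely conjugate to a parabolic rather than parabolic itself --- is precisely what absorbs the asymmetry between the two tails that defeats the pure-parabolic split.
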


\begin{proof}
Here we only prove this lemma for cusped hyperbolic $3$-manifolds, and the proof for the closed manifold case is actually easier. By shrinking the cusps of $M$ if necessary, we can assume that $*$ lies in the thick part of $M$. In this proof, we define the height function on $M$ with respect to these new cusps.

We take the boundaries of all cusps of $M$, and denote the maximum of their diameters (under the induced Euclidean metric) by $D_1$. Let $D_2$ be the diameter of the thick part of $M$, and let $$D_3=\max{\{\frac{1}{2}\ln{6}, \ln{(1+5D_1^2)}\}}.$$ Then we take $D=2(D_2+D_3)+2$. 

For any $g\in \pi_1(M,*)$ with $|g|\geq D$, we want to find the triangular relation $g=h_1h_2$ according to the height of $g$. 

{\bf Case I.} The height of $g$ is at least $D_3$. Let $x$ be one of the highest points on $g$, with height $h\geq D_3$, let $C$ be the cusp of $M$ that contains $x$ and let $T$ be its boundary. Then $g$ is a concatenation of four geodesic segments $g=g_1g_2g_3g_4$, where $x$ is the terminal point of $g_2$ and the initial point of $g_3$, while $g_2g_3$ is the intermediate cusp excursion of $g$ that contains $x$ and lies in $C$. Then Lemma \ref{cuspheightvslength} implies 
$$|g_2|=|g_3|=\ln{(e^h+\sqrt{e^{2h}-1})}=h+\ln{(1+\sqrt{1-e^{-2h}})}.$$ 

Without loss of generality, we assume that $|g_1|\leq |g_4|$.  Let $\alpha$ be the shortest geodesic segment from $x$ to $T$ (which is perpendicular to $T$), with $\alpha \cap T=y$, and let $z$ be the initial point of $g_2$ that lies in $T$. Since the diameter of $T$ is at most $D_1$, the distance between $y$ and $z$ is at most $D_1$ (under the Euclidean metric on $T$). Let $\beta$ be the shortest geodesic from $x$ to $z$, then a computation in hyperbolic geometry gives 
$$|\beta|<h\sqrt{1+(\frac{D_1}{e^{h}-1})^2}<h(1+\frac{D_1^2}{2(e^h-1)^2}).$$
So we have the following estimate $$|g_2|-|\beta|>\ln{(1+\sqrt{1-e^{-2h}})}-\frac{hD_1^2}{2(e^h-1)^2}>\ln{1.9}-0.1>\frac{1}{2}.$$ Here the second inequality holds by the choice of $D_3$ and the fact that $h>D_3$.

Then the concatenations $g_1g_2\beta g_1^{-1}$ and $g_1\beta^{-1}g_3g_4$ represent elements in $\pi_1(M,*)$ and they are the desired elements $h_1$ and $h_2$. The condition $g=h_1h_2$ obviously holds. Since $|g_1|\leq |g_4|$, $|g_2|=|g_3|$ and $|g_2|-|\beta|>\frac{1}{2}$, we have 
$$|g|-|h_1|\geq (|g_1|+|g_2|+|g_3|+|g_4|)-(|g_1|+|g_2|+|\beta|+|g_1|)\geq |g_3|-\beta>\frac{1}{2},$$ and 
$$|g|-|h_2|\geq (|g_1|+|g_2|+|g_3|+|g_4|)-(|g_1|+|\beta|+|g_3|+|g_4|)\geq |g_2|-\beta>\frac{1}{2}.$$

{\bf Case II.} If the height of $g$ is at most $D_3$, let $x$ be the middle point of $g$ and it divides $g$ to a concatenation $g=g_1g_2$ with $|g_1|=|g_2|$. Since the height of $x$ is at most $D_3$ and the diameter of the thick part of $M$ is $D_2$, there exists a geodesic segment $\beta$ from $x$ to $*$ of length at most $D_2+D_3$.

Then we have $$|g_1|-|\beta|\geq \frac{D}{2}-(D_2+D_3)>\frac{1}{2},$$ by the choice of $D$.
The desired elements $h_1,h_2\in \pi_1(M,*)$ are represented by concatenations $g_1\beta$ and $\beta^{-1}g_2$ respectively. The estimates are given by $$|g|-|h_1|>(|g_1|+|g_2|)-(|g_1|+|\beta|)=|g_2|-|\beta|>\frac{1}{2}$$ and $$|g|-|h_2|>(|g_1|+|g_2|)-(|\beta|+|g_2|)=|g_1|-|\beta|>\frac{1}{2}.$$ 
\end{proof}

Now we are ready to prove Proposition \ref{triangulargeneratingset}.

\begin{proof}[Proof of Proposition \ref{triangulargeneratingset}]

We take $D>0$ to be the constant given in Lemma \ref{reducing}, and take a finite generating set $S\subset \pi_1(M,*)$. Then $S\cup \mathscr{B}_{D}$ is also a finite generating set of $\pi_1(M,*)$. We extend $S\cup \mathscr{B}_{D}$ to a triangular generating set $S_{\text{tri}}$ of $\pi_1(M,*)$, then $S_{\text{tri}}$ is contained in $\mathscr{B}_K$ for some $K>0$, which is our desired constant $K$. Note that $K\geq D$ holds.

For any $k\geq K$, suppose that  we have a relation $g_1g_2\cdots g_n=e$ with $g_1,g_2,\cdots,g_n\in \mathscr{B}_k$. Then for any $g_i$ with $|g_i|\geq D$, Lemma \ref{reducing} gives us a triangular relation $g_i=h_i^{1}h_i^2$ in $\mathscr{B}_k$, with $|h_i^1|, |h_i^2|<|g_i|-\frac{1}{2}$. So we can transform the relation $g_1g_2\cdots g_n=e$ to another relation $h_1h_2\cdots h_m=e$ by triangular relations in $\mathscr{B}_k$, such that 
$$\max{\{|h_j|\ |\ j=1,\cdots,m\}}\leq \max{\{|g_i|\ |\ i=1,\cdots,n\}}-\frac{1}{2}.$$
Then we repeat this process inductively. Since the length of the longest geodesic segment decreases by $\frac{1}{2}$ each time, this process will end up with $k_1\cdots k_l=e$ such that $|k_i|<D$ for all $i=1,\cdots,l$.

Here we have $k_i\in  \mathscr{B}_{D}\subset S_{\text{tri}}$ holds for all $i$. Since $S_{\text {tri}}$ is a triangular generating set, the relation $k_1\cdots k_l=e$ can be transformed to the trivial word by triangular relations in $S_{\text{tri}}\subset \mathscr{B}_{K}\subset \mathscr{B}_{k}$. So $\mathscr{B}_{k}$ is a triangular generating set of $\pi_1(M,*)$.
\end{proof}

\bigskip
\bigskip

\section{Connection principle with height countrol}\label{connection_principle}

In this section, we will prove a connection principle with height control in cusped hyperbolic $3$-manifolds. It is an enhanced version of the connection principle in closed hyperbolic $3$-manifolds (e.g. Lemma 4.15 of \cite{LM}). Actually most ideas in the proof of Theorem \ref{connection_principle_with_height} are scattered in \cite{KW1} and \cite{KW2}. Since this result is crucial for this paper and we need a very precise height control, we give a full proof of our connection principle based on a more fundamental result in \cite{KW2} (Theorem 3.9 of \cite{KW2}).

For a point $p\in M$ and a unit tangent vector $\vec{t}_p\in T_pM$, we define a number $\alpha_{\vec{t}_p}\in[0,\frac{\pi}{2}]$ as the following.
\begin{itemize}
\item If $p$ lies in the thick part of $M$, or $p$ lies in the thin part and $\vec{t}_p$ points down the cusp, we define $\alpha_{\vec{t}_p}=0$. Here ``$\vec{t}_p$ points down the cusp'' means that, for the cusp whose horotorus boundary goes through $p$, $\vec{t}_p$ points outward the cusp (as a manifold with boundary).
\item If $p$ lies in the thin part of $M$ and $\vec{t}_p$ points up the cusp, we define $\alpha_{\vec{t}_p}\in [0,\frac{\pi}{2}]$ to be the angle between $\vec{t}_p$ and the horotorus going through $p$.
\end{itemize}

\begin{theorem}\label{connection_principle_with_height}
For any oriented cusped hyperbolic $3$-manifold $M$, there exist constants $T>0$, $C>2$ and $0<\kappa\leq 1$ only depend on $M$, such that for any $\delta\in (0,10^{-2})$ and any $t\geq \max{\{T,C(\ln{\frac{1}{\delta}}+1)\}}$, the following holds. 

Let $p,q\in M$ be two points in $M$ with height $h_p,h_q<\kappa t$. Let $\vec{t}_p,\vec{n}_p\in T_pM$ and $\vec{t}_q,\vec{n}_q\in T_qM$ be pairs of orthogonal unit vectors at $p$ and $q$ respectively. Then for any $\theta \in \mathbb{R}/2\pi \mathbb{Z}$, there exists an oriented $\partial$-framed segment $\mathfrak{s}$ from $p$ to $q$ such that the following holds.
\begin{enumerate}
\item The length and phase of $\mathfrak{s}$ are $\delta$-close to $t$ and $\theta$ respectively.
\item The initial direction and framing of $\mathfrak{s}$ are $\delta$-close to $\vec{t}_p$ and $\vec{n}_p$ respectively, the terminal direction and framing of $\mathfrak{s}$ are $\delta$-close to $\vec{t}_q$ and $\vec{n}_q$ respectively.
\item The initial and terminal cusp excursions of $\mathfrak{s}$ have height at most $$h_p+\min{\left\{\ln{(\sec{\alpha_{\vec{t}_p}})},\ln{\frac{1}{\delta}}+C\right\}}\ \text{and}\ h_q+\min{\left\{\ln{(\sec{\alpha_{(-\vec{t}_q)}})},\ln{\frac{1}{\delta}}+C\right\}}$$ respectively, if they exist.
\item All intermediate cusp excursions of $\mathfrak{s}$ have height at most $$\frac{1}{2}\ln{t}+C\ln{\frac{1}{\delta}}+C.$$
\end{enumerate}
\end{theorem}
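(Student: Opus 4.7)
\medskip
\noindent\textbf{Proof proposal.}
My plan is to follow the outline indicated by the author and combine the proofs of Theorem 3.11 of \cite{KW2} and Theorem 3.2 of \cite{KW1}, taking the quantitative counting/equidistribution theorem (Theorem 3.9 of \cite{KW2}) as the core input. The overall scheme is to first produce a large Liouville-measure family of $\partial$-framed segments that meet items (1) and (2), then discard the members whose cusp excursions violate (3) or (4), and finally take any surviving segment.

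First I would reformulate the desired output on the frame bundle $\text{SO}(M)$: a $\partial$-framed segment from $p$ to $q$ of length close to $t$, phase close to $\theta$ and boundary frames close to the prescribed ones corresponds to a pair of frames in the $\delta$-neighborhoods of $(p,\vec{t}_p,\vec{n}_p)$ and $(q,\vec{t}_q,\vec{n}_q)$ related by the geodesic flow for time close to $t$ post-composed with a framing rotation by $\theta$. Theorem 3.9 of \cite{KW2}, combined with exponential mixing of the geodesic flow on $\text{SO}(M)$, identifies the Liouville measure of such pairs as a main term proportional to the product of the measures of the two $\delta$-balls, plus an error decaying like $e^{-\eta t}$ for some $\eta>0$. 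The hypothesis $t \geq C(\ln\tfrac{1}{\delta}+1)$, with $C$ large relative to $\eta$ and the dimension, is exactly what is needed for the main term to dominate this error, while $h_p, h_q < \kappa t$ with $\kappa$ small prevents the $\delta$-balls from sliding into regions where Liouville mass decays too fast in the cusp direction.

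Next I would prune this abundant family. For intermediate cusp excursions: in each cusp the set of frames of height at least $h$ has Liouville measure $O(e^{-2h})$, so by a Fubini/Markov estimate the measure of length-$t$ segments containing some intermediate excursion to height at least $h$ is $O(te^{-2h})$ per cusp. Setting $h = \tfrac{1}{2}\ln t + C\ln\tfrac{1}{\delta} + C$ turns this bound into $O(\delta^{2C}e^{-2C})$, which is negligible compared to the main term once $C$ is large. For the initial excursion: in the upper half space model of a cusp, a geodesic starting at height $h_p$ with tangent $\vec{t}_p$ making angle $\alpha_{\vec{t}_p}$ with the horotorus reaches maximum height exactly $h_p + \ln(\sec\alpha_{\vec{t}_p})$ before re-entering the thick part, giving one side of the minimum in (3). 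Alternatively, the $\delta$-slack in item (2) lets me perturb $\vec{t}_p$ to a nearby direction whose angle with the horotorus is at least of order $\delta$, which caps the height by $h_p + \ln\tfrac{1}{\delta} + C$; taking whichever bound is smaller yields the $\min$ in (3), and the set of initial directions where no admissible perturbation achieves it has measure $O(\delta^{2})$, again dominated by the main term. The terminal excursion is handled symmetrically using $-\vec{t}_q$.

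The main obstacle I expect is the constant balancing in the first step. The lower bound on the main term of the counting estimate is only a polynomial in $\delta$ times the Liouville measures of the $\delta$-balls, and these measures themselves depend on $h_p, h_q$; I need this to strictly dominate both the mixing error $e^{-\eta t}$ and the bad-set measures from intermediate and boundary excursions. Getting the sharp $\tfrac{1}{2}\ln t$ in (4), rather than a coarser $\ln t$, requires applying the Markov inequality to the height function itself (whose cusp decay is $e^{-2h}$) rather than to a blunt cusp indicator, and keeping track that the Sobolev norm of the localizing test function in the equidistribution theorem grows only polynomially in $1/\delta$ and in $e^{h_p}, e^{h_q}$. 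This careful polynomial bookkeeping, rather than any new geometric idea, is what determines $T$, $C$, and $\kappa$.
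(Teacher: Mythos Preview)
Your overall architecture---produce many segments via Theorem~3.9 of \cite{KW2}, restrict initial/terminal directions for (3), then discard segments violating (4)---matches the paper, and your treatment of the boundary excursions (shrinking the $\delta$-ball of initial directions according to the cases of $\alpha_{\vec t_p}$) is essentially what the paper does. The gap is in the intermediate-excursion step. You invoke a Fubini/Markov estimate on Liouville measure to bound the ``measure of length-$t$ segments'' with a high excursion by $O(te^{-2h})$. But the objects at stake are not a continuous family carrying Liouville measure: they are the finitely many geodesic arcs from $p$ to $q$ indexed by $\gamma\in\Gamma$, and Theorem~3.9 of \cite{KW2} is a \emph{counting} statement $\#(S_t\cap g^{-1}\Gamma h)\approx e^{2t}\eta_E(S)$. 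A Liouville-measure bound controls the expected time a generic geodesic spends above height $h$; it does not by itself bound how many of these discrete arcs make a high excursion, because the ``bad'' subset on which you would re-apply the counting theorem is defined through the dynamical behavior of the arc in $M$, not through the group element, and is not a subset of the fixed compact $K\subset E$ to which Theorem~3.9 applies. Closing this gap via measure would require effective equidistribution of the arcs along their whole length, which is strictly stronger than the endpoint count you have invoked.

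The paper sidesteps this by counting bad arcs combinatorially. An arc with an intermediate excursion to height in $[h,h+1]$ is rewritten as $\gamma_-\gamma_0\gamma_+$, where $\gamma_\pm$ are the shortest geodesics from $p$ (resp.\ $q$) to the cusp horotorus and $\gamma_0$ is the excursion. One then counts choices for each piece using only elementary hyperbolic geometry: a horoball-packing argument bounds the number of $\gamma_-$ of length $\approx l$ by $O(e^{2l})$, a lattice-point count on the Euclidean horotorus bounds the choices for $\gamma_0$ by $O(e^{2h})$, and similarly for $\gamma_+$. Multiplying and summing over $l$ and over $h\ge h_0$ yields at most $O\bigl(te^{2(t-h_0)}\bigr)$ bad arcs, which is dominated by the $\sim e^{2t}\delta^6$ good count exactly when $h_0=\tfrac12\ln t+C\ln\tfrac1\delta+C$; this direct count, not a sharper Markov inequality, is where the constant $\tfrac12$ in item~(4) comes from.
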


The proof of Theorem \ref{connection_principle_with_height} uses Kahn and Wright's work on Lie groups in \cite{KW2}. To state Kahn and Wright's result, we need to set up some notations in \cite{KW2}, and we will specialize to the case that the Lie group $G=\text{PSL}(2,\mathbb{C})=\text{Isom}_+(\mathbb{H}^3)$. 

Since $G$ can be identified with the frame bundle of the $3$-dimensional hyperbolic space $\text{SO}(\mathbb{H}^3)$ (via choosing a base frame $f_0\in \text{SO}(\mathbb{H}^3)$), the hyperbolic metric on $\mathbb{H}^3$ induces a left-invariant Riemannian metric on $G$. This Riemannian metric induces a metric space structure on $G$ and we denote this metric by $d$.

Let $M$ be a cusped hyperbolic $3$-manifold, then it is identified with $\Gamma \backslash \mathbb{H}^3$ for a non-uniform lattice $\Gamma<G$. For any $h\in G$, we define $$\epsilon(h)=\min{(\frac{1}{2}\inf_{\gamma\in \Gamma\setminus \{1\}}d(h,\gamma h),1)}.$$

Let $\mathfrak{g}=\text{sl}(2,\mathbb{C})$ be the Lie algebra of $G$. Take $A=
\begin{pmatrix}
\frac{1}{2} & 0 \\
0 & -\frac{1}{2}
\end{pmatrix}\in \mathfrak{g}$. Under the adjoint action of $A$, $\mathfrak{g}$ splits as $\mathfrak{g}=\mathfrak{h}_-\oplus \mathfrak{h}_0\oplus \mathfrak{h}_+$, where $$\mathfrak{h}_-=\text{span}_{\mathbb{C}}
\begin{pmatrix}
0 & 0\\
1 &0
\end{pmatrix},\ \mathfrak{h}_0=\text{span}_{\mathbb{C}}
\begin{pmatrix}
\frac{1}{2} & 0\\
0 & -\frac{1}{2}
\end{pmatrix},\ \mathfrak{h}_+=\text{span}_{\mathbb{C}}
\begin{pmatrix}
0 & 1\\
0 & 0
\end{pmatrix}$$ are the eigenspaces of $\text{ad}_A$ corresponding to eigenvalues $-1,0,1$ respectively. All of $\mathfrak{h}_-, \mathfrak{h}_0,\mathfrak{h}_+$ are Lie subalgebras of $\mathfrak{g}$, and their corresponding Lie subgroups are $$H_-=\left\{
\begin{pmatrix}
1 & 0\\
\mu & 1
\end{pmatrix}\ \biggr\rvert\ \mu\in \mathbb{C}\right \}, H_0=\left\{
\begin{pmatrix}
\lambda & 0\\
0 & \lambda^{-1}
\end{pmatrix}\ \biggr\rvert\ \lambda \in \mathbb{C}\setminus \{0\}\right \}, H_+=\left\{
\begin{pmatrix}
1 & \mu\\
0 & 1
\end{pmatrix}\ \biggr\rvert\ \mu\in \mathbb{C}\right \}$$ respectively. Moreover, we have a constant $K_A=\text{tr}_{\mathbb{R}}(\text{ad}_A|_{\mathfrak{h}_+})$ and it equals $2$ in our case. The left-invariant Riemannian metric on $G$ induces left-invariant metrics on $H_-$, $H_0$ and $H_+$, and we denote the corresponding volume forms on $\mathfrak{h}_-,\mathfrak{h}_0, \mathfrak{h}_+$ by $\eta_{ \mathfrak{h}_-}, \eta_{ \mathfrak{h}_0},\eta_{ \mathfrak{h}_+}$, respectively. 

Next, we consider the following Lie subalgebra of $\mathfrak{g}$: $$\mathfrak{e}_+=\mathfrak{e}_-=\text{span}_{\mathbb{R}}\left\{
\begin{pmatrix}
i & 0\\
0 & -i
\end{pmatrix}, \begin{pmatrix}
0 & i\\
i & 0
\end{pmatrix},\begin{pmatrix}
0 & -1\\
1 & 0
\end{pmatrix}
\right\}.$$ We take two copies of the Lie subalgebra here since $\mathfrak{e}_+$ may not be same with $\mathfrak{e}_-$ in the general setting of \cite{KW1}. Here we need condition that the projection $\pi_{\mathfrak{h}_+}:\mathfrak{e}_+ \to \mathfrak{h}_+$ ($\pi_{\mathfrak{h}_-}:\mathfrak{e}_- \to \mathfrak{h}_-$) induced by the direct sum decomposition $\mathfrak{g}=\mathfrak{h}_-\oplus \mathfrak{h}_0\oplus \mathfrak{h}_+$ is surjective and the kernel is contained in $\mathfrak{h}_0$. The Lie subgroup $E_+=E_-$ corresponding to $\mathfrak{e}_+=\mathfrak{e}_-$ is the stabilizer of the point $(0,0,1) \in \mathbb{H}^3$, under the upper-half space model. So $E_+=E_-\cong \text{SO}(3)$ holds.

Let $E_{0\pm}=E_{\pm}\cap H_0=\left\{
\begin{pmatrix}
e^{i\theta} & 0 \\
0 & e^{-i\theta}
\end{pmatrix}\ \biggr\rvert\ \theta \in \mathbb{R} \right\}$ and let $E_0=H_0$, then we define $$E=E_-\times E_0\times E_+/\sim,$$ with $(e_-e_{0-},e_0,e_{0+}e_+)\sim (e_-,e_{0-}e_0e_{0+},e_+)$ for $e_{0-}\in E_{0-},e_{0+}\in E_{0+}$. 

Now we define a measure on $E$. Let $\mathfrak{e}_{0\pm}$ be the Lie algebras of $E_{0\pm}$, then they are also kernels of $\pi_{\mathfrak{h}_{\pm}}:\mathfrak{e}_{\pm}\to \mathfrak{h}_{\pm}$. Let $\hat{\mathfrak{e}}_{\pm}$ be a complement of $\mathfrak{e}_{0\pm}$ in $\mathfrak{e}_{\pm}$. For example, we can take $\hat{\mathfrak{e}}_{\pm}=\text{span}_{\mathbb{R}}\left\{
\begin{pmatrix}
0 & i\\
i & 0
\end{pmatrix}, \begin{pmatrix}
0 & -1\\
1 & 0
\end{pmatrix}
\right\}.$ Then $\pi_{\mathfrak{h}_{\pm}}|_{\hat{\mathfrak{e}}_{\pm}}:\hat{\mathfrak{e}}_{\pm}\to \mathfrak{h}_{\pm}$ is an isomorphism, and we denote $\eta_{\hat{\mathfrak{e}}_{\pm}}=(\pi_{\mathfrak{h}_{\pm}}|_{\hat{\mathfrak{e}}_{\pm}})^{-1}(\eta_{\mathfrak{h}_{\pm}})$. In our case, $\eta_{\hat{\mathfrak{e}}_{\pm}}$ equals $\frac{1}{4}$ of the natural volume form on $\hat{\mathfrak{e}}_{\pm}$. Then $\eta_{\hat{\mathfrak{e}}_-}\wedge \eta_{\mathfrak{h}_0}\wedge \eta_{\hat{\mathfrak{e}}_+}$ defines a volume form on $T_eE$ and it is independent of the choice of $\hat{\mathfrak{e}}_{\pm}$, where $e$ is the equivalent class of the identity element of $E_-\times E_0\times E_+$. By left translations of $E_-, E_0$ and right translation of $E_+$, this volume form on $T_0E$ gives a volume form $\eta_E$ on $E=E_-\times E_0\times E_+/\sim$. This volume form is well-defined since both $E_+$ and $E_-$ are compact and $E_0$ is abelian, and any left-invariant form on each of them is also right-invariant. The volume form $\eta_E$ also induces a measure on $E$ and we denote it by the same notation.

Now we are ready to state a result in \cite{KW2}, which is the main ingredient for proving Theorem \ref{connection_principle_with_height}.

\begin{theorem}\label{counting}(\cite{KW2} Theorem 3.9)
Let $K\subset E$ be compact, there exists constants $t_0=t_0(E)>0$, $a=a(E)>0$ only depend on $E$, $q=q(M)>0$ only depends on the manifold $M$ and $c=c(K,M)>0$ only depends on $K$ and $M$, such that the following hold.

For any $S\subset K$, let $$S_t=\{a_- \exp{(tA)} a_0a_+\ |\ [(a_-,a_0,a_+)]\in S \}.$$ For any $t\geq t_0$, we denote $\delta'=ce^{-aq t}$. Then for any pair of elements $g,h\in G$ with $\epsilon(g),\epsilon(h)>\delta'$, we have
$$(1-\delta')\eta_{E}(\mathcal{N}_{-\delta'}(S))<e^{-tK_A}\#(S_t\cap g^{-1}\Gamma h)<(1+\delta')\eta_{E}(\mathcal{N}_{\delta'}(S)).$$ Here $\mathcal{N}_{-\delta'}(S)$ and $\mathcal{N}_{\delta'}(S)$ denote the inner and outer $\delta'$-neighborhoods of $S$ in $E$ respectively.
\end{theorem}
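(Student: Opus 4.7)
The plan is to translate the existence problem into the lattice–point counting estimate of Theorem~\ref{counting} and then to discard the elements whose carrier segment climbs too high in a cusp. After choosing and lifting frames $f_p=(p,\vec t_p,\vec n_p)$ and $f_q=(q,\vec t_q,\vec n_q)$ to elements $g,h\in G$ under the identification $G\cong \mathrm{SO}(\mathbb H^3)$, an oriented $\partial$-framed segment from $p$ to $q$ with the desired initial/terminal frames, length, and phase corresponds precisely to an element $\gamma\in\Gamma$ with $g^{-1}\gamma h\in S_t$ for a specific shape $S\subset E$. Concretely, I would take $S=S(\delta,\theta)$ to be the set of triples $[(a_-,a_0,a_+)]$ with $a_\pm$ lying in a $(\delta/4)$-neighbourhood of the identity in $E_\pm/E_{0\pm}$ (controlling direction/framing $\delta$-closeness), and $a_0=\mathrm{diag}(\lambda,\lambda^{-1})$ lying in a $(\delta/4)$-neighbourhood of the element of $H_0$ that encodes the target length $t$ (via $|\lambda|$) and phase $\theta$ (via $\arg\lambda$). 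A direct volume computation then gives $\eta_E(S)\gtrsim \delta^6$, and the definitions of $S_t$ and $\mathfrak h_\pm$ insure that each $\gamma\in g^{-1}\Gamma h\cap S_t$ projects to an oriented $\partial$-framed segment in $M$ satisfying conditions (1) and (2).

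Next I would verify the hypotheses of Theorem~\ref{counting}. Choosing $\kappa<aq$ and $C$ large enough guarantees that for $h_p,h_q<\kappa t$ one has $\epsilon(g),\epsilon(h)\geq e^{-\kappa t}>\delta'=ce^{-aqt}$, while $t\geq C(\ln(1/\delta)+1)$ forces $\delta'<\delta/4$. Theorem~\ref{counting} then yields
\[
\#\bigl(S_t\cap g^{-1}\Gamma h\bigr)\;\geq\;(1-\delta')\,e^{tK_A}\,\eta_E(\mathcal N_{-\delta'}(S))\;\gtrsim\;\delta^{6}\,e^{2t}.
\]
The main technical step is to show that among these many elements, most have no intermediate cusp excursion of height exceeding $H:=\tfrac12\ln t+C\ln(1/\delta)+C$. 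A lift of such a bad segment must enter some $\Gamma$-translate of a horoball whose apex has height $\geq H$. By a standard horospherical orbit count in a non-uniform lattice, the number of $\Gamma$-translates of a fixed cusp horoball of apex-height $\geq H$ whose thickened neighbourhood meets a geodesic segment of length at most $t+1$ in the universal cover is $O(te^{-2H})$. To turn this into a count of \emph{bad} lattice elements $\gamma$, I would apply Theorem~\ref{counting} to a slight enlargement $\widetilde S\supset S$ that still records the same shape data but is intersected with a thin tubular horoball region; this gives at most $O(te^{-2H})\cdot e^{2t}\eta_E(S)$ bad elements. The choice of $H$ makes this a fraction strictly less than $\tfrac12$ of the total count, so a good $\gamma$ survives.

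Finally, the initial and terminal cusp excursions are handled deterministically: once the initial direction of the constructed segment is $\delta$-close to $\vec t_p$, a direct computation in the upper half-space (essentially Lemma~\ref{cuspheightvslength} applied to a perturbed angle) shows that the excursion height is at most $h_p+\min\{\ln\sec\alpha_{\vec t_p},\ln(1/\delta)\}+O(1)$, absorbing the $O(1)$ into $C$; the analogous bound at $q$ uses $-\vec t_q$. Combining conditions (1)–(4), the surviving $\gamma$ yields the required oriented $\partial$-framed segment $\mathfrak s$. I expect the hard part to be the Kleinian lattice estimate in paragraph two: tracking constants carefully across all cusps and all possible subintervals where a deep excursion could occur, and reconciling these with the factor $\delta^6$ in the main count so that $C,T,\kappa$ can be chosen \emph{uniformly} in $\delta$ and $t$ within the ranges prescribed by the theorem.
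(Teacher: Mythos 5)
Your proposal does not prove the statement in question. The statement is Theorem \ref{counting} itself --- the Kahn--Wright lattice-point counting estimate
$(1-\delta')\eta_{E}(\mathcal{N}_{-\delta'}(S))<e^{-tK_A}\#(S_t\cap g^{-1}\Gamma h)<(1+\delta')\eta_{E}(\mathcal{N}_{\delta'}(S))$,
which the paper imports verbatim from \cite{KW2} (Theorem 3.9) without proof. Your very first sentence announces that you will ``translate the existence problem into the lattice--point counting estimate of Theorem~\ref{counting},'' i.e.\ you take the theorem you were asked to prove as a black box. What you have actually outlined is a proof of Theorem \ref{connection_principle_with_height} (the connection principle with height control): choosing $g,h$ from lifted frames, building the shape $S$ of measure $\gtrsim\delta^6$, checking $\epsilon(g),\epsilon(h)>\delta'$, restricting the rotation sets to control initial/terminal excursions, and subtracting an $O(te^{-2H})e^{2t}$ count of segments with a deep intermediate excursion. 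That outline does track the paper's Section 4 argument (Steps I--III of the proof of Theorem \ref{connection_principle_with_height}) reasonably well, but it is a proof of a different theorem.

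A genuine proof of Theorem \ref{counting} would require the homogeneous-dynamics input that none of your paragraphs touch: effective equidistribution of the translates $g\,E_-\exp(tA)E_0E_+\,h$ in $\Gamma\backslash G$, driven by exponential mixing of the frame flow on the finite-volume quotient (this is how \cite{KW2} derives Theorem 3.9 from their Theorem 1.3, with the exponent $aq$ coming from the spectral gap and the injectivity-radius hypothesis $\epsilon(g),\epsilon(h)>\delta'$ controlling the thickening needed to smooth the counting function). Nothing in your argument produces the two-sided bound with the error $\delta'=ce^{-aqt}$, the inner/outer neighborhoods $\mathcal{N}_{\pm\delta'}(S)$, or the normalization $e^{-tK_A}$; these cannot be obtained from the geometric horoball-counting and volume-comparison arguments you describe, which only give one-sided upper bounds of the wrong shape. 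If your intended target was Theorem \ref{connection_principle_with_height}, relabel accordingly; as a proof of Theorem \ref{counting} the proposal is circular and therefore vacuous.
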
 
Note that $K_A=2$ holds in our case.

\begin{remark}
Here we state this result as counting $\#(S_t\cap g^{-1}\Gamma h)$ while \cite{KW2} Theorem 3.9 is stated as counting $\#(S_t\cap g\Gamma h)$. The author believes that it is a typo in \cite{KW2}, since Theorem 3.9 of \cite{KW2} is proved by citing Theorem 1.3 of \cite{KW2}  and the function $\Sigma_t(f,r,s)$ in Theorem 1.3 is defined by $$\Sigma_t(f,r,s)=\sum_{\gamma\in \Gamma}((Z_t)_*f)(r^{-1}\gamma s),$$ instead of  $$\Sigma_t(f,r,s)=\sum_{\gamma\in \Gamma}((Z_t)_*f)(r\gamma s).$$
\end{remark}

Now we are ready to prove Theorem \ref{connection_principle_with_height}.

\begin{proof}[Proof of Theorem \ref{connection_principle_with_height}]

Recall that $E_0=H_0\cong \mathbb{R}\times S^1$, and both $E_+$ and $E_-$ are compact. We take a compact set $K'=E_-\times ([-1,1]\times S^1) \times E_+\subset E_-\times E_0\times E_+$, and let $K$ be the image of $K'$ in $E$. For this compact set $K\subset E$, Theorem \ref{counting} gives us constants $t_0, a, q, c>0$.

We take the desired constants as following:
$$T=\max{\left\{t_0, \frac{2}{aq}\ln{\frac{c}{c_1}},\frac{1}{aq}\ln{\frac{2c}{l}}\right\}},$$ 
$$C=\max{\left\{\frac{1}{aq}, \frac{1}{aq}\ln{(8c)}, 20\ln{2}+\frac{1}{2}\ln{c_4},20\ln{2}\right\}}, $$ and $$\kappa=\min{\{\frac{aq}{2},1\}}.$$ Here $t_0,a,q,c>0$ are constants in Theorem 4.2, $l$ is the length of the shortest closed geodesic in $M$, while $c_1$ and $c_4$ are universal constants that will show up in the following proof. Actually both $a$ and $q$ in Theorem \ref{counting} are small positive numbers, so $\frac{aq}{2}<1$ would automatically holds. Note that $\delta\in (0,10^{-2})$ implies that $\delta$ is smaller than the $3$-dimensional Margulis constant, by \cite{Mey}.

{\bf Step I.} Recall that the identification between $G$ and $\text{SO}(\mathbb{H}^3)$ is made via a base frame $f_0\in \text{SO}(\mathbb{H}^3)$. We take elements $g,h\in G$ such that the projections of $g f_0$ and $hf_0\in \text{SO}(\mathbb{H}^3)$ in $\text{SO}(M)$ are $(p,\vec{t}_p,\vec{n}_p)$ and $(q,\vec{t}_p,\vec{n}_q)$ respectively. Note that $g$ and $h$ are unique up to left multiplying by elements in $\Gamma$. Since $h_p,h_q<\kappa t$, the injectivity radii of $p$ and $q$ are bounded below by $c_1e^{-\kappa t}$ for some universal constant $c_1>0$. Then we claim that $\epsilon(g),\epsilon(h)>ce^{-aqt}$ holds, thus the assumption in Theorem \ref{counting} holds. Actually, if $p$ lies in the thin part of $M$, we have $\epsilon(g)\geq c_1e^{-\kappa t}>\delta'=ce^{-aqt}$; if $p$ lies in the thick part, we have $\epsilon(g)\geq \frac{l}{2}>\delta'=ce^{-aqt}$.

We use $e$ to denote the identity element in a group, and use $B_r(g,G)$ to denote the $r$-neighborhood of $g\in G$ in a group $G$ (with a fixed metric). We take $S$ to be the image of $B_{\delta}(e, E_-)\times B_{\delta}((0,\theta), E_0)\times B_{\delta}(e, E_+) \subset K'$ in $E$. Here the metrics on $E_{\pm}$ and $E_0$ are the natural metrics on $\text{SO}(3)$ and $S^1\times \mathbb{R}$, respectively.

Since $t\geq T\geq t_0$, Theorem \ref{counting} gives 
$$\#(S_t\cap g^{-1}\Gamma h)>e^{tK_A}(1-\delta')\eta_{E}(\mathcal{N}_{-\delta'}(S))>\frac{1}{2}e^{2t}(1-\delta')(\frac{1}{4})^2(\delta-\delta')^6>2^{-7}e^{2t}\delta^6.$$ 
Here we use the fact that  $t>C(\ln{\frac{1}{\delta}}+1)$ to deduce that $\delta>8\delta'=8ce^{-aqt}$.

By definition, each element in $S_t\cap g^{-1}\Gamma h$ gives an element $\gamma \in \Gamma$, such that $$g^{-1}\gamma h=a_-e^{tA}a_0a_+$$ for some $a_-\in B_{\delta}(e,E_-), a_0\in B_{\delta}((0,\theta),H_0)$ and $a_+\in B_{\delta}(e,E_+)$. So for the base frame $f_0\in \text{SO}(\mathbb{H}^3)$, we have $$\gamma h f_0=ga_-e^{tA}a_0a_+f_0.$$ It implies that, in the manifold $M$, the frame $(q,\vec{t}_q,\vec{n}_{q})$ (corresponding to $\gamma hf_0$) can be obtained from the frame $(p,\vec{t}_p,\vec{n}_{p})$ (corresponding to $gf_0$) by the following operations:
\begin{itemize}
\item Rotate by $a_-\in B_{\delta}(e,E_-)=B_{\delta}(e,\text{SO}(3))$ with respect to the frame $gf_0$, to get $ga_-f_0$.
\item Translation by $e^{tA}a_0$ with respect to the frame $ga_-f_0$, with $a_0\in B_{\delta}((0,\theta),H_0)=B_{\delta}((0,\theta),\mathbb{R}\times S^1)$, to get $ga_-e^{tA}a_0f_0$. (Note that translating by $e^{tA}$ gives a geodesic segment of length $t$, by the choice of $A$.)
\item Rotate by $a_+\in B_{\delta}(e,E_+)=B_{\delta}(e,\text{SO}(3))$ with respect to the frame $ga_-e^{tA}a_0f_0$, to get $ga_-e^{tA}a_0a_+f_0=\gamma hf_0$.
\end{itemize}

For any $\gamma \in \Gamma$, the expression $g^{-1}\gamma h=a_-e^{tA}a_0a_+$ is not unique, since $E$ is defined by quotient an equivalence relation. However, each element in $S_t\cap g^{-1}\Gamma h$ gives a unique oriented $\partial$-framed segment $\mathfrak{s}$ from $p$ to $q$ satisfying conditions (1) and (2), up to $(2\delta)$-closeness of framings. Let $t_0\in (-\delta,\delta)$ be the $\mathbb{R}$-component of $a_0\in B_{\delta}((0,\theta),\mathbb{R}\times S^1)$. Then the carrier segment of $\mathfrak{s}$ is the geodesic segment tangent with the tangent vector of $ga_-f_0$ and has length $t+t_0$.
The initial and terminal framings of $\mathfrak{s}$ are the projections of normal vecotrs of $ga_-f_0$ and $ga_-e^{tA}a_0f_0=\gamma ha_+^{-1}f_0$, respectively. 

Moreover, each such $\partial$-framed segment $\mathfrak{s}$ is uniquely determined by its carrier segment $s$, and two different elements in $S_t\cap g^{-1}\Gamma h$ correspond to two different geodesic segments. So we have at least $2^{-7}e^{2t}\delta^6$ oriented $\partial$-framed segments satisfying conditions (1) and (2) that are distinct from each other, even up to $(2\delta)$-closeness of framings. 

\bigskip

{\bf Step II.} Now we count the number of oriented $\partial$-framed segments whose initial and terminal cusp excursions have height at most $h_p+\min{\{\ln{(\sec{\alpha_{\vec{t}_p}})},\ln{\frac{1}{\delta}+C}\}}$ and $h_q+\min{\{\ln{(\sec{\alpha_{(-\vec{t}_p)}})},\ln{\frac{1}{\delta}+C}\}}$ respectively. 

For the initial and terminal cusp excursions, we run an argument as in Corollary 3.6 of \cite{KW1}. We take subsets of $B_{\delta}(e,E_-)$ (and $B_{\delta}(e,E_+)$) as the following, to get $S'\subset S$, such that each element in $S'_t\cap g^{-1}\Gamma h$ gives an oriented $\partial$-framed segment whose initial and terminal cusp excursions satisfy condition (3).
\begin{enumerate}
\item[(a)] If $p$ lies in the thick part of $M$, then we simply take $A_-$ to be $B_{\delta}(e,E_-)$.
\item[(b)] If $p$ lies in the thin part of $M$ and $\vec{v}_p$ points down the cusp, we take half of $B_{\delta}(e,E_-)$ and denote it by $A_-$, such that for any $a_-\in A_-$, the tangent vector of $ga_-f_0$ still points down the cusp.
\item[(c)] If $p$ lies in the thin part of $M$ and $\vec{v}_p$ points up the cusp with $\alpha_{\vec{t}_p}<\frac{\pi}{2}-\frac{\delta}{2}$,  we take half of $B_{\delta}(e,E_-)$ and denote it by $A_-$, such that for any $a_-\in A_-$, the angle between the tangent vector of $ga_-f_0$ and the horo-torus is at most $\alpha_{\vec{t}_p}$.
\item[(d)] If $p$ lies in the thin part of $M$ and $\vec{v}_p$ points up the cusp with $\alpha_{\vec{t}_p}\in[\frac{\pi}{2}-\frac{\delta}{2},\frac{\pi}{2}]$, we take the subset $A_-\subset B_{\delta}(e,E_-)$ such that for any $a_-\in A_-$, the angle between the tangent vector of $ga_-f_0$ and the horo-torus is at most $\frac{\pi}{2}-\frac{\delta}{2}$.
\end{enumerate}

In either case, $A_-$ contains a ball of radius $\frac{\delta}{4}$. The choice of this ball is obvious for cases (a), (b) and (c). For (d), there exists some $a_-\in B_{\frac{3\delta}{4}}(e,E_-)$ such that the angle between the tangent vector of $ga_-f_0$ and the horo-torus is $\frac{\pi}{2}-\frac{3\delta}{4}$, and $ga_-f_0$ is the closest such frame from $gf_0$. Then the $\frac{\delta}{4}$-neighborhood of $a_-$ is contained in $A_-$.

In case (a), for any geodesic segment starting at $p$ and tangent with the tangent vector of  $ga_-f_0$ with $a_-\in A_-$, it has no initial cusp excursion. 

In case (b), for any geodesic segment starting at $p$ and tangent with the tangent vector of  $ga_-f_0$ with $a_-\in A_-$, its initial cusp excursion has height at most $h_p$. 

In case (c),  for any geodesic segment starting at $p$ and tangent with the tangent vector of  $ga_-f_0$ with $a_-\in A_-$, its initial cusp excursion has height bounded by $h_p+\ln{(\sec{\alpha_{\vec{t}_p}})}$, by a simple computation in hyperbolic geometry (Remark 3.7 of \cite{KW1}). Since $\alpha_{\vec{t}_p} <\frac{\pi}{2}-\frac{\delta}{2}$, we have $$\ln{(\sec{\alpha_{\vec{t}_p}})}<\ln{\frac{1}{\sin{\frac{\delta}{2}}}}<\ln{(\frac{4}{\delta})}<\ln{\frac{1}{\delta}}+C.$$

In case (d),  for any $a_-\in A_-$, the angle between the tangent vector of  $ga_-f_0$ and the horo-torus is at most $\frac{\pi}{2}-\frac{\delta}{2}\leq \alpha_{\vec{t}_p}$. So for the geodesic segment starting at $p$ and tangent with the tangent vector of $ga_-f_0$, its initial cusp excursion has height bounded by both $h_p+\ln{(\sec{\alpha_{\vec{t}_p}})}$ and $h_p+\ln{(\sec{(\frac{\pi}{2}-\frac{\delta}{2})})}\leq h_p+\ln{\frac{4}{\delta}}\leq h_p+\ln{\frac{1}{\delta}}+C$.

So in either case, the height of the initial cusp excursion is bounded by $h_p+\min{\{\ln{(\sec{\alpha_{\vec{t}_p}}),\ln{\frac{1}{\delta}}+C}\}}$. We also apply the same process for $q$ to get $A_+\subset B_{\delta}(e,E_+)$, by considering various cases of $\alpha_{(-\vec{t}_q)}$.

Now we count the number of oriented $\partial$-framed segments coming from $A_-\times B_{\delta}((0,\theta),H_0)\times A_+\subset E_-\times E_0\times E_+$, and we denote the image of $A_-\times B_{\delta}((0,\theta),H_0)\times A_+$ in $E$ by $S'$. Since both $A_-$ and $A_+$ contain $\frac{\delta}{4}$-balls, the same computation as in Step I gives
$$\#(S'_t\cap g^{-1}\Gamma h)>e^{2t}(1-\delta')\eta_E(\mathcal{N}_{-\delta'}(S'))>\frac{1}{2}e^{2t}(1-\delta')(\frac{1}{4})^2(\frac{\delta}{4}-\delta')^6\geq2^{-30}e^{2t}\delta^6.$$
 Here we use the fact that $\delta>8\delta'$. Now we have $2^{-30}e^{2t}\delta^6$ many distinct oriented $\partial$-framed segments satisfying conditions (1), (2) and (3) in the theorem (even up to $(2\delta)$-closeness of framings).

\bigskip

{\bf Step III.} For the intermediate cusp excursions, we run an argument as in Lemma 3.1 of \cite{KW1} to study their heights. We will prove that, among the above $2^{-30}e^{2t}\delta^6$ oriented $\partial$-framed segments obtained in step II, for at least one of them (actually many of them), all of its intermediate cusp excursions have height at most $h_0=\frac{1}{2}\ln{t}+C\ln{\frac{1}{\delta}}+C$.

For any oriented $\partial$-framed segment $\mathfrak{s}$ from $p$ to $q$ of length $l\in (t-\delta,t+\delta)$, it is uniquely determined by its carrier segment $s$ (up to $(2\delta)$-closeness of framings). So we only need to study the height of intermediate excursions of $s$. We suppose that $s$ has an intermediate cusp excursion of height in $[h,h+1]$ with $h>2$, then we write $s$ as a concatenation $s=s_-s_0s_+$ where $s_0$ is an intermediate cusp excursion of height in $[h,h+1]$. Let $T$ be the horo-torus of height $0$ going through the endpoints of $s_0$. 

Then we replace $s_-$ by the shortest geodesic segment $\gamma_-$ from $p$ to $T$ that is homotopic to $s_-$ relative to $p$ and $T$, and we replace $s_+$ by the shortest geodesic segment $\gamma_+$ from $T$ to $q$ similarly. Then we take $\gamma_0$ to be the geodesic segment from the terminal point of $s_-$ to the initial point of $s_+$, such that $\gamma_-\gamma_0\gamma_+$ is homotopic to $s_-s_0s_+$.

Since the height of $s_0$ lies in $[h,h+1]$, by a computation in hyperbolic geometry, the distance between the initial points of $s_0$ and $\gamma_0$ is at most $e^{-h}$, and so does  the distance between the terminal points of $s_0$ and $\gamma_0$. By Lemma \ref{exponentialdecay}, the height of $s_0$ and $\gamma_0$ differ by at most $3e^{-h}$. So the length of $\gamma_-\gamma_0\gamma_+$ lies in $[t-\delta,t+\delta+4e^{-h}]\subset [t-1,t+1]$, and its height lies in $[h-3e^{-h},h+1+3e^{-h}]\subset [h-1,h+2]$.

Instead of counting geodesics segments from $p$ to $q$ of length $\delta$-close to $t$ and has an intermediate cusp excursion of height in $[h,h+1]$, we will count the number of such concatenations $\gamma_-\gamma_0\gamma_+$ with the height of $\gamma_0$ lying in $[h-1,h+2]$. 

If $\gamma_-$ has length in [l,l+1], then since the length of $\gamma_0$ lies in $[2h-2,2h+6]$ (by Lemma \ref{cuspheightvslength}), the length of $\gamma_+$ lies in $[t-2h-l-8,t-2h-l+3]$. 

In $\mathbb{H}^3$, the area of a radius-$r$ sphere is $4\pi\sinh^2{r}$. For each horosphere tangent with the radius-$r$ sphere, its projection to the sphere has area $\frac{1-\sqrt{1-e^{-2r}}}{2}\cdot 4\pi\sinh^2{r}$. So there are at most $$\frac{2}{1-\sqrt{1-e^{-2r}}}<4e^{2r}$$ many disjoint horospheres tangent with the radius-$r$ sphere. This computation implies that, for any point in $\mathbb{H}^3$, its $r$-neighborhood intersects with at most $4e^{2r}$ many disjoint horoballs.

So the possible number of $\gamma_-$ is at most $4e^{2(l+1)}$ and the possible number of $\gamma_+$ is at most $4e^{2(t-2h-l+3)}$. Given the terminal point of $\gamma_-$ and the initial point of $\gamma_+$, we count the number of possible choices of $\gamma_0$. Since the height of $\gamma_0$ lies in $[h-1,h+2]$, if we homotopy $\gamma_0$ to a geodesic segment on the Euclidean horo-torus $T$, this geodesic segment has length at most $2e^{h+2}$. For the horo-torus $T$, its injectivity radius is at least some universal constant $c_2>0$ (related to the $3$-dimensional Margulis constant). By a comparison of area, the possible choice of $\gamma_0$ is at most $$\frac{\pi(2e^{h+2}+c_2)^2}{\pi c_2^2}=e^{2(h+2)}(\frac{2}{c_2}+e^{-(h+2)})^2.$$

So the number of $\gamma_-\gamma_0\gamma_+$ is bounded by $$4e^{2(l+1)}\cdot 4e^{2(t-2h-l+3)}\cdot e^{2(h+2)}(\frac{2}{c_2}+e^{-(h+2)})^2<c_3e^{2(t-h+6)},$$ where $c_3=16(\frac{2}{c_2}+1)^2$.

Here we have the condition that $\gamma_-$ has length in $[l,l+1]$ and $\gamma_0$ has height in $[h,h+1]$. We first sum over all possible lengths of $\gamma_-$, which runs from $0$ to $t$, we get that the total possible number of $\gamma_-\gamma_0\gamma_+$ is at most $c_3te^{2(t-h+6)}$. Then we sum over all possible heights of $\gamma_0$, which runs from $\frac{1}{2}\ln{t}+C\ln{\frac{1}{\delta}}+C$ to $\infty$, the possible number of $\gamma_-\gamma_0\gamma_+$ is at most 
$$c_3te^{2(t+6)}e^{-2(\frac{1}{2}\ln{t}+C\ln{\frac{1}{\delta}}+C)}\frac{1}{1-e^{-2}}=c_4e^{-2C}\delta^{2C}e^{2t}<2^{-40}e^{2t}\delta^6.$$ 
Here $c_4=\frac{c_3e^{12}}{1-e^{-2}}$. We also use that $C>3$ and $C>20\ln{2}+\frac{1}{2}\ln{c_4}$. 

Since each oriented $\partial$-framed segment satisfying conditions (1), (2), (3) is uniquely determined by its carrier segment (up to $(2\delta)$-closeness of framings), in the $2^{-30}e^{2t}\delta^6$ many oriented $\partial$-framed segments obtained in Step II, at most $2^{-40}e^{2t}\delta^6$ many of them do not satisfy the desired height control of intermediate cusp excursions. So we have $$2^{-30}e^{2t}\delta^6-2^{-40}e^{2t}\delta^6>2^{-31}e^{2t}\delta^6>1$$ many oriented $\partial$-framed segments that satisfy all the conditions in this theorem. Here we use that $t>C(\ln{\frac{1}{\delta}}+1)$ and $C>20\ln{2}$.

\end{proof}

\bigskip
\bigskip

\section{Geometric constructions}\label{construction}

In this section, we give several geometric constructions in cusped hyperbolic $3$-manifolds. These constructions are parallel with constructions in Section 4.4 of \cite{LM}. Some proofs are similar to the proofs in \cite{LM}, while some need new ideas to take care of cusps in the manifold. We give full proofs in all the constructions, since each construction need (at least) an extra height control comparing to \cite{LM}.

In this section, we always assume the following inequalities hold for constants $\epsilon,\delta,L,R>0$. To obtain these constants, we first choose $\epsilon$, then choose $\delta$, then choose $L$, finally choose $R$. Note that $\epsilon,\delta$ are very small and $L,R$ are very large.
\begin{align}\label{epsilondelta1}
0<10^9\delta<\epsilon<10^{-2},
\end{align}
\begin{align}\label{L1}
L>100,
\end{align}
\begin{align}\label{R1}
R>\max{\{10^3L,100T,100C(\ln{\frac{1}{\delta}+1}),10(\frac{e}{\delta})^{2C},10^3\}},
\end{align}
\begin{align}\label{R2}
R\geq \kappa R>100(\alpha+10^3)\ln{R}+100,
\end{align}
 Here $T>0,C>2,\kappa\in (0,1]$ are constants given in Theorem \ref{connection_principle_with_height}, and $\alpha$ is a fixed constant that is at least $4$.

For almost every lemma in this section, we need to define a geometric object in the hyperbolic space. All these definitions were originally given in Section 4 of \cite{LM}, and we give such a definition whenever we need it in the next lemma.

\begin{definition}\label{bigon}
An {\it $(L,\delta)$-tame bigon} is an $(L,\delta)$-tame $\delta$-consecutive cycle of two oriented $\partial$-framed segments $\mathfrak{a},\mathfrak{b}$ of phase $\delta$-close to $0$. It is said to be {\it $(l,\delta)$-nearly regular} if the lengths of $\mathfrak{a}$ and $\mathfrak{b}$ are both $\delta$-close to $l$. For simplicity, we will also say the reduced cyclic concatenation $[\mathfrak{a}\mathfrak{b}]$ is an $(L,\delta)$-tame bigon.
\end{definition}

The following lemma corresponds to Construction 4.17 of \cite{LM}.

\begin{lemma}\label{split}(Splitting)

Suppose that $h\in [\alpha\ln{R},(\alpha+100)\ln{R}]$. Suppose that $[\mathfrak{s}\mathfrak{s}']$ is an $(L,\delta)$-tame bigon such that the length and phase of $\mathfrak{s}$ and $\mathfrak{s}'$ are $\delta$-close to each other, and suppose $[\mathfrak{s}\mathfrak{s}']\in{\bf \Gamma}_{R,\epsilon}^{h}$. Then there is a pair of pants $\Pi\in {\bf \Pi}_{R,\epsilon}^{h+1}$, with one cuff $[\mathfrak{s}\mathfrak{s}']\in {\bf \Gamma}_{R,\epsilon}^{h}$ and the other two cuffs in ${\bf \Gamma}_{R,10\delta}^{h+1}$.
\end{lemma}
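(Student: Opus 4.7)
The plan is to construct $\Pi$ by inserting a single ``splitting segment'' into the bigon. Write $\mathfrak{s}=\mathfrak{s}_1\mathfrak{s}_2$ and $\mathfrak{s}'=\mathfrak{s}'_1\mathfrak{s}'_2$ by subdividing each segment at its midpoint $m$, respectively $m'$. I apply Theorem~\ref{connection_principle_with_height} to build an oriented $\partial$-framed segment $\mathfrak{c}$ from $m$ to $m'$ of length close to $R$, whose initial tangent and framing match (up to $\delta$) the forward tangent and framing of $\mathfrak{s}$ at $m$, whose terminal tangent and framing match those of $\mathfrak{s}'$ at $m'$, and whose length and phase are tuned so that the reduced cyclic concatenations
\[
\gamma_2=[\mathfrak{s}_1\mathfrak{c}\mathfrak{s}'_2],\qquad \gamma_3=[\mathfrak{s}_2\mathfrak{s}'_1\bar{\mathfrak{c}}]
\]
are $(R,10\delta)$-good curves. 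The cancellation $\mathfrak{c}\bar{\mathfrak{c}}\sim 1$ shows that $\gamma_2\cdot\gamma_3$ is freely homotopic, after basepoint change, to $\gamma_1=[\mathfrak{s}\mathfrak{s}']$, so $\gamma_1,\gamma_2,\gamma_3$ cobound a pair of pants $\Pi$. The three seams of $\Pi$ lie near $p$, near $q$, and near the midpoint of $\mathfrak{c}$, so each half-length is approximately $R/2+R/2=R$; by a length/phase adjustment within the $\delta$-margin afforded by the connection principle (together with Lemma~\ref{lengthphase}), the half-lengths can be made $\epsilon$-close to $R$, giving $\Pi\in{\bf \Pi}_{R,\epsilon}$.

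To apply Theorem~\ref{connection_principle_with_height}, I first use Lemma~\ref{distance} to place the midpoints: the concatenation $\mathfrak{s}\mathfrak{s}'$ lies within $K+\tfrac{1}{10}$ of $\gamma_1$, where $K=\ln\frac{1+\tan(\delta/4)}{1-\tan(\delta/4)}=O(\delta)$, so $m,m'$ have heights at most $h+K+\tfrac{1}{10}$. By \eqref{R2} this is at most $\kappa R\leq\kappa t$ for $t$ close to $R$, so the hypotheses of Theorem~\ref{connection_principle_with_height} are satisfied.

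The main obstacle is the height control on $\gamma_2,\gamma_3$. The intermediate cusp excursions of $\mathfrak{c}$ are bounded by $\tfrac{1}{2}\ln R+C\ln\tfrac{1}{\delta}+C$, which is below $\alpha\ln R\leq h$ by \eqref{R1}. The delicate part is the initial, and by symmetry terminal, cusp excursion of $\mathfrak{c}$: a priori the bound $h_m+\min\bigl(\ln\sec\alpha_{\vec t_m},\ln\tfrac{1}{\delta}+C\bigr)$ could exceed $h$ by up to $\ln\tfrac{1}{\delta}+C$. The saving observation is that, with the initial tangent of $\mathfrak{c}$ specified equal to the forward tangent of $\mathfrak{s}$ at $m$, we have $\alpha_{\vec t_m}=\alpha_{\mathfrak{s},m}$; since $\mathfrak{s}_2$ has length $R/2$ which by \eqref{R1} greatly exceeds the apex distance $\ln(\sec\alpha+\tan\alpha)$ for every $\alpha<\pi/2-\delta/2$, the subsegment $\mathfrak{s}_2$ traverses the apex of its cusp excursion through $m$ (in the case $\mathfrak{s}$ ascends at $m$; in the descending case the initial excursion of $\mathfrak{c}$ has height at most $h_m$). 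In the ascending case,
\[
h_m+\ln\sec\alpha_{\mathfrak{s},m}\leq \text{height}(\mathfrak{s})\leq h+K+\tfrac{1}{10},
\]
and the analogous bound holds in the truncated case $\alpha_{\mathfrak{s},m}\geq\pi/2-\delta/2$ of Theorem~\ref{connection_principle_with_height}. Thus the initial cusp excursion of $\mathfrak{c}$, and likewise the terminal, has height at most $h+K+\tfrac{1}{10}$. A final application of Lemma~\ref{distance} to each of the cycles $\mathfrak{s}_1\mathfrak{c}\mathfrak{s}'_2$ and $\mathfrak{s}_2\mathfrak{s}'_1\bar{\mathfrak{c}}$ shows that $\gamma_2,\gamma_3$ lie within $K+\tfrac{1}{10}$ of concatenations of height at most $h+K+\tfrac{1}{10}$, so $\text{height}(\gamma_i)\leq h+2K+\tfrac{2}{10}<h+1$ by \eqref{epsilondelta1}. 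Therefore $\gamma_2,\gamma_3\in{\bf \Gamma}_{R,10\delta}^{h+1}$ and $\Pi\in{\bf \Pi}_{R,\epsilon}^{h+1}$.
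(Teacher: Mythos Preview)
Your construction has a genuine geometric error. You prescribe the initial and terminal tangents of $\mathfrak{c}$ to agree with the forward tangents of $\mathfrak{s}$ at $m$ and of $\mathfrak{s}'$ at $m'$. That makes the cycle $\mathfrak{s}_1,\mathfrak{c},\mathfrak{s}'_2$ nearly straight, so $\gamma_2$ is fine; but in the other cycle $\mathfrak{s}_2,\mathfrak{s}'_1,\bar{\mathfrak{c}}$ the bending angle at $m'$ between $\mathfrak{s}'_1$ and $\bar{\mathfrak{c}}$ is $\delta$-close to $\pi$ (the terminal direction of $\mathfrak{s}'_1$ and the initial direction of $\bar{\mathfrak{c}}=-\vec t_{\mathrm{ter}}(\mathfrak{c})$ are opposite), and the same happens at $m$ between $\bar{\mathfrak{c}}$ and $\mathfrak{s}_2$. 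A $(\pi-\delta)$-bent cycle is not $(L,\theta)$-tame for any $\theta<\pi$, so Lemma~\ref{lengthphase} and Lemma~\ref{distance} simply do not apply: there is no control on the length, phase, or height of the reduced cyclic concatenation $\gamma_3$, and it need not lie in ${\bf \Gamma}_{R,10\delta}$ at all. Consequently the half-length estimate and the claim $\Pi\in{\bf \Pi}_{R,\epsilon}$ both collapse. Your seam discussion is also not right: with nearly-zero bending angles along $\gamma_2$, the common perpendiculars between $\gamma_1$ and $\gamma_2$ are not located ``near $p$ and $q$'' in any controlled way.

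The paper avoids this by taking the connecting segment \emph{perpendicular} to the bigon rather than tangent. One works at the endpoints $p_{\mathrm{ini}}(\mathfrak{s})$, $p_{\mathrm{ter}}(\mathfrak{s})$ (not the midpoints), first frame-rotates $\mathfrak{s},\mathfrak{s}'$ by a common angle $\phi_0$ so that neither cross product $\vec t_{\mathrm{ter}}(\mathfrak{s}(\phi_0))\times\vec n_{\mathrm{ter}}(\mathfrak{s}(\phi_0))$ nor $\vec t_{\mathrm{ini}}(\mathfrak{s}(\phi_0))\times\vec n_{\mathrm{ini}}(\mathfrak{s}(\phi_0))$ points up the cusp, and then uses Theorem~\ref{connection_principle_with_height} to build $\mathfrak{m}$ from $p_{\mathrm{ter}}(\mathfrak{s})$ to $p_{\mathrm{ini}}(\mathfrak{s})$ with initial and terminal directions $\delta$-close to those cross products. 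Now every bending angle in the three cycles $[\mathfrak{s}\mathfrak{s}']$, $[\mathfrak{s}\mathfrak{m}]$, $[\bar{\mathfrak{m}}\mathfrak{s}']$ is $\delta$-close to $\pi/2$, so Lemmas~\ref{lengthphase} and~\ref{distance} apply, and since $\alpha_{\vec t}=0$ for a direction that does not point up, the initial and terminal cusp excursions of $\mathfrak{m}$ are bounded by $h$ directly. Your height idea (bounding $h_m+\ln\sec\alpha$ by the apex height already realized on $\mathfrak{s}$) is clever, but it is attached to a tangent-direction construction that cannot produce a good pants; the perpendicular construction achieves the height control more simply via the frame-rotation trick.
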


\begin{proof}

For the sake of height control, the undesired scenario is that the vectors $\vec{t}_{\text{ter}}(\mathfrak{s})\times \vec{n}_{\text{ter}}(\mathfrak{s})$ and $\vec{t}_{\text{ini}}(\mathfrak{s})\times \vec{n}_{\text{ini}}(\mathfrak{s})$ point up in cusps of $M$ (if the end points of $\mathfrak{s}$ lie in cusps of $M$).

For any angle $\phi\in \mathbb{R}/2\pi\mathbb{Z}$, the $\phi$-frame rotation $\mathfrak{s}(\phi)$ of $\mathfrak{s}$ satisfies
$$\vec{t}_{\text{ini}}(\mathfrak{s}(\phi))\times \vec{n}_{\text{ini}}(\mathfrak{s}(\phi))=-\vec{n}_{\text{ini}}(\mathfrak{s})\sin{\phi}+(\vec{t}_{\text{ini}}(\mathfrak{s})\times \vec{n}_{\text{ini}}(\mathfrak{s}))\cos{\phi}$$ 
and 
$$\vec{t}_{\text{ter}}(\mathfrak{s}(\phi))\times \vec{n}_{\text{ter}}(\mathfrak{s}(\phi))=-\vec{n}_{\text{ter}}(\mathfrak{s})\sin{\phi}+(\vec{t}_{\text{ter}}(\mathfrak{s})\times \vec{n}_{\text{ter}}(\mathfrak{s}))\cos{\phi}.$$ 
So the set of angles $\phi$ such that $\vec{t}_{\text{ini}}(\mathfrak{s}(\phi))\times \vec{n}_{\text{ini}}(\mathfrak{s}(\phi))$ points up the cusp is contained in an open interval in $\mathbb{R}/2\pi \mathbb{Z}$ of length $\pi$, and so does the set of $\phi$  such that $\vec{t}_{\text{ter}}(\mathfrak{s}(\phi))\times \vec{n}_{\text{ter}}(\mathfrak{s}(\phi))$ points up the cusp. So there exists an angle $\phi_0$ such that neither $\vec{t}_{\text{ini}}(\mathfrak{s}(\phi_0))\times \vec{n}_{\text{ini}}(\mathfrak{s}(\phi_0))$ nor $\vec{t}_{\text{ter}}(\mathfrak{s}(\phi_0))\times \vec{n}_{\text{ter}}(\mathfrak{s}(\phi_0))$ point up the cusp.

By rotating the oriented $\partial$-framed segments $\mathfrak{s}$ and $\mathfrak{s}'$ by $\phi_0$, we  get an $(L,3\delta)$-tame bigon $[\mathfrak{s}(\phi_0)\mathfrak{s}'(\phi_0)]$.

Then we apply Theorem \ref{connection_principle_with_height} to construct an oriented $\partial$-framed segment $\mathfrak{m}$ from $p_{\text{ter}}(\mathfrak{s})$ to $p_{\text{ini}}(\mathfrak{s})$ such that the following hold.
\begin{enumerate}
\item The initial and terminal directions of $\mathfrak{m}$ are $\delta$-close to $\vec{t}_{\text{ter}}(\mathfrak{s}(\phi_0))\times \vec{n}_{\text{ter}}(\mathfrak{s}(\phi_0))$ and $-\vec{t}_{\text{ini}}(\mathfrak{s}(\phi_0))\times \vec{n}_{\text{ini}}(\mathfrak{s}(\phi_0))$ respectively. The initial and terminal framings of $\mathfrak{m}$ are $\delta$-close to $\vec{n}_{\text{ter}}(\mathfrak{s}(\phi_0))$ and $\vec{n}_{\text{ini}}(\mathfrak{s}(\phi_0))$ respectively.
\item The length and phase of $\mathfrak{m}$ are $\delta$-close to $t=2R-l(\mathfrak{s})+2I(\frac{\pi}{2})\in[R,1.1R]$ and $\theta=-\phi(\mathfrak{s})$ respectively. The height of $\mathfrak{m}$ is at most $\max{\{h,\frac{1}{2}\ln{t}+C\ln{\frac{1}{\delta}+C}\}}=h$, by the choice of $\phi_0$ and (\ref{R1}).
\end{enumerate}
Note that the height of end points of $\mathfrak{m}$ are at most $(\alpha+100)\ln{R}<\kappa R$ (by (\ref{R2})), so Theorem \ref{connection_principle_with_height} is applicable. See Figure 1 of \cite{LM} for a picture of the oriented $\partial$-framed segment $\mathfrak{m}$.

Then the desired good pants $\Pi$ can be constructed with one cuff $[\mathfrak{s}\mathfrak{s}']\in {\bf \Gamma}_{R,\epsilon}^{h}$ and the other two cuffs $\overline{[\mathfrak{s}\mathfrak{m}]}, \overline{[\bar{\mathfrak{m}}\mathfrak{s}']}\in{\bf \Gamma}_{R,10\delta}^{h+1}$ (by Lemma \ref{lengthphase} and Lemma \ref{distance}). So $\Pi\in {\bf \Pi}_{R,\epsilon}^{h+1}$ holds.

\end{proof}

\begin{definition}\label{swappair}
An {\it $(L,\delta)$-swap pair of bigons} is a pair of $(L,\delta)$-tame bigons $[\mathfrak{a}\mathfrak{b}]$ and $[\mathfrak{a}'\mathfrak{b}']$ such that $[\mathfrak{a}\mathfrak{b}']$ and $[\mathfrak{a}'\mathfrak{b}]$ are also $(L,\delta)$-tame bigons, $\mathfrak{a}$ and $\mathfrak{a}'$ have length and phase $\delta$-close to each other, and so do $\mathfrak{b}$ and $\mathfrak{b}'$.
\end{definition}

The following Lemma corresponds to Construction 4.18 of \cite{LM}, while the proof requires one more step (step III) than the proof in \cite{LM}.

\begin{lemma}\label{swap}(Swapping)

Suppose that $h\in [\alpha\ln{R},(\alpha+100)\ln{R}]$. Suppose that $[\mathfrak{a}\mathfrak{b}]$ and $[\mathfrak{a}'\mathfrak{b}']$ form a $(10L,10^5\delta)$-tame swap pair of bigons, and suppose that $[\mathfrak{a}\mathfrak{b}], [\mathfrak{a}'\mathfrak{b}'], \overline{[\mathfrak{a}\mathfrak{b}']}, \overline{[\mathfrak{a}'\mathfrak{b}]}\in {\bf \Gamma}_{R,\epsilon}^{h}$. Then there is an $(R,\epsilon)$-panted subsurface $F$ of height at most $h+\ln{R}$, with exactly four oriented boundary components $[\mathfrak{a}\mathfrak{b}], [\mathfrak{a}'\mathfrak{b}'], \overline{[\mathfrak{a}\mathfrak{b}']}, \overline{[\mathfrak{a}'\mathfrak{b}]}$. 
\end{lemma}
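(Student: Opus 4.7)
The plan is to reduce the swap construction to four applications of the splitting lemma (Lemma \ref{split}) carried out with a single common connector segment $\mathfrak{m}$, and then glue the four resulting pants along canceling internal cuffs. Since each of $\mathfrak{a}\mathfrak{b}$, $\mathfrak{a}'\mathfrak{b}'$, $\mathfrak{a}\mathfrak{b}'$, $\mathfrak{a}'\mathfrak{b}$ is a bigon, the four segments $\mathfrak{a}, \mathfrak{a}', \bar{\mathfrak{b}}, \bar{\mathfrak{b}}'$ share a common pair of endpoints $p_1=p_{\text{ter}}(\mathfrak{a})$ and $p_2=p_{\text{ini}}(\mathfrak{a})$. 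Moreover the swap-pair hypothesis forces the terminal directions and framings at $p_1$ of $\mathfrak{a},\mathfrak{a}',-\mathfrak{b},-\mathfrak{b}'$ to be mutually $O(10^5\delta)$-close, with symmetric closeness at $p_2$, and corresponding closeness of the lengths and phases of $\mathfrak{a},\mathfrak{a}'$ and of $\mathfrak{b},\mathfrak{b}'$.

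Mimicking the proof of Lemma \ref{split}, I choose a single frame-rotation angle $\phi_0\in\mathbb{R}/2\pi\mathbb{Z}$ such that after rotating every bigon by $\phi_0$, none of the eight cross-product framings $\vec{t}_{\text{ini}}\times\vec{n}_{\text{ini}}$ and $\vec{t}_{\text{ter}}\times\vec{n}_{\text{ter}}$ at $p_1$ or $p_2$ points up a cusp; this is possible because the four bad half-circles at each endpoint coincide up to $O(10^5\delta)$. I then apply Theorem \ref{connection_principle_with_height} exactly once to build a single oriented $\partial$-framed segment $\mathfrak{m}$ from $p_1$ to $p_2$ whose direction/framing data at both ends, length $\delta$-close to $2R-l(\mathfrak{a})+2I(\pi/2)$, and phase $\delta$-close to $-\varphi(\mathfrak{a})$ are $\delta$-close to what is required for splitting $[\mathfrak{a}\mathfrak{b}]$. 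Because the four splitting datasets agree within $O(10^5\delta)$, this same $\mathfrak{m}$ serves as a valid splitting connector for all four bigons within total tolerance $O(10^5\delta)\ll\epsilon$ (by \eqref{epsilondelta1}).

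Splitting each of $[\mathfrak{a}\mathfrak{b}], [\mathfrak{a}'\mathfrak{b}'], \overline{[\mathfrak{a}\mathfrak{b}']}, \overline{[\mathfrak{a}'\mathfrak{b}]}$ with this common $\mathfrak{m}$ yields four pants $\Pi_1,\Pi_2,\Pi_3,\Pi_4\in{\bf \Pi}_{R,\epsilon}^{h+1}$ whose twelve cuffs consist of the four original bigon curves together with eight internal curves. A direct bookkeeping shows the eight internal cuffs pair up and cancel: for instance $\overline{[\mathfrak{a}\mathfrak{m}]}\in\partial\Pi_1$ cancels $[\mathfrak{a}\mathfrak{m}]\in\partial\Pi_3$, $\overline{[\bar{\mathfrak{m}}\mathfrak{b}]}\in\partial\Pi_1$ cancels $[\bar{\mathfrak{m}}\mathfrak{b}]\in\partial\Pi_4$, and symmetrically for the two pairs involving $\mathfrak{a}'$ or $\mathfrak{b}'$. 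Gluing along these four cancellations produces an $(R,\epsilon)$-panted subsurface $F$ with oriented boundary $[\mathfrak{a}\mathfrak{b}]+[\mathfrak{a}'\mathfrak{b}']+\overline{[\mathfrak{a}\mathfrak{b}']}+\overline{[\mathfrak{a}'\mathfrak{b}]}$; since each $\Pi_i\in{\bf \Pi}_{R,\epsilon}^{h+1}$, the height of $F$ is at most $h+1<h+\ln R$ by \eqref{R1}.

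The one real difficulty is the common-$\mathfrak{m}$ step: I must verify that a single invocation of the connection principle delivers a segment that simultaneously meets the splitting data of four different bigons. This is exactly where the enlarged tolerance $10^5\delta$ in the swap-pair hypothesis is used, providing the slack to absorb both the $\delta$-accuracy of Theorem \ref{connection_principle_with_height} and the $O(10^5\delta)$ angular and metric spread of the four bigon datasets, while keeping all resulting pants $(R,\epsilon)$-good.
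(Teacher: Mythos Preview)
Your argument has a genuine gap: it silently assumes that $l(\mathfrak{a})$ and $l(\mathfrak{b})$ are close, which is \emph{not} part of the swap-pair hypothesis. The definition of an $(L,\delta)$-swap pair only forces $l(\mathfrak{a})\approx l(\mathfrak{a}')$ and $l(\mathfrak{b})\approx l(\mathfrak{b}')$; the two pairs can differ from each other by as much as $2R-O(L)$. With your choice of $\mathfrak{m}$, of length $\delta$-close to $2R-l(\mathfrak{a})+2I(\frac{\pi}{2})$, the cuff $[\mathfrak{a}\mathfrak{m}]$ has length close to $2R$, but the cuff $[\bar{\mathfrak{m}}\mathfrak{b}]$ has length close to $2R-l(\mathfrak{a})+l(\mathfrak{b})$, which is generically far from $2R$. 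Thus $\Pi_1$ (and likewise $\Pi_2,\Pi_3,\Pi_4$) need not lie in ${\bf \Pi}_{R,\epsilon}$ at all, and the gluing you describe does not produce an $(R,\epsilon)$-panted subsurface.

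What you have written is essentially Step~I of the paper's proof, which is stated under the \emph{additional} assumption that all four lengths $l(\mathfrak{a}),l(\mathfrak{b}),l(\mathfrak{a}'),l(\mathfrak{b}')$ are $(10^7\delta)$-close. The paper then needs two more reductions. Step~II handles the case where all four segments have length at least $0.2R$: each segment is bisected, auxiliary connectors $\mathfrak{s}_a,\mathfrak{s}_a',\mathfrak{s}_b,\mathfrak{s}_b'$ through a fixed thick-part point $*$ are constructed so that the resulting halves all have length close to $R$, and Step~I is applied to the new nearly-regular swap pairs. Step~III treats the general case by first inserting an extra segment $\mathfrak{z}$ (with endpoints in the thick part) to rewrite the bigons so that every piece has length at least $0.2R$, then invoking Step~II. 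These reductions, and the accompanying height bookkeeping through the thick part, are where the bound $h+\ln R$ (rather than $h+1$) actually arises.
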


\begin{proof}

{\bf Step I.} Suppose that the lengths and phases of $\mathfrak{a}, \mathfrak{b}, \mathfrak{a}', \mathfrak{b}'$ are all $(10^7\delta)$-close to each other, and $[\mathfrak{a}\mathfrak{b}],[\mathfrak{a}'\mathfrak{b}']$ form a $(10L,10^7\delta)$-swap pair of bigons. We also suppose that $h\in[\alpha\ln{R},(\alpha+102)\ln{R}]$. Then there exists an $(R,\epsilon)$-panted subsurface of height at most $h+1$ that has the desired oriented boundary.

By the argument in Lemma \ref{split}, we take an angle $\phi_0\in \mathbb{R}/2\pi \mathbb{Z}$ to do frame rotation to all of $\mathfrak{a}, \mathfrak{b}, \mathfrak{a}', \mathfrak{b}'$. We abuse notation and still denote these oriented $\partial$-framed segments by $\mathfrak{a}, \mathfrak{b}, \mathfrak{a}', \mathfrak{b}'$, so that neither $\vec{t}_{\text{ini}}(\mathfrak{a})\times \vec{n}_{\text{ini}}(\mathfrak{a})$ nor $\vec{t}_{\text{ter}}(\mathfrak{a})\times \vec{n}_{\text{ter}}(\mathfrak{a})$ point up the cusp. Then we construct an oriented $\partial$-framed segment $\mathfrak{m}$ from $p_{\text{ter}}(\mathfrak{a})$ to $p_{\text{ini}}(\mathfrak{a})$ with the same property as in the proof of Construction \ref{split}, with $\mathfrak{s}(\phi_0)$ and $\mathfrak{s}'(\phi_0)$ replace by $\mathfrak{a}$ and $\mathfrak{b}$ respectively. See Figure 2 of \cite{LM} for a picture of the oriented $\partial$-framed segment $\mathfrak{m}$.

Then for any $\mathfrak{a}^{\circ}\in \{\mathfrak{a},\mathfrak{a}'\}$ and $\mathfrak{b}^{\circ}\in \{\mathfrak{b},\mathfrak{b}'\}$, by the proof of Lemma \ref{split}, there is a unique pair of pants $\Pi_{\mathfrak{a}^{\circ},\mathfrak{b}^{\circ}}\in {\bf \Pi}_{R,\epsilon}^{h+1}$ with cuffs $[\mathfrak{a}^{\circ}\mathfrak{b}^{\circ}]\in {\bf \Gamma}_{R,\epsilon}^{h}$ and $\overline{[\mathfrak{a}^{\circ}\mathfrak{m}]}, \overline{[\mathfrak{b}^{\circ}\bar{\mathfrak{m}}]}\in {\bf \Gamma}_{R,\epsilon}^{h+1}$. Here we use (\ref{epsilondelta1}) and Lemma \ref{lengthphase} to deduce that these curves lie in ${\bf \Gamma}_{R,\epsilon}$, and use Lemma \ref{distance} to control their heights. Then we paste oriented pairs of pants $\Pi_{\mathfrak{a},\mathfrak{b}}, \Pi_{\mathfrak{a}',\mathfrak{b}'},\overline{\Pi_{\mathfrak{a}',\mathfrak{b}}}, \overline{\Pi_{\mathfrak{a},\mathfrak{b}'}}$ along all cuffs involving $\mathfrak{m}$ to get the desired $(R,\epsilon)$-panted subsurface $F$ with height at most $h+1$.

\bigskip

{\bf Step II.} Suppose that the lengths of $\mathfrak{a}, \mathfrak{b}, \mathfrak{a}', \mathfrak{b}'$ are all at least $0.2R$, and $[\mathfrak{a}\mathfrak{b}],[\mathfrak{a}'\mathfrak{b}']$ form a $(10L,10^6\delta)$-swap pair of bigons. We also suppose that $h\in[\alpha\ln{R},(\alpha+101)\ln{R}]$. Then there exists an $(R,\epsilon)$-panted subsurface of height at most $h+\frac{1}{2}\ln{R}+2$ that has the desired oriented boundary.

Since $[\mathfrak{a}\mathfrak{b}], [\mathfrak{a}'\mathfrak{b}']$ are both $(R,\epsilon)$-good curves, each oriented $\partial$-framed segment in $\mathfrak{a},\mathfrak{b},\mathfrak{a}',\mathfrak{b}'$ has length at most $1.9R$. We divide $\mathfrak{a}$ as the concatenation $\mathfrak{a}_-\mathfrak{a}_+$ of two oriented $\partial$-framed subsegments of same length and phase (close to $0$), and we divide $\mathfrak{b},\mathfrak{a}',\mathfrak{b}'$ to $\mathfrak{b}_-\mathfrak{b}_+$, $\mathfrak{a}_-'\mathfrak{a}_+'$, $\mathfrak{b}_-'\mathfrak{b}_+'$ similarly. Then the length and phase of $\mathfrak{a}_{\pm}$ are $(10^6\delta)$-close to that of $\mathfrak{a}_{\pm}'$, and the length and phase of $\mathfrak{b}_{\pm}$ are $(10^6\delta)$-close to that of $\mathfrak{b}_{\pm}'$.

Take an auxiliary point $*$ in the thick part of $M$ and take a frame $(\vec{t}_*,\vec{n}_*)$ based at $*$. Then we use Theorem \ref{connection_principle_with_height} to construct four oriented $\partial$-framed segments $\mathfrak{s}_a,\mathfrak{s}_a',\mathfrak{s}_b,\mathfrak{s}_b'$ as the following. 
\begin{enumerate}
\item[(a)] The initial and terminal points of $\mathfrak{s}_a$ are $p_{\text{ini}}(\mathfrak{a}_+)$ and $*$ respectively, the length and phase of $\mathfrak{s}_a$ are $\delta$-close to $R+I(\frac{\pi}{2})-l(\mathfrak{a}_+)$ and $-\varphi(\mathfrak{a}_+)$ respectively, and the height of $\mathfrak{s}_a$ is at most $h+\frac{1}{2}\ln{R}$. 
\item[(b)] The initial and terminal directions of $\mathfrak{s}_a$ are $\delta$-close to $\vec{t}_{\text{ini}}(\mathfrak{a}_+)\times \vec{n}_{\text{ini}}(\mathfrak{a}_+)$ and $\vec{t}_*$ respectively, the initial and terminal frames of $\mathfrak{s}_a$ are $\delta$-close to $\vec{n}_{\text{ini}}(\mathfrak{a}_+)$ and $\vec{n}_*$ respectively. The construction of $\mathfrak{s}_{a}'$ is similar to the construction of $\mathfrak{s}_a$.
\item[(c)] The initial and terminal points of $\mathfrak{s}_b$ are $*$ and $p_{\text{ini}}(\mathfrak{b}_+)$ respectively, and the length and phase of $\mathfrak{s}_b$ are $\delta$-close to $R+I(\frac{\pi}{2})-l(\mathfrak{b}_+)$ and $-\varphi(\mathfrak{b}_+)$ respectively,  and the height of $\mathfrak{s}_b$ is at most $h+\frac{1}{2}\ln{R}$. 
\item[(d)] The initial and terminal directions of $\mathfrak{s}_b$ are $\delta$-close to $\vec{t}_*$ and $-\vec{t}_{\text{ini}}(\mathfrak{b}_+)\times \vec{n}_{\text{ini}}(\mathfrak{b}_+)$ respectively, the initial and terminal frames of $\mathfrak{s}_b$ are $\delta$-close to $\vec{n}_*$ and $\vec{n}_{\text{ini}}(\mathfrak{b}_+)$ respectively. The construction of $\mathfrak{s}_{b}'$ is similar to the construction of $\mathfrak{s}_b$.
\end{enumerate}

Since the length of $\mathfrak{a}$ is at most $1.9R$, we have $R+I(\frac{\pi}{2})-l(\mathfrak{a}_+)\geq 0.05R$. So when we apply Theorem \ref{connection_principle_with_height} to construct $\mathfrak{s}_a$, its two end points have height at most $h<(\alpha+101)\ln{R}<\kappa (0.05R)$ (by (\ref{R2})). Moreover, the height of $\mathfrak{s}_a$ is at most $$\max{\{h+\ln{\frac{1}{\delta}}+C,\frac{1}{2}\ln{R}+C\ln{\frac{1}{\delta}}+C\} }<h+\frac{1}{2}\ln{R}$$ (by (\ref{R1})). The same holds for $\mathfrak{s}_a'$, $\mathfrak{s}_b$ and $\mathfrak{s}_b'$. See Figure 3 of \cite{LM} for a picture of oriented $\partial$-framed segments $\mathfrak{s}_a,\mathfrak{s}_a',\mathfrak{s}_b,\mathfrak{s}_b$. In our case, the oriented $\partial$-framed segment $\mathfrak{r}$ in \cite{LM} is reduced to a point, while the points $p$ and $q$ in \cite{LM} are taken to be the same point $*\in M$.

By Lemmas \ref{lengthphase} and \ref{distance}, the following eight good curves lie in ${\bf \Gamma}_{R,\epsilon}^{h+\frac{1}{2}\ln{R}+1}$:
$$[(\mathfrak{a}_-\mathfrak{s}_a)(\mathfrak{s}_b\mathfrak{b}_+)], [(\mathfrak{a}'_-\mathfrak{s}'_a)(\mathfrak{s}_b\mathfrak{b}_+)],[(\mathfrak{a}_-\mathfrak{s}_a)(\mathfrak{s}'_b\mathfrak{b}'_+)],[(\mathfrak{a}'_-\mathfrak{s}'_a)(\mathfrak{s}'_b\mathfrak{b}'_+)],$$
$$ [(\bar{\mathfrak{s}}_a\mathfrak{a}_+)(\mathfrak{b}_-\bar{\mathfrak{s}}_b)],  [(\bar{\mathfrak{s}}'_a\mathfrak{a}'_+)(\mathfrak{b}_-\bar{\mathfrak{s}}_b)],[(\bar{\mathfrak{s}}_a\mathfrak{a}_+)(\mathfrak{b}'_-\bar{\mathfrak{s}}'_b)],[(\bar{\mathfrak{s}}'_a\mathfrak{a}'_+)(\mathfrak{b}'_-\bar{\mathfrak{s}}'_b)].$$ 
These good curves and the original ones $[(\mathfrak{a}_-\mathfrak{a}_+)(\mathfrak{b}_-\mathfrak{b}_+)],  [(\mathfrak{a}'_-\mathfrak{a}'_+)(\mathfrak{b}_-\mathfrak{b}_+)],$ $ [(\mathfrak{a}_-\mathfrak{a}_+)(\mathfrak{b}'_-\mathfrak{b}'_+)], [(\mathfrak{a}'_-\mathfrak{a}'_+)(\mathfrak{b}'_-\mathfrak{b}'_+)]$ 
form the oriented boundary of four good pants $\Pi_1,\Pi_2,\Pi_3,\Pi_4\in {\bf \Pi}_{R,\epsilon}^{h+\frac{1}{2}\ln{R}+1}$. The oriented boundary of these four pair of pants are :
$$[(\mathfrak{a}_+\mathfrak{b}_-)(\mathfrak{b}_+\mathfrak{a}_-)], [\overline{(\mathfrak{b}_+\mathfrak{a}_-)}\ \overline{(\mathfrak{s}_a\mathfrak{s}_b)}], [(\mathfrak{s}_a\mathfrak{s}_b)\overline{(\mathfrak{a}_+\mathfrak{b}_-)}];$$
$$\overline{[(\mathfrak{a}'_+\mathfrak{b}_-)(\mathfrak{b}_+\mathfrak{a}'_-)]}, [(\mathfrak{b}_+\mathfrak{a}'_-)(\mathfrak{s}'_a\mathfrak{s}_b)], [\overline{(\mathfrak{s}'_a\mathfrak{s}_b)}(\mathfrak{a}'_+\mathfrak{b}_-)];$$
$$\overline{[(\mathfrak{a}_+\mathfrak{b}'_-)(\mathfrak{b}'_+\mathfrak{a}_-)]}, [(\mathfrak{b}'_+\mathfrak{a}_-)(\mathfrak{s}_a\mathfrak{s}'_b)], [\overline{(\mathfrak{s}_a\mathfrak{s}'_b)}(\mathfrak{a}_+\mathfrak{b}'_-)];$$
$$[(\mathfrak{a}'_+\mathfrak{b}'_-)(\mathfrak{b}'_+\mathfrak{a}'_-)], [\overline{(\mathfrak{b}'_+\mathfrak{a}'_-)}\ \overline{(\mathfrak{s}'_a\mathfrak{s}'_b)}], [(\mathfrak{s}'_a\mathfrak{s}'_b)\overline{(\mathfrak{a}'_+\mathfrak{b}'_-)}],$$ respectively.

We also have two $(10L,10^7\delta)$-tame swap pair of bigons, which are
$$[(\bar{\mathfrak{s}}_a\mathfrak{a}_+)(\mathfrak{b}_-\bar{\mathfrak{s}}_b)], [(\bar{\mathfrak{s}}_a'\mathfrak{a}_+')(\mathfrak{b}_-'\bar{\mathfrak{s}}_b')]\in {\bf \Gamma}_{R,\epsilon}^{h+\frac{1}{2}\ln{R}+1}$$ and
$$[(\mathfrak{a}_-\mathfrak{s}_a)(\mathfrak{s}_b\mathfrak{b}_+)], [(\mathfrak{a}_-'\mathfrak{s}_a')(\mathfrak{s}_b'\mathfrak{b}_+')]\in {\bf \Gamma}_{R,\epsilon}^{h+\frac{1}{2}\ln{R}+1}.$$ 
By our construction, the lengths and phases of the following oriented $\partial$-framed segments
$$\bar{\mathfrak{s}}_a\mathfrak{a}_+, \bar{\mathfrak{s}}_a'\mathfrak{a}_+', \mathfrak{a}_-\mathfrak{s}_a, \mathfrak{a}_-'\mathfrak{s}_a', \mathfrak{b}_-\bar{\mathfrak{s}}_b, \mathfrak{b}_-'\bar{\mathfrak{s}}_b', \mathfrak{s}_b\mathfrak{b}_+, \mathfrak{s}_b'\mathfrak{b}_+'$$
are all $(10^7\delta)$-close to each other.

By step I, there are two $(R,\epsilon)$-panted face $F_1$ and $F_2$ of height at most $h+\frac{1}{2}\ln{R}+2,$ such that the oriented boundary of $F_1$ and $F_2$ are 
$$[(\bar{\mathfrak{s}}_a\mathfrak{a}_+)(\mathfrak{b}_-\bar{\mathfrak{s}}_b)], [(\bar{\mathfrak{s}}_a'\mathfrak{a}_+')(\mathfrak{b}_-'\bar{\mathfrak{s}}_b')], \overline{[(\bar{\mathfrak{s}}'_a\mathfrak{a}'_+)(\mathfrak{b}_-\bar{\mathfrak{s}}_b)]}, \overline{[(\bar{\mathfrak{s}}_a\mathfrak{a}_+)(\mathfrak{b}'_-\bar{\mathfrak{s}}'_b)]}$$ 
and 
$$[(\mathfrak{a}_-\mathfrak{s}_a)(\mathfrak{s}_b\mathfrak{b}_+)], [(\mathfrak{a}_-'\mathfrak{s}_a')(\mathfrak{s}_b'\mathfrak{b}_+')], \overline{[(\mathfrak{a}'_-\mathfrak{s}'_a)(\mathfrak{s}_b\mathfrak{b}_+)]}, \overline{[(\mathfrak{a}_-\mathfrak{s}_a)(\mathfrak{s}'_b\mathfrak{b}'_+)]}$$
respectively.

Then we paste the four pairs of pants $\Pi_1, \Pi_2,\Pi_3,\Pi_4$ and two $(R,\epsilon)$-panted subsurfaces $F_1,F_2$ along their common boundary to get the desired $(R,\epsilon)$-panted subsurface $F$, with height at most $h+\frac{1}{2}\ln{R}+2.$ (The oriented boundary components of $F_1$ cancel with the third column of boundary components of $\Pi_i$, and the oriented boundary components of $F_2$ cancel with the second column of boundary components of $\Pi_i$).

\bigskip

{\bf Step III.} General case. If the lengths of $\mathfrak{a},\mathfrak{b},\mathfrak{a}',\mathfrak{b}'$ are all at least $0.2R$, the proof is already given in Step II. Without loss of generality, we can assume that the length of $\mathfrak{b}$ is shorter than $0.2R$, so the lengths of $\mathfrak{a}$ and $\mathfrak{a}'$ are at least $1.7R$. 

Since the height of $\mathfrak{a}$ is at most $h<(100+\alpha)\ln{R}$, by Lemma \ref{cuspheightvslength}, each cusp excursion of $\mathfrak{a}$ has length at most $(200+2\alpha) \ln{R}+2\ln{2}<0.1R,$ by (\ref{R2}). So we can divide $\mathfrak{a}$ to a concatenation of two oriented $\partial$-framed segments $\mathfrak{x}\mathfrak{a}_1$, such that $p_{\text{ter}}(\mathfrak{x})$ lies in the thick part of $M$, the length of $\mathfrak{x}$ lies in $ (0.2R,0.4R)$ and the phase of $\mathfrak{x}$ is $0$. Similarly, we can divide $\mathfrak{a}'$ to a concatenation of two oriented $\partial$-framed segments $\mathfrak{a}_1'\mathfrak{y}$, such that $p_{\text{ini}}(\mathfrak{y})$ lies in the thick part of $M$, the length of $\mathfrak{y}$ lies in $ (0.2R,0.4R)$ and the phase of $\mathfrak{y}$ is $0$.

Now we construct an oriented $\partial$-framed segment $\mathfrak{z}$ from $p_{\text{ter}}(\mathfrak{x})$ to $p_{\text{ini}}(\mathfrak{y})$ satisfying the following conditions. 
\begin{enumerate}
\item[(a)] The length and phase of $\mathfrak{z}$ are $\delta$-close to $2R-l(\mathfrak{b})-l(\mathfrak{x})-l(\mathfrak{y})$ and $-\varphi(\mathfrak{b})$ respectively. The height of $\mathfrak{z}$ is at most $\ln{R}$.
\item[(b)] The initial and terminal directions of $\mathfrak{z}$ are $\delta$-close to $\vec{t}_{\text{ter}}(\mathfrak{x})$ and $\vec{t}_{\text{ini}}(\mathfrak{y})$ respectively. The initial and terminal frames of $\mathfrak{z}$ are $\delta$-close to $\vec{n}_{\text{ter}}(\mathfrak{x})$ and $\vec{n}_{\text{ini}}(\mathfrak{y})$ respectively.
\end{enumerate}
Here the desired length of $\mathfrak{z}$ is $2R-l(\mathfrak{b})-l(\mathfrak{x})-l(\mathfrak{y})\in (R,2R)$ and its initial and terminal points lie in the thick part of $M$. So we can apply Theorem \ref{connection_principle_with_height} to construct $\mathfrak{z}$, with height at most $\frac{1}{2}\ln{2R}+C\ln{\frac{1}{\delta}}+C<\ln{R}$  (by (\ref{R1})). A picture of the oriented $\partial$-framed segments $\mathfrak{x},\mathfrak{y},\mathfrak{z}$ is shown in Figure 1.

\begin{figure}[htbp]
\centering
\label{new picture}
\def\svgwidth{.7\textwidth}
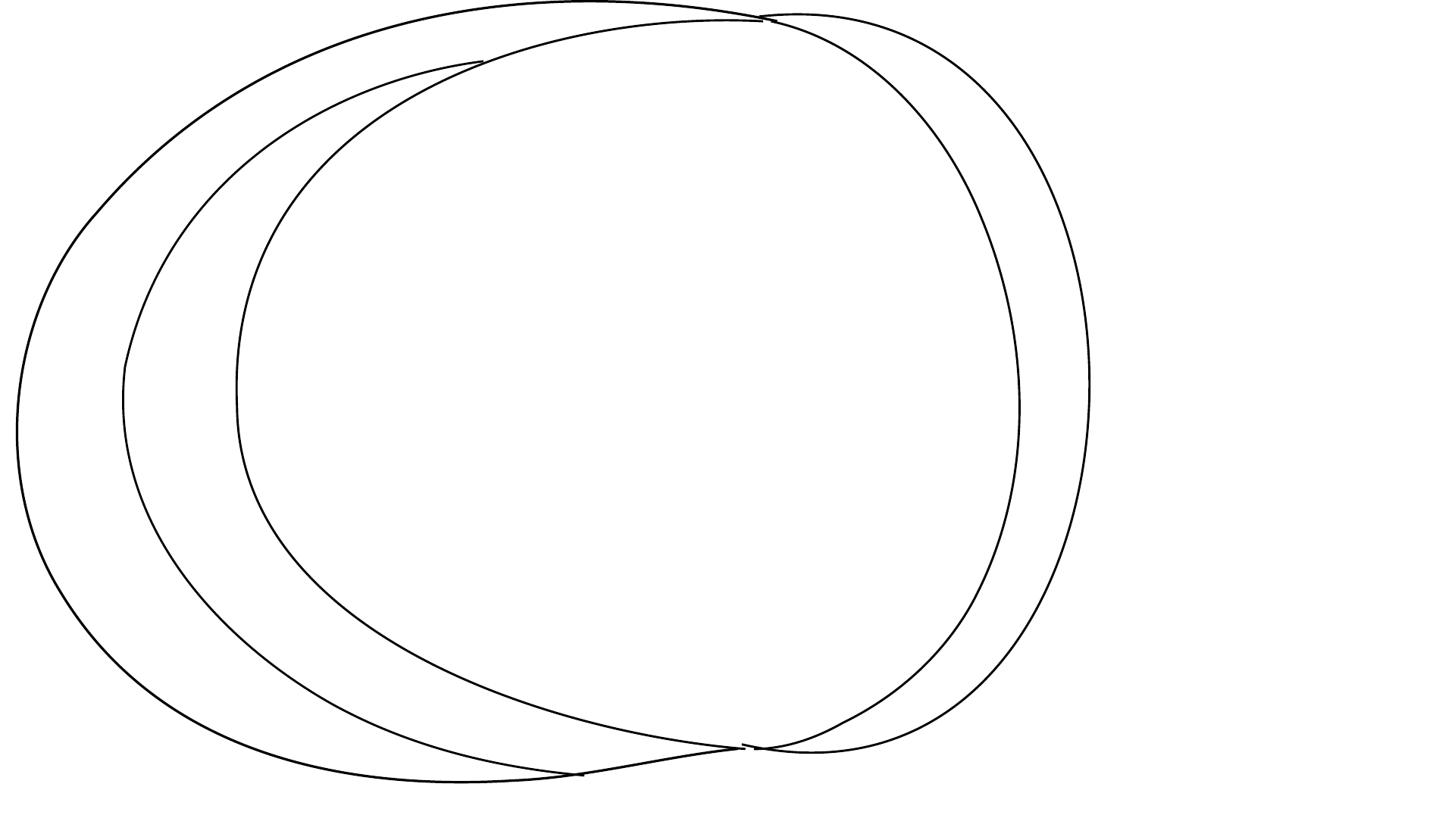
\caption{\ }
\end{figure}

Now we have two $(10L,10^6\delta)$-tame swap pairs of bigons, which are 
$$[(\mathfrak{x}\mathfrak{z})(\mathfrak{y}\mathfrak{b})],[\mathfrak{a}_1'(\mathfrak{y}\mathfrak{b}')]\ \text{and}\ [(\mathfrak{z}\mathfrak{y})(\mathfrak{b}\mathfrak{x})], [\mathfrak{a}_1(\mathfrak{b}'\mathfrak{x})].$$ 
Moreover, by Lemmas \ref{lengthphase} and \ref{distance}, all these curves and the curves obtained by swapping these pairs lie in ${\bf \Gamma}_{R,\epsilon}^{h+1}$.

Since all of $l(\mathfrak{x}),l(\mathfrak{y}),l(\mathfrak{z}),l(\mathfrak{a}_1),l(\mathfrak{a}_1')$ are greater than $0.2R$, all the oriented $\partial$-framed segments in the above swap pairs have length at least $0.2R$. So we can apply the result in Step II to get two $(R,\epsilon)$-panted subsurfaces $F_1,F_2$ of height at most $h+1+(\frac{1}{2}\ln{R}+2)<h+\ln{R}$, such that the oriented boundary of $F_1$ consists of 
$$[(\mathfrak{x}\mathfrak{z})(\mathfrak{y}\mathfrak{b})],[\mathfrak{a}_1'(\mathfrak{y}\mathfrak{b}')],\overline{[(\mathfrak{x}\mathfrak{z})(\mathfrak{y}\mathfrak{b}')]},\overline{[\mathfrak{a}_1'(\mathfrak{y}\mathfrak{b})]},$$ 
and the oriented boundary of $F_2$ consists of 
$$\overline{[(\mathfrak{z}\mathfrak{y})(\mathfrak{b}\mathfrak{x})]}, \overline{[\mathfrak{a}_1(\mathfrak{b}'\mathfrak{x})]},[(\mathfrak{z}\mathfrak{y})(\mathfrak{b}'\mathfrak{x})], [\mathfrak{a}_1(\mathfrak{b}\mathfrak{x})].$$ 
Then we paste $F_1$ and $F_2$ along the boundary components corresponding to $[(\mathfrak{x}\mathfrak{z})(\mathfrak{y}\mathfrak{b})]$ and $[(\mathfrak{z}\mathfrak{y})(\mathfrak{b}'\mathfrak{x})]$, to get an $(R,\epsilon)$-panted subsurface $F$ of height at most $h+\ln{R}$, with oriented boundary 
$$[\mathfrak{a}_1(\mathfrak{b}\mathfrak{x})]=[\mathfrak{a}\mathfrak{b}],[\mathfrak{a}_1'(\mathfrak{y}\mathfrak{b}')]=[\mathfrak{a}'\mathfrak{b}'],\overline{[\mathfrak{a}_1'(\mathfrak{y}\mathfrak{b})]}=\overline{[\mathfrak{a}'\mathfrak{b}]}, \overline{[\mathfrak{a}_1(\mathfrak{b}'\mathfrak{x})]}=\overline{[\mathfrak{a}\mathfrak{b}'].}$$

\end{proof}

In the following two lemmas, we need the definitions of $(L,\delta)$-tame tripod and $(L,\delta)$-rotation pair of tripods.

\begin{definition}\label{tripod}
An {\it $(L,\delta)$-tame tripod}, denoted as $\mathfrak{a}_0\vee \mathfrak{a}_1 \vee \mathfrak{a}_2$, is a triple $(\mathfrak{a}_0,\mathfrak{a}_1,\mathfrak{a}_2)$ of oriented $\partial$-framed segments of length at least $2L$ and of phase $\delta$-close to $0$, such that each $\bar{\mathfrak{a}}_i$ is $\delta$-consecutive to $\mathfrak{a}_{i+1}$ with bending angle $\delta$-close to $\frac{\pi}{3}$. Moreover, an $(L,\delta)$-tame tripod is {\it $(l,\delta)$-nearly regular} if the length of each $\mathfrak{a}_i$ is $\delta$-close to $l$. For each $i\in \mathbb{Z}/3\mathbb{Z}$, we define a new oriented $\partial$-framed segment $\mathfrak{a}_{i,i+1}=\bar{\mathfrak{a}}_i\mathfrak{a}_{i+1}$ (by Definition \ref{chainsandcycles} (3)).

Since the initial framings of $\mathfrak{a}_i$ are $\delta$-close to each other, the initial directions of $\mathfrak{a}_0,\mathfrak{a}_1,\mathfrak{a}_2$ rotate either counterclockwisely or clockwisely around either initial framing. Then we say $\mathfrak{a}_0\vee \mathfrak{a}_1 \vee \mathfrak{a}_2$ is {\it right-handed} if it is the former case and is {\it left-handed} if it is the later case.
\end{definition}

\begin{definition}\label{pairoftripods}
An {\it $(L,\delta)$-rotation pair of tripods} is a pair of $(L,\delta)$-tame tripods $\mathfrak{a}_0 \vee \mathfrak{a}_1 \vee \mathfrak{a}_2$ and $\mathfrak{b}_0 \vee \mathfrak{b}_1 \vee \mathfrak{b}_2$, where $\mathfrak{a}_0 \vee \mathfrak{a}_1 \vee \mathfrak{a}_2$ is $(l_a,\delta)$-nearly regular and $\mathfrak{b}_0 \vee \mathfrak{b}_1 \vee \mathfrak{b}_2$ is $(l_b,\delta)$-regular. Moreover, then chains $\mathfrak{a}_i,\bar{\mathfrak{b}}_j$ are $\delta$-consecutive and $(L,\delta)$-tame for all $i,j\in \mathbb{Z}_3$. In particular, the terminal points of all $\mathfrak{a}_i$ and $\mathfrak{b}_j$ are the same.
\end{definition}

The following two lemmas correspond to Constructions 4.19 and 4.20 of \cite{LM} respectively.

\begin{lemma}\label{rotation}(Rotation)
Suppose that $h\in [\alpha\ln{R},(\alpha+90)\ln{R}]$.  Suppose that $\mathfrak{a}_0\vee\mathfrak{a}_1\vee\mathfrak{a}_2$ and $\mathfrak{b}_0\vee\mathfrak{b}_1\vee\mathfrak{b}_2$ form a $(100L,10^4\delta)$-tame rotation pair of tripods, and
$[\mathfrak{a}_{i,i+1}\bar{\mathfrak{b}}_{j,j\pm 1}]\in {\bf \Gamma}_{R,\epsilon}^{h}$ for any $i,j\in \mathbb{Z}/3\mathbb{Z}$. Then there is an $(R,\epsilon)$-panted subsurface $F$ of height at most $h+3\ln{R}$ such that the following hold. 
\begin{enumerate}
\item If $\mathfrak{a}_0\vee\mathfrak{a}_1\vee\mathfrak{a}_2$ and $\mathfrak{b}_0\vee\mathfrak{b}_1\vee\mathfrak{b}_2$ are of opposite chiralities, then $F$ is a pair of pants $\Pi\in {\bf \Pi}_{R,\epsilon}^{h}$ with cuffs  $[\mathfrak{a}_{i,i+1}\bar{\mathfrak{b}}_{i,i+1}]$ with $i\in \mathbb{Z}/3\mathbb{Z}$.
\item If $\mathfrak{a}_0\vee\mathfrak{a}_1\vee\mathfrak{a}_2$ and $\mathfrak{b}_0\vee\mathfrak{b}_1\vee\mathfrak{b}_2$ are of identical chirality, then $F$ has exactly six oriented boundary components, which are two copies of each $[\mathfrak{a}_{i,i+1}\bar{\mathfrak{b}}_{i,i+1}]$ with $i\in \mathbb{Z}/3\mathbb{Z}$.
\end{enumerate}
\end{lemma}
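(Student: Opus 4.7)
The proof adapts Construction 4.19 of \cite{LM} to the cusped setting, with the main additions being height estimates coming from Lemmas \ref{distance} and \ref{swap}. We treat the two chirality cases separately.

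In the opposite-chirality case (1), we construct $\Pi$ directly from the tripod geometry. The three curves $\gamma_i = [\mathfrak{a}_{i,i+1}\bar{\mathfrak{b}}_{i,i+1}]$ for $i \in \mathbb{Z}/3\mathbb{Z}$ are $(R,\epsilon)$-good by hypothesis, and we take them as the oriented cuffs of an immersed pair of pants $\Pi$. Each $\gamma_i$ is the reduced cyclic concatenation of the four-cycle $\bar{\mathfrak{a}}_i, \mathfrak{a}_{i+1}, \bar{\mathfrak{b}}_{i+1}, \mathfrak{b}_i$, with bending angles close to $\pi/3$ at the two tripod vertices $p_a, p_b$. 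The opposite chirality of the two tripods is precisely what is needed so that the three cuffs close up consistently into a single immersed pants, with seams realized as short common perpendiculars near the shared legs of the tripods. Applying Lemma \ref{lengthphase} to the relevant concatenations gives $|\bold{hl}_\Pi(\gamma_i) - R| < \epsilon$, so $\Pi \in {\bf \Pi}_{R,\epsilon}$. Since the height of a pair of pants is defined as the maximal height of its cuffs, $\Pi \in {\bf \Pi}_{R,\epsilon}^h \subset {\bf \Pi}_{R,\epsilon}^{h + 3\ln R}$, and taking $F = \Pi$ finishes case (1).

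For the identical-chirality case (2), we reduce to case (1) by flipping the $\mathfrak{b}$-tripod. Define $\tilde{\mathfrak{b}} := \mathfrak{b}_0 \vee \mathfrak{b}_2 \vee \mathfrak{b}_1$ by transposing indices $1$ and $2$, which has chirality opposite to $\mathfrak{a}$. A short index computation shows that each $\tilde{\mathfrak{b}}_{i,i+1}$ equals some $\mathfrak{b}_{j,j+1}$ or $\mathfrak{b}_{j,j-1}$, so the three diagonal cuffs $[\mathfrak{a}_{i,i+1}\bar{\tilde{\mathfrak{b}}}_{i,i+1}]$ all appear among the curves $\{[\mathfrak{a}_{i,i+1}\bar{\mathfrak{b}}_{j,j\pm 1}]\} \subset {\bf \Gamma}_{R,\epsilon}^h$ given in the hypothesis. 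Case (1) applied to the opposite-chirality rotation pair $(\mathfrak{a}, \tilde{\mathfrak{b}})$ yields a pants $\Pi^\flat \in {\bf \Pi}_{R,\epsilon}^h$ whose cuffs are these diagonal curves. We then correct each cuff via an application of Lemma \ref{swap}: for each correction, the relevant $(10L, 10^5\delta)$-tame swap pair of bigons has all four corners in ${\bf \Gamma}_{R,\epsilon}^h$ (precisely the content of the $\pm$-form of the hypothesis, which supplies every $\mathfrak{a}$-$\mathfrak{b}$ combination that a swap requires). Each swap produces an $(R,\epsilon)$-panted subsurface of height at most $h + \ln R$ whose oriented boundary trades one non-target cuff of $\Pi^\flat$ for a copy of the corresponding $\gamma_i$; the off-diagonal curves cancel across the three swaps and $\Pi^\flat$ itself, and the final oriented boundary consists of two copies of each $\gamma_i$ as required.

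The principal obstacle is the bookkeeping in case (2): verifying that the chain of three swap applications can be set up so that every intermediate swap pair satisfies the tameness and goodness hypotheses of Lemma \ref{swap}. This is exactly where the $\pm$-form of the hypothesis enters essentially, by guaranteeing in advance that every corner of every swap pair we need already lies in ${\bf \Gamma}_{R,\epsilon}^h$. Once this is in place, the total height bound follows by accumulating contributions: the base pants $\Pi^\flat$ has height at most $h$, and each of the three swap subsurfaces lifts the height by at most $\ln R$, giving the claimed bound $h + 3\ln R$.
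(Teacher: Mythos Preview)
Your case (1) is fine and matches the paper. The gap is in case (2).

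Transposing to $\tilde{\mathfrak{b}}=\mathfrak{b}_0\vee\mathfrak{b}_2\vee\mathfrak{b}_1$ gives $\Pi^\flat$ with cuffs
\[
[\mathfrak{a}_{0,1}\bar{\mathfrak{b}}_{0,2}],\quad [\mathfrak{a}_{1,2}\bar{\mathfrak{b}}_{2,1}],\quad [\mathfrak{a}_{2,0}\bar{\mathfrak{b}}_{1,0}],
\]
all of which carry $\bar{\mathfrak{b}}_{j,j-1}$--type pieces, whereas the target curves $\gamma_i=[\mathfrak{a}_{i,i+1}\bar{\mathfrak{b}}_{i,i+1}]$ carry $\bar{\mathfrak{b}}_{j,j+1}$--type pieces. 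Every swap you can form from these bigons has the shape
\[
[\mathfrak{a}_{i,i+1}\bar{\mathfrak{b}}_{j,k}]+[\mathfrak{a}_{i',i'+1}\bar{\mathfrak{b}}_{j',k'}]=[\mathfrak{a}_{i,i+1}\bar{\mathfrak{b}}_{j',k'}]+[\mathfrak{a}_{i',i'+1}\bar{\mathfrak{b}}_{j,k}],
\]
which only permutes the $\mathfrak{a}$--parts among a fixed multiset of $\bar{\mathfrak{b}}$--parts. No combination of such swaps can turn the multiset $\{(0,2),(2,1),(1,0)\}$ into $\{(0,1),(1,2),(2,0)\}$; in particular your cancellation claim fails on a pure count, since $\Pi^\flat$ plus three swap surfaces have $3+3\cdot 4=15$ oriented boundary components, and $15-6=9$ cannot cancel in pairs. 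What you have written is exactly the proof of the \emph{antirotation} lemma (Lemma~\ref{antirotation}), whose target curves $[\mathfrak{a}_{i,i+1}\bar{\mathfrak{b}}_{i+1,i}]$ already have the $\bar{\mathfrak{b}}_{j,j-1}$ shape, so a single swap on the $\mathfrak{a}$--indices finishes that case.

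For rotation case (2) one needs a genuinely different way to reverse chirality that keeps the $\mathfrak{b}$--legs in the same cyclic order. The paper does this by writing $\mathfrak{b}_i=\mathfrak{c}_i\mathfrak{r}$ and using the connection principle to build an auxiliary segment $\mathfrak{s}$ with one framing flipped, so that $\mathfrak{c}_0^*\mathfrak{s}\vee\mathfrak{c}_1^*\mathfrak{s}\vee\mathfrak{c}_2^*\mathfrak{s}$ is a left-handed tripod with the \emph{same} carrier legs (no transposition). Case (1) applied to $\mathfrak{a}$ against this frame-flipped tripod, together with swaps that exchange $\mathfrak{r}$ for $\mathfrak{s}$, then assembles to the required surface. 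This $\mathfrak{s}$--construction is the missing idea in your argument.
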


\begin{proof}
For statement (1), since $\mathfrak{a}_0\vee\mathfrak{a}_1\vee\mathfrak{a}_2$ and $\mathfrak{b}_0\vee\mathfrak{b}_1\vee\mathfrak{b}_2$ have opposite chiralty, the desired subsurface $F$ is a pair of good pants whose spine is a figure-$\theta$ graph with segments $\mathfrak{a}_i\bar{\mathfrak{b}}_i$.

For statement (2), without loss of generality, we can assume that  $\mathfrak{a}_0\vee\mathfrak{a}_1\vee\mathfrak{a}_2$ and $\mathfrak{b}_0\vee\mathfrak{b}_1\vee\mathfrak{b}_2$ are both right-handed, by flipping framings. Moreover, we can also assume that the length of each $\mathfrak{b}_i$ is at least $0.4R$, by switching $\mathfrak{a}$ and $\mathfrak{b}$ if necessary.

{\bf Step I.} Suppose that $h\in[\alpha \ln{R},(\alpha+95)\ln {R}]$. Suppose that  $\mathfrak{a}_0\vee\mathfrak{a}_1\vee\mathfrak{a}_2$ and $\mathfrak{b}_0\vee\mathfrak{b}_1\vee\mathfrak{b}_2$ form a $(100L,3\times 10^4\delta)$-tame rotation pair of tripods, and also suppose that $\mathfrak{b}_0\vee\mathfrak{b}_1\vee\mathfrak{b}_2$ can be written as concatenations $\mathfrak{c}_0\mathfrak{r}\vee\mathfrak{c}_1\mathfrak{r}\vee\mathfrak{c}_2\mathfrak{r}$ such that the following holds. 
\begin{enumerate}
\item[(a)] The length of $\mathfrak{r}$ is at least $0.1R$ and the phase of $\mathfrak{r}$ is $\delta$-close to $0$.
\item[(b)] For each $i$, $\mathfrak{c}_i,\mathfrak{r}$ is a $\delta$-consecutive $(10L,\delta)$-tame chain for each $i$, and $\mathfrak{c}_0\vee\mathfrak{c}_1\vee\mathfrak{c}_2$ is $(l_{b}-l(\mathfrak{r}),\delta)$-nearly regular. 
\end{enumerate}
Then there exists an $(R,\epsilon)$-panted subsurface $F$ of height at most $h+\frac{3}{2}\ln{R}+3$ with the desired oriented boundary.

By the moreover part of Lemma \ref{distance}, the heights of $\mathfrak{a}_0\vee\mathfrak{a}_1\vee\mathfrak{a}_2$, $\mathfrak{b}_0\vee\mathfrak{b}_1\vee\mathfrak{b}_2$  and $\mathfrak{c}_0\vee\mathfrak{c}_1\vee\mathfrak{c}_2$ are at most $h+1$.

By using Theorem \ref{connection_principle_with_height}, we construct an auxiliary oriented $\partial$-famed segment $\mathfrak{s}$ from $p_{\text{ini}}(\mathfrak{r})$ to $p_{\text{ter}}(\mathfrak{r})$ satisfying the following.
\begin{enumerate}
\item[(a)] The length and phase of $\mathfrak{s}$ are $\delta$-close to $l(\mathfrak{r})$ and $0$ respectively. The height of $\mathfrak{s}$ is at most $h+1+\frac{1}{2}\ln{R}$.
\item[(b)] The initial and terminal directions of $\mathfrak{s}$ are $\delta$-close to $\vec{t}_{\text{ini}}(\mathfrak{r})$ and  $\vec{t}_{\text{ter}}(\mathfrak{r})$ respectively. The initial and terminal framings of $\mathfrak{s}$ are $\delta$-close to $-\vec{n}_{\text{ini}}(\mathfrak{r})$ and  $\vec{n}_{\text{ter}}(\mathfrak{r})$ respectively. 
\end{enumerate}
Here the end points of $\mathfrak{s}$ have height at most $h+1<\kappa(0.1R)$ (by (\ref{R2})), and the height of $\mathfrak{s}$ is bounded by $$\max{\{(h+1)+\ln{\frac{1}{\delta}}+C,\frac{1}{2}\ln{2R}+C\ln{\frac{1}{\delta}}+C\}}<h+1+\frac{1}{2}\ln{R}$$ (by (\ref{R1})). See Figure 4 of \cite{LM} for a picture of the oriented $\partial$-framed segment $\mathfrak{s}$.

Then $\mathfrak{c}_0^*\mathfrak{s}\vee \mathfrak{c}_1^*\mathfrak{s}\vee \mathfrak{c}_2^*\mathfrak{s}$ is an $(l(\mathfrak{b}),10\delta)$-nearly regular left-handed tripod, with height at most $h+\frac{1}{2}\ln{R}+2$. 

For each $i\in \mathbb{Z}/3\mathbb{Z}$, by swapping the $(10L,10^5\delta)$-tame swap pair (Lemma \ref{swap}) 
$$[(\mathfrak{a}_{i,i+1})(\bar{\mathfrak{r}}\ \bar{\mathfrak{c}}_{i,i+1}\mathfrak{r})], [(\bar{\mathfrak{a}}_{i,i+1})(\bar{\mathfrak{s}}\mathfrak{c}_{i,i+1}^*\mathfrak{s})]\in {\bf \Gamma}_{R,\epsilon}^{h+\frac{1}{2}\ln{R}+3},$$ 
we get an $(R,\epsilon)$-panted subsurface $E_i$ of height at most $h+\frac{3}{2}\ln{R}+3$, whose oriented boundary consists of 
$$[(\mathfrak{a}_{i,i+1})(\bar{\mathfrak{r}}\ \bar{\mathfrak{c}}_{i,i+1}\mathfrak{r})], [(\bar{\mathfrak{a}}_{i,i+1})(\bar{\mathfrak{s}}\mathfrak{c}_{i,i+1}^*\mathfrak{s})], \overline{[(\mathfrak{a}_{i,i+1})(\bar{\mathfrak{s}}\mathfrak{c}_{i,i+1}^*\mathfrak{s})]}, \overline{[(\bar{\mathfrak{a}}_{i,i+1})(\bar{\mathfrak{r}}\ \bar{\mathfrak{c}}_{i,i+1}\mathfrak{r})]}\in {\bf \Gamma}_{R,\epsilon}^{h+\frac{1}{2}\ln{R}+3}.$$ 

Similarly, for each $i\in \mathbb{Z}/3\mathbb{Z}$, by swapping the $(10L,10^5\delta)$-tame swap pair (Lemma \ref{swap}) 
$$[(\mathfrak{r}\mathfrak{a}_{i,i+1}\bar{\mathfrak{r}}) (\bar{\mathfrak{c}}_{i,i+1})], [(\mathfrak{s}^*\bar{\mathfrak{a}}_{i,i+1}^*\bar{\mathfrak{s}}^*)(\mathfrak{c}_{i,i+1})]\in {\bf \Gamma}_{R,\epsilon}^{h+\frac{1}{2}\ln{R}+3},$$ 
we get another $(R,\epsilon)$-panted subsurface $E_i'$ of height at most $h+\frac{3}{2}\ln{R}+3$, whose oriented boundary consists of 
$$[(\mathfrak{r}\mathfrak{a}_{i,i+1}\bar{\mathfrak{r}}) (\bar{\mathfrak{c}}_{i,i+1})], [(\mathfrak{s}^*\bar{\mathfrak{a}}_{i,i+1}^*\bar{\mathfrak{s}}^*)(\mathfrak{c}_{i,i+1})], \overline{[(\mathfrak{r}\mathfrak{a}_{i,i+1}\bar{\mathfrak{r}})(\mathfrak{c}_{i,i+1}) ]}, \overline{[(\mathfrak{s}^*\bar{\mathfrak{a}}_{i,i+1}^*\bar{\mathfrak{s}}^*)(\bar{\mathfrak{c}}_{i,i+1})]}\in {\bf \Gamma}_{R,\epsilon}^{h+\frac{1}{2}\ln{R}+3}.$$

We apply statment (1) to the $(10L,10^5\delta)$-tame rotation pair of tripods $\mathfrak{a}_0\vee\mathfrak{a}_1\vee\mathfrak{a}_2$ and  $\mathfrak{c}_0^*\mathfrak{s}\vee\mathfrak{c}_1^*\mathfrak{s}\vee\mathfrak{c}_2^*\mathfrak{s}$ of opposite chiralty, then flip the orientation of the obtained subsurface. We obtain a pair of pants $\Pi\in {\bf \Pi}_{R,\epsilon}^{h+\frac{1}{2}\ln{R}+3}$ with oriented boundary 
$$[\mathfrak{a}_{i,i+1}\overline{(\bar{\mathfrak{s}}\mathfrak{c}_{i,i+1}^*\mathfrak{s})}]=\overline{[\bar{\mathfrak{a}}_{i,i+1}\bar{\mathfrak{s}}\mathfrak{c}_{i,i+1}^*\mathfrak{s}]}\in {\bf \Pi}_{R,\epsilon}^{h+\frac{1}{2}\ln{R}+3}$$
for $i\in \mathbb{Z}/3\mathbb{Z}$. We take another copy of $\Pi$ and denote it by $\Pi'$, then its oriented boundary can be rewritten as 
$$\overline{[\bar{\mathfrak{a}}_{i,i+1}^*\bar{\mathfrak{s}}^*\mathfrak{c}_{i,i+1}\mathfrak{s}^*]}\in {\bf \Pi}_{R,\epsilon}^{h+\frac{1}{2}\ln{R}+3}$$  for $i\in \mathbb{Z}/3\mathbb{Z}$.

Then the oriented boundary of $\Pi$ cancels with the second boundary components of $E_i$ for each $i\in \mathbb{Z}/3\mathbb{Z}$, and the oriented boundary of $\Pi'$ cancels with the second components of $E_i'$ for each $i\in \mathbb{Z}/3\mathbb{Z}$. The third and fourth boundary components of $E_i$ cancel with the fourth and third components of $E'_i$, respectively. So we can paste $\Pi,\Pi'$ and $E_i,E_i'$ for $i\in \mathbb{Z}/3\mathbb{Z}$ to get the desired $(R,\epsilon)$-panted subsurface $F$ whose oriented boundary consists of two copies of 
$$[\mathfrak{a}_{i,i+1}(\bar{\mathfrak{r}}\ \bar{\mathfrak{c}}_{i,i+1}\mathfrak{r})]=[\mathfrak{a}_{i,i+1}\bar{\mathfrak{b}}_{i,i+1}]$$ 
for each $i\in \mathbb{Z}/3\mathbb{Z}$, and the height of $F$ is at most $h+\frac{3}{2}\ln{R}+3$.
 
\bigskip

{\bf Step II.} General case. We first construct an auxiliary right-handed tripod $$\mathfrak{b}_0'\vee \mathfrak{b}_1'\vee \mathfrak{b}_2'=\mathfrak{c}_0\mathfrak{r}\vee \mathfrak{c}_1\mathfrak{r}\vee \mathfrak{c}_2\mathfrak{r}$$ satisfying the following conditions.
\begin{enumerate}
\item[(a)] The length and phase of $\mathfrak{r}$ are $\delta$-close to $0.2R$ and $0$ respectively. The height of $\mathfrak{r}$ is at most $h+\frac{1}{2}\ln{R}$. The initial point of $\mathfrak{r}$ lies in the thick part of $M$ and the terminal point of $\mathfrak{r}$ is the common terminal point of $\mathfrak{a}_i$.
\item[(b)] $\mathfrak{c}_i,\mathfrak{r}$ is a $\delta$-consecutive $(10L,\delta)$-tame chain for each $i\in \mathbb{Z}/3\mathbb{Z}$, and $\mathfrak{c}_0\vee \mathfrak{c}_1\vee \mathfrak{c}_2$ is $(l_b-l(\mathfrak{r}),\delta)$-nearly regular. The initial and terminal points of $\mathfrak{c}_i$ both lie in the thick part of $M$ and the height of $\mathfrak{c}_i$ is at most $h$.
\item[(c)] $\mathfrak{a}_0\vee \mathfrak{a}_1\vee \mathfrak{a}_2$ and $\mathfrak{b}_0'\vee \mathfrak{b}_1'\vee \mathfrak{b}_2'$ form a $(100L,3\delta)$-tame rotation pair of tripods of same chiralty.
\end{enumerate}
Here when we apply Theorem \ref{connection_principle_with_height} to construct $\mathfrak{r}$ and $\mathfrak{c}_i$, we use the fact that $l_b-0.2R>0.2R$, since $l_b$ is at least $0.4R$. So each $\mathfrak{b}_i'$ has height at most $h+\frac{1}{2}\ln{R}+1$. See Figure 5 of \cite{LM} to see a picture of the tripod $\mathfrak{b}_0'\vee \mathfrak{b}_1'\vee \mathfrak{b}_2'$.

Now we apply Step I to $\mathfrak{a}_0\vee \mathfrak{a}_1\vee \mathfrak{a}_2$ and  $\mathfrak{b}'_0\vee \mathfrak{b}'_1\vee \mathfrak{b}'_2$, to get an $(R,\epsilon)$-panted subsurface $F'$ of height at most 
$(h+\frac{1}{2}\ln{R}+1)+(\frac{3}{2}\ln{R}+3)=h+2\ln{R}+4$, 
such that the oriented boundary of $F'$ consists of two copies of 
$$[\mathfrak{a}_{i,i+1}\bar{\mathfrak{b}}'_{i,i+1}]\in {\bf \Gamma}_{R,\epsilon}^{h+\frac{1}{2}\ln{R}+2}$$ 
for each $i\in \mathbb{Z}/3\mathbb{Z}$. Then we apply statement (1) to the two $(100L,10^4\delta)$-tame rotation pairs of tripods of opposite chirality 
$$\mathfrak{a}_0\vee\mathfrak{a}_{-1}\vee\mathfrak{a}_{-2},\ \mathfrak{b}_0\vee\mathfrak{b}_1\vee\mathfrak{b}_2, \ \text{and}\ \mathfrak{a}_0\vee\mathfrak{a}_{-1}\vee\mathfrak{a}_{-2},\ \mathfrak{b}'_0\vee\mathfrak{b}'_1\vee\mathfrak{b}'_2.$$ 
Since the heights of these tripods are bounded by $h+\frac{1}{2}\ln{R}+1$, we get two pair of pants $\Pi,\Pi'\in {\bf \Pi}_{R,\epsilon}^{h+\frac{1}{2}\ln{R}+2}$ such that their oriented boundaries are 
$$[\mathfrak{a}_{-i,-i-1}\bar{\mathfrak{b}}_{i,i+1}]\ \text{and}\ \overline{[\mathfrak{a}_{-i,-i-1}\bar{\mathfrak{b}}'_{i,i+1}]}\in  {\bf \Gamma}_{R,\epsilon}^{h+\frac{1}{2}\ln{R}+2}$$ for $i\in \mathbb{Z}/3\mathbb{Z}$ respectively.

Moreover, for each $i\in \mathbb{Z}/3\mathbb{Z}$, we apply Lemma \ref{swap} (swapping) to the $(100L,10^5\delta)$-tame swap pair
$$[\mathfrak{a}_{i,i+1}\bar{\mathfrak{b}}_{i,i+1}], [\mathfrak{a}_{-i,-i-1}\bar{\mathfrak{b}}'_{i,i+1}] \in {\bf \Gamma}_{R,\epsilon}^{h+\frac{1}{2}\ln{R}+2}$$ 
to get an $(R,\epsilon)$-panted subsurface $E_i$ of height at most $(h+\frac{1}{2}\ln{R}+2)+\ln{R}=h+\frac{3}{2}\ln{R}+2$ with oriented boundary 
$$[\mathfrak{a}_{i,i+1}\bar{\mathfrak{b}}_{i,i+1}], [\mathfrak{a}_{-i,-i-1}\bar{\mathfrak{b}}'_{i,i+1}], \overline{[\mathfrak{a}_{i,i+1}\bar{\mathfrak{b}}'_{i,i+1}]}, \overline{[\mathfrak{a}_{-i,-i-1}\bar{\mathfrak{b}}_{i,i+1}]} \in {\bf \Gamma}_{R,\epsilon}^{h+\frac{1}{2}\ln{R}+2}.$$

Now we paste one copy of $F'$, two copies of $\Pi,\Pi'$ and $E_i$ for each $i\in \mathbb{Z}/3\mathbb{Z}$, to get the desired $(R,\epsilon)$-panted subsurface $F$ of height at most $h+2\ln{R}+4<h+3\ln{R}$, whose oriented boundary consists of two copies of $[\mathfrak{a}_{i,i+1}\bar{\mathfrak{b}}_{i,i+1}]$ for each $i\in \mathbb{Z}/3\mathbb{Z}$.
\end{proof}

\begin{lemma}\label{antirotation}(Antirotation)
Suppose that $h\in [\alpha\ln{R},(\alpha+90)\ln{R}]$.  Suppose that $\mathfrak{a}_0\vee\mathfrak{a}_1\vee\mathfrak{a}_2$ and $\mathfrak{b}_0\vee\mathfrak{b}_1\vee\mathfrak{b}_2$ form a $(100L,10^4\delta)$-tame rotation pair of tripods, and $[\mathfrak{a}_{i,i+1}\bar{\mathfrak{b}}_{j,j\pm1}]\in {\bf \Gamma}_{R,\epsilon}^{h}$ for any $i,j\in \mathbb{Z}/3\mathbb{Z}$. Then there is an $(R,\epsilon)$-panted subsurface $F$ of height at most $h+3\ln{R}$ such that the following hold. 
\begin{enumerate}
\item If $\mathfrak{a}_0\vee\mathfrak{a}_1\vee\mathfrak{a}_2$ and $\mathfrak{b}_0\vee\mathfrak{b}_1\vee\mathfrak{b}_2$ are of opposite chiralities, then $F$ has exactly six oriented boundary component, which are two copies of each $[\mathfrak{a}_{i,i+1}\bar{\mathfrak{b}}_{i+1,i}]$ with $i\in \mathbb{Z}/3\mathbb{Z}$.
\item If $\mathfrak{a}_0\vee\mathfrak{a}_1\vee\mathfrak{a}_2$ and $\mathfrak{b}_0\vee\mathfrak{b}_1\vee\mathfrak{b}_2$ are of identical chirality, then $F$ has exactly three oriented boundary components, which consists of  $[\mathfrak{a}_{i,i+1}\bar{\mathfrak{b}}_{i+1,i}]$ with $i\in \mathbb{Z}/3\mathbb{Z}$. 
\end{enumerate}
\end{lemma}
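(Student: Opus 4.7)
The plan is to follow the same scheme as Lemma~\ref{rotation} (Rotation), with the roles of the two chirality subcases interchanged. Geometrically, replacing $\bar{\mathfrak{b}}_{i,i+1}$ by $\bar{\mathfrak{b}}_{i+1,i}$ in the cuff labels corresponds to gluing the two tripods so that the cyclic orderings at the two trivalent vertices of the resulting figure-$\theta$ spine match rather than oppose. Consequently, a single pair of pants directly realizes the antirotation cuff pattern precisely when the two tripods share chirality; this makes case~(2) the easy direct case and case~(1) the hard case requiring an auxiliary construction paralleling Step~II of the Rotation proof.

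For case~(2), when the two tripods are of identical chirality, I would directly build a pair of pants $\Pi \in {\bf \Pi}_{R,\epsilon}^{h}$ whose spine is the figure-$\theta$ graph with edges $\mathfrak{a}_i\bar{\mathfrak{b}}_i$; the matching chirality at the two vertices forces the three cuffs to be exactly $[\mathfrak{a}_{i,i+1}\bar{\mathfrak{b}}_{i+1,i}]$ for $i \in \mathbb{Z}/3\mathbb{Z}$. The $(R,\epsilon)$-goodness of these cuffs follows from Lemma~\ref{lengthphase} together with inequality~(\ref{epsilondelta1}), and the height bound $h$ is supplied by the moreover part of Lemma~\ref{distance} applied to each cuff.

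For case~(1), when the two tripods have opposite chirality, I would mimic Step~II of the Rotation proof almost verbatim. Interchanging $\mathfrak{a}$ and $\mathfrak{b}$ if necessary, one may assume $l_b \geq 0.4R$. Decompose $\mathfrak{b}_i = \mathfrak{c}_i\mathfrak{r}$ with $l(\mathfrak{r})$ close to $0.2R$ and $p_{\text{ini}}(\mathfrak{r})$ in the thick part of $M$. Using Theorem~\ref{connection_principle_with_height}, construct an auxiliary $\partial$-framed segment $\mathfrak{s}$ from $p_{\text{ini}}(\mathfrak{r})$ to $p_{\text{ter}}(\mathfrak{r})$ whose initial framing is flipped to $-\vec{n}_{\text{ini}}(\mathfrak{r})$. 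Then the new tripod $\mathfrak{c}_0^*\mathfrak{s}\vee\mathfrak{c}_1^*\mathfrak{s}\vee\mathfrak{c}_2^*\mathfrak{s}$ has chirality opposite to the original $\mathfrak{b}$-tripod, and hence identical to that of the $\mathfrak{a}$-tripod. Applying case~(2) of the present lemma to this new rotation pair produces a pair of pants $\Pi$, while case~(1) of Rotation (to handle mixed-chirality pants involving $\mathfrak{s}$) together with Lemma~\ref{swap} applied to the $(L,\delta)$-tame swap pairs exchanging $\bar{\mathfrak{r}}\bar{\mathfrak{c}}_{i+1,i}\mathfrak{r}$ with $\bar{\mathfrak{s}}\mathfrak{c}_{i+1,i}^*\mathfrak{s}$ (and their orientation reversals) produce the remaining pants and swap surfaces. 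Pasting all these pieces along common cuffs yields an $(R,\epsilon)$-panted subsurface whose oriented boundary is two copies of each $[\mathfrak{a}_{i,i+1}\bar{\mathfrak{b}}_{i+1,i}]$, with height at most $h + 3\ln R$ by the same $\tfrac{1}{2}\ln R$ accumulations that appeared in Rotation.

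The main obstacle is the combinatorial index bookkeeping. The antirotation twist $i \leftrightarrow i+1$ on the $\mathfrak{b}$-side, compounded with the orientation reversals and framing flips introduced by $\mathfrak{s}$, must be tracked carefully through the entire pasting scheme so that after all cancellations the oriented boundary is exactly two copies of each $[\mathfrak{a}_{i,i+1}\bar{\mathfrak{b}}_{i+1,i}]$ and nothing extra. In particular, one must invoke case~(2) rather than case~(1) of the present lemma at the step where the chirality-flipped tripod yields a pair of pants, and choose the swap pairs so that the cuffs introduced by $\mathfrak{s}$ cancel precisely with the secondary cuffs of the other pants used.
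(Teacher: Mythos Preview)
Your case~(2) argument contains a genuine error. The cuffs of a pair of pants with figure-$\theta$ spine are determined purely by the edges of the spine, not by the chirality of the two trivalent vertices. With edges $e_i=\mathfrak{a}_i\bar{\mathfrak{b}}_i$, the three cuffs are always (cyclically) $e_i\bar{e}_{i+1}=\mathfrak{a}_i\bar{\mathfrak{b}}_i\mathfrak{b}_{i+1}\bar{\mathfrak{a}}_{i+1}\sim[\mathfrak{a}_{i+1,i}\bar{\mathfrak{b}}_{i+1,i}]$, i.e.\ of Rotation type $[\mathfrak{a}_{j,j+1}\bar{\mathfrak{b}}_{j,j+1}]$. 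They are never $[\mathfrak{a}_{i,i+1}\bar{\mathfrak{b}}_{i+1,i}]$, because that curve pairs $\mathfrak{a}_{i+1}$ with $\bar{\mathfrak{b}}_i$ at the common terminal point, which is not a single edge $e_j$ of your spine. What the chirality hypothesis in Lemma~\ref{rotation}(1) actually governs is whether the map from the pants extending the spine is an $(R,\epsilon)$-\emph{good} pants (the half-length condition requires the cyclic orders at the two vertices to be opposite); it does not change which homotopy classes the cuffs land in. So no choice of chirality makes this spine produce the antirotation cuffs, and since your case~(1) invokes case~(2) for the chirality-flipped auxiliary tripod, it collapses as well.

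The paper's proof avoids this entirely by a short reduction: relabel the $\mathfrak{b}$-tripod as $\mathfrak{b}_0\vee\mathfrak{b}_2\vee\mathfrak{b}_1$, which reverses its chirality, and apply Lemma~\ref{rotation} directly. This yields a surface $E$ whose boundary is (one or two copies of) $[\mathfrak{a}_{01}\bar{\mathfrak{b}}_{02}],[\mathfrak{a}_{12}\bar{\mathfrak{b}}_{21}],[\mathfrak{a}_{20}\bar{\mathfrak{b}}_{10}]$; then a single application of Lemma~\ref{swap} to the swap pair $[\mathfrak{a}_{01}\bar{\mathfrak{b}}_{10}],[\mathfrak{a}_{20}\bar{\mathfrak{b}}_{02}]$ produces a surface $E'$ that corrects the two mismatched labels. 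Pasting the appropriate number of copies of $E'$ to $E$ gives the desired $F$, with height at most $h+3\ln R$ inherited from Lemma~\ref{rotation}. This is far shorter than reproducing the entire two-step Rotation construction with twisted indices.
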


\begin{proof}
We consider the $(100L,10^4\delta)$-tame rotation pair of tripods 
$$\mathfrak{a}_0\vee \mathfrak{a}_1\vee \mathfrak{a}_2, \mathfrak{b}_0\vee \mathfrak{b}_2\vee \mathfrak{b}_1.$$ 
By Lemma \ref{rotation}, there is an $(R,\epsilon)$-panted subsurface $E$ of height at most $h+3\ln{R}$, such that the following hold.
\begin{enumerate}
\item If $\mathfrak{a}_0\vee \mathfrak{a}_1\vee \mathfrak{a}_2$ and $\mathfrak{b}_0\vee \mathfrak{b}_1\vee \mathfrak{b}_2$ have opposite chirality, then $\mathfrak{a}_0\vee \mathfrak{a}_1\vee \mathfrak{a}_2$ and $\mathfrak{b}_0\vee \mathfrak{b}_2\vee \mathfrak{b}_1$ have same chirality. In this case the oriented boundary of $F$ consists of two copies of 
$$[\mathfrak{a}_{01}\bar{\mathfrak{b}}_{02}], [\mathfrak{a}_{12}\bar{\mathfrak{b}}_{21}], [\mathfrak{a}_{20}\bar{\mathfrak{b}}_{10}]\in {\bf \Gamma}_{R,\epsilon}^h.$$
\item If $\mathfrak{a}_0\vee \mathfrak{a}_1\vee \mathfrak{a}_2$ and $\mathfrak{b}_0\vee \mathfrak{b}_1\vee \mathfrak{b}_2$ have same chirality, then $\mathfrak{a}_0\vee \mathfrak{a}_1\vee \mathfrak{a}_2$ and $\mathfrak{b}_0\vee \mathfrak{b}_2\vee \mathfrak{b}_1$ have opposite chirality. In this case, the oriented boundary of $F$ consists of one copy of 
$$[\mathfrak{a}_{01}\bar{\mathfrak{b}}_{02}], [\mathfrak{a}_{12}\bar{\mathfrak{b}}_{21}], [\mathfrak{a}_{20}\bar{\mathfrak{b}}_{10}]\in {\bf \Gamma}_{R,\epsilon}^h.$$
\end{enumerate}

Now we consider the $(100L,10^5\delta)$-tame swap pair of bigons 
$$[\mathfrak{a}_{01}\bar{\mathfrak{b}}_{10}], [\mathfrak{a}_{20}\bar{\mathfrak{b}}_{02}].$$ 
By Lemma \ref{swap} (swapping), there is an $(R,\epsilon)$-panted subsurface $E'$ of height at most $h+\ln{R}$, such that its oriented boundary is 
$$[\mathfrak{a}_{01}\bar{\mathfrak{b}}_{10}], [\mathfrak{a}_{20}\bar{\mathfrak{b}}_{02}], \overline{[\mathfrak{a}_{01}\bar{\mathfrak{b}}_{02}]}, \overline{[\mathfrak{a}_{20}\bar{\mathfrak{b}}_{10}]}\in {\bf \Gamma}_{R,\epsilon}^h.$$
Then we paste $E$ and two (for statement (1)) or one (for statement (2)) copies of $E'$ to get an $(R,\epsilon)$-panted subsurface $F$ of height at most $h+3\ln{R}$ with the desired oriented boundary: two or one copies of $$[\mathfrak{a}_{01}\bar{\mathfrak{b}}_{10}], [\mathfrak{a}_{12}\bar{\mathfrak{b}}_{21}], [\mathfrak{a}_{20}\bar{\mathfrak{b}}_{02}]\in {\bf \Gamma}_{R,\epsilon}^h$$ respectively.
\end{proof}

\bigskip
\bigskip

\section{The panted cobordism group}\label{cobordism}

In this section, we will define the panted cobordism group $\Omega_{R,\epsilon}^{h,h'}(M)$ and prove Theorem \ref{main1}.

We will always assume the following inequalities hold, which implies all the inequalities at the beginning of Section \ref{construction}. These inequalities involve positive numbers $\epsilon,\delta,L,K,R>0$. To obtain these constants, we first choose $\epsilon$, then choose $\delta$, then choose $L$, then choose $K$, finally choose $R$, and this process also invokes Lemma \ref{setuplemma} in the following. Here $\epsilon$ and $\delta$ are very small, while $L,K,R$ are very large.
\begin{align}\label{epsilondelta2}
0<10^9\delta<\epsilon<10^{-2},
\end{align}
\begin{align}\label{L2}
L>\max{\{100,T,C(\ln{\frac{10^3}{\delta}}+2),\frac{1}{\delta}\ln{\frac{10}{\delta}}, (\frac{e}{\delta})^{10C}\}},
\end{align}
\begin{align}\label{L3}
L> \frac{1}{2}\ln{(\kappa^{-1} L)}+C(\ln{\frac{1}{\delta}}+1)+2\ \text{and\ satisfies\ Lemma\ \ref{setuplemma}\ (1)},
\end{align}
\begin{align}\label{K1}
K>30L\ \text{and\ satisfies Lemma \ref{setuplemma} (2)},
\end{align}
\begin{align}\label{R4}
R>\max{\{10^4(1+\kappa^{-1})L,100T,100C(\ln{\frac{10^3}{\delta}+1}),10^{11}(10^3\frac{e}{\delta})^{10C},\frac{e^{100(C+1)}}{\delta^{100}}, e^{4L}\}},
\end{align}
\begin{align}\label{R5}
R\geq \kappa R>10^4(\alpha+10^3)\ln{R}+100+10\ln{\frac{100\alpha}{\delta}},
\end{align} 
\begin{align}\label{R7}
R\ \text{satisfies Lemma \ref{setuplemma} (4) (which involves $K$)}.
\end{align}
Here the constants $T>0, C>2, \kappa\in (0,1]$ are constants in Theorem \ref{connection_principle_with_height}, and $\alpha$ is the constant in Theorem \ref{main1} that is greater or equal to $4$.

\subsection{Definition of the panted cobordism group}\label{pantscobordismsection}

We first give our official definition of the panted cobordism group $\Omega_{R,\epsilon}^{h,h'}(M)$, which is more convenient for our proof.

\begin{definition}\label{pantscobordismdefinition}(cf. Definition 5.1 of \cite{LM})
For $h'>h>0$, we define the {\it $(R,\epsilon)$-panted cobordism group of height $(h,h')$}, denoted by $\Omega_{R,\epsilon}^{h,h'}(M)$, as the following.

For two $(R,\epsilon)$-multicurves $L$ and $L'$ of height at most $h$, we say that they are {\it $(R,\epsilon)$-panted cobordant with height at most $h'$} if there is an $(R,\epsilon)$-panted subsurface $F$ in $M$ of height at most $h'$, such that $\partial F$ is the disjoint union $L\sqcup \bar{L}'$. Then we define $\Omega_{R,\epsilon}^{h,h'}(M)$ to be the set of all equivalence classes of $(R,\epsilon)$-multicurves of height at most $h$, under $(R,\epsilon)$-panted cobordant with height at most $h'$. For an $(R,\epsilon)$-multicurve $L$ of height at most $h$, we use $[L]_{R,\epsilon}^{h,h'}$ to denote the equivalence class of $L$ in $\Omega_{R,\epsilon}^{h,h'}(M)$. 

Moreover, we also allow the possibility of $h'=\infty$. In this case, we do not impose any height bound for the $(R,\epsilon)$-panted subsurface $F$. We denote the resulting $(R,\epsilon)$-panted cobordism group by $\Omega_{R,\epsilon}^{h}(M)$, and denote the equivalence class of $L$ by $[L]_{R,\epsilon}^h$.
\end{definition}

In the following, we prove that $(R,\epsilon)$-cobordant with certain height bound is an equivalence relation, and $\Omega_{R,\epsilon}^{h,h'}(M)$ has an abelian group structure for some $h$ and $h'$. In the remainder of this section, we will always use $h$ and $h'$ to denote $\alpha\ln{R}$ and $\beta\ln{R}$ respectively.

\begin{lemma}
For any $\epsilon\in (0,10^{-2})$ and $\beta>\alpha>1$ with $\beta-\alpha>0.1$, if $R$ satisfies all the inequalites in (\ref{R4}) that does not involve $L$, then
\begin{enumerate}
\item ${\bf \Gamma}_{R,\epsilon}^{\alpha \ln{R}}$ is not empty.
\item Being $(R,\epsilon)$-panted cobordant with height at most $h'=\beta\ln{R}$ is an equivalence relation on the set of $(R,\epsilon)$-multicurves of height at most $h=\alpha \ln{R}$.
\item Under the disjoint union operation, $\Omega_{R,\epsilon}^{\alpha \ln{R},\beta\ln{R}}(M)$ has an abelian group structure.
\end{enumerate}
\end{lemma}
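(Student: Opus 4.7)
The plan is as follows. For (1), I would apply Theorem~\ref{connection_principle_with_height} to a base point $p$ in the thick part of $M$ equipped with an arbitrary orthonormal frame $(\vec t_p,\vec n_p)$, taking $q=p$, target length $t\in(2R-\delta,2R+\delta)$, phase $\delta$-close to $0$, and initial/terminal directions and framings $\delta$-close to $\vec t_p,\vec n_p$ respectively. The non-$L$-involving inequalities on $R$ in (\ref{R4}) imply $t\geq\max\{T,C(\ln(1/\delta)+1)\}$, and $h_p=0<\kappa t$ is trivial, so the theorem applies. By Lemma~\ref{lengthphase} the resulting closed geodesic $\gamma=[\mathfrak{s}]$ has complex length $10\delta$-close to $2R$, placing it in ${\bf\Gamma}_{R,\epsilon}$; Lemma~\ref{distance} combined with the height bound of Theorem~\ref{connection_principle_with_height}(3)-(4) gives $\mathrm{ht}(\gamma)\leq\tfrac12\ln(2R)+C\ln(1/\delta)+C+1$, which is less than $\alpha\ln R$ for $R$ large.

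For (2), symmetry is immediate: if $F$ is panted of height $\leq h'$ with $\partial F=L\sqcup\bar L'$, reversing pants orientations produces $\bar F$ of the same height with $\partial\bar F=L'\sqcup\bar L$. Transitivity proceeds by gluing: given $F,G$ panted of height $\leq h'$ with $\partial F=L\sqcup\bar L'$ and $\partial G=L'\sqcup\bar L''$, for each good curve $\gamma$ appearing in $L'$ with multiplicity $m$ I identify the $m$ preimage circles of $\bar\gamma$ in $\partial F$ with the $m$ preimage circles of $\gamma$ in $\partial G$ via any bijection; these become interior curves of $F\cup G$ separating two good pants, so the inherited pants decomposition shows $F\cup G$ is a panted subsurface of height $\leq h'$ with $\partial(F\cup G)=L\sqcup\bar L''$.

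Reflexivity is the main content: for every $\gamma\in{\bf\Gamma}_{R,\epsilon}^{\alpha\ln R}$ I must construct a panted subsurface $F_\gamma$ of height $\leq\beta\ln R$ with $\partial F_\gamma=\gamma\sqcup\bar\gamma$; then $F_L=\sqcup_i F_{\gamma_i}$ handles a general multicurve $L=\sqcup_i\gamma_i$. My plan is to build an $(R,\epsilon)$-good pants $\Pi$ with $\gamma$ as a cuff and take $F_\gamma=\Pi\cup\bar\Pi$ pasted along the two remaining cuffs. To build $\Pi$: choose antipodal points $p_1,p_2$ of $\gamma$ and unit normal framings at $p_1,p_2$ distributing the rotation angle $\theta$ of $\gamma$ (which is $2\epsilon$-close to $0 \bmod 2\pi$) between the two arcs $\mathfrak{s}_1,\mathfrak{s}_2$ of $\gamma$; then use Theorem~\ref{connection_principle_with_height} to construct a seam $\mathfrak{m}$ from $p_2$ to $p_1$ of length $\delta$-close to $R+2I(\pi/2)$, with initial/terminal directions perpendicular to $\gamma$ and phase chosen so that by Lemma~\ref{lengthphase} both closed geodesics $\gamma_1=[\mathfrak{s}_1\mathfrak{m}]$ and $\gamma_2=[\bar{\mathfrak{m}}\mathfrak{s}_2]$ lie in ${\bf\Gamma}_{R,\epsilon}$. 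Theorem~\ref{connection_principle_with_height}(3)-(4) bounds $\mathrm{ht}(\mathfrak{m})$ by $\alpha\ln R+\ln(1/\delta)+C$, and Lemma~\ref{distance} then bounds $\mathrm{ht}(\Pi)\leq \alpha\ln R+\ln(1/\delta)+C+1\leq\beta\ln R$, where the last inequality uses $R>e^{100(C+1)}/\delta^{100}$ from (\ref{R4}) to guarantee $(\beta-\alpha)\ln R>\ln(1/\delta)+C+1$.

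For (3), the disjoint union operation is well-defined on equivalence classes (from $L_i\sim L_i'$ via $F_i$ we get $L_1\sqcup L_2\sim L_1'\sqcup L_2'$ via $F_1\sqcup F_2$), commutative, associative, with identity the class of the empty multicurve; the inverse of $[L]$ is $[\bar L]$, with witness the cap $F_L$ from reflexivity showing $L\sqcup\bar L\sim\emptyset$. The key technical obstacle throughout is the reflexivity construction: because the rotation angle of $\gamma$ is only $2\epsilon$-controlled while segment phases are $\delta$-controlled with $\delta\ll\epsilon$, the naive bigon representation of $\gamma$ required by Lemma~\ref{split} is not automatically available, and the phase of $\mathfrak{m}$ must be chosen so that $\theta$ is absorbed into the formation of $\gamma_1,\gamma_2$ rather than into the phases of individual $\partial$-framed segments.
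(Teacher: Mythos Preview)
Your proposal is correct and follows essentially the same route as the paper: the connection principle applied at a thick base point for (1), and for reflexivity in (2) the construction of a good pants $\Pi$ with $\gamma$ as a cuff via a perpendicular seam $\mathfrak{m}$, followed by gluing $\Pi\cup\bar\Pi$ along the other two cuffs. The paper packages the seam construction as Lemma~\ref{split} and uses a frame-rotation trick there (choosing $\phi_0$ so that the seam's endpoints do not point up the cusp) to sharpen the height bound to $\mathrm{ht}(\Pi)\leq h+1$, whereas your bound $h+\ln(1/\delta)+C+1$ is weaker but still below $\beta\ln R$; your care about absorbing the $2\epsilon$-rotation angle of $\gamma$ into the phase of $\mathfrak{m}$ is a legitimate subtlety that the paper's direct citation of Lemma~\ref{split} elides.
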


\begin{proof}

We first take a positive constant $\delta<10^{-9}\epsilon$. To prove ${\bf \Gamma}_{R,\epsilon}^{\alpha \ln{R}}$ is not empty, we take a point $p=q$ in the thick part of $M$, and take two orthogonal unit tangent vectors $\vec{t}_p=\vec{t}_q\in T_pM$ and $\vec{n}_p=\vec{n}_q\in T_pM$, to get two frames $(p,\vec{t}_p,\vec{n}_p)=(q,\vec{t}_q,\vec{n}_q) \in \text{SO}(M)$. Then we apply Theorem \ref{connection_principle_with_height} to these two frames, with $t=2R$ and $\theta=0$, to get an oriented $\partial$-framed segement $\mathfrak{s}$. After straightening the carrier of $\mathfrak{s}$ to a closed geodesic, we get an $(R,\epsilon)$-good curve (by Lemma \ref{lengthphase}). By Theorem \ref{connection_principle_with_height} and Lemma \ref{distance}, the height of the resulting closed geodesic is at most $\frac{1}{2}\ln{2R}+C\ln{\frac{1}{\delta}}+C+1$, which is smaller than $h=\alpha \ln{R}$, by (\ref{R4}) and the fact that $\alpha>1$.

To prove that being $(R,\epsilon)$-panted cobordant with height at most $h'$ is an equivalence relation, we only need to prove that it is reflective, since it is obviously symmetric and transitive. The reflectivity is proved by the same way as Lemma 5.4 of \cite{LM}. For any $(R,\epsilon)$-multicurve $L$ of height at most $h$, we divide each closed geodesic $l\subset L$ to two oriented $\partial$-framed segments of same length and phase (close to $0$). We apply Lemma \ref{split} (splitting) to construct a pair of pants $\Pi\in {\bf \Pi}_{R,\epsilon}^{h+1}$ with $l$ as one of its cuff. Then we paste $\Pi$ and $\bar{\Pi}$ along the two boundary components other than $l$ to get an $(R,\epsilon)$-panted subsurface with height at most $h+1$, with oriented boundary $l\sqcup \bar{l}$. Here the height of $\Pi$ is at most $h+1<\alpha\ln{R}+1<\beta\ln{R}$, by (\ref{R4}). So $l$ is $(R,\epsilon)$-panted cobordant to itself, and so does $L$. 

The disjoint union operation is obviously well-defined, associative and commutative. The zero element is $[L\sqcup \bar{L}]_{R,\epsilon}^{h,h'}$ for any $(R,\epsilon)$-multicurve $L$, since 
$$[L\sqcup \bar{L}]_{R,\epsilon}^{h,h'}+[L']_{R,\epsilon}^{h,h'}=[L\sqcup \bar{L}\sqcup L']_{R,\epsilon}^{h,h'}=[L']_{R,\epsilon}^{h,h'}.$$
Here the last equation follows from the fact that $(R,\epsilon)$-panted cobordant with height at most $h'$ is reflective. A similar proof also shows that the zero element is unique. So the inverse of any $[L]_{R,\epsilon}^{h,h'}$ is $[\bar{L}]_{R,\epsilon}^{h,h'}$.

\end{proof}



To prove Theorem \ref{main1}, we will construct two homomorphisms 
$$\Phi:\Omega_{R,\epsilon}^{h,h'}(M)\to  H_1(\text{SO}(M);\mathbb{Z})\ \text{and}\ \Psi^{\text{ab}}: H_1(\text{SO}(M);\mathbb{Z})\to \Omega_{R,\epsilon}^{h,h'}(M).$$ 
The homomorphism $\Phi$ is easy to describe and will be defined in this section, and the more complicated homomorphism $\Psi^{\text{ab}}$ will be defined in Section \ref{inversehomomorphism}. Actually, we will first define a homomorphism $\Psi:\pi_1(\text{SO}(M),\text{e})\to \Omega_{R,\epsilon}^{h,h'}(M)$ by a family of auxiliary data, then the desired homomorphism $\Psi^{\text{ab}}: H_1(\text{SO}(M);\mathbb{Z})\to \Omega_{R,\epsilon}^{h,h'}(M)$ is the abelianization of $\Psi$. In Section \ref{verifications}, we will prove that $\Phi\circ \Psi^{\text{ab}}=id:H_1(\text{SO}(M);\mathbb{Z})\to H_1(\text{SO}(M);\mathbb{Z})$, and $\Psi$ is surjective. These two facts together imply that $\Phi$ is an isomorphism.

The definition of $\Phi:\Omega_{R,\epsilon}^{h,h'}(M)\to  H_1(\text{SO}(M);\mathbb{Z})$ is same with the definition of $\Phi$ in \cite{LM}. For each $(R,\epsilon)$-curve $\gamma$ with height at most $h=\alpha \ln{R}$, we define 
$$\Phi([\gamma]_{R,\epsilon}^{h,h'})=[\hat{\gamma}]\in H_1(\text{SO}(M);\mathbb{Z})$$ 
to be the homology class of the {\it canonical lift} $\hat{\gamma}$ of $\gamma$ (Definition 5.11 of \cite{LM}) in $\text{SO}(M)$. To defined the canonical lift $\hat{\gamma}$, we first take a point $p\in \gamma$ and two orthogonal unit vectors $\vec{t}_p,\vec{n}_p\in T_pM$ such that $\vec{t}_p$ is tangent to the positive direction of $\gamma$. Then we take $\hat{\gamma}$ to be the concatenation of following paths in $\text{SO}(M)$.
\begin{itemize}
\item Parallel transport the frame ${\text e}=(p,\vec{t}_p,\vec{n}_p)$ along $\gamma$ once to get another frame ${\text e}'$ based at $p$.
\item Rotate ${\text e}'$ along $\vec{n}_p$ by $2\pi$ counterclockwisely to return to ${\text e}'$,
\item Connect ${\text e}'$ to ${\text e}$ by an $\epsilon$-short path in $\text{SO}(M)|_p=\text{SO}(3)$.
\end{itemize}

By Lemma 5.14 of \cite{LM}, the function $\Phi$ is a well-defined map and is also a group homomorphism. The proof of Lemma 5.14 of \cite{LM} also uses Lemma 5.12 of \cite{LM}, and neither of them really uses the condition that $M$ is a closed hyperbolic $3$-manifold. So the proof is also valid in the cusped manifold case.

\subsection{Define the homomorphism $\Psi:\pi_1(\text{SO}(M),*)\to \Omega_{R,\epsilon}^{h,h'}(M)$}\label{inversehomomorphism}

The definition of the homomorphism $\Psi:\pi_1(\text{SO}(M),\text{e})\to \Omega_{R,\epsilon}^{h,h'}(M)$ is quite complicated, and we will follow the strategy in \cite{LM} to define it. In Section \ref{setup}, we first choose a set of auxiliary data, and the following notations will be defined there. The auxiliary data includes a finite triangular generating set 
$$\tau_k(\mathscr{B}_K)\subset \pi_1(M,*),$$ 
and another finite set 
$$\mathscr{C}_{DL}\cap \mathscr{B}_{\frac{3}{2}R}^{h-2} \subset \pi_1(M,*) $$ that contains $\tau_k(\mathscr{B}_K)$. Then we use 
$\hat{\mathscr{C}}_{DL}\cap \hat{\mathscr{B}}_{\frac{3}{2}R}^{h-2}$ to denote the preimage of $\mathscr{C}_{DL}\cap \mathscr{B}_{\frac{3}{2}R}^{h-2}\subset \pi_1(M,*)$ in $\pi_1(\text{SO(M)},\text{e})$.

In Section \ref{settheory}, we will define a set-theoretic map 
$$\Psi_1:\hat{\mathscr{C}}_{DL}\cap \hat{\mathscr{B}}_{\frac{3}{2}R}^{h-2} \to \Omega_{R,\epsilon}^{h,h'}(M),$$ 
and prove that it satisfies the triangular relation. In Section \ref{definitionPsi}, we prove that this set-theoretic map $\Psi_1$ induces a homomorphism 
$$\Psi:\pi_1(\text{SO(M)},\text{e})\to  \Omega_{R,\epsilon}^{h,h'}(M).$$

\subsubsection{Setup}\label{setup}

Let $*\in M$ be a point in the thick part of $M$ such that it lies on a closed oriented geodesic $\gamma$. We take a unit vector $\vec{t}\in T_*M$ that is tangent to the positive direction of $\gamma$, then take another unit vector $\vec{n}\in T_*M$ orthogonal to $\vec{v}$. We identify the universal cover of $M$ with $\mathbb{H}^3$. Let $O\in \mathbb{H}^3$ be a point in the preimage of $*$, and let $\vec{t}_O, \vec{n}_O\in T_OM$ be the unit vectors corresponding to $\vec{t}, \vec{n}$ respectively. Now we get base frames $\text{e}=(*,\vec{t}, \vec{n})\in \text{SO}(M)$ and $\tilde{\text{e}}=(O,\vec{t}_O,\vec{n}_O)\in \text{SO}(\mathbb{H}^3)$. 

We use $B_r$ to denote the radius-$r$ open ball in $\mathbb{H}^3$ centered at $O$. Let $O_r$ be the intersection of $\partial B_r$ and the geodesic ray starting from $O$ and tangent with $\vec{t}_O$. Let $U_r$ be the open half space that contains $B_r$, such that $\partial U_r$ is tangent to $\partial B_r$ at $O_r$. Then for any $r,s>0$, we define 
$$\mathscr{B}_r=\{g\in \pi_1(M,*)\ |\ gO\in B_r,\ g\ne e\},$$
$$\mathscr{B}_r^s=\{ g\in \mathscr{B}_r\ |\  \text{the\ height\ of\ the\ geodesic\ representative\ of\ } g\ \text{is\ at\ most\ } s\}$$ and 
$$\mathscr{C}_r=\{g\in \pi_1(M,*)\ |\ gU_r\cap U_r=\emptyset\}.$$
Here we identify $\pi_1(M,*)$ with a subgroup of $\text{Isom}_+(\mathbb{H}^3)$, and this definition of $\mathscr{B}_r$ is equivalent to the definition of $\mathscr{B}_r$ in Proposition \ref{triangulargeneratingset}. Also note that $\mathscr{B}_r$ is a finite set for all $r>0$.

The following lemma set up a collection of auxiliary data that will be used for proving Theorem \ref{main1}. The conditions in this lemma contain conditions (\ref{epsilondelta2}) - (\ref{R7}) at the beginning of this section.

\begin{lemma}\label{setuplemma}
Let $M$ be an oriented cusped hyperbolic $3$-manifold. For any $\epsilon\in (0,10^{-2})$, there is a collection of data depending on $M$ and $\epsilon$ as follows.
\begin{enumerate}
\item There exist constants $\delta, L>0$, such that $\delta$ satisfies (\ref{epsilondelta2}), $L$ is greater than the height of $\gamma$ and satisfies the inequalities in (\ref{L2}), (\ref{L3}).
\item There exists a positive constant $K>30L$, such that $\mathscr{B}_K$ is a triangular generating set of $\pi_1(M,*)$ (which is condition (\ref{K1})).
\item Let $D=300(1+\kappa^{-1})$, then there exists $k\in \pi_1(M,*)$, such that $k\in \mathscr{C}_{DL}$ and $\tau_k(\mathscr{B}_K)\subset \mathscr{C}_{DL}$. Here $\tau_k(g)=k^{-1}gk$.
\item There exists a positive constant $R_0>0$, such that for any $R>R_0$, the following conditions hold.
\begin{enumerate}
\item $R$ satisfies the inequalities in (\ref{R4}) and  (\ref{R5}).
\item $R>20DL+10|k|$ and $\alpha \ln{R}\geq 5+\text{height}(k).$ Here $|k|$ is the length of the geodesic representative of $k\in \pi_1(M,*)$, and $\text{height}(k)$ is the height of $k$.
\item $\tau_k(\mathscr{B}_K)\subset \mathscr{B}_{\frac{3}{2}R-4DL}^{\alpha\ln{R}-3}$.
\end{enumerate}
\end{enumerate}
\end{lemma}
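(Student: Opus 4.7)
The plan is to choose the constants sequentially and invoke standard dynamics of non-elementary Kleinian groups at the critical step. First, fix $\delta > 0$ with $10^9 \delta < \epsilon$, securing (\ref{epsilondelta2}), and then choose $L$ large enough to satisfy the finite list of inequalities in (\ref{L2}) and (\ref{L3}) and to exceed the (fixed) height of $\gamma$. Each constraint is a lower bound on $L$ in terms of $\delta$, the constants $T, C, \kappa$ from Theorem \ref{connection_principle_with_height}, and universal quantities, so a valid $L$ exists. For part (2), Proposition \ref{triangulargeneratingset} supplies a threshold $K_0$ above which $\mathscr{B}_K$ is a triangular generating set of $\pi_1(M, *)$; then take $K = \max\{K_0, 30L + 1\}$.

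The heart of the lemma is part (3). Write $\partial_\infty U_{DL} \subset S^2 = \partial \mathbb{H}^3$ for the closed round disk of ideal endpoints of the half-space $U_{DL}$. Since $\pi_1(M,*) < \mathrm{PSL}_2(\mathbb{C})$ is a non-uniform lattice and hence non-elementary, fixed points of its loxodromic elements are dense in $S^2$. I would pick a loxodromic $h \in \pi_1(M,*)$ whose attracting and repelling fixed points $h^\pm$ both lie in the open set $S^2 \setminus \bigl( \partial_\infty U_{DL} \cup \bigcup_{g \in \mathscr{B}_K} \mathrm{Fix}(g) \bigr)$, which is the complement of one closed disk and finitely many points (the fixed-point set of the finite torsion-free set $\mathscr{B}_K$). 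Since $g(h^+) \neq h^+$ for every $g \in \mathscr{B}_K$ (recall $e \notin \mathscr{B}_K$), continuity furnishes an open neighborhood $V$ of $h^+$ in $S^2$ with $V \cap \partial_\infty U_{DL} = \emptyset$ and $g(V) \cap V = \emptyset$ for each $g \in \mathscr{B}_K$. Standard north-south dynamics give $h^n(\partial_\infty U_{DL}) \subset V$ for all $n$ sufficiently large, since $\partial_\infty U_{DL}$ is a compact subset of $S^2 \setminus \{h^-\}$; I would set $k = h^n$ for such an $n$.

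The verification becomes transparent in the boundary picture, using the elementary fact that two open half-spaces in $\mathbb{H}^3$ with closed ideal boundary disks disjoint in $S^2$ are themselves disjoint. Since $\partial_\infty(kU_{DL}) = k(\partial_\infty U_{DL}) \subset V$ and $V \cap \partial_\infty U_{DL} = \emptyset$, we have $kU_{DL} \cap U_{DL} = \emptyset$, i.e.\ $k \in \mathscr{C}_{DL}$. For any $g \in \mathscr{B}_K$, the condition $\tau_k(g) \in \mathscr{C}_{DL}$ is equivalent (by applying the isometry $k$) to $g(kU_{DL}) \cap kU_{DL} = \emptyset$; on ideal boundaries, $\partial_\infty(g \cdot kU_{DL}) \subset g(V)$ and $\partial_\infty(kU_{DL}) \subset V$, and these are disjoint since $g(V) \cap V = \emptyset$ by the construction of $V$.

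Finally, for part (4), once $\delta, L, K$, and $k$ are fixed, the quantities $D$, $|k|$, $\mathrm{height}(k)$, and the maxima of the lengths and heights of the finite set $\tau_k(\mathscr{B}_K)$ are all determined. Each remaining condition in (a), (b), and (c) is a polynomial or logarithmic lower bound on $R$ against these fixed quantities, so all are simultaneously satisfied for $R$ sufficiently large; take $R_0$ to be any such bound. The main obstacle is part (3), where a single $k$ must simultaneously be Schottky-like against $U_{DL}$ and conjugate every element of the finite set $\mathscr{B}_K$ into the Schottky set $\mathscr{C}_{DL}$. The trick is to localize $\partial_\infty(kU_{DL})$ inside a small neighborhood $V$ of an attracting fixed point $h^+$ that is disjoint both from $\partial_\infty U_{DL}$ and from all its $\mathscr{B}_K$-images; once such $V$ is fixed, any sufficiently large power of $h$ works.
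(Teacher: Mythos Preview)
Your proposal is correct and follows essentially the same approach as the paper: both arguments for part~(3) pick a loxodromic element whose fixed points avoid the closure of $\partial_\infty U_{DL}$ and the fixed points of the finitely many $g\in\mathscr{B}_K$, then take a sufficiently high power so that the image of $U_{DL}$ shrinks toward the attracting fixed point. Your presentation via ideal boundary disks and a single neighborhood $V$ of $h^+$ is slightly cleaner than the paper's, which splits into cases according to whether $g\in\mathscr{B}_K$ is hyperbolic or parabolic, but the content is the same.
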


\begin{proof}
For (1), the existence of $\delta$ and $L$ follows directly from the desired condition. For (2), the existence of $K$ follows from Proposition \ref{triangulargeneratingset}. If (3) holds true, (4) follows from the fact that $\tau_k(\mathscr{B}_K)$ is a finite set and we can take $R_0$ to be arbitrarily large.

Now we work on condition (3). The condition that $\tau_k(g)\in \mathscr{C}_{DL}$ is equivalent to $gk(U_{DL})\cap k(U_{DL})=\emptyset$. Take a hyperbolic element $k'\in \pi_1(M,*)$, such that neither of its fixed points on $\partial \mathbb{H}^3$ lie in the closure of $U_{DL}$ in $\mathbb{H}^3\cup \partial \mathbb{H}^3$, and neither of its fixed points on $\mathbb{H}^3$ coincide with the (finitely many) fixed points of elements in $\mathscr{B}_K$.

For each $g\in \mathscr{B}_K$, if $g$ is a hyperbolic element, there exists a positive integer $N_g$, such that for all $n>N_g$, $(k')^n(U_{DL})$ is sufficiently far away from the axis of $g$ so that $g(k')^n(U_{DL})\cap (k')^n(U_{DL})=\emptyset$. If $g\in \mathscr{B}_K$ is a parabolic element, up to conjugation, we can assume that $g$ is representated by $g(x,y,z)=(x+1,y,z)$ in the upper half space model. Then there exists a positive integer $N_g$, such that for all $n>N_g$, the Euclidean diameter of $(k')^n(U_{DL})$ is at most $\frac{1}{2}$, then  $g(k')^n(U_{DL})\cap (k')^n(U_{DL})=\emptyset$. Moreover, since neither fixed points of $k'$ lie in the closure of $U_{DL}$, there exists $N$, such that for all $n>N$, $(k')^n(U_{DL})\cap U_{DL}=\emptyset$. Now we take any $n$ larger than the $N$ and all the $N_g$, then $k=(k')^n$ satisfies condition (3).
\end{proof}

In the following, we will always assume $R$ is greater than the $R_0$ provided in Lemma \ref{setuplemma} (4).

For $\mathscr{B}_r $, $\mathscr{B}_r^s $ and $\mathscr{C}_r\subset \pi_1(M,*)$, we use $\hat{\mathscr{B}}_r$, $\hat{\mathscr{B}}_r^s$ and $\hat{\mathscr{C}}_r$ to denote their preimages in $\pi_1(\text{SO}(M),\text{e})$ respectively. We also use $\widehat{\tau_k(\mathscr{B}_K)}$ to denote the preimage of $\tau_k(\mathscr{B}_K)\subset \pi_1(M,*)$ in $\pi_1(\text{SO}(M),\text{e})$. Note that Lemma \ref{setuplemma} (3) (4) implies that $\widehat{\tau_k(\mathscr{B}_K)}\subset \hat{\mathscr{C}}_{DL}\cap \hat{\mathscr{B}}_{\frac{3}{2}R-4DL}^{\alpha\ln{R}-3}$ holds.

In Definition 5.7 of \cite{LM}, Liu-Markovic defined $\delta$-sharp elements in $\pi_1(\text{SO}(M),\text{e})$, and we slightly modify it as in the following.

\begin{definition}\label{sharpelement}
For any $\delta\in (0,10^{-11})$, an element $\hat{g}\in \pi_1(\text{SO}(M),\text{e})$ is {\it $\delta$-sharp} if its image $g\in \pi_1(M,*)$ is represented by a geodesic segment whose initial and terminal directions are $(100\delta)$-close to $\vec{t}$ and $-\vec{t}$, respectively. 

For a $\delta$-sharp element $\hat{g}$, an oriented $\partial$-framed segment $\mathfrak{g}$ is associated to $\hat{g}$, and vice versa, if the following conditions hold.
\begin{itemize}
\item The carrier segment of $\mathfrak{g}$ is the geodesic representative of $g$. The phase of $\mathfrak{g}$ is $\delta$-close to $0$. The initial and terminal framings of $\mathfrak{g}$ are $(200\delta)$-close to each other.
\item The element $\hat{g}\in \pi_1(\text{SO}(M),\text{e})$ is represented by the following closed path. First parallel transport $\text{e}$ along $g$, then rotate by $\pi$ counterclockwisely along $\vec{n}_{\text{ter}}(\mathfrak{g})$, then returns to $\text{e}$ along a $(10^3\delta)$-short path in $\text{SO}(M)|_*=\text{SO}(3)$.
\end{itemize}
\end{definition}

\begin{remark}
In Definition \ref{sharpelement}, although we can replace $(100\delta)$-closeness by $\delta$-closeness, as in the definition in \cite{LM}, we do prefer the $(100\delta)$-closeness condition, for convenience of the proof of Theorem \ref{main1}.
\end{remark}

By a similar proof as in Lemma 5.8 of \cite{LM}, for each $\delta$-sharp element $\hat{g}\in \pi_1(\text{SO(M)},\text{e})$, there is a unique oriented $\partial$-framed segment $\mathfrak{s}_{\hat{g}}$ associated to $\hat{g}$, up to $(200\delta)$-closeness of framings at end points. By Lemma 5.17 of \cite{LM}, if $L>\frac{1}{D}\ln{\frac{4}{\delta}}$ (which holds by (\ref{L2})), any $\hat{g}\in \hat{\mathscr{C}}_{DL}$ is a $\delta$-sharp element, even if we require $\delta$-closeness in Definition \ref{sharpelement}.

\subsubsection{A set-theoretic map $\Psi_1$ and its properties}\label{settheory}

In this section, we define a set-theoretic map 
$$\Psi_1:\hat{\mathscr{C}}_{DL}\cap \hat{\mathscr{B}}_{\frac{3}{2}R}^{h-2} \to \Omega_{R,\epsilon}^{h,h'}(M)$$ 
and check that it satisfies the triangular relation on a slightly smaller subset $\hat{\mathscr{C}}_{DL}\cap \hat{\mathscr{B}}_{\frac{3}{2}R-4DL}^{h-3}\subset \hat{\mathscr{C}}_{DL}\cap \hat{\mathscr{B}}_{\frac{3}{2}R}^{h-2}$. Note that we always have $h=\alpha \ln{R}$.

For any $\hat{g}\in \hat{\mathscr{C}}_{DL}\cap \hat{\mathscr{B}}_{\frac{3}{2}R}^{h-2}$, it is automatically $\delta$-sharp, and we take an oriented $\partial$-framed segment $\mathfrak{s}_{\hat{g}}$ associated to $\hat{g}$. Let $\vec{n}_{\hat{g}}\in T_*M$ be a unit vector orthogonal to $\vec{t}$ such that it is $(300\delta)$-close to both $\vec{n}_{\text{ini}}(\mathfrak{s}_{\hat{g}})$ and $\vec{n}_{\text{ter}}(\mathfrak{s}_{\hat{g}})$.

To define $\Psi_1(\hat{g})$, we need some auxiliary data as the following.
\begin{condition}\label{defining}
We take a right-handed nearly regular tripod $\mathfrak{a}_0\vee \mathfrak{a}_1\vee\mathfrak{a}_2$ and an oriented $\partial$-framed segment $\mathfrak{b}$ satisfying the following. 
\begin{itemize}
\item The right-handed tripod $\mathfrak{a}_0\vee \mathfrak{a}_1\vee\mathfrak{a}_2$ is $(R-\frac{1}{2}l(\mathfrak{s}_{\hat{g}})+\frac{1}{2}I(\frac{\pi}{3}),\delta)$-nearly regular, and with height at most $\frac{3}{5}\ln{R}$. For each $i$, the terminal point $p_{\text{ter}}(\mathfrak{a}_i)$ is $*$ and the initial point $p_{\text{ini}}(\mathfrak{a}_i)$ is a point in the thick part of $M$. The terminal direction $\vec{t}_{\text{ter}}(\mathfrak{a}_i)$ is $(10^3\delta)$-close to $\vec{t}$, and the terminal framing $\vec{n}_{\text{ter}}(\mathfrak{a}_i)$ is $(10^3\delta)$-close to $\vec{n}_{\hat{g}}$.
\item The oriented $\partial$-framed segment $\mathfrak{b}$ has length $\delta$-close to $2R-l(\mathfrak{s}_{\hat{g}})$ and phase $\delta$-close to $0$, with height at most $\frac{3}{5}\ln{R}$. The initial and terminal directions of $\mathfrak{b}$ are $(10^3\delta)$-close to $-\vec{t}$ and $\vec{t}$ respectively, and the initial and terminal framings of $\mathfrak{b}$ are both $(10^3\delta)$-close to $\vec{n}_{\hat{g}}$.
\end{itemize}
\end{condition}
See Figure 7 of \cite{LM} to see a picture of the tripod $\mathfrak{a}_0\vee \mathfrak{a}_1\vee\mathfrak{a}_2$ and the oriented $\partial$-framed segment $\mathfrak{b}$.

The existence of these objects rely on the connection principle (Theorem \ref{connection_principle_with_height}) and the fact that $*$ lies in the thick part. Here we have length estimate 
$$R-\frac{1}{2}l(\mathfrak{s}_{\hat{g}})+\frac{1}{2}I(\frac{\pi}{3}), 2R-l(\mathfrak{s}_{\hat{g}})\in (\frac{R}{4}, 2R),$$
 and the oriented $\partial$-framed segments have height at most 
 $$\frac{1}{2}\ln{2R}+C\ln{\frac{1}{\delta}}+C<\frac{3}{5}\ln{R},\ \text{by (\ref{R4})}.$$
 Actually Theorem \ref{connection_principle_with_height} can be used to construct objects in Condition \ref{defining} with $(10^3\delta)$-closeness replaced by $\delta$-closeness. Here the $(10^3\delta)$-closeness is also for convenience of the proof of Theorem \ref{main1}.

Now for any $\hat{g}\in \hat{\mathscr{C}}_{DL}\cap \hat{\mathscr{B}}_{\frac{3}{2}R}^{h-2}$, we define 
$$\Psi_1(\hat{g})=[\mathfrak{s}_{\hat{g}}\mathfrak{a}_{01}]_{R,\epsilon}^{h,h'}+[\mathfrak{s}_{\hat{g}}\mathfrak{a}_{12}]_{R,\epsilon}^{h,h'}+[\mathfrak{s}_{\hat{g}}\mathfrak{a}_{20}]_{R,\epsilon}^{h,h'}-[\mathfrak{s}_{\hat{g}}\mathfrak{b}]_{R,\epsilon}^{h,h'}-[\mathfrak{s}_{\hat{g}}\bar{\mathfrak{b}}]_{R,\epsilon}^{h,h'}\in \Omega_{R,\epsilon}^{h,h'}(M).$$

The following lemma is parallel to Lemma 5.18 of \cite{LM}, which proves that $\Psi_1(\hat{g})$ is well-defined.

\begin{lemma}\label{welldefined}
The set-theoretic map $\Psi_1: \hat{\mathscr{C}}_{DL}\cap \hat{\mathscr{B}}_{\frac{3}{2}R}^{h-2}\to \Omega_{R,\epsilon}^{h,h'}(M)$ is well-defined, i.e. the following conditions hold.
\begin{enumerate}
\item All curves in the expression of $\Psi_1(\hat{g})$ lie in ${\bf \Gamma}_{R,\epsilon}^{h}$;
\item For any $\hat{g}\in \hat{\mathscr{C}}_{DL}\cap \hat{\mathscr{B}}_{\frac{3}{2}R}^{h-2}$, $\Psi_1(\hat{g})$ does not depend on the choices in the definition.
\end{enumerate}
\end{lemma}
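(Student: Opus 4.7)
The plan is to verify conditions (1) and (2) separately, paralleling the closed-manifold argument of Lemma 5.18 of \cite{LM} with added height tracking.

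For condition (1), I would apply the length/phase estimate Lemma \ref{lengthphase} and the distance estimate Lemma \ref{distance} to each of the five reduced cyclic concatenations appearing in $\Psi_1(\hat g)$. Each concatenation is a $\delta$-consecutive cycle whose two joining points lie at $*$: the direction and framing data coming from Definition \ref{sharpelement} (for $\mathfrak{s}_{\hat g}$) and Condition \ref{defining} (for the auxiliary segments) are arranged so that the bending angles at $*$ are $O(\delta)$. The interior bending $\pi/3$ at the tripod vertex inside $\mathfrak{a}_{i,i+1}$ contributes an $I(\pi/3)$ length correction, which is precisely absorbed by the chosen length $R - \tfrac{1}{2}l(\mathfrak{s}_{\hat g}) + \tfrac{1}{2}I(\pi/3)$ of each $\mathfrak{a}_i$; consequently each closed geodesic has complex length $\epsilon$-close to $2R$ and phase close to $0$, so lies in ${\bf \Gamma}_{R,\epsilon}$. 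For the height bound, Lemma \ref{distance} places each closed geodesic in the $1$-neighborhood of its piecewise-geodesic representative, whose pieces have height at most $h-2$ (for $\mathfrak{s}_{\hat g}$) or $\tfrac{3}{5}\ln R$ (for $\mathfrak{a}_i$ and $\mathfrak{b}$), yielding total height at most $h - 1 \leq h$.

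For condition (2), the three pieces of data to address are the representative $\mathfrak{s}_{\hat g}$ (unique up to $(200\delta)$-closeness of framings), the tripod $\mathfrak{a}_0 \vee \mathfrak{a}_1 \vee \mathfrak{a}_2$, and the segment $\mathfrak{b}$. Each term $[\mathfrak{s}_{\hat g}\,\cdot\,]$ is a homotopy class of closed geodesic depending only on the carrier segment of $\mathfrak{s}_{\hat g}$, so the framing ambiguity of $\mathfrak{s}_{\hat g}$ causes no issue. For tripod independence, I would observe that two right-handed tripods $\mathfrak{a}, \mathfrak{a}'$ satisfying Condition \ref{defining} form a $(100L, 10^4\delta)$-tame rotation pair of identical chirality with common terminal point $*$, and then apply Lemma \ref{rotation}(2) to produce an $(R,\epsilon)$-panted subsurface of height at most $h + 3\ln R \leq h'$ bounded by $2\sum_i [\mathfrak{a}_{i,i+1}\bar{\mathfrak{a}}'_{i,i+1}]$. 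Combining this with swap cobordisms from Lemma \ref{swap}, built from auxiliary bigons at $*$ that have intermediate length equal to $l(\mathfrak{s}_{\hat g})$, would convert the rotation output into the desired equality $\sum_i[\mathfrak{s}_{\hat g}\mathfrak{a}_{i,i+1}] = \sum_i[\mathfrak{s}_{\hat g}\mathfrak{a}'_{i,i+1}]$ in $\Omega_{R,\epsilon}^{h,h'}(M)$, exploiting the linked-pair identity $[\mathfrak{a}\mathfrak{b}]+[\mathfrak{a}'\mathfrak{b}']=[\mathfrak{a}\mathfrak{b}']+[\mathfrak{a}'\mathfrak{b}]$ induced by each swap pair. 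Independence from $\mathfrak{b}$ would follow from an analogous and simpler swap argument applied to the pair $\mathfrak{b}, \mathfrak{b}'$ and their orientation reversals.

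The main obstacle is the bookkeeping: every auxiliary bigon, tripod, swap pair, and rotation pair must meet the tameness and closeness thresholds of Lemmas \ref{swap} and \ref{rotation}, and every resulting panted subsurface must stay within the height budget $h' = \beta\ln R$. Since swap adds up to $\ln R$ and rotation adds up to $3\ln R$ to the height, the assumption $\beta - \alpha \geq 3$ of Theorem \ref{main1} just suffices; the constants chosen in Lemma \ref{setuplemma} and the inequalities (\ref{epsilondelta2})--(\ref{R5}) are tuned precisely so that every such invocation lands within the allowed budget.
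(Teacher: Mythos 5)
Your treatment of condition (1), of the $\mathfrak{b}$-independence, and of the framing ambiguity of $\mathfrak{s}_{\hat g}$ is essentially in line with the paper's argument (for the framing you still need to note that auxiliary data chosen with $\delta$-closeness serves both normal vectors $\vec{n}_{\hat g}$ and $\vec{n}_{\hat g}'$, since the set of admissible tripods depends on that choice; this is a minor point). The genuine gap is in the tripod-independence step. Two tripods $\mathfrak{a}_0\vee\mathfrak{a}_1\vee\mathfrak{a}_2$ and $\mathfrak{a}_0'\vee\mathfrak{a}_1'\vee\mathfrak{a}_2'$ satisfying Condition \ref{defining} do \emph{not} form a rotation pair in the sense of Definition \ref{pairoftripods}: all six legs have terminal direction $(10^3\delta)$-close to $\vec t$ at $*$, so the bending angle of the chain $\mathfrak{a}_i,\bar{\mathfrak{a}}_j'$ is close to $\pi$, not to $0$, and the tameness hypothesis fails. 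Moreover the hypothesis $[\mathfrak{a}_{i,i+1}\bar{\mathfrak{a}}'_{j,j\pm1}]\in{\bf \Gamma}_{R,\epsilon}^{h}$ of Lemma \ref{rotation} fails as well: each $\mathfrak{a}_{i,i+1}$ has length about $2R-l(\mathfrak{s}_{\hat g})$, so even ignoring the near-backtracking at $*$ the concatenation has length roughly $4R-2l(\mathfrak{s}_{\hat g})$, which is nowhere near $2R$ for general $l(\mathfrak{s}_{\hat g})\in(2DL,\tfrac32 R)$; with the near-$\pi$ bending the reduced closed geodesic is not controlled at all. So Lemma \ref{rotation}(2) cannot be invoked for the pair $(\mathfrak{a},\mathfrak{a}')$, and even if one had $2\sum_i[\mathfrak{a}_{i,i+1}\bar{\mathfrak{a}}'_{i,i+1}]_{R,\epsilon}^{h,h'}=0$, you could not divide by $2$ in $\Omega_{R,\epsilon}^{h,h'}(M)$ nor convert it into the needed identity involving $\mathfrak{s}_{\hat g}$ without further input.

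What is missing is precisely the paper's auxiliary object: a \emph{left-handed} tripod $\mathfrak{c}_0\vee\mathfrak{c}_1\vee\mathfrak{c}_2$, produced by the connection principle (Theorem \ref{connection_principle_with_height}), which is $(\tfrac12(l(\mathfrak{s}_{\hat g})+I(\tfrac{\pi}{3})),\delta)$-nearly regular, has terminal point $*$, terminal directions $\delta$-close to $-\vec t$, terminal framings close to $\vec n_{\hat g}$, and height at most $\tfrac35\ln R$. Its opposite chirality and reversed direction at $*$ make $(\mathfrak{a},\mathfrak{c})$ and $(\mathfrak{a}',\mathfrak{c})$ genuine $(100L,10^4\delta)$-tame rotation pairs, and its complementary length makes $[\mathfrak{a}_{i,i+1}\bar{\mathfrak{c}}_{j,j\pm1}]$ honest curves in ${\bf \Gamma}_{R,\epsilon}^{h}$; Lemma \ref{rotation}(1) then gives $\sum_i[\mathfrak{a}_{i,i+1}\bar{\mathfrak{c}}_{i,i+1}]_{R,\epsilon}^{h,h'}=0=\sum_i[\mathfrak{a}'_{i,i+1}\bar{\mathfrak{c}}_{i,i+1}]_{R,\epsilon}^{h,h'}$, and swapping the pairs $[\mathfrak{s}_{\hat g}\mathfrak{a}_{i,i+1}]$, $[\bar{\mathfrak{c}}_{i,i+1}\mathfrak{a}'_{i,i+1}]$ via Lemma \ref{swap} transfers these relations into $\sum_i[\mathfrak{s}_{\hat g}\mathfrak{a}_{i,i+1}]_{R,\epsilon}^{h,h'}=\sum_i[\mathfrak{s}_{\hat g}\mathfrak{a}'_{i,i+1}]_{R,\epsilon}^{h,h'}$ within the height budget. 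Your phrase about ``auxiliary bigons at $*$ with intermediate length $l(\mathfrak{s}_{\hat g})$'' gestures in this direction, but without constructing $\mathfrak{c}$ (or an equivalent complementary-length, opposite-direction partner) the swap pairs you need do not consist of four $(R,\epsilon)$-good curves, and the argument does not close.
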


\begin{proof}
For (1), by Lemma \ref{lengthphase}, one can check that each curve lies in ${\bf \Gamma}_{R,10^4\delta}$ directly. For the height control, we know that $\hat{g}$ has height at most $h-2$, while $\mathfrak{a}_i$ and $\mathfrak{b}$ have height at most $\frac{3}{5}\ln{R}<h-2$. Then Lemma \ref{distance} implies that $[\mathfrak{s}_{\hat{g}}\mathfrak{a}_{01}]\in {\bf \Gamma}_{R,\epsilon}^{h}$ holds. The same argument works for all other terms.

For (2), we first fix $\mathfrak{s}_{\hat{g}}$ and $\vec{n}_{\hat{g}}$, then take another right-handed tripod $\mathfrak{a}'_0\vee \mathfrak{a}_1'\vee \mathfrak{a}_2'$ and another oriented $\partial$-framed segment $\mathfrak{b}'$ satisfying Condition \ref{defining}. Then we need to prove that 
\begin{align*}
&[\mathfrak{s}_{\hat{g}}\mathfrak{a}_{01}]_{R,\epsilon}^{h,h'}+[\mathfrak{s}_{\hat{g}}\mathfrak{a}_{12}]_{R,\epsilon}^{h,h'}+[\mathfrak{s}_{\hat{g}}\mathfrak{a}_{20}]_{R,\epsilon}^{h,h'}-[\mathfrak{s}_{\hat{g}}\mathfrak{b}]_{R,\epsilon}^{h,h'}-[\mathfrak{s}_{\hat{g}}\bar{\mathfrak{b}}]_{R,\epsilon}^{h,h'}\\
=&[\mathfrak{s}_{\hat{g}}\mathfrak{a}'_{01}]_{R,\epsilon}^{h,h'}+[\mathfrak{s}_{\hat{g}}\mathfrak{a}'_{12}]_{R,\epsilon}^{h,h'}+[\mathfrak{s}_{\hat{g}}\mathfrak{a}'_{20}]_{R,\epsilon}^{h,h'}-[\mathfrak{s}_{\hat{g}}\mathfrak{b}']_{R,\epsilon}^{h,h'}-[\mathfrak{s}_{\hat{g}}\bar{\mathfrak{b}}']_{R,\epsilon}^{h,h'}.
\end{align*}

We take an auxiliary left-handed tripod $\mathfrak{c}_0\vee \mathfrak{c}_2\vee \mathfrak{c}_2$ such that the following conditions hold.
\begin{enumerate}
\item[(a)] The tripod $\mathfrak{c}_0\vee \mathfrak{c}_2\vee \mathfrak{c}_2$ is $(\frac{1}{2}(l(\mathfrak{s}_{\hat{g}})+I(\frac{\pi}{3})),\delta)$-nearly regular.
\item[(b)] For each $i$, the initial point of $\mathfrak{c}_i$ is a point in the thick part of $M$, the terminal point of $\mathfrak{c}_i$ is $*$,  and the height of $\mathfrak{c}_i$ is at most $\frac{3}{5}\ln{R}$.
\item[(c)] The terminal direction of $\vec{t}_{\text{ter}}(\mathfrak{c}_i)$ is $\delta$-close to $-\vec{t}$, and the terminal framing of $\vec{n}_{\text{ter}}(\mathfrak{c}_i)$ is $\delta$-close to $\vec{n}_{\hat{\mathfrak{g}}}$.
\end{enumerate}
Note that the desired length of $\mathfrak{c}_i$ lies in the interval 
$$\big(\frac{1}{2}(2DL+I(\frac{\pi}{3})),\frac{1}{2}(\frac{3}{2}R+I(\frac{\pi}{2}))\big)\subset (DL,R).$$ By our choice of $L$ in (\ref{L2}), Theorem \ref{connection_principle_with_height} can be applied to construct the left-handed tripod $\mathfrak{c}_0\vee \mathfrak{c}_2\vee \mathfrak{c}_2$. The height control follows from $\frac{1}{2}\ln{R}+C\ln{\frac{1}{\delta}}+C<\frac{3}{5}\ln{R}$, by (\ref{R4}).

Here $\mathfrak{a}_0\vee \mathfrak{a}_1\vee \mathfrak{a}_2$ and $\mathfrak{c}_0\vee \mathfrak{c}_1\vee \mathfrak{c}_2$  form a $(100L,10^4\delta)$-tame rotation pair of tripods of opposite chirality, with height at most $\frac{3}{5}\ln{R}$, and the same holds for $\mathfrak{a}'_0\vee \mathfrak{a}'_1\vee \mathfrak{a}'_2$ and $\mathfrak{c}_0\vee \mathfrak{c}_1\vee \mathfrak{c}_2$. Then Lemma \ref{rotation} (1) (rotation) implies that $$\sum_{i\in \mathbb{Z}/3\mathbb{Z}}[\mathfrak{a}_{i,i+1}\bar{\mathfrak{c}}_{i,i+1}]_{R,\epsilon}^{h,h'}=0\ \text{and}\ \sum_{i\in \mathbb{Z}/3\mathbb{Z}}[\mathfrak{a}'_{i,i+1}\bar{\mathfrak{c}}_{i,i+1}]_{R,\epsilon}^{h,h'}=0\ \text{in}\ \Omega_{R,\epsilon}^{h,h'}(M).$$ 

Note that for all lemmas in Section \ref{construction}, the height of the output $(R,\epsilon)$-panted subsurface is greater than the height of the input $(R,\epsilon)$-curves by at most $3\ln{R}$. Since $\beta-\alpha>3$, we have $h-h'=(\beta-\alpha)\ln{R}>3\ln{R}$, and all those lemmas can be translated to equations in $\Omega_{R,\epsilon}^{h,h'}$. We will use such equations for many times in the remainder of this paper, without explicitly referring to the condition $\beta-\alpha>3$.

For each $i\in \mathbb{Z}/3\mathbb{Z}$, since $[\mathfrak{s}_{\hat{g}}\mathfrak{a}_{i,i+1}]$ and $[\bar{\mathfrak{c}}_{i,i+1}\mathfrak{a}'_{i,i+1}]$ form a $(10L,10^5\delta)$-tame swap pair of bigons, by Lemma \ref{swap} (swapping), we have 
$$[\mathfrak{s}_{\hat{g}}\mathfrak{a}_{i,i+1}]_{R,\epsilon}^{h,h'}+[\bar{\mathfrak{c}}_{i,i+1}\mathfrak{a}'_{i,i+1}]_{R,\epsilon}^{h,h'}=[\mathfrak{s}_{\hat{g}}\mathfrak{a}'_{i,i+1}]_{R,\epsilon}^{h,h'}+[\bar{\mathfrak{c}}_{i,i+1}\mathfrak{a}_{i,i+1}]_{R,\epsilon}^{h,h'}\ \text{in}\ \Omega_{R,\epsilon}^{h,h'}(M).$$
Moreover, since $[\mathfrak{s}_{\hat{g}}\mathfrak{b}]$ and $[\bar{\mathfrak{s}}_{\hat{g}}\mathfrak{b}']$ form a $(10L,10^5\delta)$-tame swap pair of bigons, by Lemma \ref{swap} (swapping) again, we have 
$$[\mathfrak{s}_{\hat{g}}\mathfrak{b}]_{R,\epsilon}^{h,h'}+[\bar{\mathfrak{s}}_{\hat{g}}\mathfrak{b}']_{R,\epsilon}^{h,h'}=[\mathfrak{s}_{\hat{g}}\mathfrak{b}']_{R,\epsilon}^{h,h'}+[\bar{\mathfrak{s}}_{\hat{g}}\mathfrak{b}]_{R,\epsilon}^{h,h'}\ \text{in}\ \Omega_{R,\epsilon}^{h,h'}(M),$$ which is equivalent to 
$$-[\mathfrak{s}_{\hat{g}}\mathfrak{b}]_{R,\epsilon}^{h,h'}-[\mathfrak{s}_{\hat{g}}\bar{\mathfrak{b}}]_{R,\epsilon}^{h,h'}=-[\mathfrak{s}_{\hat{g}}\mathfrak{b}']_{R,\epsilon}^{h,h'}-[\mathfrak{s}_{\hat{g}}\bar{\mathfrak{b}}']_{R,\epsilon}^{h,h'}\ \text{in}\ \Omega_{R,\epsilon}^{h,h'}(M).$$

The sum of these equations implies that the two definitions of $\Psi_1(\hat{g})$ given by different choices of $\mathfrak{a}_0\vee \mathfrak{a}_1 \vee \mathfrak{a}_2$ and $\mathfrak{b}$ are same with each other.

Now we check that $\Psi_1(\hat{g})$ also does not depend on the choice of $\mathfrak{s}_{\hat{g}}$ and $\vec{n}_{\hat{g}}$. If one take another set of $\mathfrak{s}_{\hat{g}}'$ and $\vec{n}_{\hat{g}}'$, then $\mathfrak{s}_{\hat{g}}$ and $\mathfrak{s}_{\hat{g}}'$ have the same carrier segments, the initial and terminal framings of $\mathfrak{s}_{\hat{g}}$ and $\mathfrak{s}_{\hat{g}}'$ are $(300\delta)$-close to each other respectively, while $\vec{n}_{\hat{g}}$ and $\vec{n}_{\hat{g}}'$ are $(900\delta)$-close to each other. Now we take $\mathfrak{a}_0\vee \mathfrak{a}_1\vee \mathfrak{a}_2$ and $\mathfrak{b}$ satisfying Condition \ref{defining} with respect to $\vec{n}_{\hat{g}}$, with $(10^3\delta)$-closeness replaced by $\delta$-closeness. Then they also satisfy Condition \ref{defining} with respect to $\vec{n}_{\hat{g}}'$. Then the two definitions of $\Psi_1(\hat{g})$ based on $\vec{n}_{\hat{g}}$ and $\vec{n}_{\hat{g}}'$ are equal to each other. So $\Psi_1(\hat{g})$ is well-defined.
\end{proof}

Now we prove that $\Psi_1$ satisfies the triangular relation, which parallels with Lemma 5.19 of \cite{LM}.

\begin{lemma}\label{relation}
For the set-theoretic map $\Psi_1: \hat{\mathscr{C}}_{DL}\cap \hat{\mathscr{B}}_{\frac{3}{2}R}^{h-2}\to \Omega_{R,\epsilon}^{h,h'}(M)$, the following relations hold.
\begin{enumerate}
\item For any $\hat{g}\in \hat{\mathscr{C}}_{DL}\cap \hat{\mathscr{B}}_{\frac{3}{2}R}^{h-2}$, 
$$\Psi_1(\hat{g})+\Psi_1(\hat{g}^{-1})=0.$$
\item For any triple $\hat{g}_0,\hat{g}_1,\hat{g}_2\in \hat{\mathscr{C}}_{DL}\cap \hat{\mathscr{B}}_{\frac{3}{2}R-4DL}^{h-3}$ satisfying $\hat{g}_0\hat{g}_1\hat{g}_2=1,$ 
$$\Psi_1(\hat{g}_0)+\Psi_1(\hat{g}_1)+\Psi_1(\hat{g}_2)=0.$$
\end{enumerate}
\end{lemma}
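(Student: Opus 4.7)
The plan is to exploit the flexibility in the definition of $\Psi_1$ afforded by Lemma \ref{welldefined} --- the independence of $\Psi_1(\hat g)$ on the choices of $\mathfrak{s}_{\hat g}, \vec{n}_{\hat g}, \mathfrak{a}_0\vee\mathfrak{a}_1\vee\mathfrak{a}_2,$ and $\mathfrak{b}$ --- by making coordinated choices across $\hat g$ and $\hat g^{-1}$ for relation (1), and across $\hat g_0,\hat g_1,\hat g_2$ for relation (2), so that the required cancellations in $\Omega_{R,\epsilon}^{h,h'}(M)$ are either transparent or a direct consequence of the Swapping, Rotation, and Antirotation constructions from Section \ref{construction}.

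For (1), I would pick $\mathfrak{s}_{\hat g^{-1}} = \bar{\mathfrak{s}}_{\hat g}^{\,*}$ with $\vec n_{\hat g^{-1}} = -\vec n_{\hat g}$, and take $\mathfrak{a}'_i = \mathfrak{a}_{-i}^{\,*}$ and $\mathfrak{b}' = \mathfrak{b}^{\,*}$. The nontrivial check is that the new tripod is right-handed as Condition \ref{defining} requires: reversing the arm indexing and frame-flipping each reverse chirality, so their composition preserves right-handedness; the other items of Condition \ref{defining} are immediate from those for $\hat g$. Using the identity $\bar{\mathfrak{a}}_{i,i+1} = \mathfrak{a}_{i+1,i}$, the observation that frame flipping does not affect the underlying carrier and hence the closed-geodesic class in $\Omega_{R,\epsilon}^{h,h'}(M)$, and the reversal relation $[\bar{\mathfrak s}\mathfrak t]_{R,\epsilon}^{h,h'} = -[\mathfrak s\bar{\mathfrak t}]_{R,\epsilon}^{h,h'}$, one computes
\[
[\mathfrak{s}_{\hat g^{-1}}\mathfrak{a}'_{i,i+1}]_{R,\epsilon}^{h,h'} = -[\mathfrak{s}_{\hat g}\mathfrak{a}_{-i-1,-i}]_{R,\epsilon}^{h,h'}, \qquad [\mathfrak{s}_{\hat g^{-1}}\mathfrak{b}']_{R,\epsilon}^{h,h'} = -[\mathfrak{s}_{\hat g}\bar{\mathfrak{b}}]_{R,\epsilon}^{h,h'},
\]
and similarly $[\mathfrak{s}_{\hat g^{-1}}\bar{\mathfrak{b}}']_{R,\epsilon}^{h,h'} = -[\mathfrak{s}_{\hat g}\mathfrak{b}]_{R,\epsilon}^{h,h'}$. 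After the reindexing $j = -i-1$ in the tripod sum, every term of $\Psi_1(\hat g^{-1})$ becomes the additive inverse of a corresponding term of $\Psi_1(\hat g)$, yielding the asserted cancellation.

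For (2), the relation $\hat g_0\hat g_1\hat g_2 = 1$ means that the three framed-segment paths representing the $\hat g_j$ (with the intermediate $\pi$-rotations from Definition \ref{sharpelement}) concatenate to a null-homotopic loop in $\mathrm{SO}(M)$, and lifting to $\mathbb{H}^3$, the carrier segments of $\mathfrak{s}_0,\mathfrak{s}_1,\mathfrak{s}_2$ close into a geodesic triangle with vertices $O, g_0O, g_2^{-1}O$. I would exploit this by choosing a common auxiliary point $Q$ in the thick part of $M$ and, by Theorem \ref{connection_principle_with_height}, constructing for each $j$ a tripod $\mathfrak{a}^{(j)}$ with apex $Q$ and feet at $*$, together with a segment $\mathfrak{b}^{(j)}$, all realizing Condition \ref{defining} for $\hat g_j$. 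The nine tripod-bigons and six $\mathfrak{b}$-bigons in $\sum_j \Psi_1(\hat g_j)$ then pair up: successive pairs of tripods at the shared apex $Q$ are combined via Lemma \ref{rotation} or Lemma \ref{antirotation}, while the $\mathfrak{b}$-bigons at neighboring vertices pair via Lemma \ref{swap}; the resulting $(R,\epsilon)$-panted subsurfaces glue to a single null-cobordism whose oriented boundary is the total multicurve representing $\sum_j \Psi_1(\hat g_j)$. The main obstacle is to identify this combinatorial pairing explicitly and verify simultaneously that every intermediate panted subsurface stays within height $h' = \beta\ln R$: each Swap adds at most $\ln R$ to the height, each Rotation or Antirotation at most $3\ln R$, and the strengthened hypothesis $\hat g_j \in \hat{\mathscr{B}}_{\frac{3}{2}R - 4DL}^{h-3}$ together with $\beta - \alpha \geq 3$ gives exactly the margin needed to accommodate the finite number of such applications, paralleling (with the additional height bookkeeping) the closed-manifold argument of \cite{LM}.
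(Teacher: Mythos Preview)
Your argument for part (1) is correct and coincides with the paper's: choosing $\mathfrak{s}_{\hat g^{-1}}=\bar{\mathfrak{s}}_{\hat g}^{\,*}$ and the frame-flipped, reindexed tripod yields termwise cancellation. (The paper takes $\bar{\mathfrak b}^{\,*}$ rather than $\mathfrak b^{\,*}$ as the auxiliary segment for $\hat g^{-1}$, but since both $\mathfrak b$ and $\bar{\mathfrak b}$ enter the definition of $\Psi_1$ symmetrically this is immaterial.)

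For part (2) your proposal has a genuine gap: you have identified the right toolkit (swapping, rotation, antirotation) but not the structural idea that makes the cancellation go through. The paper does \emph{not} simply place three independent auxiliary tripods $\mathfrak a^{(j)}$ at a common apex and hope the fifteen bigons pair off. Instead it first encodes the relation $\hat g_0\hat g_1\hat g_2=1$ geometrically by a single left-handed tripod $\mathfrak t_0\vee\mathfrak t_1\vee\mathfrak t_2$ based at the Fermat point of the geodesic triangle, so that $\mathfrak s_{\hat g_{r+2}}=\mathfrak t_{r,r+1}(\phi_{r+2})$ for suitable phases $\phi_r$ summing to zero. The computation is then carried out only in the special case where $\mathfrak t$ is nearly regular and all $\phi_r=0$: there one can use a \emph{single} right-handed tripod $\mathfrak a_0\vee\mathfrak a_1\vee\mathfrak a_2$ as auxiliary data for all three $\hat g_r$ simultaneously, and crucially choose $\mathfrak b^{(r+2)}=\mathfrak a_{r,r+1}$. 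With these coordinated choices the nine tripod terms reduce via two swaps to a rotation sum and an antirotation sum, and the six $\mathfrak b$-terms cancel three of the nine directly---this is an explicit identity, not a pairing one discovers after the fact.

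The reduction of the general case to this special one is itself substantial and occupies three further steps in the paper: a ``connecting'' lemma showing $\sum_r\Psi_1(\hat g_r)$ is invariant when one leg $\mathfrak r_i$ of an extended tripod $\mathfrak c_i\mathfrak r_i$ is replaced by a nearby $\mathfrak r_i'$; an interpolation through a finite sequence of tripods deforming $\mathfrak t$ to a nearly regular one with trivial phases (constructed via Theorem~\ref{connection_principle_with_height} with height control, which is where the cusped-manifold difficulty enters); and a final extension step to ensure each $\mathfrak t_i$ is long enough for the interpolation. Your sketch contains none of this machinery, and in particular your remark that ``the main obstacle is to identify this combinatorial pairing explicitly'' is precisely the point: without the Fermat tripod, the choice $\mathfrak b=\mathfrak a_{r,r+1}$, and the interpolation, there is no pairing to identify.
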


Before proving Lemma \ref{relation}, we first prove a lemma that characterizes some elements of $\pi_1(M,*)$ lying in $\mathscr{C}_{DL}$.

\begin{lemma}\label{simplecharacterize}
Suppose that $g\in \pi_1(M,*)$ can be represented by a concatenation of geodesic segments $\gamma_1\gamma_2\gamma_3$, such that the one of the following holds, then $g\in \mathscr{C}_{DL}$ holds. 
\begin{enumerate}
\item Both $\gamma_1$ and $\bar{\gamma}_3$ have length at least $DL$ and tangent to $\vec{t}$ at their initial points, $\gamma_2$ has length at least $2$, and the bending angles at the initial and terminal points of $\gamma_2$ are at most $2\delta$.
\item $\gamma_1=k^{-1}$, $\gamma_3=k$, and the bending angles at the initial and terminal points of $\gamma_2$ are both at most $20\delta$.
\end{enumerate}
\end{lemma}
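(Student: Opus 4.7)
In both cases the strategy is to lift the concatenation $\gamma_1\gamma_2\gamma_3$ to a piecewise geodesic path $\tilde\gamma$ in $\mathbb{H}^3$ starting at $O$ and ending at $gO$, and then exhibit a totally geodesic plane separating $U_{DL}$ from $gU_{DL}$. The key is to establish (a) $d(O,gO)\geq 2DL+1$, and (b) the tangent to the geodesic from $O$ to $gO$ at $O$ is very close to $\vec t_O$ and its tangent at $gO$ is very close to $-g\vec t_O$. Granted (a) and (b), the points $O_{DL}$ and $gO_{DL}$ lie close to this geodesic, separated along it by a positive distance, and any totally geodesic plane perpendicular to the geodesic in the interior of this gap separates $U_{DL}$ from $gU_{DL}$.

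For case (1), the initial direction of $\tilde\gamma_1$ at $O$ is $\vec t_O$ and $|\tilde\gamma_1|\geq DL$, while the terminal direction of $\tilde\gamma_3$ at $gO$ is $-g\vec t_O$ with $|\tilde\gamma_3|\geq DL$; the bending angles at the two junctions are at most $2\delta$, and $|\tilde\gamma_2|\geq 2$. The interior angles at the junctions being at least $\pi-2\delta$, the hyperbolic cosine and sine laws (as in the proof of Lemma \ref{anglechange}(2), where the strong obtuseness compensates for the modest length of $\tilde\gamma_2$ even though $|\tilde\gamma_2|\ge 2$ falls short of the nominal threshold $D>3$ in that lemma) yield $d(O,gO)\geq 2DL+1$ together with angular errors at both endpoints of order at most $40\delta/\sinh(DL)$, well below the slack needed for (b).

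For case (2), the directional alignment at the endpoints is provided by combining Lemma \ref{setuplemma}(3), which asserts $k\in\mathscr C_{DL}$, with Lemma 5.17 of \cite{LM}, which upgrades this to $\delta$-sharpness of $k$ (and of $k^{-1}$) since $DL>\tfrac{1}{D}\ln(4/\delta)$. The $\delta$-sharpness implies that $\tilde\gamma_1$ (the lift of $k^{-1}$ from $O$) starts in direction $(100\delta)$-close to $\vec t_O$ and ends at $k^{-1}O$ in direction $(100\delta)$-close to $-k^{-1}\vec t_O$; symmetrically $\tilde\gamma_3$ starts at $gk^{-1}O$ in direction $(100\delta)$-close to $gk^{-1}\vec t_O$ and ends at $gO$ in direction $(100\delta)$-close to $-g\vec t_O$. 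Combined with the $20\delta$-bounds on the bending at the two junctions and the lengths $|\tilde\gamma_1|=|\tilde\gamma_3|=|k|\geq 2DL$, the cosine-law argument of case (1) applies to yield (a) and (b).

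The main obstacle is that in case (2) the length of $\tilde\gamma_2$ is not controlled a priori, so one must rule out the possibility that $\tilde\gamma_2$ enters $U_{DL}$ or $gU_{DL}$ along the way. The directional constraint forces $\tilde\gamma_2$ to leave $k^{-1}O$ in direction $(120\delta)$-close to $-k^{-1}\vec t_O$, which is antiparallel to the outward normal of $\partial(k^{-1}U_{DL})$ at $k^{-1}O_{DL}$; hence $\tilde\gamma_2$ remains on the far side of $\partial(k^{-1}U_{DL})$ from $U_{DL}$, which is disjoint from $U_{DL}$ by $k\in\mathscr C_{DL}$, and symmetric reasoning at $gk^{-1}O$ handles $gU_{DL}$.
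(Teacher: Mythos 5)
Your plan works only if the separating wall can be taken perpendicular to the chord $[O,gO]$, and that in turn requires far more than your claims (a) and (b) as stated: one must control the boundary \emph{planes} $\partial U_{DL}$ and $\partial(gU_{DL})$, not merely the tangency points $O_{DL}$, $gO_{DL}$. Quantitatively, the angle at $O$ between $\vec t_O$ and the chord must be $\ll e^{-DL}$: if the ray defining $U_{DL}$ is tilted off the chord by an angle of order $e^{-DL}$ with a moderate constant (say $40e^{-DL}$), then $O_{DL}$ already sits a bounded distance ($\approx\operatorname{arcsinh}20$) off the chord and $\partial U_{DL}$ swings so far that $U_{DL}$ is no longer contained in one side of any plane perpendicular to the chord in your gap; the implication ``tangency points close to the chord, feet separated $\Rightarrow$ any perpendicular plane in the gap separates the half-spaces'' is exactly the geometric content of the lemma and you assert it without proof. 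In case (1) the needed exponential alignment is available (the initial directions are exactly $\vec t$, and the sine law does give an error of order $\delta e^{-DL}$ as you say), so your outline is salvageable there, but the crucial step is missing. In case (2) the mechanism fails outright: the lift of $k^{-1}$ leaves $O$ only $\delta$-close (at best) to $\vec t_O$, an error independent of $DL$, so the chord direction at $O$ is off by roughly $100\delta$ and $O_{DL}$ lies at distance about $\operatorname{arcsinh}(100\delta\sinh DL)$ --- of order $DL$ --- from the chord; neither $O_{DL}$ nor $gO_{DL}$ is anywhere near $[O,gO]$, and no plane perpendicular to the chord has $U_{DL}$ on one side. Your closing paragraph does not repair this: whether the path $\tilde\gamma_2$ enters $U_{DL}$ or $gU_{DL}$ is irrelevant; what must be excluded is that the two half-spaces meet each other, possibly far away from the path, and showing $\tilde\gamma_2\subset k^{-1}U_{DL}$ says nothing about that.

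The paper's proof never compares anything to the chord, which is why it avoids both problems. In case (1) it takes $P_1$, $P_3$ perpendicular to $\gamma_1$, $\gamma_3$ at the junction points: since $\gamma_1$ and $\bar\gamma_3$ run along the very rays defining $U_{DL}$ and $gU_{DL}$ and have length at least $DL$, these planes automatically have $U_{DL}$, resp.\ $gU_{DL}$, entirely on one side, with no angle estimate needed; then the plane $P_2$ perpendicular to $\gamma_2$ at its midpoint is disjoint from $P_1$ and $P_3$ by the ideal right-triangle identity $\sin\beta\cosh a=1$ (here $\sin(\pi-2\delta)\cosh 1>1$, which is where ``length at least $2$'' is used), so $P_2$ separates the half-spaces. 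In case (2), where $\gamma_2$ has no length control, the role of $P_1$ is played by $\partial(k^{-1}U_{DL})$ itself: the hypothesis $k\in\mathscr{C}_{DL}$ forces $U_{DL}$ to lie beyond $\partial(k^{-1}U_{DL})$, and the plane through $k^{-1}O$ perpendicular to $\gamma_2$ is disjoint from $\partial(k^{-1}U_{DL})$ because the direction of $\gamma_2$ at $k^{-1}O$ is within roughly $21\delta$ of the length-$DL$ axis from $k^{-1}O$ to $k^{-1}O_{DL}$ and $\cosh(DL)$ is enormous; the symmetric statement at the terminal endpoint of $\gamma_2$ (with $\partial(k^{-1}wU_{DL})$, $w=[\gamma_2]$) then traps $U_{DL}$ and $gU_{DL}$ on opposite sides of the two planes perpendicular to $\gamma_2$ at its endpoints. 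If you wish to keep your framework, you must replace the chord-perpendicular wall in case (2) by these segment-perpendicular planes and invoke $k\in\mathscr{C}_{DL}$ in exactly this way; no estimate on $d(O,gO)$ or on the chord's endpoint directions can substitute for it.
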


\begin{proof}
We start with proving (1). We lift the path $\gamma_1\gamma_2\gamma_3$ to the universal cover $\mathbb{H}^3$ of $M$, with initial point $O$, and we still denote these three geodesic segments by the same notation. 

Let  $P_1$ be the hyperbolic plane perpendicular to $\gamma_1$ at the end point of $\gamma_1$, and let $P_2$ be the hyperbolic plane perpendicular to $\gamma_2$ at the middle point of $\gamma_2$. Then we claim that $P_1\cap P_2=\emptyset$.

The claim follows from the following fact in hyperbolic geometry. In a right-angled triangle $\Delta$ with one ideal vertex, let $a$ be the length of the finite edge of $\Delta$, and let $\beta$ be the angle adjacent to the finite edge that is not the right-angle. Then $\sin{\beta}\cosh{a}=1$ holds. Here we have the condition $\sin{(\pi-2\delta)}\cosh{1}>1$, so $P_1\cap P_2=\emptyset$.

Let  $P_3$ be the hyperbolic plane perpendicular to $\gamma_3$ at the initial point of $\gamma_3$. The same argument implies that  $P_3\cap P_2=\emptyset$. Then we know that $P_2$ separates $U_{DL}$ and $g(U_{DL})$ in $\mathbb{H}^3$, since it separates $P_1$ and $P_3$. So $g\in \mathscr{C}_{DL}$ holds.

The proof of (2) is similar to (1). Actually, the statement follows from the fact that the following two hyperbolic planes are disjoint from each other.
\begin{itemize}
\item The hyperbolic plane going through the initial point of $\gamma_2$ and perpendicular to $\gamma_2$.
\item Boundary of the translation of $U_{DL}$ by the isometry $\gamma_1=k^{-1}\in\text{Isom}_+(\mathbb{H}^3)$.
\end{itemize}
\end{proof}

\begin{proof}[Proof of Lemma \ref{relation}]
For statement (1), let $\mathfrak{s}_{\hat{g}}$ be an oriented $\partial$-framed segment associated to $\hat{g}$ and let $\mathfrak{a}_0\vee \mathfrak{a}_1\vee \mathfrak{a}_2$ and $\mathfrak{b}$ be the auxiliary data for defining $\Psi_1(\hat{g})$. Then by Definition \ref{sharpelement}, $\bar{\mathfrak{s}}^*_{\hat{g}}$ is associated to $\hat{g}^{-1}$, and we can choose the auxiliary data for defining $\Psi_1(\hat{g}^{-1})$ to be $\mathfrak{a}_0^*\vee \mathfrak{a}_2^*\vee \mathfrak{a}^*_1$ and $\bar{\mathfrak{b}}^*$. Then we compute 
\begin{align*}
&\Psi_1(\hat{g})+\Psi_1(\hat{g}^{-1})\\
=\ &( [\mathfrak{s}_{\hat{g}}\mathfrak{a}_{01}]_{R,\epsilon}^{h,h'}+[\mathfrak{s}_{\hat{g}}\mathfrak{a}_{12}]_{R,\epsilon}^{h,h'}+[\mathfrak{s}_{\hat{g}}\mathfrak{a}_{20}]_{R,\epsilon}^{h,h'}-[\mathfrak{s}_{\hat{g}}\mathfrak{b}]_{R,\epsilon}^{h,h'}-[\mathfrak{s}_{\hat{g}}\bar{\mathfrak{b}}]_{R,\epsilon}^{h,h'})\\
&+([\bar{\mathfrak{s}}^*_{\hat{g}}\mathfrak{a}^*_{02}]_{R,\epsilon}^{h,h'}+[\bar{\mathfrak{s}}^*_{\hat{g}}\mathfrak{a}_{21}]_{R,\epsilon}^{h,h'}+[\bar{\mathfrak{s}}^*_{\hat{g}}\mathfrak{a}_{10}]_{R,\epsilon}^{h,h'}-[\bar{\mathfrak{s}}^*_{\hat{g}}\bar{\mathfrak{b}}^*]_{R,\epsilon}^{h,h'}-[\bar{\mathfrak{s}}^*_{\hat{g}}\mathfrak{b}^*]_{R,\epsilon}^{h,h'})\\
=\ & 0\in \Omega_{R,\epsilon}^{h,h'}(M).
\end{align*}
Here the last equation holds since the ten items cancel with each other pairwisely. For example, we have 
$$[\bar{\mathfrak{s}}^*_{\hat{g}}\mathfrak{a}^*_{02}]_{R,\epsilon}^{h,h'}=[\overline{\mathfrak{s}^*_{\hat{g}}\mathfrak{a}^*_{20}}]_{R,\epsilon}^{h,h'}=-[\mathfrak{s}^*_{\hat{g}}\mathfrak{a}^*_{20}]_{R,\epsilon}^{h,h'}=-[\mathfrak{s}_{\hat{g}}\mathfrak{a}_{20}]_{R,\epsilon}^{h,h'}\in \Omega_{R,\epsilon}^{h,h'}(M).$$

\bigskip

For statement (2), we first need to set up some preparational work.

For any triple of elements $\hat{g}_0,\hat{g}_1,\hat{g}_2\in \hat{\mathscr{C}}_{DL}\cap \hat{\mathscr{B}}_{\frac{3}{2}R}^{h-2}$ satisfying $\hat{g}_0\hat{g}_1\hat{g}_2=1,$ there is a left handed $(100L,\delta)$-tame tripod $\mathfrak{t}_0\vee \mathfrak{t}_1\vee \mathfrak{t}_2$ of height at most $h-1$, such that the carrier segment of $\mathfrak{t}_{i,i+1}$ is the projection of $\hat{g}_{i+2}\in\pi_1(\text{SO}(M),\text{e})$ to $M$ for any $i\in \mathbb{Z}/3\mathbb{Z}$. Actually, the geodesic segments corresponding to $\hat{g}_0,\hat{g}_2,\hat{g}_2$ bound a geodesic triangle in $M$, and the common initial point of $\mathfrak{t}_0\vee \mathfrak{t}_1\vee \mathfrak{t}_2$ is the Fermat point of this triangle. Since each $\hat{g}_i \in \hat{\mathscr{C}}_{DL}$, by Lemma 4.11 of \cite{LM}, the length of each $\mathfrak{t}_i$ is at least $DL-1>200L$. The height bound of $\mathfrak{t}_0\vee \mathfrak{t}_1\vee \mathfrak{t}_2$ follows from the fact that the carrier segment of each $\mathfrak{t}_i$ lies in the $1$-neighborhood of the union of geodesic segments corresponding to $\hat{g}_0$, $\hat{g}_1$ and $\hat{g}_2$ (since the bending angle of $\bar{\mathfrak{t}}_i\mathfrak{t}_{i+1}$ is $\frac{\pi}{3}$).

By Lemma 5.9 of \cite{LM}, there are angles $\phi_0,\phi_1,\phi_2\in \mathbb{R}/2\pi \mathbb{Z}$ with $\phi_0+\phi_1+\phi_2=0$, such that $2\phi_{i+2}$ is $(2\delta)$-close to the angle from $\vec{n}_{\text{ter}}(\mathfrak{t}_i)$ to $\vec{n}_{\text{ter}}(\mathfrak{t}_{i+1})$ with respect to their common orthogonal vector $\delta$-close to $-\vec{t}$, and $\mathfrak{t}_{i,i+1}(\phi_{i+2})$ is associated to the $\delta$-sharp element $\hat{g}_{i+2}\in \pi_1(\text{SO}(M),\text{e})$. So we can simply take $\mathfrak{s}_{\hat{g}_{i+2}}$ to be  $\mathfrak{t}_{i,i+1}(\phi_{i+2})$.

{\bf Step I.} We assume that for $\hat{g}_0,\hat{g}_1,\hat{g}_2\in \hat{\mathscr{C}}_{DL}\cap \hat{\mathscr{B}}_{\frac{3}{2}R}^{h-2}$, the corresponding tripod $\mathfrak{t}_0\vee \mathfrak{t}_1\vee\mathfrak{t}_2$ is $(l,10\delta)$-nearly regular for some constant $l\geq 200L$, all $\vec{n}_{\text{ter}}(\mathfrak{t}_i)$ are $\delta$-close to the vector $\vec{n}$, and all $\phi_i$ are $0$. Then we prove that 
$$\Psi_1(\hat{g}_0)+\Psi_1(\hat{g}_1)+\Psi_1(\hat{g}_2)=0.$$

For each $r\in \mathbb{Z}/3\mathbb{Z}$, we can take $\mathfrak{s}_{\hat{g}_{r+2}}$ to be $\mathfrak{t}_{r,r+1}$. Then there exists a right-handed tripod $\mathfrak{a}_0\vee\mathfrak{a}_1\vee \mathfrak{a}_2$ of height at most $\frac{3}{5}\ln{R}$, such that for any $r\in \mathbb{Z}/3\mathbb{Z}$, $\mathfrak{a}_0\vee\mathfrak{a}_1\vee \mathfrak{a}_2$ and $\mathfrak{a}_{r,r+1}$ can be used to define $\Psi_1(\hat{g}_{r+2})$ (satisfying Condition \ref{defining} with respect to $\hat{g}_{r+2}$).

So we have 
\begin{align*}
&\Psi_1(\hat{g}_0)+\Psi_1(\hat{g}_1)+\Psi_1(\hat{g}_2)\\
=\ & \sum_{r=0}^2\Big(\sum_{i=0}^2[\mathfrak{t}_{r,r+1}\mathfrak{a}_{i,i+1}]_{R,\epsilon}^{h,h'}-[\mathfrak{t}_{r,r+1}\mathfrak{a}_{r,r+1}]_{R,\epsilon}^{h,h'}-[\mathfrak{t}_{r,r+1}\bar{\mathfrak{a}}_{r,r+1}]_{R,\epsilon}^{h,h'}\Big)\\
=\ & \sum_{r=0}^2[\mathfrak{t}_{r,r+1}\mathfrak{a}_{r+1,r+2}]_{R,\epsilon}^{h,h'}+ \sum_{r=0}^2[\mathfrak{t}_{r,r+1}\mathfrak{a}_{r+2,r}]_{R,\epsilon}^{h,h'}-\sum_{r=0}^2[\mathfrak{t}_{r,r+1}\bar{\mathfrak{a}}_{r,r+1}]_{R,\epsilon}^{h,h'}\\
=\ & 2 \sum_{r=0}^2[\mathfrak{t}_{r,r+1}\mathfrak{a}_{r,r+1}]_{R,\epsilon}^{h,h'}-0\\
=\ & 2 \sum_{r=0}^2[\mathfrak{t}_{r,r+1}\bar{\mathfrak{a}}_{r+1,r}]_{R,\epsilon}^{h,h'}\\
=\ &0
\end{align*} 

For the third equation, the first and second terms are both equal to $\sum_{r=0}^2[\mathfrak{t}_{r,r+1}\mathfrak{a}_{r,r+1}]_{R,\epsilon}^{h,h'}$. The first term follows from the following computation, by invoking Lemma \ref{swap} (swapping). The second term follows from a similar computation.
\begin{align*}
&[\mathfrak{t}_{01}\mathfrak{a}_{12}]_{R,\epsilon}^{h,h'}+([\mathfrak{t}_{12}\mathfrak{a}_{20}]_{R,\epsilon}^{h,h'}+[\mathfrak{t}_{20}\mathfrak{a}_{01}]_{R,\epsilon}^{h,h'})\\
=\ & [\mathfrak{t}_{01}\mathfrak{a}_{12}]_{R,\epsilon}^{h,h'}+[\mathfrak{t}_{12}\mathfrak{a}_{01}]_{R,\epsilon}^{h,h'}+[\mathfrak{t}_{20}\mathfrak{a}_{20}]_{R,\epsilon}^{h,h'}\\
=\ & ([\mathfrak{t}_{01}\mathfrak{a}_{01}]_{R,\epsilon}^{h,h'}+[\mathfrak{t}_{12}\mathfrak{a}_{12}]_{R,\epsilon}^{h,h'})+[\mathfrak{t}_{20}\mathfrak{a}_{20}]_{R,\epsilon}^{h,h'}. 
\end{align*}
The third term equals $0$ follows from Lemma \ref{rotation} (1) (rotation), since $\mathfrak{a}_0\vee \mathfrak{a}_1 \vee \mathfrak{a}_2$ and $\mathfrak{t}_0\vee \mathfrak{t}_1 \vee \mathfrak{t}_2$  have opposite chiralty. The last equation follows from Lemma \ref{antirotation} (1) (antirotation).

\bigskip

{\bf Step II.} In this step, we prove a connecting result. Suppose that $\mathfrak{c}_0\mathfrak{r}_0\vee \mathfrak{c}_1\mathfrak{r}_1\vee \mathfrak{c}_2\mathfrak{r}_2$ and $\mathfrak{c}_0\mathfrak{r}'_0\vee \mathfrak{c}_1\mathfrak{r}'_1\vee \mathfrak{c}_2\mathfrak{r}'_2$ are $(100L,10\delta)$-tame left-handed tripods of height at most $h-1$ satisfying the following conditions.
\begin{enumerate}
\item[(a)] The left-handed tripod $\mathfrak{c}_0\vee \mathfrak{c}_1\vee \mathfrak{c}_2$ is $(2L,\delta)$-nearly regular. The phases of $\mathfrak{c}_i$ and $\mathfrak{r}_i$ are all $\delta$-close to $0$.
\item[(b)] For each $i\in \mathbb{Z}/3\mathbb{Z}$, $\mathfrak{c}_i$ and $\mathfrak{r}_i$ are $(10\delta)$-consecutive and $(L,10\delta)$-tame. The terminal point of $\mathfrak{r}_i$ is $*$.
\item[(c)] For each $i\in \mathbb{Z}/3\mathbb{Z}$, $l(\mathfrak{r}_i)$ is $(10\delta)$-close to $l(\mathfrak{r}_i')$, $\vec{t}_{\text{ter}}(\mathfrak{r}_i)$ and $\vec{t}_{\text{ter}}(\mathfrak{r}'_i)$ are both $\delta$-close to $-\vec{t}$, and $\vec{n}_{\text{ter}}(\mathfrak{r}_i)$ is $(10\delta)$-close to $\vec{n}_{\text{ter}}(\mathfrak{r}'_i)$.
\item[(d)] The group elements $\hat{g}_0,\hat{g}_1,\hat{g}_2, \hat{g}'_0,\hat{g}'_1,\hat{g}'_2$ defined in the following lie in $\hat{\mathscr{C}}_{DL}\cap \hat{\mathscr{B}}_{\frac{3}{2}R}^{h-2}$.
\end{enumerate}

Let $\phi_0,\phi_1,\phi_2\in \mathbb{R}/2\pi\mathbb{Z}$ be the angles associated to $\mathfrak{c}_0\mathfrak{r}_0\vee \mathfrak{c}_1\mathfrak{r}_1\vee \mathfrak{c}_2\mathfrak{r}_2$ given by the setup before step I. Then these angles also work for $\mathfrak{c}_0\mathfrak{r}_0'\vee \mathfrak{c}_1\mathfrak{r}_1'\vee \mathfrak{c}_2\mathfrak{r}_2'$ up to $(30\delta)$-closeness for the condition on framings before Step I.
We defined $\hat{g}_{i+2},\hat{g}_{i+2}'\in \pi_1(\text{SO}(M),\text{e})$ to be the $\delta$-sharp elements accociated to $(\bar{\mathfrak{r}}_i\mathfrak{c}_{i,i+1}\mathfrak{r}_{i+1})(\phi_{i+2}), (\bar{\mathfrak{r}}'_i\mathfrak{c}_{i,i+1}\mathfrak{r}'_{i+1})(\phi_{i+2})$ respectively. 

Then we want to prove
$$\Psi_1(\hat{g}_0)+\Psi_1(\hat{g}_1)+\Psi_1(\hat{g}_2)=\Psi_1(\hat{g}_0')+\Psi_1(\hat{g}_1')+\Psi_1(\hat{g}_2').$$

Actually, it suffices to consider the case that $\mathfrak{r}_i \ne \mathfrak{r}'_i$ for only one $i\in\{0,1,2\}$. Without loss of generality, we work on the pair $\mathfrak{c}_0\mathfrak{r}_0 \vee \mathfrak{c}_1\mathfrak{r}_1\vee \mathfrak{c}_2\mathfrak{r}_2$ and $\mathfrak{c}_0\mathfrak{r}'_0 \vee \mathfrak{c}_1\mathfrak{r}_1\vee \mathfrak{c}_2\mathfrak{r}_2$. Since $\hat{g}_0=\hat{g}_0'$ holds in this case, we only need to prove
$$\Psi_1(\hat{g}_1)+\Psi_1(\hat{g}_2)=\Psi_1(\hat{g}'_1)+\Psi_1(\hat{g}'_2).$$

Now we take auxiliary data $\mathfrak{a}_0^{(1)}\vee\mathfrak{a}_1^{(1)}\vee \mathfrak{a}_2^{(1)}$ and $\mathfrak{b}^{(1)}$ for computing $\Psi_1(\hat{g}_1)$ that statisfy Condition \ref{defining}, with $(10^3\delta)$-closeness replaced by $\delta$-closeness. Then $\mathfrak{a}_0^{(1)}\vee\mathfrak{a}_1^{(1)}\vee \mathfrak{a}_2^{(1)}$ and $\mathfrak{b}^{(1)}$ also statisfy Condition \ref{defining} for computing $\Psi_1(\hat{g}_1')$. Similarly, we take  auxiliary data $\mathfrak{a}_0^{(2)}\vee\mathfrak{a}_1^{(2)}\vee \mathfrak{a}_2^{(2)}$ and $\mathfrak{b}^{(2)}$ satisfying Condition \ref{defining} for computing both $\Psi_1(\hat{g}_2)$ and $\Psi_1(\hat{g}_2')$.

Since the lengths of $\mathfrak{c}_i\mathfrak{r}_i$ and $\mathfrak{c}_i\mathfrak{r}_i'$ are at least $100L$ and $\mathfrak{c}_0\vee \mathfrak{c}_1\vee \mathfrak{c}_2$ is $(2L,\delta)$-regular, the lengths of $\mathfrak{r}_i$ and $\mathfrak{r}_i'$ are at least $50L$. By using Lemma \ref{swap} (swapping), we swap the $(10L,10^5\delta)$-pair of bigon $[(\mathfrak{r}_0)(\mathfrak{a}_{01}^{(1)}(-\phi_1)\bar{\mathfrak{r}}_2\mathfrak{c}_{20})]$ and $[(\mathfrak{r}'_0)(\bar{\mathfrak{a}}_{01}^{(2)}(-\phi_2)\bar{\mathfrak{r}}_1\mathfrak{c}_{10})]$ to get 
\begin{align*}
&[(\mathfrak{r}_0)(\mathfrak{a}_{01}^{(1)}(-\phi_1)\bar{\mathfrak{r}}_2\mathfrak{c}_{20})]_{R,\epsilon}^{h,h'}+[(\mathfrak{r}'_0)(\bar{\mathfrak{a}}_{01}^{(2)}(-\phi_2)\bar{\mathfrak{r}}_1\mathfrak{c}_{10})]_{R,\epsilon}^{h,h'}\\
=\ &[(\mathfrak{r}_0)(\bar{\mathfrak{a}}_{01}^{(2)}(-\phi_2)\bar{\mathfrak{r}}_1\mathfrak{c}_{10})]_{R,\epsilon}^{h,h'}+[(\mathfrak{r}'_0)(\mathfrak{a}_{01}^{(1)}(-\phi_1)\bar{\mathfrak{r}}_2\mathfrak{c}_{20})]_{R,\epsilon}^{h,h'}\ \text{in}\ \Omega_{R,\epsilon}^{h,h'}(M).
\end{align*} Since $\mathfrak{r}_1=\mathfrak{r}_1'$ and $\mathfrak{r}_2=\mathfrak{r}_2'$, this equation is equivalent to

\begin{align*}
&[(\bar{\mathfrak{r}}_0\mathfrak{c}_{01}\mathfrak{r}_{1})(\phi_2)\mathfrak{a}_{01}^{(2)}]_{R,\epsilon}^{h,h'}+[(\bar{\mathfrak{r}}_2\mathfrak{c}_{20}\mathfrak{r}_{0})(\phi_1)\mathfrak{a}_{01}^{(1)}]_{R,\epsilon}^{h,h'}\\
=\ &[(\bar{\mathfrak{r}}'_0\mathfrak{c}_{01}\mathfrak{r}'_{1})(\phi_2)\mathfrak{a}_{01}^{(2)}]_{R,\epsilon}^{h,h'}+[(\bar{\mathfrak{r}}'_2\mathfrak{c}_{20}\mathfrak{r}'_{0})(\phi_1)\mathfrak{a}_{01}^{(1)}]_{R,\epsilon}^{h,h'}. 
\end{align*}

Similarly, we have 
\begin{align*}
&[(\bar{\mathfrak{r}}_0\mathfrak{c}_{01}\mathfrak{r}_{1})(\phi_2)\mathfrak{a}_{12}^{(2)}]_{R,\epsilon}^{h,h'}+[(\bar{\mathfrak{r}}_2\mathfrak{c}_{20}\mathfrak{r}_{0})(\phi_1)\mathfrak{a}_{12}^{(1)}]_{R,\epsilon}^{h,h'}\\
=\ &[(\bar{\mathfrak{r}}'_0\mathfrak{c}_{01}\mathfrak{r}'_{1})(\phi_2)\mathfrak{a}_{12}^{(2)}]_{R,\epsilon}^{h,h'}+[(\bar{\mathfrak{r}}'_2\mathfrak{c}_{20}\mathfrak{r}'_{0})(\phi_1)\mathfrak{a}_{12}^{(1)}]_{R,\epsilon}^{h,h'},
\end{align*}
\begin{align*}
&[(\bar{\mathfrak{r}}_0\mathfrak{c}_{01}\mathfrak{r}_{1})(\phi_2)\mathfrak{a}_{20}^{(2)}]_{R,\epsilon}^{h,h'}+[(\bar{\mathfrak{r}}_2\mathfrak{c}_{20}\mathfrak{r}_{0})(\phi_1)\mathfrak{a}_{20}^{(1)}]_{R,\epsilon}^{h,h'}\\
=\ &[(\bar{\mathfrak{r}}'_0\mathfrak{c}_{01}\mathfrak{r}'_{1})(\phi_2)\mathfrak{a}_{20}^{(2)}]_{R,\epsilon}^{h,h'}+[(\bar{\mathfrak{r}}'_2\mathfrak{c}_{20}\mathfrak{r}'_{0})(\phi_1)\mathfrak{a}_{20}^{(1)}]_{R,\epsilon}^{h,h'},
\end{align*}
and 
\begin{align*}
&-[(\bar{\mathfrak{r}}_0\mathfrak{c}_{01}\mathfrak{r}_{1})(\phi_2)\mathfrak{b}^{(2)}]_{R,\epsilon}^{h,h'}-[(\bar{\mathfrak{r}}_2\mathfrak{c}_{20}\mathfrak{r}_{0})(\phi_1)\mathfrak{b}^{(1)}]_{R,\epsilon}^{h,h'}\\
=\ &-[(\bar{\mathfrak{r}}'_0\mathfrak{c}_{01}\mathfrak{r}'_{1})(\phi_2)\mathfrak{b}^{(2)}]_{R,\epsilon}^{h,h'}-[(\bar{\mathfrak{r}}'_2\mathfrak{c}_{20}\mathfrak{r}'_{0})(\phi_1)\mathfrak{b}^{(1)}]_{R,\epsilon}^{h,h'}, 
\end{align*}
\begin{align*}
&-[(\bar{\mathfrak{r}}_0\mathfrak{c}_{01}\mathfrak{r}_{1})(\phi_2)\bar{\mathfrak{b}}^{(2)}]_{R,\epsilon}^{h,h'}-[(\bar{\mathfrak{r}}_2\mathfrak{c}_{20}\mathfrak{r}_{0})(\phi_1)\bar{\mathfrak{b}}^{(1)}]_{R,\epsilon}^{h,h'}\\
=\ &-[(\bar{\mathfrak{r}}'_0\mathfrak{c}_{01}\mathfrak{r}'_{1})(\phi_2)\bar{\mathfrak{b}}^{(2)}]_{R,\epsilon}^{h,h'}-[(\bar{\mathfrak{r}}'_2\mathfrak{c}_{20}\mathfrak{r}'_{0})(\phi_1)\bar{\mathfrak{b}}^{(1)}]_{R,\epsilon}^{h,h'}.
\end{align*}

The sum of these five equations gives $\Psi_1(\hat{g}_1)+\Psi_1(\hat{g}_2)=\Psi_1(\hat{g}'_1)+\Psi_1(\hat{g}'_2)$.

\bigskip

{\bf Step III.} We assume that the group elements $\hat{g}_0,\hat{g}_1,\hat{g}_2\in \hat{\mathscr{C}}_{DL}\cap \hat{\mathscr{B}}_{\frac{3}{2}R}^{h-2}$, and the length of each $\mathfrak{t}_i$ is at least $2DL$. Then we prove that 
$$\Psi_1(\hat{g}_0)+\Psi_1(\hat{g}_1)+\Psi_1(\hat{g}_2)=0.$$

For the tripod 
$\mathfrak{t}_0\vee \mathfrak{t}_1\vee \mathfrak{t}_2$, 
let 
$\phi_0,\phi_1,\phi_2\in \mathbb{R}/2\pi \mathbb{Z}$ 
be the angles chosen at the setup before Step I. We write each $\mathfrak{t}_i$ as a concatenation of oriented $\partial$-framed segments 
$\mathfrak{c}_i\mathfrak{r}_i$ so that $\mathfrak{c}_0\vee\mathfrak{c}_1\vee \mathfrak{c}_2$ 
is $(2L,\delta)$-regular. Now we interpolate 
$\mathfrak{t}_0^{(0)}\vee\mathfrak{t}_1^{(0)}\vee\mathfrak{t}_2^{(0)}=\mathfrak{c}_0\mathfrak{r}_0\vee \mathfrak{c}_1\mathfrak{r}_1\vee \mathfrak{c}_2\mathfrak{r}_2$
to a nearly regular tripod, along a sequence of tripods 
$\mathfrak{t}_0^{(k)}\vee\mathfrak{t}_1^{(k)}\vee\mathfrak{t}_2^{(k)}$ 
of height at most $h-2$ with $k=1,\cdots,N$. More precisely, we have 
$\mathfrak{t}_i^{(k)}=\mathfrak{c}_i\mathfrak{s}_i^{(k)}\mathfrak{n}_i^{(k)}$ 
for any $k=1,2,\cdots,N$ and the following hold.

\begin{enumerate}
\item[(a)] For all $i\in \mathbb{Z}/3\mathbb{Z}$ and $k>0$, $\mathfrak{c}_i,\mathfrak{s}_i^{(k)},\mathfrak{n}_i^{(k)}$ form an $(L,\delta)$-tame $\delta$-consecutive chain.
\item[(b)] For all $i\in \mathbb{Z}/3\mathbb{Z}$ and $k>0$, the carrier segment of $\bar{\mathfrak{n}}_i^{(k)}$ is the length-$DL$ geodesic segment starting at $*$ and tangent to $\vec{t}$ (so it runs around the closed geodesic $\gamma$ defined at the beginning of Section \ref{setup}, which has height at most $L$), and the phase of $\mathfrak{n}_i^{(k)}$ is $0$. The terminal framings of $\mathfrak{n}_i^{(k)}$ and $\mathfrak{n}_i^{(k+1)}$ are $(5\delta)$-close to each other.
\item[(c)]  For all $i\in \mathbb{Z}/3\mathbb{Z}$ and $k>0$, the lengths of $\mathfrak{s}_i^{(k)}$ and $\mathfrak{s}_i^{(k+1)}$ are $(5\delta)$-close to each other, the length of  $\mathfrak{s}_i^{(k)}$ is at least $\kappa^{-1}L$ and the height of $\mathfrak{s}_i^{(k)}$ is at most $h-3$.
\item[(d)] For all $i\in \mathbb{Z}/3\mathbb{Z}$, the lengths of $\mathfrak{r}_i$ and $\mathfrak{s}_i^{(1)}\mathfrak{n}_i^{(1)}$ are $(5\delta)$-close to each other, and the terminal framings of $\mathfrak{r}_i$ and $\mathfrak{s}_i^{(1)}\mathfrak{n}_i^{(1)}$ are $(5\delta)$-close to each other.
\item[(e)] For all $i\in \mathbb{Z}/3\mathbb{Z}$, the lengths of $\mathfrak{s}_i^{(N)}$ are $\delta$-close to each other, and  the terminal framings of $\mathfrak{n}_i^{(N)}$'s are $\delta$-close to each other.
\end{enumerate}

To construct these 
$\mathfrak{t}_0^{(k)}\vee \mathfrak{t}_1^{(k)}\vee \mathfrak{t}_2^{(k)}$ for $k=1,\cdots, N$, we first choose the lengths of $\mathfrak{c}_i\mathfrak{s}_i^{(k)}\mathfrak{n}_i^{(k)}$ by gradually decreasing them from $l(\mathfrak{c}_i\mathfrak{r}_i)$ to $2DL$. Then we choose the sequence of terminal framings of $\mathfrak{n}_i^{(k)}$, such that all of them are orthogonal to $\vec{t}$ and gradually converge to $\vec{n}$. Then $\mathfrak{n}_i^{(k)}$ is automatically determined by its terminal framing, and we apply Theorem \ref{connection_principle_with_height} to construct $\mathfrak{s}_i^{(k)}$. Note that the initial and terminal points of $\mathfrak{s}_i^{(k)}$ have height at most $2L$, by our choice of $\mathfrak{c}_i$ and $\mathfrak{n}_i^{(k)}$ and Lemma \ref{setuplemma} (1). Since the length of $\mathfrak{s}_i^{(k)}$ is at least $2DL-DL-2L=(D-2)L$, which is greater than $\kappa^{-1}L$, Theorem \ref{connection_principle_with_height} is applicable to construct $\mathfrak{s}_i^{(k)}$. The height bound of $\mathfrak{s}_i^{(k)}$ follows from the fact that 
$$\max{\{L+\ln{\frac{1}{\delta}}+C,\frac{1}{2}\ln{2R}+C(\ln{\frac{1}{\delta}}+1)\}}<h-3,\ \text{by}\ (\ref{R4}).$$

For each $\mathfrak{t}_0^{(k)}\vee\mathfrak{t}_1^{(k)}\vee\mathfrak{t}_2^{(k)}$, let $\phi_0^{(k)},\phi_1^{(k)},\phi_2^{(k)}\in \mathbb{R}/2\pi \mathbb{Z}$ be a triple of angles giving oriented $\partial$-framed segments $\mathfrak{t}_{i,i+1}^{(k)}(\phi_{i+2})$ and associated $\delta$-sharp elements $\hat{g}_i^{(k)}\in \pi_1(\text{SO}(M),\text{e})$ with $\hat{g}_0^{(k)}\hat{g}_1^{(k)}\hat{g}_2^{(k)}=1$, as in the setup before Step I. By our construction, we can assume that $\phi_i^{(0)}=\phi_i$ and $\phi_i^{(N)}=0$ holds for each $i\in \mathbb{Z}/3\mathbb{Z}$. 

By Lemma \ref{simplecharacterize} (1) and our construction of $\hat{g}_i^{(k)}$, each $\hat{g}_i^{(k)}$ lies in $\hat{\mathscr{C}}_{DL}$. Since $\hat{g}_i^{(k)}\in \hat{\mathscr{B}}_{\frac{3}{2}R}^{h-2}$ also holds, $\Psi_1(\hat{g}_i^{(k)})$ is defined for each $i$ and $k$.

Now Step I implies that 
$$\Psi_1(\hat{g}_0^{(N)})+\Psi_1(\hat{g}_1^{(N)})+\Psi_1(\hat{g}_2^{(N)})=0,$$ 
and Step II implies that for any $k=0,\cdots,N-1,$
$$\Psi_1(\hat{g}_0^{(k)})+\Psi_1(\hat{g}_1^{(k)})+\Psi_1(\hat{g}_2^{(k)})=\Psi_1(\hat{g}_0^{(k+1)})+\Psi_1(\hat{g}_1^{(k+1)})+\Psi_1(\hat{g}_2^{(k+1)}).$$ 
So we have  
\begin{align*}
&\Psi_1(\hat{g}_0)+\Psi_1(\hat{g}_1)+\Psi_1(\hat{g}_2)=\Psi_1(\hat{g}_0^{(0)})+\Psi_1(\hat{g}_1^{(0)})+\Psi_1(\hat{g}_2^{(0)})\\
=\ &\Psi_1(\hat{g}_0^{(N)})+\Psi_1(\hat{g}_1^{(N)})+\Psi_1(\hat{g}_2^{(N)})=0.
\end{align*}

\bigskip

{\bf Step IV.} General case. We have group elements $\hat{g}_0,\hat{g}_1,\hat{g}_2\in \hat{\mathscr{C}}_{DL}\cap \hat{\mathscr{B}}_{\frac{3}{2}R-4DL}^{h-2}$, such that $\hat{g}_0\hat{g}_1\hat{g}_2=1$, with associated tripod $\mathfrak{t}_0\vee\mathfrak{t}_1\vee \mathfrak{t}_2$ and angles $\phi_0,\phi_1,\phi_2\in \mathbb{R}/2\pi \mathbb{Z}$. Then we will prove that $$\Psi_1(\hat{g}_0)+\Psi_1(\hat{g}_1)+\Psi_1(\hat{g}_2)=0.$$

If the length of each $\mathfrak{t}_i$ is at least $2DL$, the result follows from step III. So we assume that some $\mathfrak{t}_i$ has length at most $2DL$. Recall that, by Lemma 4.11 of \cite{LM} and the fact that $\hat{g}_i\in  \hat{\mathscr{C}}_{DL}$, each $\mathfrak{t}_i$ has length at least $DL-1$.

We construct a geodesic segment $s$ of height at most $\frac{1}{2}\ln{R}$ with intial and terminal points at $*$ as the following.
\begin{enumerate}
\item[(a)] The geodesic segment $s$ is homotopic to a concatenation of geodesic segments $s_1s_2$, with bending angle at most $\delta$.
\item[(b)] For the geodesic segment $\bar{s}_2$, its initial point is $*$ and its initial direction is $\vec{t}$, with length $DL$. So $s_2$ runs around the closed geodesic $\gamma$ and its initial and terminal points have height at most $L$.
\item[(c)] The geodesic $s_1$ has length $\delta$-close to $\frac{1}{2}DL$, and its initial vector is $\delta$-close to $-\vec{t}$.
\end{enumerate}
Again we use Theorem \ref{connection_principle_with_height} to construct $s_1$. Here we use the fact that $*$ and the initial point of $s_2$ have height at most $L$, while the length of $s_1$ is $\frac{1}{2}DL>\kappa^{-1}L$. The height control of $s$ follows from Theorem \ref{connection_principle_with_height}, (\ref{L3}), (\ref{K1}) and (\ref{R4}). Moreover, by Lemma \ref{simplecharacterize} (1), each $g_i'=\bar{s}g_is$ is an element in $\mathscr{C}_{DL}$. By our construction of $s$ and the fact that $g_i\in \mathscr{B}_{\frac{3}{2}R-4DL}^{h-3}$, each $g_i'=\bar{s}g_is$ is an element in $\mathscr{B}_{\frac{3}{2}R}^{h-2}$.

Now we extend the tripod $\mathfrak{t}_0\vee\mathfrak{t}_1\vee\mathfrak{t}_2$ to  $\mathfrak{t}_0\mathfrak{s}_0\vee\mathfrak{t}_1\mathfrak{s}_1\vee\mathfrak{t}_2\mathfrak{s}_2$. Here each oriented $\partial$-framed segment $\mathfrak{s}_i$ has $s$ as its carrier segment, with phase $0$, and we take the frame of each $\mathfrak{s}_i$ so that it is $\delta$-consecutive with $\mathfrak{t}_i$. Then there are group elements $\hat{g}_0',\hat{g}_1',\hat{g}_2'\in \hat{\mathscr{C}}_{DL}\cap \hat{\mathscr{B}}_{\frac{3}{2}R}^{h-2}$ associated to $\mathfrak{t}_0\mathfrak{s}_0\vee\mathfrak{t}_1\mathfrak{s}_1\vee\mathfrak{t}_2\mathfrak{s}_2$ with $\hat{g}_0'\hat{g}_1'\hat{g}_2'=1$.

Here the length of each $\mathfrak{t}_i\mathfrak{s}_i$ is at least $(DL-1)+(DL+\frac{1}{2}DL)-1>2DL$.
By step III, we have $$\Psi_1(\hat{g}_0')+\Psi_1(\hat{g}_1')+\Psi_1(\hat{g}_2')=0.$$ Now it suffices to prove that $\Psi_1(\hat{g}_i')=\Psi_1(\hat{g}_i)$ holds for each $i$.

Suppose the $\delta$-sharp element $\hat{g}_i\in \pi_1(\text{SO}(M),\text{e})$ is associated with the oriented $\partial$-framed segment $\mathfrak{t}_{i,i+1}(\phi_{i+2})$, then $\hat{g}_i'$ is associated with $(\bar{\mathfrak{s}}_i\mathfrak{t}_{i,i+1}\mathfrak{s}_{i+1})(\phi_{i+2})$. Actually, the oriented $\partial$-framed segment $\mathfrak{s}_{i}(-\phi_{i+2})$ equals $\mathfrak{s}_{i+1}(\phi_{i+2})$ up to $(2\delta)$-closeness of framings. 

 For defining $\Psi_1(\hat{g}_{i+2}')$, we take auxiliary data $\mathfrak{a}_0\vee\mathfrak{a}_1\vee\mathfrak{a}_2$ and $\mathfrak{b}$ as in Condition \ref{defining}, with $(10^3\delta)$-closeness replaced by $\delta$-closeness. Then we can take 
 $$\mathfrak{a}_0\bar{\mathfrak{s}}_{i}(\phi_{i+2})\vee\mathfrak{a}_1\bar{\mathfrak{s}}_{i}(\phi_{i+2})\vee \mathfrak{a}_2\bar{\mathfrak{s}}_{i}(\phi_{i+2})$$ 
 and $$\mathfrak{s}_i(-\phi_{i+2})\mathfrak{b}\bar{\mathfrak{s}}_{i}(\phi_{i+2})$$ 
 as the auxiliary date for defining $\Psi_1(\hat{g}_{i+2})$. So we have 
 \begin{align*}
& \Psi_1(\hat{g}_{i+2})\\
=\ &\sum_{j=0}^2[\mathfrak{t}_{i,i+1}(\phi_{i+2})\Big(\mathfrak{s}_i(-\phi_{i+2})\mathfrak{a}_{j,j+1}\bar{\mathfrak{s}}_{i}(\phi_{i+2})\Big)]_{R,\epsilon}^{h.h'}-[\mathfrak{t}_{i,i+1}(\phi_{i+2})\Big(\mathfrak{s}_i(-\phi_{i+2})\mathfrak{b}\bar{\mathfrak{s}}_{i}(\phi_{i+2})\Big)]_{R,\epsilon}^{h.h'}\\
&-[\mathfrak{t}_{i,i+1}(\phi_{i+2})\Big(\mathfrak{s}_i(-\phi_{i+2})\bar{\mathfrak{b}}\bar{\mathfrak{s}}_{i}(\phi_{i+2})\Big)]_{R,\epsilon}^{h.h'}\\
=\ &\sum_{j=0}^2[(\bar{s}_i\mathfrak{t}_{i,i+1}\mathfrak{s}_{i+1})(\phi_{i+2})\mathfrak{a}_{j,j+1}]_{R,\epsilon}^{h.h'}-[(\bar{s}_i\mathfrak{t}_{i,i+1}\mathfrak{s}_{i+1})(\phi_{i+2})\mathfrak{b}]_{R,\epsilon}^{h.h'}-[(\bar{s}_i\mathfrak{t}_{i,i+1}\mathfrak{s}_{i+1})(\phi_{i+2})\bar{\mathfrak{b}}]_{R,\epsilon}^{h.h'}\\
=\ &\Psi_1(\hat{g}_{i+2}').
 \end{align*}
Here the second equality holds since $\mathfrak{s}_i(-\phi_{i+2})$ equals $\mathfrak{s}_{i+1}(\phi_{i+2})$, up to $(2\delta)$-closeness of framings.

So we have $$\Psi_1(\hat{g}_0)+\Psi_1(\hat{g}_1)+\Psi_1(\hat{g}_2)=\Psi_1(\hat{g}_0')+\Psi_1(\hat{g}_1')+\Psi_1(\hat{g}_2')=0,$$ and the proof is done.
\end{proof}

\subsubsection{Definition of $\Psi$}\label{definitionPsi}

Now we give the definition of $\Psi:\pi_1(\text{SO}(M),\text{e})\to \Omega_{R,\epsilon}^{h,h'}(M)$. 

\begin{proposition}\label{definitionPsilemma}
The restriction of $\Psi_1:\hat{\mathscr{C}}_{DL}\cap \hat{\mathscr{B}}_{\frac{3}{2}R}^{h-2}\to \Omega_{R,\epsilon}^{h,h'}(M)$ to the subset $\widehat{\tau_{k}(\mathscr{B}_K)}$ extends uniquely to a homomorphism $\Psi:\pi_1(\text{SO}(M),\text{e})\to \Omega_{R,\epsilon}^{h,h'}(M)$. 
\end{proposition}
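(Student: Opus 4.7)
Uniqueness is immediate: the projection $\pi_1(\text{SO}(M),\text{e})\to\pi_1(M,*)$ has central kernel $\langle\sigma\rangle\cong\mathbb{Z}/2$, and for each $g\in\tau_k(\mathscr{B}_K)$ both lifts $\hat{g}$ and $\hat{g}\sigma$ belong to $\widehat{\tau_k(\mathscr{B}_K)}$, so $\sigma=\hat{g}^{-1}(\hat{g}\sigma)$ is recovered inside this set. Combined with the fact that $\tau_k(\mathscr{B}_K)$ generates $\pi_1(M,*)$ (as a conjugate of the generating set $\mathscr{B}_K$ furnished by Lemma \ref{setuplemma}(2)), this shows that $\widehat{\tau_k(\mathscr{B}_K)}$ generates $\pi_1(\text{SO}(M),\text{e})$, which forces any homomorphic extension to be unique.

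For existence, I plan to lift the triangular presentation of $\pi_1(M,*)$ by $\tau_k(\mathscr{B}_K)$ to a presentation of $\pi_1(\text{SO}(M),\text{e})$, define $\Psi$ on the generators, and check each defining relation using Lemma \ref{relation}. Since conjugation by $k$ preserves triangularity, $\tau_k(\mathscr{B}_K)$ is itself a triangular generating set. Fixing a set-theoretic section $\tilde{s}:\tau_k(\mathscr{B}_K)\to\pi_1(\text{SO}(M),\text{e})$, the central extension yields
\[\pi_1(\text{SO}(M),\text{e})=\bigl\langle \tilde{s}(\tau_k(\mathscr{B}_K))\cup\{\sigma\}\bigm| \sigma^2,\ [\sigma,\tilde{s}(g)],\ \tilde{s}(g_{i_1})\cdots\tilde{s}(g_{i_n})\sigma^{-\epsilon(r)}\bigr\rangle,\]
where $r:g_{i_1}\cdots g_{i_n}=e$ ranges over the length-$\leq 3$ defining relators of $\pi_1(M,*)$ and $\epsilon(r)\in\{0,1\}$ records the $\sigma$-twist of the chosen lift. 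By Lemma \ref{setuplemma}(4)(c), $\widehat{\tau_k(\mathscr{B}_K)}\subset\hat{\mathscr{C}}_{DL}\cap\hat{\mathscr{B}}_{\frac{3}{2}R-4DL}^{h-3}$, which is exactly the domain where Lemma \ref{relation} applies to triples drawn from $\widehat{\tau_k(\mathscr{B}_K)}$. I then define $\Psi(\tilde{s}(g)):=\Psi_1(\tilde{s}(g))$ and $\Psi(\sigma):=\Psi_1(\hat{g}_0)-\Psi_1(\hat{g}_0\sigma)$ for some fixed $\hat{g}_0\in\widehat{\tau_k(\mathscr{B}_K)}$. For each length-3 lifted relation $\tilde{s}(g_{i_1})\tilde{s}(g_{i_2})\tilde{s}(g_{i_3})=\sigma^{\epsilon(r)}$, I rewrite it as $\tilde{s}(g_{i_1})\tilde{s}(g_{i_2})(\tilde{s}(g_{i_3})\sigma^{\epsilon(r)})=e$ (using $\sigma^2=e$) and apply Lemma \ref{relation}(2) to reduce the required identity $\sum_j\Psi_1(\tilde{s}(g_{i_j}))=\epsilon(r)\Psi(\sigma)$ to the single statement $\Psi(\sigma)=\Psi_1(\tilde{s}(g_{i_3}))-\Psi_1(\tilde{s}(g_{i_3})\sigma)$ when $\epsilon(r)=1$; length-2 relations are handled analogously using Lemma \ref{relation}(1).

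The hard part will be showing that $\phi(\hat{g}):=\Psi_1(\hat{g})-\Psi_1(\hat{g}\sigma)$ is independent of the choice of $\hat{g}\in\widehat{\tau_k(\mathscr{B}_K)}$ and lies in the $2$-torsion of $\Omega_{R,\epsilon}^{h,h'}(M)$, so that $\Psi(\sigma)$ is well-defined and the relation $\sigma^2=e$ is respected. From any single lifted triangular relation $\hat{g}_1\hat{g}_2\hat{g}_3\in\{e,\sigma\}$, I apply Lemma \ref{relation}(2) to the four $\sigma$-twists $(\hat{g}_1\sigma^{a_1},\hat{g}_2\sigma^{a_2},\hat{g}_3\sigma^{a_3})$ whose product equals $e$, and subtract the resulting equations in pairs to obtain identities $\phi(\hat{g}_i)\pm\phi(\hat{g}_j)=0$ for every pair of indices; these combine to force $\phi(\hat{g}_1)=\phi(\hat{g}_2)=\phi(\hat{g}_3)$ and $2\phi(\hat{g}_i)=0$. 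Because $\tau_k(\mathscr{B}_K)$ is a triangular generating set, every pair of generators is linked by a chain of length-$\leq 3$ relations, so $\phi$ takes a common $2$-torsion value $\Psi(\sigma)$ across all of $\widehat{\tau_k(\mathscr{B}_K)}$. The remaining relations then follow immediately: $\sigma^2=e$ becomes $2\Psi(\sigma)=0$, and $[\sigma,\tilde{s}(g)]=e$ is automatic in the abelian target. This completes the verification and produces the desired homomorphism $\Psi:\pi_1(\text{SO}(M),\text{e})\to\Omega_{R,\epsilon}^{h,h'}(M)$ extending $\Psi_1|_{\widehat{\tau_k(\mathscr{B}_K)}}$.
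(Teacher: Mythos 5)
Your route is genuinely different from the paper's. The paper simply observes that, since $\pi_1(\text{SO}(M),\text{e})\cong\pi_1(M,*)\times\mathbb{Z}/2\mathbb{Z}$, the full preimage $\widehat{\tau_{k}(\mathscr{B}_K)}$ is itself a triangular generating set of $\pi_1(\text{SO}(M),\text{e})$, defines $\Psi(c)=\Psi_1(c_1)+\cdots+\Psi_1(c_n)$ for any word $c=c_1\cdots c_n$ in these generators, and gets well-definedness directly from Lemma \ref{relation}. You instead keep the fiber generator $\sigma$ explicit, present $\pi_1(\text{SO}(M),\text{e})$ as a central extension over the triangular presentation of $\pi_1(M,*)$, and check the lifted relators. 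Much of this is sound: the uniqueness argument, the standard presentation of the extension, and the ``four $\sigma$-twists'' manipulation of Lemma \ref{relation}(2), which correctly shows that the three generators entering a common length-$3$ relation have equal, $2$-torsion values of $\phi(\hat{g})=\Psi_1(\hat{g})-\Psi_1(\hat{g}\sigma)$.

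There is, however, a genuine gap at exactly the step you flag as the hard part. You conclude that $\phi$ is constant and $2$-torsion on all of $\widehat{\tau_{k}(\mathscr{B}_K)}$ from the assertion that, because $\tau_k(\mathscr{B}_K)$ is a triangular generating set, every pair of generators is linked by a chain of length-$\le 3$ relations (and, implicitly, that every generator occurs in some genuine length-$3$ relation, which your derivation of $2\phi=0$ requires; length-$2$ relations only give $\phi(\hat{g}^{-1})=-\phi(\hat{g})$). Neither statement is a consequence of triangularity: in the free group $F(a,b)$ the symmetric set $\{a^{\pm1},b^{\pm1}\}$ is triangular, yet its only length-$\le 3$ relations are the inverse cancellations, so $a$ and $b$ lie in different components of the relation graph and no generator lies in any triangle. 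Since constancy of $\phi$ is needed not only for the relators with $\epsilon(r)=1$ and for $\sigma^2=e$, but already for $\Psi$ to agree with $\Psi_1$ on \emph{both} lifts of each generator (which is what the proposition asserts), your construction is incomplete as written. To close the gap one must use the specific structure of $\mathscr{B}_K$ rather than abstract triangularity: for instance, Lemma \ref{reducing} places every $g\in\mathscr{B}_K$ with $|g|\ge D$ in a triangle $g=h_1h_2$ with strictly shorter $h_1,h_2\in\mathscr{B}_K$, which both connects the long generators and supplies triangles through them, and one must still link and triangulate the short elements (e.g.\ by multiplying with suitable elements of intermediate length so that the products remain in $\mathscr{B}_K$) before transporting everything through $\tau_k$. (The paper's own one-line claim that $\widehat{\tau_{k}(\mathscr{B}_K)}$ is triangular leans on the same richness of relations in $\mathscr{B}_K$; but the justification you give does not establish the point your argument needs.)
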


\begin{proof}
Since $\mathscr{B}_K$ is a triangular generating set of $\pi_1(M,*)$ and $\pi_1(\text{SO}(M),\text{e})\cong \pi_1(M,*)\times \mathbb{Z}/2\mathbb{Z}$, $\widehat{\tau_{k}(\mathscr{B}_K)}$ is a triangular generating set of $\pi_1(\text{SO}(M),\text{e})$. So for each element $c\in \pi_1(\text{SO}(M),\text{e})$, we can write $c=c_1c_2\cdots c_n$ with $c_i\in \widehat{\tau_{k}(\mathscr{B}_K)}\subset \hat{\mathscr{C}}_{DL}\cap \hat{\mathscr{B}}_{\frac{3}{2}R-4DL}^{h-3}$. Then we define that 
$$\Psi(c)=\Psi_1(c_1)+\Psi_1(c_2)+\cdots+\Psi_1(c_n).$$

Suppose that we alternatively write $c=c_1'c_2'\cdots c_m'$ with $c_j'\in \widehat{\tau_{k}(\mathscr{B}_K)}$. Since $\widehat{\tau_{k}(\mathscr{B}_K)}$ is a triangular generating set, $c_1c_2\cdots c_n$ can be transformed to $c_1'c_2'\cdots c_m'$ by a finite sequence of relations in $\widehat{\tau_{k}(\mathscr{B}_K)}$ with length at most $3$. Then Lemma \ref{relation} implies that $\Psi(c)$ is well defined.

\end{proof}

\begin{remark}\label{twodefinitiondifference}
For each element 
$$c\in \hat{\mathscr{C}}_{DL}\cap \hat{\mathscr{B}}_{\frac{3}{2}R}^{h-2},$$
 it lies in the domain of two functions: 
 $\Psi_1:\hat{\mathscr{C}}_{DL}\cap \hat{\mathscr{B}}_{\frac{3}{2}R}^{h-2}\to \Omega_{R,\epsilon}^{h,h'}(M)$ and $\Psi:\pi_1(\text{SO}(M),\text{e})\to \Omega_{R,\epsilon}^{h,h'}(M)$. 
If $c\notin  \widehat{\tau_{k}(\mathscr{B}_K)}$, we do not know whether $\Psi_1(c)=\Psi(c)$ holds here. 
\end{remark}

\subsection{Verifications}\label{verifications}

Let $\Psi^{\text{ab}}:H_1(\text{SO}(M);\mathbb{Z})\to \Omega_{R,\epsilon}^{h,h'}(M)$ be the abelianization of the homomorphism $\Psi:\pi_1(\text{SO}(M);\text{e})\to \Omega_{R,\epsilon}^{h,h'}(M)$ defined in Proposition \ref{definitionPsilemma}.
In this section, we will prove that $\Phi\circ \Psi^{\text{ab}}=id:H_1(\text{SO}(M);\mathbb{Z})\to H_1(\text{SO}(M);\mathbb{Z})$ and $\Psi$ is surjective. These two results together imply that $\Phi$ is an isomorphism.

The fact that $\Phi\circ \Psi^{\text{ab}}=id$ is proved in Lemma 5.23 of \cite{LM}, so we only state the following result without giving a proof. Note that the proof of Lemma 5.23 in \cite{LM} also works for cusped hyperbolic $3$-manifolds.

\begin{proposition}\label{identity}
For any $\hat{g}\in \hat{\mathscr{C}}_{DL}\cap \hat{\mathscr{B}}_{\frac{3}{2}R}^{h-2}\subset \pi_1(\text{SO}(M),\text{e})$, we have that $\Phi\circ \Psi_1(\hat{g})=[\hat{g}]$ holds. Here $[\hat{g}]$ denotes the homology class in $H_1(\text{SO}(M);\mathbb{Z})$ represented by $\hat{g}$.
Hence, $\Phi\circ \Psi^{\text{ab}}:H_1(\text{SO}(M);\mathbb{Z})\to H_1(\text{SO}(M);\mathbb{Z})$ is the identity homomorphism.
\end{proposition}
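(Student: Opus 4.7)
The plan is to follow the strategy of Lemma 5.23 of \cite{LM} essentially verbatim, with only cosmetic modifications to accommodate the cusped setting. Since $\Phi$ is defined by sending $[\gamma]_{R,\epsilon}^{h,h'}$ to the homology class of the canonical lift $\hat{\gamma}\in\mathrm{SO}(M)$, the content of $\Phi\circ\Psi_1(\hat{g})=[\hat{g}]$ is a direct computation in $H_1(\mathrm{SO}(M);\mathbb{Z})$ of the sum
\[
[\widehat{[\mathfrak{s}_{\hat{g}}\mathfrak{a}_{01}]}]+[\widehat{[\mathfrak{s}_{\hat{g}}\mathfrak{a}_{12}]}]+[\widehat{[\mathfrak{s}_{\hat{g}}\mathfrak{a}_{20}]}]-[\widehat{[\mathfrak{s}_{\hat{g}}\mathfrak{b}]}]-[\widehat{[\mathfrak{s}_{\hat{g}}\bar{\mathfrak{b}}]}].
\]

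The first step is, for each of the five curves $\gamma=[\mathfrak{s}_{\hat{g}}\mathfrak{x}]$, to represent $[\hat{\gamma}]\in H_1(\mathrm{SO}(M);\mathbb{Z})$ by a concrete loop based at $\text{e}$. I would build this loop as the concatenation of (a) a lift of $\mathfrak{s}_{\hat{g}}$ obtained by parallel transport of $\text{e}$ (with initial framing $\vec{n}_{\hat{g}}$), (b) a lift of $\mathfrak{x}$ by parallel transport, (c) short adjustment arcs at the bending points, and (d) the $2\pi$ counterclockwise fiber rotation around $\vec{n}_*$ and an $\epsilon$-short closing path in $\mathrm{SO}(M)|_*\cong\mathrm{SO}(3)$ required by the definition of the canonical lift. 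Because the terminal frames of all $\mathfrak{a}_i$ and of $\mathfrak{b}$ are $(10^3\delta)$-close to the single normal vector $\vec{n}_{\hat{g}}$ (Condition~\ref{defining}), the bending arcs and the fiber-closure arcs for all five loops lie in a prescribed small neighborhood of $\text{e}$ and can be chosen consistently for all five curves.

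Next, I would add the five homology classes. The $\mathfrak{a}$-contribution from the three positive terms is a lift of the $1$-cycle $\bar{\mathfrak{a}}_0\mathfrak{a}_1+\bar{\mathfrak{a}}_1\mathfrak{a}_2+\bar{\mathfrak{a}}_2\mathfrak{a}_0$, in which each $\mathfrak{a}_i$ is traversed once with each orientation; this cycle bounds a singular $2$-chain (indeed a tripod-like cone to the common initial point), hence is zero in $H_1(\mathrm{SO}(M);\mathbb{Z})$. Similarly the $\mathfrak{b}$-contribution from the two negative terms is the lift of $\mathfrak{b}-\mathfrak{b}$, which is zero. What remains is $(3-2)=1$ copy of the lift of $\mathfrak{s}_{\hat{g}}$ together with one $2\pi$ fiber rotation and one $\epsilon$-short fiber closure at $\text{e}$; this is exactly a representative of $[\hat{g}]$, by Definition~\ref{sharpelement} comparing the framing rotation built into a $\delta$-sharp element with the rotation built into the canonical lift. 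The main obstacle is purely bookkeeping: one must verify that the five individual fiber closure arcs at $\text{e}$, each $\epsilon$-short, glue into a trivial loop in $\mathrm{SO}(M)|_*$, and that the five $2\pi$-rotations (three with sign $+$, two with sign $-$) leave net rotation by $2\pi$ around $\vec{n}_{\hat{g}}$, which matches the one prescribed in the definition of the canonical lift of $\mathfrak{s}_{\hat{g}}$. This is identical to the closed-manifold verification in \cite[Lem.~5.23]{LM}, which nowhere uses compactness of $M$; the only adjustment is that one must verify that every frame appearing in the construction lies over a point of $M$ (not over a missing cusp), which is immediate since by Lemma~\ref{welldefined}(1) all curves involved have height at most $h=\alpha\ln R$, and the bending arcs, fiber loops, and auxiliary segments stay within bounded tubular neighborhoods of these curves.

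The final clause of the proposition then follows formally: $\Phi\circ\Psi^{\mathrm{ab}}$ and the identity are two homomorphisms out of $H_1(\mathrm{SO}(M);\mathbb{Z})$ that agree on the image of $\widehat{\tau_k(\mathscr{B}_K)}$, a generating set (since $\mathscr{B}_K$ is a triangular generating set of $\pi_1(M,*)$ by Lemma~\ref{setuplemma}(2), and conjugation by $k$ and passing to $\pi_1(\mathrm{SO}(M),\text{e})\cong\pi_1(M,*)\times\mathbb{Z}/2\mathbb{Z}$ preserve this property). Hence they coincide.
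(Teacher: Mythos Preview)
Your proposal is correct and matches the paper's approach exactly: the paper gives no proof of its own for this proposition, simply stating that the argument of Lemma~5.23 of \cite{LM} (and the auxiliary Lemma~5.12 there) goes through unchanged in the cusped case, which is precisely what you outline. Your sketch of that argument---cancelling the tripod and $\mathfrak{b},\bar{\mathfrak{b}}$ contributions in $H_1(\mathrm{SO}(M);\mathbb{Z})$ and matching the residual framed lift of $\mathfrak{s}_{\hat g}$ with the representative of $[\hat g]$---is faithful to \cite{LM}, and your observation that compactness is nowhere used is exactly the paper's justification.
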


In the following proposition, we prove that $\Psi:\pi_1(\text{SO}(M),\text{e})\to \Omega_{R,\epsilon}^{h,h'}(M)$ is surjective. 

\begin{proposition}\label{surjective}
For any $\gamma\in {\bf \Gamma}_{R,\epsilon}^h$, the cobordism class $[\gamma]_{R,\epsilon}^{h,h'}\in \Omega_{R,\epsilon}^{h,h'}(M)$ is equal to an integral linear combination of elements in the $\Psi$-image of $\widehat{\tau_{k}(\mathscr{B}_K)}\subset \pi_1(\text{SO}(M),\text{e})$. Hence, $\Psi$ is a surjective homomorphism onto $\Omega_{R,\epsilon}^{h,h'}(M)$.
\end{proposition}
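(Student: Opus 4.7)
The plan is to show that every $(R,\epsilon)$-good curve $\gamma$ of height at most $h$ is cobordant to a $\mathbb{Z}$-linear combination of curves of the form $[\mathfrak{s}_{\hat{g}}\mathfrak{a}_{ij}]$ and $[\mathfrak{s}_{\hat{g}}\mathfrak{b}]$ (the basic terms in the formula defining $\Psi_1$) for suitable $\hat{g} \in \hat{\mathscr{C}}_{DL}\cap\hat{\mathscr{B}}_{\frac{3}{2}R-4DL}^{h-3}$, and then to promote each such $\Psi_1(\hat{g})$ to a genuine $\Psi$-value by the triangular relation.

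First, I would choose a point $p$ on $\gamma$ of minimal height (in particular $p$ lies in the thick part of $M$ once $R$ is large enough by (\ref{R5})), and split $\gamma=[\mathfrak{u}_1\mathfrak{u}_2]$ into two halves of length $\delta$-close to $R$. By Theorem \ref{connection_principle_with_height}, construct an oriented $\partial$-framed segment $\mathfrak{r}$ from $p$ to $*$ with terminal direction $\delta$-close to $-\vec{t}$, terminal framing close to $\vec{n}$, and height at most $\tfrac{1}{2}\ln R + C\ln\tfrac{1}{\delta} + C$. Then the two based bigons $\mathfrak{s}_1=\mathfrak{u}_1\mathfrak{r}$ and $\mathfrak{s}_2=\bar{\mathfrak{r}}\mathfrak{u}_2$ are oriented $\partial$-framed segments based at $*$ whose straightened cyclic concatenation is $\gamma$, each of length $\delta$-close to $R+|\mathfrak{r}|<\tfrac{3}{2}R-4DL$ and of height at most $h-3$ by the choices in (\ref{R4})-(\ref{R5}). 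By Lemma \ref{simplecharacterize}(1) and our control on the initial/terminal directions at $*$, each $\mathfrak{s}_i$ gives a $\delta$-sharp element $\hat{g}_i \in \hat{\mathscr{C}}_{DL}\cap\hat{\mathscr{B}}_{\frac{3}{2}R-4DL}^{h-3}$.

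Next, I would use Lemma \ref{split}, Lemma \ref{swap} (swapping), Lemma \ref{rotation} (rotation) and Lemma \ref{antirotation} (antirotation) in a bookkeeping argument analogous to the proof of Lemma \ref{welldefined} and Lemma \ref{relation}, to match $[\gamma]_{R,\epsilon}^{h,h'}$ with a specific combination of the five terms defining $\Psi_1(\hat{g}_1 \hat{g}_2)$ (or with a combination of $\Psi_1(\hat{g}_1)$ and $\Psi_1(\hat{g}_2)$ related via the triangular relation applied to the product $\hat{g}_1 \hat{g}_2 (\hat{g}_2\hat{g}_1)^{-1}\cdot(\hat{g}_2\hat{g}_1)$). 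Concretely, the curve $\gamma$ itself can be realized as one of the bigon terms $[\mathfrak{s}_{\hat{g}}\mathfrak{b}]$ when $\mathfrak{b}$ is chosen (via the connection principle) so that $\mathfrak{s}_{\hat{g}}\mathfrak{b}$ straightens to $\gamma$; the remaining four terms of $\Psi_1(\hat{g})$, being $(R,\epsilon)$-good curves themselves based at $*$, can be inductively reduced to $\Psi_1$-values of further $\delta$-sharp elements in the same domain. Finally, each such $\Psi_1(\hat{g})$ is rewritten as $\sum \Psi_1(c_i) = \sum \Psi(c_i) = \Psi(\hat{g})$ by factoring $\hat{g}$ through the triangular generating set $\widehat{\tau_k(\mathscr{B}_K)}$ and applying Lemma \ref{relation}(2).

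The main obstacle is the combinatorial bookkeeping in the middle step: $\Psi_1(\hat{g})$ is a sum of five $(R,\epsilon)$-good curves rather than a single one, so extracting $[\gamma]$ requires constructing several auxiliary $\delta$-sharp elements whose $\Psi_1$-values collectively cancel out the unwanted terms. All of this must be done while keeping heights inside the band $[\alpha\ln R, \beta\ln R]$ and lengths inside $\frac{3}{2}R - 4DL$, so that both $\Psi_1$ is defined and Lemma \ref{relation}(2) applies — this is where the height-controlled connection principle (Theorem \ref{connection_principle_with_height}) and the precise estimates in Lemma \ref{lengthphase} and Lemma \ref{distance} are crucial. Once surjectivity is established, combined with Proposition \ref{identity} showing $\Phi\circ\Psi^{\mathrm{ab}} = \mathrm{id}$, the homomorphism $\Phi$ is forced to be an isomorphism, completing the proof of Theorem \ref{main1}.
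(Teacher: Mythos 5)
Your outline identifies the right ingredients but leaves three genuine gaps, and the central one is exactly the step you flag as ``the main obstacle'' without resolving it. First, $\Psi_1(\hat{g})$ is a sum of five good curves, and your plan to realize $\gamma$ as the single term $[\mathfrak{s}_{\hat{g}}\mathfrak{b}]$ and then ``inductively reduce'' the other four terms does not close up: each reduction of an auxiliary curve to further $\Psi_1$-values spawns five new auxiliary curves, and you give no decreasing complexity that would terminate the induction (moreover $\mathfrak{b}$ is not free to choose — it must satisfy Condition \ref{defining}, with endpoints at $*$, prescribed directions, phase near $0$ and height at most $\frac{3}{5}\ln R$, which a ``half of $\gamma$'' will not satisfy). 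The paper avoids any induction by a pairing trick: it builds \emph{two} elements $\hat{x}_\pm$ from the two halves $\mathfrak{s}_\pm$ of $\gamma$, conjugated into $\mathscr{C}_{DL}$ by $k$ and with a frame flip on one of them, and chooses the auxiliary data so that the five terms of $\Psi_1(\hat{x}_+)$ and of $\Psi_1(\hat{x}_-)$ pair off; by the inverse of Lemma \ref{split} each pair sums to $\pm[\gamma]_{R,\epsilon}^{h,h'}$, giving $\Psi_1(\hat{x}_+)+\Psi_1(\hat{x}_-)=[\gamma]_{R,\epsilon}^{h,h'}$ in one stroke. Second, you skip the height reduction entirely: choosing the basepoint $p$ at minimal height does nothing about the rest of $\gamma$, which may have excursions of height arbitrarily close to $h=\alpha\ln R$; the loops you form would then only lie in $\hat{\mathscr{B}}^{h}_{\cdot}$, not $\hat{\mathscr{B}}^{h-3}_{\frac{3}{2}R-4DL}$, and the curves appearing in $\Psi_1$ could exceed height $h$ after straightening. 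This is why the paper first proves Lemma \ref{reduceheight}, a nontrivial multi-step construction replacing $[\gamma]$ by a combination of classes of curves of height at most $h-6$.

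Third, the final step ``factor $\hat{g}$ through $\widehat{\tau_k(\mathscr{B}_K)}$ and apply Lemma \ref{relation}(2)'' is not routine. Writing $\hat{x}=c_1\cdots c_n$ with $c_i$ in the generating set gives $\Psi(\hat{x})=\sum\Psi_1(c_i)$ by definition, but what you need is $\Psi_1(\hat{x})=\sum\Psi_1(c_i)$, and Lemma \ref{relation}(2) can only be applied when \emph{every} intermediate element in the reduction lies in $\hat{\mathscr{C}}_{DL}\cap\hat{\mathscr{B}}^{h-3}_{\frac{3}{2}R-4DL}$ (see Remark \ref{twodefinitiondifference}: $\Psi_1$ and $\Psi$ are not known to agree off the generating set). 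A generic factorization produces partial products with no control on sharpness, height, or length. The paper handles this with the set $\mathcal{A}$ (directions pinned near $\pm\vec{t}_{\text{ini}}(k)$, length $\le\frac{5}{4}R$, height $\le h-5$), Lemma \ref{containinintersection}, and the long Lemma \ref{reduce}, which produces a sequence of triangular splittings whose every term stays in $\mathcal{A}$, with quantitative control of angle drift and height at each subdivision, until all factors have length $<30L<K$. Without an argument of this kind your last sentence does not follow, so the proposal as written does not establish surjectivity.
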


The proof of Proposition \ref{surjective} is significantly more complicated than the proof of the corresponding result (Lemma 5.24) in \cite{LM}. We will first prove two techinical lemmas.

In the definition of $\Psi_1$, we have to make the height in $ \hat{\mathscr{C}}_{DL}\cap \hat{\mathscr{B}}_{\frac{3}{2}R}^{h-2}$ smaller than $h$ by some definite amount (we take $h-2$ here), so that for any $\hat{g}\in  \hat{\mathscr{C}}_{DL}\cap \hat{\mathscr{B}}_{\frac{3}{2}R}^{h-2}$, we can make sure all terms in the definition of $\Psi_1(\hat{g})$ have length at most $h$. However, once we choose this $h-2<h$, for $[\gamma]_{R,\epsilon}^{h,h'}\in \Omega_{R,\epsilon}^{h,h'}(M)$ such that $\gamma$ has cusp excursions of height very close to $h$, it is not easy to write it as a linear combination of $\Psi$-images. So we need the following lemma to decrease heights of $(R,\epsilon)$-good curves that represent $[\gamma]_{R,\epsilon}^{h,h'}$.

\begin{lemma}\label{reduceheight}
For any $\gamma \in {\bf \Gamma}_{R,\epsilon}^h$, $[\gamma]_{R,\epsilon}^{h,h'}$ equals an integer linear combination of $[\gamma_i]_{R,\epsilon}^{h,h'}$ with $\gamma_i\in {\bf \Gamma}_{R,10\delta}^{h-6}$.
\end{lemma}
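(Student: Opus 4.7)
The plan is to induct on the number $N(\gamma)$ of ``tall'' cusp excursions of $\gamma$, meaning those of height exceeding $h-6$. If $N(\gamma)=0$ then $\gamma\in\mathbf{\Gamma}_{R,\epsilon}^{h-6}\subseteq\mathbf{\Gamma}_{R,10\delta}^{h-6}$ (since $\epsilon>10\delta$), and the claim is immediate. Otherwise the plan is to use Lemma \ref{split} to peel off a curve with strictly fewer tall excursions, reducing to the base case $N(\gamma)=1$.

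For the inductive step, I pick a tall excursion $e$ of $\gamma$ with height $h_0\in(h-6,h]$. By Lemma \ref{cuspheightvslength}, $l(e)\le 2(h_0+\ln 2)=O(\log R)$, which is far less than $R$ by (\ref{R4}). I then choose thick-part points $p_1,p_2$ on $\gamma$ flanking $e$ so that the two complementary arcs of $\gamma$ between them have lengths $\delta$-close to $R$; this is possible precisely because the excursion is short. Writing $\gamma=[\mathfrak{a}\mathfrak{b}]$ with $\mathfrak{a}$ containing $e$ and $\mathfrak{b}$ the complement, Lemma \ref{split} yields
\begin{equation*}
[\gamma]_{R,\epsilon}^{h,h'}=[\mathfrak{a}\mathfrak{m}]_{R,\epsilon}^{h,h'}+[\bar{\mathfrak{m}}\mathfrak{b}]_{R,\epsilon}^{h,h'},
\end{equation*}
where $\mathfrak{m}$ is a new connection from $p_{\mathrm{ter}}(\mathfrak{a})$ to $p_{\mathrm{ini}}(\mathfrak{a})$. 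Since $p_1,p_2$ lie in the thick part, Theorem \ref{connection_principle_with_height} bounds the height of $\mathfrak{m}$ by $\tfrac{1}{2}\ln(2R)+C\ln(1/\delta)+C$, which is much smaller than $h-6$ by (\ref{R4}). By Lemma \ref{distance}, the height of each new cuff is bounded by the maximum height of its constituent segments plus $1$. Since $e$ lies entirely in $\mathfrak{a}$, the curve $[\bar{\mathfrak{m}}\mathfrak{b}]$ has strictly fewer tall excursions than $\gamma$ and is handled by the inductive hypothesis, while the other cuff $[\mathfrak{a}\mathfrak{m}]$ has a single tall excursion, reducing to the base case.

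The main obstacle is this base case $N(\gamma)=1$, since splitting alone cannot lower the height: any sub-arc containing the excursion peak still attains height $h_0$. My plan is to combine splitting with Lemma \ref{swap} applied to alternatives constructed from Theorem \ref{connection_principle_with_height}, namely a low-height alternative $\mathfrak{a}^*$ to $\mathfrak{a}$ with matching endpoints, length, phase, direction, and framing, and a similar alternative $\mathfrak{m}^*$ to $\mathfrak{m}$. A single swap yields the relation $[\mathfrak{a}\mathfrak{m}]=[\mathfrak{a}\mathfrak{m}^*]+[\mathfrak{a}^*\mathfrak{m}]-[\mathfrak{a}^*\mathfrak{m}^*]$ modulo pants boundaries, in which two summands are low-height but the third, $[\mathfrak{a}\mathfrak{m}^*]$, still retains the tall excursion. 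The crux will be showing that one further splitting or a suitable rotation move (Lemma \ref{rotation} or Lemma \ref{antirotation}) absorbs $[\mathfrak{a}\mathfrak{m}^*]$ into low-height pieces modulo pants boundaries; this should exploit that the excursion arc $e$ itself is very short (length $O(\log R)$), and that Theorem \ref{connection_principle_with_height}'s bound $h_p+\ln(1/\delta)+C$ on cusp-excursion heights at thin-part endpoints permits the construction of matching low-height caps that close off the excursion. Working out this coordinated geometric construction, with height estimates that land strictly below $h-6$ at every stage, is the main technical difficulty of the lemma.
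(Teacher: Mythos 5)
Your reduction of the many-excursion case to the one-excursion case is broadly in the spirit of the paper (the paper's Steps I--II do this by repeated splitting, halving the number of tall excursions each time rather than peeling them off one at a time), but two points need attention. First, your base case is backwards: ${\bf \Gamma}_{R,\epsilon}^{h-6}\subseteq{\bf \Gamma}_{R,10\delta}^{h-6}$ is false, since $10\delta<\epsilon$ means $(R,10\delta)$-good curves form the \emph{smaller} set; the paper fixes this by one extra application of Lemma \ref{split}, reducing the target to ${\bf \Gamma}_{R,\epsilon}^{h-7}$. Second, your induction is phrased as identities among classes $[\cdot]_{R,\epsilon}^{h,h'}$, but the intermediate cuffs such as $[\mathfrak{a}\mathfrak{m}]$ can have height slightly exceeding $h$ (by Lemma \ref{distance}/\ref{exponentialdecay} the peak can rise by a small positive amount), so these classes need not be defined; the paper avoids this by building explicit panted subsurfaces whose internal cuffs are allowed height up to $h+3\le h'$, with only the boundary curves required to stay below $h$.

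The genuine gap is the one-excursion case, which you correctly identify as the crux but do not solve, and the route you sketch cannot work as stated. Any curve containing $\mathfrak{a}$ passes through the peak of the excursion, whose height may be as large as $h$, so it can never lie in ${\bf \Gamma}^{h-6}$; the swap relation $[\mathfrak{a}\mathfrak{m}]=[\mathfrak{a}\mathfrak{m}^*]+[\mathfrak{a}^*\mathfrak{m}]-[\mathfrak{a}^*\mathfrak{m}^*]$ merely transfers the tall excursion to $[\mathfrak{a}\mathfrak{m}^*]$, and rotation/antirotation moves have the same defect, while ``low-height caps'' are impossible because any segment emanating from a point of height $h_p$ gives a curve of height at least $h_p$. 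What is needed, and what the paper's Step III supplies, is a mechanism that actually lowers the peak: split $\gamma$ at the \emph{highest point} $p$ of the excursion and at the antipodal point $q$, choosing the connecting segment $\mathfrak{t}$ from Theorem \ref{connection_principle_with_height} to leave $p$ in the direction $\vec{t}_{\mathrm{ini}}(\mathfrak{s})\times\vec{n}_{\mathrm{ini}}(\mathfrak{s})$ pointing straight down the cusp. The resulting cuffs $[\mathfrak{t}\bar{\mathfrak{s}}]$ and $[\bar{\mathfrak{t}}\bar{\mathfrak{s}}']$ then cut the right-angled corner at $p$ when straightened, and the upper-half-space computation (geodesic from $-1$ to $1$ versus geodesic from $0$ to $1$) shows their heights drop by roughly $\ln 2$, hence by at least $\tfrac12$ after errors; iterating this construction $20$ times, while tracking that the single tall excursion stays single and the long low subsegment persists, brings the height below $h-7$. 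Without this idea (or an equivalent one) the proof does not close.
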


\begin{proof}
By Lemma \ref{split} (splitting), it suffices to prove that $[\gamma]_{R,\epsilon}^{h,h'}$ equals an integer linear combination of $[\gamma_i]_{R,\epsilon}^{h,h'}$ with $\gamma_i\in {\bf \Gamma}_{R,\epsilon}^{h-7}$.

{\bf Step I.} We prove that there exists an $(R,\epsilon)$-panted subsurface $F$ of height at most $h+2$, such that the oriented boundary of $F$ consists of $\gamma$ and $\delta_i$ for $i=1,\cdots,n$. Here each $\delta_i\in {\bf \Gamma}_{R,\epsilon}^{h+2}$, and $\delta_i$ contains a sub-segment of length at least $\frac{5}{4}R$ and height at most $\frac{3}{5}\ln{R}$.

We write $\gamma$ as a concatenation of oriented $\partial$-framed segments $[\mathfrak{s}\mathfrak{s}']$ of same length and phase (close to $0$). Then by Theorem \ref{connection_principle_with_height} and the proof of Lemma \ref{split} (splitting), there exists a $\partial$-framed segment $\mathfrak{t}$ from $p_{\text{ini}}(\mathfrak{s})$ to $p_{\text{ter}}(\mathfrak{s})$ of length $\delta$-close $2R-l(\mathfrak{s})+2I(\frac{\pi}{2})$, such that its initial and terminal cusp excursions have height at most $h$ (by choosing $\vec{t}_p$ and $\vec{t}_q$ pointing down cusps), and all of its intermediate cusp excursions have height at most $\frac{1}{2}\ln{2R}+C\ln{\frac{1}{\delta}}+C<\frac{3}{5}\ln{R}-2$ (by (\ref{R4})). Moreover, we have a pair of good pants $\Pi\in {\bf \Pi}_{R,\epsilon}^{h+1}$ such that its oriented boundary consists of $\gamma,\delta_1'=[\mathfrak{t}\bar{\mathfrak{s}}], \delta_2'=[\bar{\mathfrak{t}}\bar{\mathfrak{s}}']$, with $\delta_i'\in {\bf \Gamma}_{R,\epsilon}^{h+1}$ (by Lemma \ref{distance}). By the choice of $\mathfrak{t}$, each $\delta_i'$ has a sub-segment $d_i$ (corresponding to the complement of initial and terminal cusp excursions of $\mathfrak{t}$) of length at least $R-3h$ and height at most $\frac{3}{5}\ln{R}-1$.

Then we take the middle point of $d_i$ and use it to divide $\delta_i'$ to a concatenation of two oriented $\partial$-framed segments of same length and phase. Then we apply the same argument as above again for each $i$, then we get a good pants $\Pi_i\in {\bf \Pi}_{R,\epsilon}^{h+2}$ such that its oriented boundary consists of $\delta_i',\delta_{i1}, \delta_{i2}$ with $\delta_{ij}\in {\bf \Gamma}_{R,\epsilon}^{h+2}$, and each $\delta_{ij}$ has a subsegment (corresponding to the union of half of $d_i$ and the new oriented $\partial$-framed segment) of length at least $\frac{3}{2}R-3h>\frac{5}{4}R$ and height at most $\frac{3}{5}\ln{R}$. So step I is done.

\bigskip

{\bf Step II.} Suppose that  $\gamma\in {\bf \Gamma}_{R,\epsilon}^{h+2}$ and it contains a sub-segment of length at least $\frac{5}{4}R$ and height at most $\frac{3}{5}\ln{R}$. We prove that there exists an $(R,\epsilon)$-panted subsurface $F$ of height at most $h+3$, such that the oriented boundary of $F$ consists of $\gamma$ and $\delta_i$ for $i=1,\cdots,n$. Here each $\delta_i\in {\bf \Gamma}_{R,\epsilon}^{h+3}$ has at most one cusp excursion of height at least $h-8$, and it contains a sub-segment of length at least $\frac{5}{4}R$ and height at most $\frac{21}{10}\ln{R}$.

For each cusp excursion of $\gamma$ of height at least $h-9$, it has length at least $2(h-9)>\ln{R}$, by (\ref{R4}). Since all such cusp excursions are disjoint from each other and are contained in a sub-segment $c\subset \gamma$ of length at most $(2R-\frac{5}{4}R)+2\cdot \frac{3}{5}\ln{R}+2<R$ (by (\ref{R5})), there are at most $\frac{R}{\ln{R}}$ many cusp excursions of height at least $h-9$. If the number of such cusp excursions is at most one, there is nothing to prove. 

Note that for any two distinct cusp excursions in $c$, there is a point in $c$ between them that lies in the thick part of $M$. We take a point $p\in c$ in the thick part of $M$ that divides $c$ to $c_1\cup c_2$, such that the number of cusp excursions of height at least $h-9$ in $c_1$ and $c_2$ differ by at most $1$. Let $q$ be the opposite point of $p$ on $\gamma$, then it lies in the sub-segment of height at most $\frac{3}{5}\ln{R}$, and both $p$ and $q$ have height at most $\frac{3}{5}\ln{R}$. By the construction in step I, there exists an oriented $\partial$-framed segment $\mathfrak{t}$ from $p$ to $q$ of length $\delta$-close $2R-\frac{1}{2}l(\gamma)+2I(\frac{\pi}{2})$, such that all cusp excursions of $\mathfrak{t}$ have height at most $\frac{3}{5}\ln{R}$ (since $\frac{1}{2}\ln{2R}+C\ln{\frac{1}{\delta}}+C<\frac{3}{5}\ln{R}$, by (\ref{R4})). 

Now we have a good pants $\Pi$ whose oriented boundary consists of $\gamma, \delta_1,\delta_2$ such that the following  hold.
\begin{enumerate}
\item[(a)] Each $\delta_i\in {\bf \Gamma}_{R,\epsilon}^{(h+2)+\frac{3}{R}}$ and $\Pi\in {\bf \Pi}_{R,\epsilon}^{(h+2)+\frac{3}{R}}$.
\item[(b)] Each $\delta_i$ contains a sub-segment of length at least $\frac{5}{4}R$ and height at most $\frac{3}{5}\ln{R}+1$.
\item[(c)] Each $\delta_i$ contains at most $\lceil\frac{1}{2}\frac{R}{\ln{R}}\rceil$ many cusp excursions of height at least $(h-9)+\frac{3}{R}$.
\end{enumerate} Here $\lceil x\rceil$ denotes the least integer greater or equal to $x$.

The reasons for conditions (a), (b), (c) are provided in the following. Since $\alpha\geq 4$, for any point on $\gamma$ with height at least $h-10$, its distances from both $p$ and $q$ are at least $\ln{R}$ (even if $\frac{3}{5}\ln{R}$ is replaced by $\frac{21}{10}\ln{R}$ in the following part of the proof). So each cusp excursion of $\gamma$ of height at least $h-10$ only gives rise to one cusp excursion of $\delta_1$ or $\delta_2$ with height increase by at most $\frac{3}{R}$ (by Lemma \ref{distance} and Lemma \ref{exponentialdecay}). On the other hand, each cusp excursion of $\gamma$ of height at most $h-10$ may only give rise to one cusp excursion of $\delta_1$ or $\delta_2$, with height increase by at most $1$ (Lemma \ref{distance}), which is smaller than $h-9$. So items (a) and (c) hold. Now item (b) follows from the fact that $\mathfrak{t}$ has length at least $R$, while $p$ (and $q$) has a $\frac{3}{5}\ln{R}$ (and $\frac{1}{4}R$) neighborhood in both components of $\gamma \setminus \{p,q\}$ with height at most $\frac{3}{5}\ln{R}$.

 Now we repeat this construction for $\lceil \log_2(\frac{R}{\ln{R}})\rceil+2\leq \frac{1}{\ln{2}}(\ln{R}-\ln{\ln{R}})+3$ times. In this process, all the good curves contain a sub-segment of length at least $\frac{5}{4}R$ and height at most 
 $$\frac{3}{5}\ln{R}+\frac{1}{\ln{2}}(\ln{R}-\ln{\ln{R}})+3<\frac{21}{10}\ln{R},\ \text{by}\ (\ref{R4}).$$ 
All the good curves and good pants obtained by this process have height at most 
 $$(h+2)+\frac{3}{R}(\frac{1}{\ln{2}}(\ln{R}-\ln{\ln{R}}+3))\leq h+3,\ \ \text{by\ (\ref{R5})}.$$ 
 Most importantly, for a sequence of numbers $\{A_k\}$ with $A_0=\frac{R}{\ln{R}}$ and $A_{k+1}\leq \lceil \frac{1}{2} A_k \rceil$, if $n\geq \lceil \log_2(\frac{R}{\ln{R}})\rceil+2$, we have $A_n\leq 1$ holds. So each resulting curve after the $ (\lceil \log_2(\frac{R}{\ln{R}})\rceil+2)$-th step has at most one cusp excursion of height at most  $$(h-9)+\frac{3}{R}(\frac{1}{\ln{2}}(\ln{R}-\ln{\ln{R}}+3))\leq h-8.$$ 
 
 \bigskip
 
 {\bf Step III.} Suppose that $\gamma\in {\bf \Gamma}_{R,\epsilon}^{h+3}$ has only one cusp excursion of height at least $h-8$, and contains a sub-segment of length at least $\frac{5}{4}R$ and height at most $\frac{21}{10}\ln{R}$. There exists an $(R,\epsilon)$-panted subsurface $F$ of height at most $h+3$, such that its oriented boundary consists of $\gamma$ and $\delta_i$ for $i=1,\cdots,n$, with $\delta_i\in {\bf \Gamma}_{R,\epsilon}^{h-7}$.
 
We denote the only cusp excursion of $\gamma$ of height at least $h-8$ by $c$. Let $p$ be the highest point of $c$ and let $q$ be the opposite point of $p$ on $\gamma$, so that they divide $\gamma$ to a concatenation $[\mathfrak{s}\mathfrak{s}']$ of same length and phase (close to $0$). Note that $q$ lies in the subsegment of height at most $\frac{21}{10}\ln{R}$. Here the initial and terminal points of $\mathfrak{s}$ are $p$ and $q$ respectively, and we choose the framing of $\mathfrak{s}$ such that $\vec{t}_{\text{ini}}(\mathfrak{s})\times \vec{n}_{\text{ini}}(\mathfrak{s})$ points straightly down the cusp (has angle $\frac{\pi}{2}$ with the horotorus going through $p$). Now we apply Theorem \ref{connection_principle_with_height} to construct an oriented $\partial$-framed segment $\mathfrak{t}$ such that the following hold.
\begin{enumerate}
\item[(a)] The initial and terminal points of $\mathfrak{t}$ are $p$ and $q$ respectively.
\item[(b)] The length and phase of $\mathfrak{t}$ are $\delta$-close to $2R-l(\mathfrak{s})+2I(\frac{\pi}{2})$ and $-\phi(\mathfrak{s})$ respectively.
\item[(c)] The initial and terminal directions of $\mathfrak{t}$ are $\delta$-close to $\vec{t}_{\text{ini}}(\mathfrak{s})\times \vec{n}_{\text{ini}}(\mathfrak{s})$ and $-\vec{t}_{\text{ter}}(\mathfrak{s})\times \vec{n}_{\text{ter}}(\mathfrak{s})$ respectively.
\item[(d)] The initial and terminal framings of $\mathfrak{t}$ are $\delta$-close to $\vec{n}_{\text{ini}}(\mathfrak{s})$ and $\vec{n}_{\text{ter}}(\mathfrak{s})$ respectively.
\item[(e)] The initial and terminal cusp excursions of $\mathfrak{t}$ have height at most $h+3$ and $\frac{21}{10}\ln{R}+(\ln{\frac{1}{\delta}}+C)$ respectively, and all intermediate cusp excursions of $\mathfrak{t}$ have height at most $\ln{R}$.
\end{enumerate} 
Here the length of $\mathfrak{t}$ is at least $R$, while the height of $p$ and $q$ are at most $h+3$, which are smaller than $\kappa R$ (by (\ref{R5})). The height of cusp excursions of $\mathfrak{t}$ follows from the height control in Theorem \ref{connection_principle_with_height} and the fact that $\vec{t}_{\text{ini}}(\mathfrak{t})$ almost points straightly down the cusp. 

Now we take the pants $\Pi$ given by $\gamma$ and $\mathfrak{t}$, then $\Pi\in{\bf \Pi}_{R,\epsilon}^{h+3}$ and its oriented boundary consists of $\gamma,[\mathfrak{t}\bar{\mathfrak{s}}],[\bar{\mathfrak{t}}\bar{\mathfrak{s}}']$. Moreover, both $[\mathfrak{t}\bar{\mathfrak{s}}]$ and $[\bar{\mathfrak{t}}\bar{\mathfrak{s}}']$ satisfy the following.
\begin{enumerate}
\item[(a)] $[\mathfrak{t}\bar{\mathfrak{s}}], [\bar{\mathfrak{t}}\bar{\mathfrak{s}}']\in {\bf \Gamma}_{R,\epsilon}^{h+\frac{5}{2}}$.
\item[(b)] It has only one cusp excursion of height at least $h-8+\frac{3}{R}$.
\item[(c)] It contains a sub-segment of length at least $\frac{5}{4}R-2h$ and of height at most $\frac{21}{10}\ln{R}+(\ln{\frac{1}{\delta}}+C+1)$
\end{enumerate}

Here condition (a) follows from the following simple observation in hyperbolic geometry. In the upper-half space model, let $\alpha$ be the geodesic between $-1\in \mathbb{C}$ and $1\in \mathbb{C}$, and let $\delta$ be the geodesic ray from the highest point $(0,0,1)\in \alpha$ to $0\in \mathbb{C}$ that is perpendicular to $\alpha$, and let $\beta$ be the geodesic between $0$ and $1$. Then the height of $\alpha$ is greater than the height of $\beta$ by $\ln{2}>0.693$. Our construction of $\mathfrak{t}$ fits $\delta$-closely into this model. 
So the heights of $[\mathfrak{t}\bar{\mathfrak{s}}]$ and $[\bar{\mathfrak{t}}\bar{\mathfrak{s}}']$ decrease from the height of $\gamma$ by at least $\frac{1}{2}$ by this observation and the moreover part of Lemma \ref{distance}.

 Condition (b) follows from the fact that the highest point of any other cusp excursions of $\gamma$ has distance at least $\ln{R}$ from $p$ and Lemma \ref{exponentialdecay}. Condition (c) follows from the fact that $\mathfrak{t}$ has a sub-segment of length at least $R+2I(\frac{\pi}{2})-(h+3)$ and height at most $\frac{21}{10}\ln{R}+(\ln{\frac{1}{\delta}}+C)$, while each of $\mathfrak{s}$ and $\mathfrak{s}'$ has  a sub-segment of length at least $\frac{5}{4}R-R$ and height at most $\frac{21}{10}\ln{R}$.

By induction, we repeat this process for $20$ times. In each step, we decrease the height of the curve by $\frac{1}{2}$. So by the end, we have curves of height at most $h+3-\frac{1}{2}\times 20=h-7$. In the $k$-th step ($k\leq 20$), the resulting curve has only one cusp excursion of height at least $h-8+\frac{3}{R}k<h-7$, and contains a sub-segment of length at least $\frac{5}{4}R-2kh>R$ and of height at most $\frac{21}{10}\ln{R}+k(\ln{\frac{1}{\delta}}+C+1)<\frac{5}{2}\ln{R}<\alpha-10$, by (\ref{R5}) and (\ref{R4}).

Finally, we get an $(R,\epsilon)$ panted subsurface $F$ (consists of at most $2^{20}$ pairs of pants) of height at most $h+3$ whose oriented boundary consists of $\gamma$ and $\delta_i$ with $\delta_i\in {\bf \Gamma}_{R,\epsilon}^{h-7}.$

\end{proof}

 Let $\mathcal{A}$ denote the subset of $\pi_1(M,*)$ consists of nontrivial elements $u$ such that its geodesic representative (still denoted by $u$) satisfies the following conditions.
 \begin{enumerate}
 \item The length of $u$ is at most $\frac{5}{4}R$, and the height of $u$ is at most $h-5$.
 \item The initial and terminal directions of $u$ are $(10\delta)$-close to $-\vec{t}_{\text{ini}}(k)$ and $\vec{t}_{\text{ini}}(k)$ respectively.
 \end{enumerate} 
 We use $\hat{\mathcal{A}}$ to denote the preimage of $\mathcal{A}$ in $\pi_1(\text{SO(M)},\text{e})$.
 One basic property of $\mathcal{A}$ is that $\tau_k(\mathcal{A})$ is contained in $\mathscr{C}_{DL}\cap \mathscr{B}_{\frac{3}{2}R-4DL}^{h-3}$, which is proved in the following lemma.
 
 \begin{lemma}\label{containinintersection}
 $$\tau_k(\mathcal{A})\subset \mathscr{C}_{DL}\cap \mathscr{B}_{\frac{3}{2}R-4DL}^{h-3}.$$
 \end{lemma}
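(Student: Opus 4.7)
The plan is to write $\tau_k(u) = k^{-1} \cdot u \cdot k$ as a concatenation of three based geodesic segments at $*$, and then verify in turn the three defining conditions of $\mathscr{C}_{DL} \cap \mathscr{B}_{\frac{3}{2}R-4DL}^{h-3}$: membership in $\mathscr{C}_{DL}$, length at most $\frac{3}{2}R - 4DL$, and height at most $h-3$.

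For membership in $\mathscr{C}_{DL}$, I will apply Lemma \ref{simplecharacterize}(2) with $\gamma_1 = k^{-1}$, $\gamma_2 = u$, $\gamma_3 = k$. Since $\vec{t}_{\text{ter}}(k^{-1}) = -\vec{t}_{\text{ini}}(k)$, the bending angle at the first $*$ equals the angle between $-\vec{t}_{\text{ini}}(k)$ and $\vec{t}_{\text{ini}}(u)$, which is at most $10\delta$ by the defining condition (2) of $\mathcal{A}$. A symmetric computation shows that the bending at the second $*$ is at most $10\delta$ as well. Both bending angles are $\leq 20\delta$, so Lemma \ref{simplecharacterize}(2) yields $\tau_k(u) \in \mathscr{C}_{DL}$.

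For the length bound, the piecewise geodesic $k^{-1} \cdot u \cdot k$ is a representative of $\tau_k(u)$, so $|\tau_k(u)| \leq 2|k| + l(u) \leq 2|k| + \frac{5}{4}R$. Applying $|k| < \frac{R}{10} - 2DL$ from Lemma \ref{setuplemma}(4)(b), we get $|\tau_k(u)| < \frac{29R}{20} - 4DL < \frac{3R}{2} - 4DL$.

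For the height bound, I lift to the universal cover and use the thin-triangle estimate twice. Fix a lift $\tilde * \in \mathbb{H}^3$ of $*$ and lift the concatenation $k^{-1} \cdot u \cdot k$ to a piecewise geodesic through successive vertices $\tilde *_0 = \tilde *$, $\tilde *_1$, $\tilde *_2$, $\tilde *_3 = \tau_k(u)\cdot \tilde *$. The based geodesic representative of $\tau_k(u)$ lifts to the single geodesic segment $\tilde *_0 \tilde *_3$. Applying Lemma \ref{hyperbolicconstant} to the triangle $\tilde *_0 \tilde *_1 \tilde *_2$ shows that $\tilde *_0 \tilde *_2$ lies in the $\ln(\sqrt{2}+1)$-neighborhood of $\tilde *_0 \tilde *_1 \cup \tilde *_1 \tilde *_2$, and applying it again to the triangle $\tilde *_0 \tilde *_2 \tilde *_3$ shows that $\tilde *_0 \tilde *_3$ lies in the $2\ln(\sqrt{2}+1)$-neighborhood of $\tilde *_0 \tilde *_1 \cup \tilde *_1 \tilde *_2 \cup \tilde *_2 \tilde *_3$. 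Projecting back to $M$, the geodesic representative of $\tau_k(u)$ lies within distance $2\ln(\sqrt{2}+1) < 2$ of $k^{-1} \cup u \cup k$. Since $u$ has height at most $h-5$ (by the definition of $\mathcal{A}$) and $k$ has height at most $h-5$ (by Lemma \ref{setuplemma}(4)(b)), the union has height at most $h-5$, and so the geodesic representative of $\tau_k(u)$ has height at most $(h-5) + 2\ln(\sqrt{2}+1) < h-3$.

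The main subtlety is the height estimate: unlike the cycle case handled by Lemma \ref{distance}, no chain version is stated in the paper, so we rely on repeated application of Lemma \ref{hyperbolicconstant}. Fortunately $2\ln(\sqrt{2}+1) < 2$ is small enough that the two-unit slack built into the definition of $\mathcal{A}$ (height $\leq h-5$ rather than $h-3$) absorbs it with room to spare.
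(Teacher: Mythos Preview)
Your proof is correct and follows essentially the same strategy as the paper: verify membership in $\mathscr{C}_{DL}$ via Lemma~\ref{simplecharacterize}(2), bound the length using Lemma~\ref{setuplemma}(4)(b), and bound the height by comparing the geodesic representative of $\tau_k(u)$ to the piecewise path $k^{-1}\cdot u\cdot k$.

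The only noteworthy difference is in the height step. The paper simply invokes Lemma~\ref{distance}, but as you observe, that lemma is stated only for \emph{cycles} of segments, whereas here we have a chain. Your workaround of applying Lemma~\ref{hyperbolicconstant} twice (giving a $2\ln(\sqrt{2}+1)<2$ neighborhood) is a cleaner justification: it uses only the hyperbolicity constant and avoids any tameness hypotheses on the bending angles or segment lengths. The paper's citation of Lemma~\ref{distance} is presumably shorthand for the analogous chain estimate, but your argument makes this explicit and is arguably more self-contained.
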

 
 \begin{proof}
 For any $u\in \mathcal{A}$, we prove that $\tau_k(u)=k^{-1}uk\in \mathscr{C}_{DL}\cap\mathscr{B}_{\frac{3}{2}R-4DL}^{h-3}.$
 
 By Lemma \ref{simplecharacterize} (2), we have $\tau_{k}(u)=k^{-1}uk\in \mathscr{C}_{DL}$. By Lemma \ref{setuplemma} (4), we have $|k^{-1}uk|\leq 2|k|+|u|\leq \frac{5}{4}R+2|k|<\frac{3}{2}R-4DL$, so $\tau_k(u)\in \mathscr{B}_{\frac{3}{2}R-4DL}$ holds. Since the heights of $k$ and $u$ are at most $h-5$, by Lemma \ref{setuplemma} (4) (b). The height of $\tau_k(u)$ is at most $h-3$, by Lemma \ref{distance}. So
 $\tau_k(u)\in \mathscr{B}_{\frac{3}{2}R-4DL}^{h-3} $ and the proof is done.
 \end{proof}

In the following lemma, we prove that each element in $\mathcal{A}$ can be written as a product of elements in $\mathscr{B}_{30L}$, via a sequence of nice triangular relations.

\begin{lemma}\label{reduce}
For any element $u \in \mathcal{A} \subset \pi_1(M,*)$, such that the length of $u$ is at most $\frac{6}{5}R$, the height of $u$ is at most $h-5$ and
 the initial and terminal directions of $u$ are $(2\delta)$-close to $-\vec{t}_{\text{ini}}(k)$ and $\vec{t}_{\text{ini}}(k)$ respectively. Then there is a finite sequence $(S_i)_{i=1}^N$, such that each $S_i$ is a sequance $(g_{ij})_{j=1}^i$ of elements in $\pi_1(M,*)$ with length $i$ and the following conditions hold.
 \begin{enumerate}
 \item We have $S_1=(g_{11})=(u)$.
 \item For each $i=1,\cdots,N$ and $j=1,\cdots,i$, $g_{ij}\in \mathcal{A}$ holds.
 \item For each $i=1,\cdots,N-1$, there is an $m_i$ between $1$ and $i$, such that $g_{ij}=g_{i+1,j}$ for each $j<m_i$, $g_{ij}=g_{i+1,j+1}$ for each $j>m_i$, and $g_{i,m_i}=g_{i+1,m_i}g_{i+1,m_{i}+1}$ hold.  
 \item For each $g_{Nj}$ with $j=1,\cdots,N$ in the last sequence, $|g_{Nj}|<30L$ holds.
 \end{enumerate}
 \end{lemma}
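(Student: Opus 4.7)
The plan is to construct the sequences $(S_i)_{i=1}^N$ inductively, at each step performing a single split: select some element $g_{i,j}$ in $S_i$ of length at least $30L$, write $g_{i,j}=g_{i+1,j}\cdot g_{i+1,j+1}$ as a product of two elements of $\mathcal{A}$ of strictly shorter length, and let $S_{i+1}$ be obtained from $S_i$ by this substitution. Iterate until every element has length less than $30L$.

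Given $g=g_{i,j}\in \mathcal{A}$ with $|g|\geq 30L$, I would parameterize its geodesic representative by arc length. By the hypothesis $\mathrm{height}(g)\leq h-5$ and Lemma~\ref{cuspheightvslength}, each cusp excursion of $g$ has length at most $2(h-5)+2\ln 2$, so the thick-part points of $g$ are separated in arc length by at most this quantity. Select a point $p=g(s)$ in the thick part of $M$ at an arc length chosen to ensure both resulting pieces have strictly smaller length: when $|g|$ is much larger than $h$, take $s$ near $|g|/2$ (adjusted by at most $h$ to hit the thick part); when $|g|$ is close to $30L$, take $s$ near one end so that the shorter piece has length below $30L$ in a single step while the other piece strictly decreases. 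Apply Theorem~\ref{connection_principle_with_height} to build an oriented $\partial$-framed segment $\alpha$ from $*$ to $p$ with initial direction at $*$ being $\delta$-close to $-\vec{t}_{\text{ini}}(k)$, terminal direction at $p$ being $\delta$-close to the tangent of $g$ at $p$, framings arranged so that both concatenations $g_1'\bar\alpha$ and $\alpha g_2'$ have bending angles at $p$ of size at most $\delta$, and length chosen near the minimum allowed by the theorem (hence at most $L$, by (\ref{L2})--(\ref{L3})). Set $g_1:=g_1'\bar\alpha$ and $g_2:=\alpha g_2'$ as elements of $\pi_1(M,*)$, so that $g_1 g_2 = g$ by construction.

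To verify $g_1, g_2\in\mathcal{A}$: the length bounds follow from Lemma~\ref{lengthphase}, giving $|g_i|\leq |g_i'|+|\alpha|+\delta\leq \frac{5}{4}R$ for both pieces; the initial and terminal tangent directions of the straightened geodesic representatives at $*$ coincide (up to Lemma~\ref{anglechange} corrections) with those of $g_1'$ and $\bar\alpha$ at their respective ends, hence remain $(10\delta)$-close to $\mp\vec{t}_{\text{ini}}(k)$; and the heights are controlled by applying Lemma~\ref{distance} (in the universal cover) together with the height control on $\alpha$ furnished by Theorem~\ref{connection_principle_with_height}(3)--(4), which using that $*$ and $p$ lie in the thick part yields $\mathrm{height}(\alpha)\leq \tfrac{1}{2}\ln|\alpha|+C\ln(1/\delta)+C$, a quantity much smaller than $h$ by (\ref{R4}), so the straightened heights stay within $h-5$.

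The main obstacle is the termination argument, which is delicate in the regime where $h\gg L$: a straightforward midpoint split need not decrease the maximum length when $|g|\in [30L,60L]$ is nearly covered by a single long cusp excursion, and the asymmetric strategy above is needed to make progress. Combining the two cases, one checks that every split either (a) produces at least one piece of length $<30L$ (strictly decreasing the count of ``long'' pieces) or (b) when $|g|$ is large enough, produces two pieces each of length at most $|g|/2+h+|\alpha|<|g|$. In the lexicographic order on the pair (number of pieces of length $\geq 30L$, maximum such length), each split strictly decreases, so the procedure terminates in finitely many steps, producing the required sequence $(S_i)_{i=1}^N$.
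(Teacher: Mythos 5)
There is a genuine gap, and it sits exactly at the point you flag as "the main obstacle." Your procedure only ever splits at points of $g$ lying in the thick part, and your termination argument assumes that a piece of length in $[30L,60L]$ can either be halved or have a sub-$30L$ piece peeled off while the remainder strictly shortens. But by (\ref{R4}) we have $R>e^{4L}$, so $h=\alpha\ln R\geq 4\ln R>16L$, and by Lemma \ref{cuspheightvslength} a single cusp excursion of a curve of height up to $h-5$ can have length up to $2(h-5)+2\ln 2\gg 30L$. Hence a piece $g$ with $30L\leq |g|$ may consist essentially of one excursion, with thick-part points only within bounded distance of its endpoints. In that situation your ``asymmetric'' split at a thick point near one end produces one short piece and one piece $[\alpha g_2']$ whose length is about $|\alpha|+|g_2'|$, i.e.\ at least as long as (indeed typically longer than) $g$ itself, and whose single high excursion persists (Lemmas \ref{exponentialdecay}, \ref{distance}); the number of pieces of length $\geq 30L$ does not drop and the maximal such length does not decrease, so the lexicographic measure does not decrease and the procedure never terminates on such pieces. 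This is precisely why the paper's proof has a separate first phase: for high curves one splits not at thick points but at the \emph{highest} points of the cusp excursions of height close to the current height $h_u$, connecting each such point back to $*$ by a segment of length about $\kappa^{-1}h_u$ (legal for Theorem \ref{connection_principle_with_height} because $h_u<\kappa t$) whose initial direction points straight down the cusp; this forces every resulting piece to have height smaller by a definite amount ($\geq 2/5$ per round), and only after the height is driven below $L+2$ does one cut lengths down to $<30L$ by thick-point splits spaced $10L$--$20L$ apart. Your proposal contains no mechanism that reduces height, and without one the length-reduction scheme cannot be completed.

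Two further points, secondary but real. First, your framing requirement is self-contradictory as written: if the bending angle of $g_1'\bar{\alpha}$ at $p$ is at most $\delta$ then $\vec{t}_{\text{ter}}(\alpha)$ is nearly $-\vec{t}_g(p)$, which forces the bending angle of $\alpha g_2'$ at $p$ to be near $\pi$, not at most $\delta$; the workable choice (as in the paper) is to have $\alpha$ meet $g$ nearly perpendicularly, giving bending angles near $\pi/2$ at both concatenations, which is what makes Lemma \ref{anglechange} applicable. Second, membership in $\mathcal{A}$ must survive \emph{all} splits, and you give no accumulation estimate: each straightening perturbs the end directions by roughly $20e^{-d}$ with $d$ the distance to the new corner, and in a halving scheme a single piece's ancestry has on the order of $\ln R$ splits, so the crude bound $\sim e^{-L}\ln R$ (similarly for heights) is not uniform in $R$, since $R$ is chosen after $L$ and $\delta$ with no upper bound. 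The paper avoids this because in its first phase the per-round angle increments are exponentially small in the current height, which decreases linearly, so they sum to a convergent series $<\delta$, and its second phase is applied only once. These bookkeeping issues could likely be repaired by a more careful splitting order, but the missing height-reduction idea cannot.
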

 
 \begin{proof}
 We prove this lemma in two steps.
 
{\bf Step I.} Suppose that the height of $u$ is at least $L+2$. We prove that there exists a sequence of sequences in $\pi_1(M,*)$ as in this lemma that satisfies conditions (1), (2), (3) and the following (4)'.
\begin{enumerate}[start=4, label={(\arabic*)'}]
\item For each $g_{Nj}$ with $j=1,\cdots,N$ in the last sequence, its length is at most $\frac{6}{5}R$ and its height is at most $L+2$. Moreover, the initial and terminal directions of $g_{Nj}$ are $(5\delta)$-close to $-\vec{t}_{\text{ini}}(k)$ and $\vec{t}_{\text{ini}}(k)$ respectively.
\end{enumerate}

Let $h_u$ be the height of $u$, then $h_u\in (L+2,h-5)$. We take all cusp excursions of $u$ with height at least $h_u-2$, and denote them by $c_1,c_2,\cdots,c_k$ (by following the linear order on $u$). Let $p_i$ be the highest point of $c_i$. For each $i=1,\cdots,k$, let $u_i$ be the sub-segment of $u$ from the initial point $*$ to $p_i$. For each $i=1,\cdots,k+1$, let $v_i$ be the sub-segment of $u$ from $p_{i-1}$ to $p_i$, with $p_0=p_{k+1}=*$. Then for each $i=1,\cdots,k$, $u_iv_{i+1}=u_{i+1}$ and $u_iv_{i+1}\cdots v_{k+1}=u$ holds.

For each $i$, we apply Thoerem \ref{connection_principle_with_height} to construct a geodesic segment $\gamma_i$ such that the following hold.
\begin{enumerate}
\item[(a)] The initial and terminal points of $\gamma_i$ are $p_i$ and $*$ respectively.

\item[(b)] The initial direction of $\gamma_i$ is $\delta$-close to the tangent vector at $p_i$ that points straightly down the cusp, and the terminal direction of $\gamma_i$ is $\delta$-close to $\vec{t}_{\text{ini}}(k)$.
\item[(c)] The length of $\gamma_i$ is $\delta$-close to $\kappa^{-1}h_u<\frac{1}{40}R$ (by (\ref{R5})), and the height of all intermediate cusp excursions of $\gamma_i$ are at most $\frac{1}{2}\ln{(\kappa^{-1}h_u})+C\ln{\frac{1}{\delta}}+C<h_u-2$ (by (\ref{L3})).
\end{enumerate}

By using the relation $u_i\gamma_i=(u_{i-1}\gamma_{i-1})(\gamma_{i-1}^{-1}v_i\gamma_i)$, we get a sequence of sequences in $\pi_1(M,*)$:
\begin{align*}
& S_1=(u),\ S_2=(u_k\gamma_k, \gamma_k^{-1}v_{k+1}),\ S_3=(u_{k-1}\gamma_{k-1},\gamma_{k-1}^{-1}v_k\gamma_k,\gamma_k^{-1}v_{k+1}),\\
\cdots,\ & S_{k+1}=(u_1\gamma_1,\gamma_1^{-1}v_2\gamma_2,\gamma_2^{-1}v_3\gamma_3, \cdots,\gamma_{k-1}^{-1}v_k\gamma_k,\gamma_k^{-1}v_{k+1}).
\end{align*}
This sequence obviously satisfies (1) and (3), and we will check that (2) holds. The lengths of $u_i\gamma_i$ and $\gamma_{i-1}^{-1}v_i\gamma_i$ are bounded above by 
$$|u|+2\frac{1}{40}R<\frac{6}{5}R+\frac{1}{20}R=\frac{5}{4}R.$$
Since the length of each $u_i,v_i,\gamma_i$ are at least  $h_u-2$, by Lemma \ref{anglechange}, the initial and terminal directions of $u_i\gamma_i, \gamma_{i-1}^{-1}v_i\gamma_i, \gamma_k^{-1}v_{k+1}$ are $(40e^{-\frac{h_u-2}{2}})$-close to the initial direction of its initial segment and the terminal direction of its terminal segment, respectively. So these initial and terminal directions are $(2\delta+40e^{-\frac{h_u-2}{2}})$-close to $-\vec{t}_{\text{ini}}(k)$ and $\vec{t}_{\text{ini}}(k)$ respectively. Moreover, since each $u_1,v_i,\gamma_i$ has no intermediate cusp excursion of height at least $h_u-2$ and the initial direction of each $\gamma_i$ straightly points down the cusp, by the argument in Step III of the proof of Lemma \ref{reduceheight}, the heights of $u_1\gamma_1$, $\gamma_{i-1}^{-1}v_i\gamma_i$, $\gamma_k^{-1}v_{k+1}$ are bounded above by $h_u-\frac{1}{2}$.

If some term $c$ in the last sequence $S_{k+1}$ has length greater than $\frac{6}{5}R$ (there is at most one such term), we do the following construction. Take a point $p$ in $c$ that lies in the thick part of $M$, such that its distance from the middle point of $c$ is at most $h$ (since the height of $c$ is bounded by $h_u<h-5$ and Lemma \ref{cuspheightvslength}). Then $p$ divides $c$ to a concatenation $c_1c_2$ such that each $c_i$ has length at most $\frac{5}{8}R+h<\frac{3}{4}R$ and at least $\frac{3}{5}R-h>\frac{1}{2}R$. Now we apply Theorem \ref{connection_principle_with_height} to construct a geodesic segment $c'$ from $p$ to $*$ such that the following hold.
\begin{enumerate}
\item[(a)] The length of $c'$ is $\delta$-close to $L$, and the height of $c'$ is at most $L$.
\item[(b)] The initial direction of $c'$ is $\delta$-close to be perpendicular to $c$, and the terminal direction of $c'$ is $\delta$-close to $\vec{t}_{\text{ini}}(k)$.
\end{enumerate}

Then we take the concatenations $c_1c'$ and $c'^{-1}c_2$. Their lengths are bounded above by $\frac{3}{4}R+L<R$. Their heights are bounded above by 
$$\max\{h_u-\frac{1}{2}+3e^{-(h_u-\frac{3}{2})},L+1\}<h_u-\frac{2}{5},$$
since any point on $c$ with height at least $h_u-\frac{3}{2}$ has distance at least $h_u-\frac{3}{2}$ from $p$. Moreover, by Lemma \ref{anglechange}, the initial direction of $c_1c'$ is $(20e^{-\frac{R}{2}})$-close to the initial direction of $c_1$, and the terminal direction of $c_1c'$ is $(20e^{-L})$-close to the terminal direction of $c'$. So the initial and terminal directions of $c_1c'$ are $(2\delta+40e^{-\frac{h_u-2}{2}}+20e^{-\frac{R}{2}})$-close to $-\vec{t}_{\text{ini}}(k)$ and $\vec{t}_{\text{ini}}(k)$ respectively, and the same hold for $c'c_2$. Here we use the fact that $\delta+20e^{-L}<2\delta$, by (\ref{L2}).

After doing this last step, we get a sequence of sequences in $\pi_1(M,*)$ satisfying (1), (2), (3) and the following (4)''.
\begin{enumerate}[start=4, label={(\arabic*)''}]
\item For each $g_{Nj}$ with $j=1,\cdots,N$ in the last sequence, its length is at most $\frac{6}{5}R$, its height is at most $h_u-\frac{2}{5}$, and its initial and terminal directions  are $(2\delta+40e^{-\frac{h_u-2}{2}}+20e^{-\frac{R}{2}})$-close to $-\vec{t}_{\text{ini}}(k)$ and $\vec{t}_{\text{ini}}(k)$ respectively.
\end{enumerate} 

We will call the term $(2\delta+40e^{-\frac{h_u-2}{2}}+20e^{-\frac{R}{2}})$ the ``angle difference'' after this process, and call the term $(40e^{-\frac{h_u-2}{2}}+20e^{-\frac{R}{2}})$ the ``increase of angle difference''. By this process, we have successfully decreased the height of $u$ by at least $\frac{2}{5}$, with the angle difference increased by $40e^{-\frac{h_u-2}{2}}+20e^{-\frac{R}{2}}$. Now we repeat applying this process to each $g_{Nj}$ in the last sequence we obtained above, and we inductively do this process until all the elements in the last sequence have height at most $L+2$. Since the height decreases by at least $\frac{2}{5}$ in each step, we apply this process for at most $\lceil(h_u-(L+2))(\frac{2}{5})\rceil\leq \lceil\frac{5h}{2}\rceil$ times.

It remains to bound the angle difference for the elements in the last sequence obtained above. In the last step, the height decreases from some number greater than $L+2$ to some number smaller than $L+2$, so the angle difference increases by at most 
$$40e^{-\frac{(L+2)-2}{2}}+20e^{-\frac{R}{2}}=40e^{-\frac{L}{2}}+20e^{-\frac{R}{2}}$$ 
after this step. Then in the previous step, since we started with height at least $L+2+\frac{2}{5}$, the angle difference increases by at most 
$$40e^{-\frac{L}{2}-\frac{1}{5}}+20e^{-\frac{R}{2}}.$$ By taking sum, the angle difference increases from $2\delta$ by at most
\begin{align*}
&\sum_{k=0}^{\infty}40e^{-\frac{L}{2}-\frac{1}{5}k}+ \lceil\frac{5h}{2}\rceil\cdot 20e^{-\frac{R}{2}}\leq \frac{40e^{-\frac{L}{2}}}{1-e^{-\frac{1}{5}}}+50\alpha e^{-\frac{R}{2}}\ln{R}+20e^{-\frac{R}{2}}<3\delta,
\end{align*}
by (\ref{L2}) and (\ref{R4}). So the last sequence we obtained satisfies condition (4)'.

\bigskip
{\bf Step II.} By step I, we can assume that $u\in \mathcal{A}$ has length at most $\frac{6}{5}R$ and height at most $L+2$, with initial and terminal directions $(5\delta)$-close to $-\vec{t}_{\text{ini}}(k)$ and $\vec{t}_{ini}(k)$ respectively.

Then we take points $q_0=*,q_1,\cdots,q_{n-1},q_n=*$ on $u$ that follow the linear order on $u$, such that the distance between $q_i$ and $q_{i+1}$ on $u$ lies in $[10L,20L]$. This can be done by taking $q_i$ inductively such that its distance from $q_{i-1}$ is $20L$. If the last point $q_{n-1}$ is too close to the terminal point $*$ of $u$, we replace it by the average of $q_{n-2}$ and $*$. Then since the height of $u$ is at most $L+2$, for each $q_i$, by Lemma \ref{cuspheightvslength}, there is a point $p_i$ on $u$ that lies in the thick part of $M$ and has distance at most $L+2+\ln{2}<2L$ from $q_i$. So we obtain a sequence of points  $p_0=*,p_1,\cdots,p_{n-1},p_n=*$ on $u$ such that each $p_i$ lies in the thick part of $M$ and the distance between $p_i$ and $p_{i+1}$ lies in $(6L,24L)$. 

We take similar notations as in Step I. For each $i=1,\cdots,n-1$, let $u_i$ be the sub-segment of $u$ from $p_0$ to $p_i$. For each $i=1,\cdots,n$, let $v_i$ be the sub-segment of $u$ from $p_{i-1}$ to $p_i$. Then for each $i=1,\cdots,k$, $u_iv_{i+1}=u_{i+1}$ and $u_iv_{i+1}\cdots v_{n}=u$ holds. So we have $|u_1|<24L$ and $|v_i|<24L$ for all $i$. 

For each $i$, we apply Thoerem \ref{connection_principle_with_height} to construct a geodesic $\gamma_i$ from $p_i$ to $*$ such that the following hold.
\begin{enumerate}
\item[(a)] The length of $\gamma_i$ is $\delta$-close to $L$.
\item[(b)] The initial direction of $\gamma_i$ is $\delta$-close to be perpendicular to $u$ and the terminal direction of $\gamma_i$ is $\delta$-close to $\vec{t}_{\text{ini}}(k)$.
\end{enumerate}

Then we have a sequence of sequences in $\pi_1(M,*)$:
\begin{align*}
& S_1=(u),\ S_2=(u_k\gamma_k, \gamma_k^{-1}v_{k+1}),\ S_3=(u_{k-1}\gamma_{k-1},\gamma_{k-1}^{-1}v_k\gamma_k,\gamma_k^{-1}v_{k+1}),\\
\cdots,\ & S_{k+1}=(u_1\gamma_1,\gamma_1^{-1}v_2\gamma_2,\gamma_2^{-1}v_3\gamma_3, \cdots,\gamma_{k-1}^{-1}v_k\gamma_k,\gamma_k^{-1}v_{k+1}).
\end{align*}
It obviously satisfies conditions (1) and (3) of the lemma. We can check that it satisfies condition (2) as the following. The length of each element in these sequences is at most $\frac{6}{5}R+2L<\frac{5}{4}R$. Since the heights of $u_i,v_i,\gamma_i$ are at most $L+2$, the elements in $S_{k+1}$ have height at most $L+3<h-5$, by (\ref{R4}). The angle difference is at most $5\delta+40e^{-L}<10\delta$, by (\ref{L2}) and Lemma \ref{anglechange}.

Finally, condition (4) follows from the simple estimate $$|u_1\gamma_1|\leq |u_1|+|\gamma_1|<24L+L<30L,$$ 
$$|\gamma_iv_{i+1}\gamma_{i+1}|\leq |\gamma_i|+|v_{i+1}|+|\gamma_{i+1}|<24L+L+L<30L.$$
 \end{proof}

Now we are ready to prove Proposition \ref{surjective}.
 
 \begin{proof}[Proof of Proposition \ref{surjective}]
 The ``hence'' part follows directly from the main statement, since $\Psi$ is a homomorphism.
 
 For any $[\gamma]_{R,\epsilon}^{h,h'}\in \Omega_{R,\epsilon}^{h,h'}(M)$, we need to prove that it is an integer linear combination of elements in $\Psi(\widehat{\tau_{k}(\mathscr{B}_K)})$. By Lemma \ref{reduceheight}, we can assume that $\gamma\in {\bf \Gamma}_{R,10\delta}^{h-6}$ holds.

 {\bf Step I.} We lift $k\in \pi_1(M,*)$ in Lemma \ref{setuplemma} (3) to an element $\hat{k}\in \pi_1(\text{SO}(M),\text{e})$.
Then we prove that there exist $\hat{x}_{\pm}=\hat{k}^{-1}\hat{y}_{\pm}\hat{k}\in \tau_{\hat{k}}(\hat{\mathcal{A}})$ such that the following hold.
 \begin{enumerate}
\item[(a)] $[\gamma]_{R,\epsilon}^{h,h'}=\Psi_1(\hat{x}_+)+\Psi_1(\hat{x}_-).$
\item[(b)] The length of $\hat{y}_{\pm}$ is at most $\frac{6}{5}R$, and the height of $\hat{y}_{\pm}$ is at most $h-5$.
\item[(c)] The initial and terminal directions of $\hat{y}_{\pm}$ are $(2\delta)$-close to $-\vec{t}_{\text{ini}}(k)$ and $\vec{t}_{\text{ini}}(k)$ respectively.
\end{enumerate}
 
 Since $\gamma\in{\bf \Gamma}_{R,10\delta}^{h-6}$, we bisect $\gamma$ into $[\mathfrak{s}_-\mathfrak{s}_+]$ of two oriented $\partial$-framed segments of same length and phase (close to $0$), with height at most $h-6$, such that the two frames at each intersection point of $\mathfrak{s}_-$ and $\mathfrak{s}_+$ coincide with each other. Moreover, we apply frame rotation as in the proof of Lemma \ref{split} (splitting), such that if the terminal point of $\mathfrak{s}_{\pm}$ lies in a cusp of $M$, then $\vec{t}_{\text{ter}}(\mathfrak{s}_{\pm})\times \vec{n}_{\text{ter}}(\mathfrak{s}_{\pm})$ does not point up the cusp. We take an oriented $\partial$-framed segment $\mathfrak{k}$ associated to the $\delta$-sharp element $\hat{k}$ (since any element in $\hat{\mathscr{C}}_{DL}$ is a $\delta$-sharp element).
 
 By Theorem \ref{connection_principle_with_height}, there are oriented $\partial$-framed segments $\mathfrak{u}_{\pm}$ from $p_{\text{ter}}(\mathfrak{s}_{\pm})$ to $*$ such that the following hold:
 \begin{enumerate}
 \item[(a)] The length and phase of $\mathfrak{u}_{\pm}$ are $\delta$-close to $\frac{1}{20}R$ and $0$ respectively, and the height of $\mathfrak{u}_{\pm}$ is at most $h-6$.
\item[(b)] The initial direction of $\mathfrak{u}_{\pm}$ is $\delta$-close to $\vec{t}_{\text{ter}}(\mathfrak{s}_{\pm})\times \vec{n}_{\text{ter}}(\mathfrak{s}_{\pm})$ and the initial framing of $\mathfrak{u}_{\pm}$ is $\delta$-close to $\vec{n}_{\text{ter}}(\mathfrak{s}_{\pm})$.
\item[(c)] The terminal direction of $\mathfrak{u}_{\pm}$ is $\delta$-close to $\vec{t}_{\text{ini}}(\mathfrak{k})$, and the terminal framing of $\mathfrak{u}_{\pm}$ is $\delta$-close to $\vec{n}_{\text{ini}}(\mathfrak{k})$.
 \end{enumerate}
 Here the height of initial and terminal points of $\mathfrak{u}_{\pm}$ are at most $h$, which satisfies $h<\kappa (\frac{1}{20}R)$, by (\ref{R5}). The height bound of $\mathfrak{u}_{\pm}$ follows from Theorem \ref{connection_principle_with_height} and the choice of $\vec{t}_{\text{ini}}(\mathfrak{u}_{\pm})$. See Figure 8 of \cite{LM} to see a picture of the oriented $\partial$-framed segments $\mathfrak{u}_{\pm}$.

Let $\hat{x}_{\pm}\in \pi_1(\text{SO}(M),\text{e})$ be the elements associated to the concatenations of oriented $\partial$-framed segments $[\bar{\mathfrak{k}}\bar{\mathfrak{u}}_-\mathfrak{s}_+\mathfrak{u}_+\mathfrak{k}]$ and $[\bar{\mathfrak{k}}\bar{\mathfrak{u}}_+\mathfrak{s}_-\mathfrak{u}_-\mathfrak{k}]^*$ respectively.
 
 Let $\hat{y}_{\pm}=\hat{k}\hat{x}_{\pm}\hat{k}^{-1}$, then its projection to $\pi_1(M,*)$ is the carrier of $\bar{\mathfrak{u}_{\mp}}\mathfrak{s}_{\pm}\mathfrak{u}_{\pm}$, and we denote it by $y_{\pm}$.
So $y_{\pm}$ has length at most $\frac{6}{5}R$ and height at most $h-5$ (by Lemma \ref{distance} and our construction of $u_{\pm}$). Moreover, its initial and terminal directions are $(2\delta)$-close to $-\vec{t}_{\text{ini}}(k)$ and $\vec{t}_{\text{ini}}(k)$ respectively, by Lemma \ref{anglechange} and the fact that $\delta+40e^{-\frac{R}{40}}<2\delta$ (\ref{R4}). So conditions (b) and (c) hold for $\hat{y}_{\pm}$, and  $\hat{x}_{\pm}\in \tau_{\hat{k}}(\hat{\mathcal{A}})\subset \hat{\mathscr{C}}_{DL}\cap \hat{\mathscr{B}}_{\frac{3}{2}R-4DL}^{h-3}$ holds. 
 
 To compute $\Psi_1(\hat{x}_+)+\Psi_1(\hat{x}_-)$, we take the auxiliary data $\mathfrak{a}_0\vee\mathfrak{a}_1\vee \mathfrak{a}_2$ and $\mathfrak{b}$ for computing $\Psi_1(\hat{x}_+)$, with the $(10^3\delta)$-closeness in Condition \ref{defining} replaced by $\delta$-closeness. Then $\mathfrak{a}_0^*\vee\mathfrak{a}_{-1}^*\vee \mathfrak{a}_{-2}^*$ and $\bar{\mathfrak{b}}^*$ can be used to compute $\Psi_1(\hat{x}_-)$.
 
 For simplicity, we use $\mathfrak{r}_+$ and $\mathfrak{r}_-$ to denote $\bar{\mathfrak{k}}\bar{\mathfrak{u}}_-\mathfrak{s}_+\mathfrak{u}_+\mathfrak{k}$ and $\bar{\mathfrak{k}}\bar{\mathfrak{u}}_+\mathfrak{s}_-\mathfrak{u}_-\mathfrak{k}$ respectively. Then we have 
\begin{align*}
&[\mathfrak{r}_+\mathfrak{a}_{i,i+1}]_{R,\epsilon}^{h,h'}+[\mathfrak{r}_-^*\mathfrak{a}_{i+1,i}^*]_{R,\epsilon}^{h,h'}=[\mathfrak{r}_+\mathfrak{a}_{i,i+1}]_{R,\epsilon}^{h,h'}+[\mathfrak{r}_-\overline{\mathfrak{a}_{i,i+1}}]_{R,\epsilon}^{h,h'}\\
=\ &[\mathfrak{s}_+(\mathfrak{u}_+\mathfrak{k}\mathfrak{a}_{i,i+1}\bar{\mathfrak{k}}\bar{\mathfrak{u}}_-)]_{R,\epsilon}^{h,h'}+[\mathfrak{s}_-\overline{(\mathfrak{u}_+\mathfrak{k}\mathfrak{a}_{i,i+1}\bar{\mathfrak{k}}\bar{\mathfrak{u}}_-)}]_{R,\epsilon}^{h,h'}=[\mathfrak{s}_-\mathfrak{s}_+]_{R,\epsilon}^{h,h'}=[\gamma]_{R,\epsilon}^{h,h'}.
\end{align*}
Here the third equality follows from the inverse of Lemma \ref{split} (splitting).

So we have equalities 
\begin{align*}
& [\mathfrak{r}_+\mathfrak{a}_{01}]_{R,\epsilon}+[\mathfrak{r}_-^*\mathfrak{a}_{10}^{*}]_{R,\epsilon}^{h,h'}=[\gamma]_{R,\epsilon}^{h,h'},\ [\mathfrak{r}_+\mathfrak{a}_{12}]_{R,\epsilon}+[\mathfrak{r}_-^*\mathfrak{a}_{21}^{*}]_{R,\epsilon}^{h,h'}=[\gamma]_{R,\epsilon}^{h,h'},\\
&  [\mathfrak{r}_+\mathfrak{a}_{20}]_{R,\epsilon}+[\mathfrak{r}_-^*\mathfrak{a}_{02}^{*}]_{R,\epsilon}^{h,h'}=[\gamma]_{R,\epsilon}^{h,h'}.
\end{align*}
Similarly, we have 
$$-[\mathfrak{r}_+\mathfrak{b}]_{R,\epsilon}-[\mathfrak{r}_-^*\bar{\mathfrak{b}}^{*}]_{R,\epsilon}^{h,h'}=-[\gamma]_{R,\epsilon}^{h,h'},\ -[\mathfrak{r}_+\bar{\mathfrak{b}}]_{R,\epsilon}-[\mathfrak{r}_-^*\mathfrak{b}^{*}]_{R,\epsilon}^{h,h'}=-[\gamma]_{R,\epsilon}^{h,h'}.$$
Then the sum of these equalities gives $\Psi_1(\hat{x}_+)+\Psi_1(\hat{x}_-)=[\gamma]_{R,\epsilon}^{h,h'}$.

\bigskip

{\bf Step II.} Now we only need to prove that $\Psi_1(\hat{x}_{\pm})$ is an integer linear combination of $\Psi$-image of $\widehat{\tau_k(\mathscr{B}_K)}$. We use $y_{\pm}\in \pi_1(M,*)$ to denote the projection of $\hat{y}_{\pm}=\hat{k}\hat{x}_{\pm}\hat{k}^{-1}\in\pi_1(\text{SO}(M),\text{e})$.

By the proof of step I, the geodesic representative of $y_{\pm}$ is the carrier segment of  $[\bar{\mathfrak{u}}_{\mp}\mathfrak{s}_{\pm}\mathfrak{u_{\pm}}]$. In the previous step, we have checked that $y_{\pm}\in \mathcal{A}$, with length at most $\frac{6}{5}R$, height at most $h-5$, and its initial and terminal directions are $(2\delta)$-close to $-\vec{t}_{\text{ini}}(k)$ and $\vec{t}_{\text{ini}}(k)$ respectively. So $y_{\pm}$ satisfies the assumption of Lemma \ref{reduce}.
 
For simplicity, we use $\hat{x}$ to denote $\hat{x}_+$ or $\hat{x}_-\in \pi_1(\text{SO}(M),\text{e})$, use $\hat{y}$ to denote the corresponding $\hat{y}_+$ or $\hat{y}_-\in \pi_1(\text{SO}(M),\text{e})$, and use $y$ to denote the corresponding $y_+$ or $y_-\in \pi_1(M,*)$. Then we apply Lemma \ref{reduce} to the element $y\in \mathcal{A}$, to get a sequence $(S_i)_{i=1}^N$, such that each $S_i$ is a sequence $(g_{ij})_{j=1}^N$ in $\pi_1(M,*)$ of length $i$. For each $i=1,\cdots,N$, by lifting elements in $\pi_1(M,*)$ to $\pi_1(\text{SO}(M),\text{e})$ inductively, we obtain a lifting length-$i$ sequence $\hat{S}_i=(\hat{g}_{ij})_{j=1}^i$ of $S_i$ in $\pi_1(\text{SO(M)},\text{e})$ such that the following hold.
 \begin{enumerate}
 \item[(a)] We have $\hat{S}_1=(\hat{g}_{11})=(\hat{y})$.
 \item[(b)] For each $i=1,\cdots,N$ and $j=1,\cdots,i$, $\hat{g}_{ij}\in \hat{\mathcal{A}}$ holds.
 \item[(c)] For each $i=1,\cdots,N-1$, there is an $m_i$ between $1$ and $i$, such that $\hat{g}_{ij}=\hat{g}_{i+1,j}$ for each $j<m_i$, $\hat{g}_{ij}=\hat{g}_{i+1,j+1}$ for each $j>m_i$, while $\hat{g}_{i,m_i}=\hat{g}_{i+1,m_i}\hat{g}_{i+1,m_{i}+1}$ hold.  
 \item[(d)] For each $\hat{g}_{Nj}$ with $j=1,\cdots,N$, we have $|\hat{g}_{Nj}|<30L<K$ holds, by (\ref{K1}).
 \end{enumerate}

By (b) and Lemma \ref{containinintersection}, for each $i$ and $j$, $\tau_{\hat{k}}(\hat{g}_{ij})\in \hat{\mathscr{C}}_{DL}\cap \hat{\mathscr{B}}_{\frac{3}{2}R-4DL}^{h-3}$ holds, thus $\Psi_1(\tau_{\hat{k}}(\hat{g}_{ij}))$ is defined. By (a), we have 
$$\Psi_1(\hat{x})=\Psi_1(\tau_{\hat{k}}(\hat{y}))=\Psi_1(\tau_{\hat{k}}(\hat{g}_{11})).$$
By (c) and Lemma \ref{relation}, for each $i=1,\cdots,N-1$, we have $$\Psi_1(\tau_{\hat{k}}(\hat{g}_{im_i}))=\Psi_1(\tau_{\hat{k}}(\hat{g}_{i+1,m_i}))+\Psi_1(\tau_{\hat{k}}(\hat{g}_{i+1,m_i+1})),$$ and
$$\Psi_1(\tau_{\hat{k}}(\hat{g}_{i1}))+\cdots+\Psi_1(\tau_{\hat{k}}(\hat{g}_{ii}))=\Psi_1(\tau_{\hat{k}}(\hat{g}_{i+1,1}))+\cdots+\Psi_1(\tau_{\hat{k}}(\hat{g}_{i+1,i+1})).$$
So we have 
\begin{align*}
&\Psi_1(\hat{x})=\Psi_1(\tau_{\hat{k}}(\hat{g}_{11}))=\Psi_1(\tau_{\hat{k}}(\hat{g}_{N1}))+\cdots+\Psi_1(\tau_{\hat{k}}(\hat{g}_{NN})).
\end{align*}

Finally, by (d), each $\tau_{\hat{k}}(\hat{g}_{Nj})$ lies in $\widehat{\tau_{k}(\mathscr{B}_K)}$. So $\Psi(\tau_{\hat{k}}(\hat{g}_{Nj}))=\Psi_1(\tau_{\hat{k}}(\hat{g}_{Nj}))$ holds by the construction of $\Psi$ in Proposition \ref{definitionPsilemma}. Then both $\Psi_1(\hat{x}_+)$ and $\Psi_1(\hat{x}_-)$ are integer linear combinations of elements in $\Psi(\widehat{\tau_{k}(\mathscr{B}_K)})$, and so does $[\gamma]_{R,\epsilon}^{h,h'}$. So $\Psi$ is surjective.
 
 \end{proof}
 
 Now we are ready to prove Theorem \ref{main1}.
 
 \begin{proof}[Proof of Theorem \ref{main1}]
 We take $\epsilon\in (0,10^{-2})$, take $R_0>0$ that satisfies all the conditions (\ref{R4}), (\ref{R5}) and (\ref{R7}), and take any $R>R_0$. Then all the statements in this section holds for $\epsilon$ and $R$.
Now we have three homomorphisms, which are $\Phi:\Omega_{R,\epsilon}^{h,h'}(M)\to H_1(\text{SO}(M);\mathbb{Z})$, $\Psi:\pi_1(\text{SO}(M),\text{e})\to \Omega_{R,\epsilon}^{h,h'}(M)$, and the abelianization of $\Psi$, which is denoted by $\Psi^{\text{ab}}:H_1(\text{SO}(M);\mathbb{Z})\to \Omega_{R,\epsilon}^{h,h'}(M)$.

By Proposition \ref{identity}, we know that $\Phi\circ \Psi^{\text{ab}}$ is the identity homomorphism on $H_1(\text{SO}(M);\mathbb{Z})$, so $\Psi^{\text{ab}}$ is an injective homomorphism. By Proposition \ref{surjective}, $\Psi$ is a surjective homomorphism, and so does $\Psi^{\text{ab}}$. So $\Psi^{\text{ab}}$ is an isomorphism, and so does $\Phi:\Omega_{R,\epsilon}^{h,h'}(M)\to H_1(\text{SO}(M);\mathbb{Z})$.

By the definition of $\Phi$, the isomorphism $\Phi:\Omega_{R,\epsilon}^{h,h'}(M)\to H_1(\text{SO}(M);\mathbb{Z})$ actually factors through $\Omega_{R,\epsilon}^h(M)=\Omega_{R,\epsilon}^{h,\infty}(M)$. Moreover, since the natural projection $\Omega_{R,\epsilon}^{h,h'}(M)\to \Omega_{R,\epsilon}^h(M)$ is surjective, $\Omega_{R,\epsilon}^h(M)$ is also isomorphic to $H_1(\text{SO}(M);\mathbb{Z})$.
 \end{proof}


\begin{thebibliography}{ZZZZ}



\bibitem[Agol]{Agol} {I.~Agol}, {\it The virtual Haken conjecture}, with an appendix by I.~Agol, D.~Groves, J.~Manning, Documenta Math. 18 (2013), 1045 - 1087.



















\bibitem[Ham]{Ham} {U.~Hamenstadt}, {\it  Incompressible surfaces in rank one locally symmetric spaces}, Geom. Funct. Anal. 25 (2015), no. 3, 815 - 859.

\bibitem[KM1]{KM1} {J.~Kahn, V.~Markovic}, {\it Immersing almost geodesic surfaces in a closed hyperbolic three manifold}, Ann. of Math. (2) 175 (2012), no. 3, 1127 - 1190.

\bibitem[KM2]{KM2} {J.~Kahn, V.~Markovic}, {\it The good pants homology and the Ehrenpreis conjecture}, Ann. of Math. (2) 182 (2015), no. 1, 1 - 72.

\bibitem[KW1]{KW1} {J.~Kahn, A.~Wright}, {\it Nearly Fuchsian surface subgroups of finite covolume Kleinian groups}, preprint 2018, \url{https://arxiv.org/abs/1809.07211}.

\bibitem[KW2]{KW2} {J.~Kahn, A.~Wright}, {\it Counting connections in a locally symmetric space}, \url{http://www.math.brown.edu/~kahn/Connections.pdf}.





\bibitem[LM]{LM} {Y.~Liu and V.~Markovic}, {\it Homology of curves and surfaces in closed hyperbolic $3$-manifolds}, Duke Math. J. 164 (2015), 2723--2808.

 
\bibitem[Mey]{Mey} {R.~Meyorhoff}, {\it A lower bound for the volume of hyperbolic $3$-manifolds}, Canad. J. Math. 39 (1987), no. 5, 1038 - 1056.











\bibitem[Sun]{Sun} {H.~Sun}, {\it Virtual domination of $3$-manifolds II}, in preparation.








\end{thebibliography}
\end{document}